\documentclass[11pt,a4paper,intlimits]{amsart}

\usepackage{amsmath}
\usepackage{amsopn,amsfonts,amssymb,amsthm}

\usepackage[T1]{fontenc}
\usepackage[english]{babel}
\selectlanguage{english}
\usepackage[utf8]{inputenc}
\usepackage{epsfig}
\usepackage{graphicx}
\DeclareGraphicsExtensions{.png.jpg}

\newtheorem{thm}{Theorem}[section]
\newtheorem{prop}[thm]{Proposition}
\newtheorem{lem}[thm]{Lemma}

\newtheorem{col}[thm]{Corollary}

\numberwithin{equation}{section}

\newcommand{\N}{\mathbb{N}}

\newcommand{\C}{\mathbb{C}}

\newcommand{\R}{\mathbb{R}}

\newcommand{\D}{\mathbb{D}}

\DeclareMathOperator{\im}{Im}
\DeclareMathOperator{\re}{Re}
\DeclareMathOperator{\dist}{dist}
\DeclareMathOperator{\diam}{diam}
\DeclareMathOperator{\HD}{HD}

\DeclareMathOperator{\intt}{int}

\newcommand{\sms}{\setminus}
\newcommand{\leeq}{\leqslant}
\newcommand{\greq}{\geqslant}

\newcommand{\la}{\lambda}
\newcommand{\ve}{\varepsilon}
\newcommand{\mc}{\mathcal}
\newcommand{\dt}{\divideontimes}

\begin{document}

\title[On the derivative of the Hausdorff dimension]{On the derivative of the Hausdorff dimension of the Julia sets for $z^2+c$}

\author{Ludwik Jaksztas, Michel Zinsmeister}

\address{Faculty of Mathematics and Information Sciences, Warsaw University of Technology, Pl. Politechniki 1, 00-661 Warsaw, Poland}
\email{jaksztas@impan.gov.pl}

\address{Institut Denis Poisson, COST, Universit\'e d'Orl\'eans, BP6749 45067 Orl\'eans Cedex, France}
\email{zins@univ-orleans.fr}

\thanks{partially supported by Polish NCN grants NN201 607 640 and UMO-2014/13/B/ST1/01033}

\subjclass[2000]{Primary 37F45, Secondary 37F35}

\begin{abstract}
Let $d(c)$ denote the Hausdorff dimension of the Julia set $J_c$ of the polynomial $f_c(z)=z^2+c$. We will investigate behavior of the function $d(c)$ when real parameter $c$ tends to a parabolic parameter $c_0$. 
\end{abstract}

\maketitle

\section{Introduction}\label{sec:wprowadzenie}

For a polynomial $f$ of degree at least 2, we define the filled-in Julia set $K(f)$ as the set of all points that do not escape to infinity under iteration of $f$. The Julia set $J(f)$ is the boundary of $K(f)$, i.e.
   $$J(f)=\partial K(f)=\partial\{z\in\C:f^n(z)\nrightarrow\infty\}.$$

We will consider the family of quadratic polynomials of the form
   $$f_c(z)=z^2+c, \textrm{ where } c\in\C.$$
As usual, $J_c$ and $K_c$ abbreviate $J(f_c)$ and $K(f_c)$ respectively.

We define the Mandelbrot set $\mathcal M$ as the set of all parameters $c$ for which the Julia set $J_c$ is connected, or equivalently,
   $$\mathcal M=\{c\in\C:f_c^n(0)\nrightarrow\infty\}.$$

We are interested in the function $c\mapsto d(c)$, where $d(c)$ denotes the Hausdorff dimension of the Julia set $J_c$.

Recall that a polynomial $f:\overline\C\rightarrow\overline\C$ (or more generally a rational function) is called hyperbolic (expanding) if
$\exists_{n\in\N}\;\forall_{z\in J(f)}\;|(f^n)'(z)|>1.$

The function $d$ is real-analytic on each hyperbolic component of $\intt(\mathcal M)$ (consisting of parameters related to hyperbolic maps) as well as on the exterior of $\mathcal M$ (see \cite{R})


M. Shishikura proved in \cite{Sh} that there exists a residual (hence dense) set of parameters $\partial\mathcal M$ such that $d(c)=2$. Therefore $d$ is not continuous at $c\in\partial\mathcal M$ if $d(c)<2$. In particular, it follows from \cite{U} that $d$ is not continuous at any parabolic parameter.

Nevertheless the Hausdorff dimension is continuous along some paths. This was first proved by O. Bodart and M. Zinsmeister (see \cite{BZ}) when the real parameter tends to $1/4$ from the left. Later, the continuity was proved (see \cite{Mii}) when $c$ approaches other parabolic parameters in a "good way". In particular the function $d|_\R$ is continuous on the interval $(c_{feig},1/4]$ (included in $\mathcal M$), where $c_{feig}\approx-1.401$ is the Feigenbaum parameter. Note that $d|_\R$ is not right-continuous at $1/4$, i.e. when $c$ approaches $1/4$ from outside of the Mandelbrot set (see \cite{DSZ}). For results concerning other parameters see \cite{GS} and \cite{Ri}.

In this paper we restrict $d$ to $\R$ and investigate the derivative of the Hausdorff dimension with respect $c$ converging to a parabolic parameter. This derivative has been studied in several papers:

First, G. Havard and M. Zinsmeister proved in \cite{HZ} that
\vspace{2.5mm}
\newline
\textbf{Theorem I.}
   \emph{There exist $c_0<1/4$ and $K>1$ such that for every $c\in(c_0, 1/4)$}
      $$\frac{1}{K}\Big(\frac{1}{4}-c\Big)^{d(\frac{1}{4})-\frac{3}{2}}\leeq d'(c)\leeq K\Big(\frac{1}{4}-c\Big)^{d(\frac{1}{4})-\frac{3}{2}}.$$
We know from \cite{HZi} that $d(1/4)<3/2$. Thus, $d'(c)\rightarrow+\infty$ when $c\rightarrow1/4^-$ (tends from the left). A similar problem in the case of the exponential family, i.e. parabolic map with one petal, was solved in \cite{HUZ}.

Next, it was proven, under the assumption $d(-3/4)<4/3$ (see \cite{J} and \cite{Ji}), that
\vspace{2.5mm}
\newline
\textbf{Theorem II.}
   \emph{There exist $c_0>-3/4$ and $K>1$ such that for every $c\in(-3/4,c_0)$}
      $$-K\Big(\frac{3}{4}+c\Big)^{\frac{3}{2}d(-\frac{3}{4})-2}\leeq d'(c)\leeq -\frac{1}{K}\Big(\frac{3}{4}+c\Big)^{\frac{3}{2}d(-\frac{3}{4})-2}.$$
   \emph{In particular $d'(c)\rightarrow-\infty$, when $c\rightarrow-3/4^+$ (tends from the right).}
\vspace{2.5mm}
\newline
\textbf{Theorem III.}
   \emph{There exists $K^-_{-3/4}>0$ such that,}
      $$\lim_{c\rightarrow-3/4^-} d'(c)\Big(-\frac{3}{4}-c\Big)^{-\frac{3}{2}d(-\frac{3}{4})+2}=-K^-_{-3/4}$$
   \emph{provided $d(-3/4)<4/3$}.

\begin{figure}[!h]
\begin{center}
\hspace{\stretch{1}}\includegraphics[width=6cm]{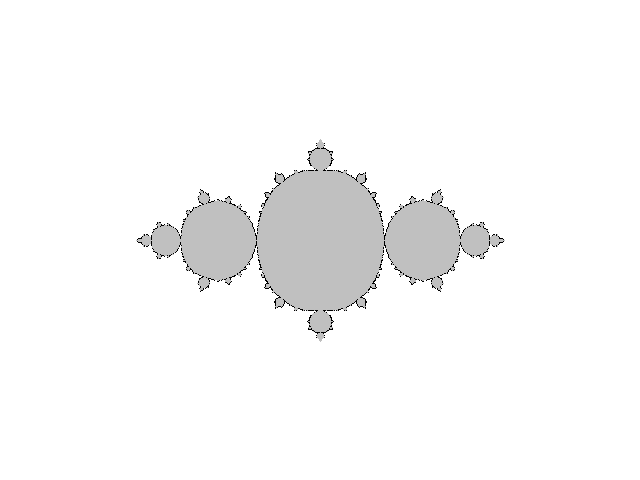}\hspace{\stretch{1}}\includegraphics[width=6cm]{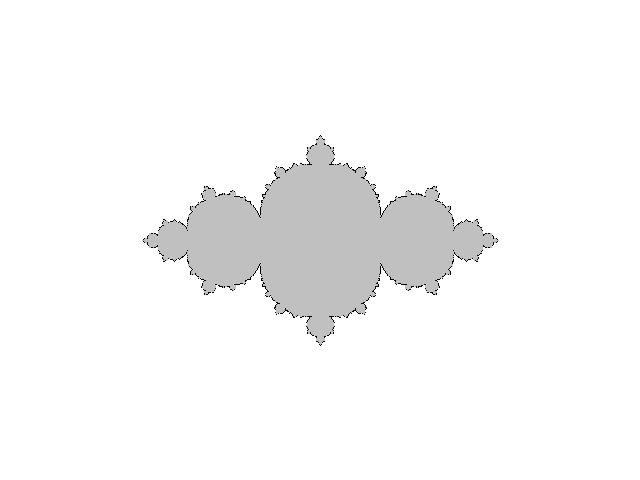}\hspace{\stretch{1}}
\caption{The bifurcation at $c_0=-0.75$: $c=-0.80;\, c=-0.70.$ }
\end{center}
\end{figure}
The main goal of this paper is  to prove the following generalization of Theorem II and III:

\begin{thm}\label{thm:twopetals}
   \emph{If $f_{c_0}^k$ has a parabolic fixed point with two petals then}
 \begin{enumerate}
   \item
      \emph{if $d(c_0)<4/3$ then there exist $K_{c_0}^+>K_{c_0}^->0$ such that}
         $$\lim_{c\rightarrow c_0^\pm}\frac{d'(c)}{|c_0-c|^{\frac32d(c_0)-2}}=-K_{c_0}^\pm,$$
   \item
      \emph{if $d(c_0)=4/3$ then there exists $K_{c_0}>0$ such that}
         $$\lim_{c\rightarrow c_0}\frac{d'(c)}{-\log|c_0-c|}=-K_{c_0},$$
   \item
      \emph{if $d(c_0)>4/3$ then there exists $K_{c_0}\in\R$ such that}
         $$\lim_{c\rightarrow c_0}d'(c)=K_{c_0}.$$
 \end{enumerate}
   \emph{In particular, if $d(c_0)>4/3$ then $d(c)$ is differentiable at $c_0$. Whereas, if $d(c_0)\leeq4/3$ then $d'(c)\rightarrow-\infty$, as $c\rightarrow c_0$.}
\end{thm}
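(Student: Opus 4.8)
The plan is to derive Theorem~\ref{thm:twopetals} from Bowen's formula together with a quantitative analysis of the pressure function as $c\to c_0$ along the real axis.

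First I would use that, for real $c$ near $c_0$ with $c\neq c_0$, the map $f_c$ is expanding on $J_c$, so by Bowen's formula $d(c)$ is the unique zero in $t$ of the pressure $P(t,c)$ of the potential $-t\log|f_c'|$ on $(J_c,f_c)$. Differentiating $P(d(c),c)=0$ gives
\[
 d'(c)=-\frac{\partial_c P(d(c),c)}{\partial_t P(d(c),c)}.
\]
Since $\partial_t P(t,c)=-\int\log|f_c'|\,d\mu_{t,c}<0$, and (as one must check) this quantity stays bounded away from $0$ and $-\infty$ as $c\to c_0$ — the parabolic orbit contributes only a finite amount at $t=d(c_0)$ because $d(c_0)\geq1$ exceeds the threshold $2/3$ associated with a two-petal parabolic point — the factor $-1/\partial_tP(d(c),c)$ has a positive finite limit $L$. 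Hence the whole theorem reduces to the one-sided asymptotics of $\partial_c P(d(c),c)$ as $c\to c_0^{\pm}$, the positive factor $L$ affecting only the constants.

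Next I would transport the problem to a fixed set. Following the approach of \cite{BZ} and \cite{J}, for $c$ on one side of $c_0$ one conjugates $f_c$ to $f_{c_0}$ by a holomorphic motion which is a David homeomorphism (but not uniformly quasiconformal) near the parabolic cycle; the thermodynamic formalism survives this, and $\partial_c P(d(c),c)$ becomes an integral over $J_{c_0}$ of an explicit density coming from the $c$-derivative of the conjugacy, against the transported equilibrium measure. Now split $J_{c_0}$ into a neighbourhood $V_\eta$ of the parabolic cycle of diameter comparable to $\eta:=|c_0-c|^{1/2}$ — the scale at which the bifurcated cycle sits, since a two-petal parabolic fixed point perturbs like $w\mapsto(1+O(c-c_0))w-aw^3+\cdots$, whose nonzero fixed points lie at $w$-distance $\sim\eta$ — and its complement. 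On $J_{c_0}\sms V_\eta$ the integrand extends real-analytically in $c$ up to $c_0$, hence contributes a convergent quantity. Inside $V_\eta$ I would pass to Fatou coordinates: the $n$-th fundamental piece of the petal lies at $z$-distance $\sim n^{-1/2}$ from the parabolic point with $d(c_0)$-conformal size $\sim n^{-\frac32 d(c_0)}$, the $c$-derivative accumulated over an excursion of length $n$ produces an extra factor of order $n$, and excursions are truncated at length $N\sim\eta^{-2}=|c_0-c|^{-1}$ (the number of iterates needed to cross the opened petal, the bifurcated cycle having Fatou coordinate of order $\eta^{-2}$). The petal contribution is therefore comparable to $\sum_{n=1}^{N}n^{1-\frac32 d(c_0)}$, which equals $N^{2-\frac32 d(c_0)}(1+o(1))=|c_0-c|^{\frac32 d(c_0)-2}(1+o(1))$ when $d(c_0)<4/3$, is $\sim\log N\sim-\log|c_0-c|$ when $d(c_0)=4/3$, and converges when $d(c_0)>4/3$ — exactly the trichotomy of the statement. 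A sign analysis (the implosion increases the Hausdorff dimension, so $d$ is decreasing on each side near $c_0$) produces the minus sign, and the distinct geometry of $V_\eta$ on the two sides — for $c<c_0$ an attracting cycle sits inside it so the deepest pieces drop out of $J_{c_0}$, for $c>c_0$ the orbit passes through — yields $K_{c_0}^{+}>K_{c_0}^{-}>0$ in case (1). The ``in particular'' clause is then immediate: when $d(c_0)>4/3$ the two one-sided limits of $d'$ coincide, so $d$ is differentiable at $c_0$; when $d(c_0)\leeq4/3$ the exponent $\frac32 d(c_0)-2$ is negative (resp.\ the logarithm diverges), so $d'(c)\to-\infty$.

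The main obstacle is making these last two steps rigorous and sharp at the same time: one needs the David conjugacies — equivalently, the transfer operators $\mc L_{t,c}$ — controlled uniformly as $c\to c_0$ with enough precision to read off not merely the order of magnitude of $\partial_c P$ but the exact one-sided constants $K_{c_0}^{\pm}$ and their sign. This amounts to a quantitative parabolic-implosion estimate at the critical scale $\eta=|c_0-c|^{1/2}$, precisely where the conjugacies are only David and the dependence of the pressure on $c$ is genuinely singular; essentially all of the work of the paper goes into this.
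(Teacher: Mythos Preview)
Your outline follows essentially the same architecture as the paper: the derivative formula from thermodynamic formalism (formula~(\ref{eq:wzor})), a near/far decomposition around the parabolic cycle, Fatou coordinates to control the near part, and the trichotomy coming from whether $\sum n^{1-\frac32 d(c_0)}$ converges. A few points, however, are either mis-attributed or underestimate where the real work lies.

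\textbf{Where the factor of $n$ comes from.} You say the $c$-derivative accumulated over an excursion of length $n$ contributes a factor $\sim n$, to be paired with conformal mass $\sim n^{-\frac32 d}$. The paper's framing is the opposite: the integrand $\frac{\partial}{\partial\lambda}\log|F'_\lambda(\varphi_\lambda)|$ is \emph{bounded} on $\mc C_n(\la)$ --- indeed it is shown to equal $6\Gamma(n\delta_\la)-1+o(1)$ with $\Gamma$ a universal bounded function (Propositions~\ref{prop:re=}, \ref{prop:beta}, \ref{prop:reDF}) --- and the extra $n$ comes from the \emph{invariant} measure, $\tilde\mu_\la(\mc C_n)\asymp n^{1-\frac32\mc D(\la)}$, which is $n$ times larger than the conformal mass (Lemma~\ref{lem:sumofmeasures}, Proposition~\ref{prop:meas}). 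Your induced-map heuristic gives the same exponent, but the paper's route is what actually delivers the sharp constants: one needs the precise bounded value $6\Gamma(n\delta_\la)-1$ of the integrand, not just an $O(n)$ Birkhoff bound.

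\textbf{The sign and the inequality $K_{c_0}^+>K_{c_0}^->0$.} These are not obtained by a physical argument about implosion or about ``deepest pieces dropping out''. They come from an explicit computation (Lemma~\ref{lem:>0}): with $\Upsilon_\pm(h)=\int(6\Gamma-1)G^h_\pm$ one shows $\Upsilon_-(1)=0$ by direct integration, then $\Upsilon_-(h)>0$ for $h\in(1,4/3)$ by a monotonicity argument comparing $e^{hx}\Lambda_0^h$ to $e^x\Lambda_0^1$, and finally $\Upsilon_+(h)>\Upsilon_-(h)$ by checking that $v(x)+e^{2x}v(-x)\geq0$ for an explicit primitive $v$. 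This is genuinely delicate and not recoverable from the heuristic you sketch.

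\textbf{The denominator.} Your justification that $\partial_t P$ stays bounded because ``$d(c_0)\geq1$ exceeds the threshold $2/3$'' is off: for two petals the relevant estimate is that $\big|\log|F'_\la|\big|\lesssim n^{-1/2}$ on $\mc C_n$, so $\int_{\textbf{M}_N}\big|\log|F'_\la|\big|\,d\tilde\mu_\la\lesssim\sum n^{1/2-\frac32\mc D(\la)}$, which converges precisely because $\mc D(-1)>1$ strictly (Zdunik's theorem is needed here). Positivity of the limit $\chi$ is a separate argument (Lemma~\ref{lem:exp}, Proposition~\ref{prop:den}).

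In short: your skeleton is the paper's skeleton, but the two places you wave your hands --- the exact value of the integrand near the parabolic point, and the positivity/ordering of the resulting constants --- are exactly where the paper spends most of its effort.
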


Finally, concerning Hausdorff dimensions of connected parabolic Julia sets, we will use the important theorem, due to Anna Zdunik (see \cite{Zd}), which asserts that they greater than $1$.

The one petal case of Theorem I can also be generalized to small copies of the Mandelbrot set, but this renormalizable case will be considered in a future paper.

\textbf{Notation.}
$A\asymp B$ means that $K^{-1}\leeq A/B\leeq K$, where constant $K>1$ does not depend on $A$ and $B$ under consideration.


We will write $A_c(z)=\tilde o(B_c(z))$ if for every $\ve>0$ there exist $\eta>0$ and $U$ a neighborhood of the fixed point under consideration, such that $|A_c(z)/B_c(z)|\leeq \ve$, where $z\in U$ and $0<|c-c_0|<\eta$.

\section{Thermodynamical formalism}\label{sec:formalizmt}




The goal of this section is to establish the formula (\ref{eq:wzor}) which is the starting point of this work.

Let $F_\lambda$ be a holomorphic family of hyperbolic polynomials of degree $d$ with connected Julia sets, $\lambda\in\Lambda$ where $\Lambda$ is an open subset of $\C$. Note that we will be interested in families which are conjugated to $f_c^k$, $k\greq1$ (after reparametrization). Write
   $$\mathcal J_\lambda:=J(F_\lambda), \textrm{ and } \mc D(\lambda):=\HD(\mathcal J_\lambda).$$

If $\la_\dt\in\Lambda$, then there exist a holomorphic motion $\varphi$ of $\mc J_{\la_\dt}$ parametrized by $\Lambda$ (see \cite{Hub}), such that $\varphi_{\la_\dt}(s)=s$ and $\varphi_\la:\mc J_{\la_\dt}\rightarrow\mc J_\la$ conjugates $F_{\la_\dt}|_{\mc J_{\la_\dt}}$ to $F_\la|_{\mc J_\la}$ (i.e. $\varphi_\la\circ F_{\la_\dt}=F_\la\circ\varphi_\la$). Thus, the function $\la\mapsto\varphi_\la(s)$ is holomorphic for every $s\in\mc J_{\la_\dt}$.


Now we use the thermodynamical formalism, which holds for hyperbolic rational maps. We will consider only such maps. Let $X=\mc J_{\la_\dt}$, $T=F_{\la_\dt}$, and let $\phi:X\rightarrow\R$ be a H\"{o}lder continuous function, to be often called a potential function. We will consider potentials of the form $\phi=-t\log|F'_\la(\varphi_\lambda)|$ for $\la\in\Lambda$.

{\em The topological pressure} can be defined as follows:
   $$P(T,\phi):=\lim_{n\rightarrow\infty}\frac{1}{n}\log\sum_{\overline s\in{T^{-n}(s)}}e^{S_n(\phi(\overline s))},$$
where $S_n(\phi)=\sum_{k=0}^{n-1}\phi\circ T^k$, and the limit exists and does not depend on $s\in X$.
If $\phi=-t\log|F'_\lambda(\varphi_\la)|$ and $\varphi_\la(\overline s)=\overline z$, then $e^{S_n(\phi(\overline s))}=|(F_\lambda^n)'(\overline z)|^{-t}$.
Hence
   $$P(T,-t\log|F'_\la(\varphi_\la)|) =\lim_{n\rightarrow\infty}\frac{1}{n}\log\sum_{\overline z\in{F_\lambda^{-n}(z)}}|(F_\lambda^n)'(\overline z)|^{-t}.$$
The function $t\mapsto P(T,-t\log|F'_\la(\varphi_\la)|)$ is decreasing from $+\infty$ to $-\infty$. In particular, there exists a unique $t_0$ such that $P(T,-t_0\log|F'_\la(\varphi_\la)|)\\=0$. By Bowen's Theorem (see \cite[Corollary 9.1.7]{PU} or \cite[Theorem 5.12]{Z}) we have
\begin{equation}\label{eq:acm}
   t_0=\mc D(\lambda).
\end{equation}
Thus, we have $P(T,-\mc D(\la)\log|F'_\la(\varphi_\la)|)=0$. Put $\phi_\la:=-\mc D(\la)\log|F'_\la(\varphi_\la)|$.

{\em The Ruelle} or {\em transfer operator} $\mathcal{L}_{\phi}:C^0(X)\rightarrow C^0(X)$ is defined as
   $$\mathcal{L}_{\phi}(u)(s):=\sum_{\overline s\in{T^{-1}(s)}}u(\overline s)e^{\phi(\overline s)}.$$
The Perron-Frobenius-Ruelle theorem \cite[Theorem 4.1]{Z} asserts that $\beta=e^{P(T,\phi)}$ is a single eigenvalue of $\mathcal{L}_{\phi}$ associated with an eigenfunction $\tilde h_{\phi}>0$. Moreover, there exists a unique probability measure $\tilde \omega_{\phi}$ such that $\mathcal{L}_{\phi}^*(\tilde \omega_{\phi})=\beta\tilde\omega_{\phi}$, where $\mathcal{L}_{\phi}^*$ is conjugated to $\mathcal{L}_{\phi}$.

For $\phi=\phi_\la$ we have $\beta=1$, and then $\tilde \mu_{\phi_\la}:=\tilde
h_{\phi_\la}\tilde\omega_{\phi_\la}$ is a $T$-invariant measure called an equilibrium state after normalization. But in the present work it will be more convenient not to normalize, contrarily to the tradition.
We denote by $\tilde{\omega_\la}$ and $\tilde{\mu_\la}$ the measures $\tilde\omega_{\phi_\la}$ and
$\tilde\mu_{\phi_\la}$ respectively (measures supported on $\mc J_{\la_\dt}$). Next, we take
$\mu_{\la}:=(\varphi_{\la})_*\tilde{\mu_{\la}}$, and $\omega_{\la}:=(\varphi_{\la})_*\tilde{\omega_{\la}}$ (image measures supported on
$\mc J_\la$).

So, the measure $\mu_\la$ is $F_\la$-invariant, whereas the measure $\omega_\la$ is called {\em $F_\la$-conformal with exponent
d(c)}, i.e. $\omega_\la$ is a Borel probability measure such that for every Borel subset $A\subset\mc J_\la$,
\begin{equation*}\label{eq:SP}
   \omega_\la(F_\la(A))=\int_A|F_\la'|^{\mc D(\la)}d\omega_\la,
\end{equation*}
provided $F_\la$ is injective on $A$.


It follows from \cite[Proposition 6.11]{Z} or \cite[Theorem 4.6.5]{PU} that for every H\"{o}lder $\psi$ and $\phi$ at every $t\in\R$, we have
\begin{equation*}\label{eq:der}
   \frac{\partial}{\partial t}P(T,\psi+t\phi)= \frac{\int_{X}\phi\,d\tilde\mu_{\psi+t\phi}}{\tilde\mu_{\psi+t\phi}(X)}.
\end{equation*}




The above formula, and implicit function theorem (see \cite[Proposition 2.1]{J}) lead to
\begin{prop}\label{prop:wzor}\cite[Proposition 2.1]{HZ}
   Let us consider the family $\{F_\la\}_{\la\in\Lambda}$ as above, with the further assumptions that:
 \begin{itemize}
   \item[\textbf{-}]
   $\Lambda$ is symmetric with respect to $\R$,
   \item[\textbf{-}]
   $F_{\overline \la}(z)=\overline{F_\la(\overline z)}$.
 \end{itemize}

   If $\la\in\R$ is such that $F_\la$ is hyperbolic then
 \begin{equation}\label{eq:wzor}
    \mc D'(\la)=-\frac{\mc D(\la)}{\int_{\mc J_{\la_\dt}}\log|F'_\la(\varphi_\la)|d\tilde{\mu_\la}}\int_{\mc J_{\la_\dt}}\frac{\partial}{\partial \la}\log|F'_\la(\varphi_\la)|d\tilde{\mu_\la}.
 \end{equation}
\end{prop}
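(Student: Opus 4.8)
The plan is to differentiate the defining identity $P(T,\phi_\la)=0$ --- where $T=F_{\la_\dt}$, $X=\mc J_{\la_\dt}$ and $\phi_\la=-\mc D(\la)\log|F'_\la(\varphi_\la)|$ --- treating the exponent $\mc D(\la)$ as an unknown pinned down by the pressure equation, and then invoking the implicit function theorem. Fix a real base parameter $\la_\dt$; for real $\la$ nearby put $L_\la:=\log|F'_\la(\varphi_\la)|$, viewed as a function on the \emph{fixed} set $X$. Since $F_{\la_\dt}$ is hyperbolic, $\mc J_{\la_\dt}$ (and $\mc J_\la$ for $\la$ close to $\la_\dt$) contains no critical point, so $F'_\la(\varphi_\la(s))\neq0$ for every $s\in X$; because $\la\mapsto\varphi_\la(s)$ is holomorphic and $F_\la$ is a polynomial, $\la\mapsto F'_\la(\varphi_\la(s))$ is holomorphic and nonvanishing, hence has a local holomorphic logarithm, so $L_\la(s)=\re\log F'_\la(\varphi_\la(s))$ depends real-analytically on $\la$, and $\frac{\partial}{\partial\la}L_\la$ is again H\"older on $X$ with locally bounded norm (holomorphic motions are H\"older with locally bounded constants, and $1/F'_\la$ is bounded near $\mc J_\la$). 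We also note that the symmetry hypotheses on $\Lambda$ and on $F_\la$ make $\varphi_\la$ and the measures $\tilde\mu_\la$ compatible with complex conjugation for real $\la$; this is the regime in which the formula will be used.

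Set $g(\la,t):=P(T,-tL_\la)$ for real $(\la,t)$ near $(\la_\dt,\mc D(\la_\dt))$; by Bowen's theorem (\ref{eq:acm}) one has $g(\la,\mc D(\la))=0$. Ruelle's analytic perturbation theory for the transfer operator --- which underlies the variational formula for the pressure recalled just before the statement, \cite[Proposition 6.11]{Z}, \cite[Theorem 4.6.5]{PU} --- shows that $g$ is real-analytic and gives its partial derivatives: taking $\psi=0$ and $\phi=-L_\la$ yields $\partial_t g=-\bigl(\int_X L_\la\,d\tilde\mu_{-tL_\la}\bigr)/\tilde\mu_{-tL_\la}(X)$, while applying the same first-variation formula to the $C^1$ path of potentials $\la\mapsto-tL_\la$ (with $t$ fixed) yields $\partial_\la g=-t\bigl(\int_X\frac{\partial}{\partial\la}L_\la\,d\tilde\mu_{-tL_\la}\bigr)/\tilde\mu_{-tL_\la}(X)$. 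At $t=\mc D(\la)$ the relevant measure is $\tilde\mu_\la$, so $\partial_t g=-\bigl(\int_X L_\la\,d\tilde\mu_\la\bigr)/\tilde\mu_\la(X)$, which is strictly negative because the (un-normalized) Lyapunov integral $\int_X\log|F'_\la(\varphi_\la)|\,d\tilde\mu_\la=\int_{\mc J_\la}\log|F'_\la|\,d\mu_\la$ is positive: indeed $|(F_\la^n)'|>1$ on $\mc J_\la$ for some $n\in\N$, so $\int_{\mc J_\la}\log|(F_\la^n)'|\,d\mu_\la>0$, and this equals $n\int_{\mc J_\la}\log|F'_\la|\,d\mu_\la$ by $F_\la$-invariance of $\mu_\la$, while $\tilde\mu_\la(X)\in(0,\infty)$. (This is also consistent with the remark, recalled above, that $t\mapsto g(\la,t)$ is strictly decreasing.)

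Since $\partial_t g\neq0$, the implicit function theorem in the real-analytic category (\cite[Proposition 2.1]{J}) applied to the equation $g(\la,t)=0$ reproves that $\mc D$ is real-analytic on the hyperbolic locus and gives
\[
  \mc D'(\la)=-\frac{\partial_\la g}{\partial_t g}\Big|_{t=\mc D(\la)}
  =-\mc D(\la)\,\frac{\int_X\frac{\partial}{\partial\la}L_\la\,d\tilde\mu_\la}{\int_X L_\la\,d\tilde\mu_\la},
\]
the common factor $\tilde\mu_\la(X)$ cancelling between numerator and denominator. Since $L_\la=\log|F'_\la(\varphi_\la)|$ and $X=\mc J_{\la_\dt}$, this is precisely (\ref{eq:wzor}).

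The main obstacle is the second step: proving that the topological pressure depends analytically not only on the scalar $t$ but on the parameter $\la$ entering \emph{inside} the potential, with the stated integral expression for $\partial_\la g$. This amounts to showing that $\la\mapsto\mc L_{-tL_\la}$ is an analytic family of bounded operators on a suitable space of H\"older functions --- which is exactly where the uniform-in-$\la$ H\"older estimates on $\varphi_\la$ and on $\frac{\partial}{\partial\la}L_\la$ coming from hyperbolicity are needed --- so that the simple leading eigenvalue $e^{P}$ together with the eigenfunction $\tilde h$ and eigenmeasure $\tilde\omega$ perturb analytically; the first-variation formula then follows by differentiating the eigen-equations. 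Once this is granted, the positivity of the Lyapunov exponent and the implicit function theorem are routine.
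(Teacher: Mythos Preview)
Your argument is correct and follows precisely the route the paper indicates: the paper does not give a self-contained proof but simply records the pressure-derivative formula $\frac{\partial}{\partial t}P(T,\psi+t\phi)=\int\phi\,d\tilde\mu_{\psi+t\phi}/\tilde\mu_{\psi+t\phi}(X)$ and cites \cite[Proposition 2.1]{HZ} and the implicit function theorem in \cite[Proposition 2.1]{J}. Your write-up expands exactly this outline --- setting $g(\la,t)=P(T,-tL_\la)$, computing both partials via the first-variation formula, checking $\partial_t g<0$ from hyperbolicity, and applying the implicit function theorem --- and you correctly flag that the only substantive point (analytic dependence of the transfer operator on $\la$, not just on $t$) is where the Ruelle perturbation theory is invoked; this is also what the cited references supply.
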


\section{The two petals case}\label{sec:twopetals}

\subsection{}
Let us assume that $f_{c_0}$, where $c_0\in\R$, has a parabolic cycle of length $k\greq1$, and let $\alpha_{c_0}$ be a point in this cycle. Then, $f_{c_0}^k$ has a parabolic fixed point with a multiplier $(f_{c_0}^k)'(\alpha_{c_0})=\pm1$. Since the critical orbit is real, the $+1$ case corresponds to one petal, and the $-1$ to two petals. Let us assume, from now on, that $(f_{c_0}^k)'(\alpha_{c_0})=-1$.

The parameter $c_0$ lies at the boundary of two hyperbolic components $W_l$, $W_r$ of $\intt\mathcal M$. The components $W_l$, $W_r$ are symmetric with respect to the real axis, $W_l$ is placed from the left side of $c_0$ whereas $W_r$ from the right.

Since $(f_{c_0}^k)'(\alpha_{c_0})\neq1$, there exists a neighborhood $U$ of $c_0$, and an holomorphic function $c\mapsto\alpha_c$ on $U$, such that for every $c\in U$ $\alpha_c$ is an element of a $k$-cycle for $f_c$. Since the Julia sets move holomorphically in $W_l$, and in $W_r$, we may assume that $W_l, W_r\subset U$.

This $k$-cycle is attracting for $c\in W_r$, and repelling for $c\in W_l$. Note that the attracting cycle for $c\in W_l$ has length $2k$.
Let
   $$\la(c):=(f_c^k)'(\alpha_c).$$
The function $\la$ is holomorphic on $U$, and its restriction to $W_r$, is a bijection onto $\D$, moreover $\la(c_0)=-1$. Since $\partial W_r$ is smooth at $c_0$ \cite{DH} we see that $\la'(c_0)\neq0$. Thus, from the fact that $|\la(c)|<1$, where $c\in W_r$, we obtain
   $$\la'(c_0)>0.$$
So, the function $\la$ is increasing in an interval $(c_0-\ve,c_0+\ve)$, for some $\ve>0$.

\subsection{}
We will need the following lemma:

\begin{lem}
The parabolic cycle of $f_{c_0}$ contains at least one point $\alpha$ where $$(f^k_{c_0})''(\alpha)\neq0.$$
\end{lem}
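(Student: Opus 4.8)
The plan is to argue by contradiction: suppose that $(f_{c_0}^k)''(\alpha)=0$ at every point $\alpha$ of the parabolic cycle. Since $\lambda(c_0)=(f_{c_0}^k)'(\alpha_{c_0})=-1$, the map $g:=f_{c_0}^k$ satisfies $g'(\alpha)=-1$ and $g''(\alpha)=0$ at each cycle point. The first step is to look at the \emph{second} iterate $h:=g^2=f_{c_0}^{2k}$, which fixes each $\alpha$ with $h'(\alpha)=(g'(\alpha))^2=1$, i.e. $\alpha$ is a parabolic fixed point of $h$ with multiplier $+1$; the relevant data for the petal structure of $h$ at $\alpha$ is then the coefficient of $(z-\alpha)^2$ in $h$. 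A direct chain-rule computation gives $h''(\alpha)=g''(g(\alpha))\,(g'(\alpha))^2+g'(g(\alpha))\,g''(\alpha)$, and under our assumption $g''\equiv 0$ on the cycle this forces $h''(\alpha)=0$ as well. Thus $\alpha$ would be a parabolic fixed point of $h=f_{c_0}^{2k}$ with $h'(\alpha)=1$ and $h''(\alpha)=0$, hence with at least \emph{two} attracting petals (the number of petals of a parabolic germ $w\mapsto w+a_{p+1}w^{p+1}+\cdots$ with $a_2=\cdots=a_p=0$, $a_{p+1}\neq 0$ is $p\geq 2$).

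The second step is to derive a contradiction from the existence of two or more petals for $f_{c_0}^{2k}$ at $\alpha$ by counting critical orbits. By the Fatou–Shishikura / Leau–Fatou flower theorem, each attracting petal of a parabolic cycle must attract a critical point of $f_{c_0}$ in its immediate basin. But $f_c(z)=z^2+c$ has a unique critical point, namely $0$, so the polynomial $f_{c_0}$ has exactly one critical orbit available. Hence $f_{c_0}^{2k}$ can have at most one attracting petal at the whole cycle, contradicting the conclusion of the previous paragraph. (One should phrase the petal count carefully: the parabolic basin of the cycle of $f_{c_0}^{2k}$, being a single grand orbit under $f_{c_0}$, can contain at most one critical orbit, so the cyclic group of petals around the cycle has at most one element — incompatible with $p\geq 2$ petals at each of the fixed points.)

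I would then backtrack to see what this says about $g=f_{c_0}^k$ itself: the hypothesis $(g^k)''\equiv 0$ on the cycle is exactly the degenerate situation just excluded, so at least one cycle point $\alpha$ must have $g''(\alpha)\neq 0$, which is the assertion of the lemma. A cleaner self-contained variant avoids $h$: expand $g$ near a cycle point $\alpha$ as $g(z)=\alpha' + (-1)(z-\alpha)+\tfrac12 g''(\alpha)(z-\alpha)^2+\cdots$ (where $\alpha'=g(\alpha)$ is the next cycle point, equal to $\alpha$ only when $k$ effectively reduces), compose around the cycle, and read off that the nonlinearity of the full return map $g$ at $\alpha$ — equivalently the resurgence/formal invariant of the parabolic germ — vanishes to higher order precisely when all the $g''$'s vanish; since $f_{c_0}$ is a genuine parabolic (not the identity) with a single critical point, that resurgent coefficient cannot make the germ have more petals than the available critical orbits allow.

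The main obstacle I anticipate is the careful bookkeeping in the petal count: one must be precise about the relationship between the number of petals of $f_{c_0}^{2k}$ at a single fixed point, the total number of petals around the length-$2k$ (or length-$k$) cycle, and the number of distinct grand orbits of critical points under $f_{c_0}$, invoking the Fatou flower theorem and the fact that each cycle of immediate parabolic basins absorbs a critical orbit. Once the combinatorial statement ``two or more petals $\Rightarrow$ two or more independent critical orbits'' is pinned down, the contradiction with $f_c(z)=z^2+c$ having the single critical point $0$ is immediate, and the chain-rule computation showing $g''\equiv 0 \Rightarrow (g^2)''\equiv 0$ is routine.
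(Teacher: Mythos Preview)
Your argument has a genuine gap. The chain-rule identity you write down,
\[
h''(\alpha)=g''(g(\alpha))\,(g'(\alpha))^2+g'(g(\alpha))\,g''(\alpha),
\]
is correct, but since $\alpha$ is a fixed point of $g=f_{c_0}^k$ with $g'(\alpha)=-1$, it gives
\[
h''(\alpha)=g''(\alpha)\cdot 1+(-1)\cdot g''(\alpha)=0
\]
\emph{unconditionally}. The vanishing of $h''(\alpha)$ therefore does not use your contradiction hypothesis $g''\equiv 0$ on the cycle at all; it is automatic from the multiplier $-1$. Consequently the conclusion ``$h$ has at least two attracting petals'' is not a contradiction --- it is precisely the standing situation of the paper (two petals, one cycle of immediate basins under $f_{c_0}$, absorbing the unique critical orbit). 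To get a contradiction via petal counting you would need to force \emph{more} than two petals, i.e.\ $h'''(\alpha)=0$ as well; but a direct computation under $g''(\alpha)=0$ gives $h'''(\alpha)=-2g'''(\alpha)$, which has no reason to vanish. So the hypothesis $g''\equiv 0$ on the cycle does not by itself raise the petal count, and your contradiction never materializes.

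The paper's proof takes a different, purely local route: it shows the stronger statement that $g''$ cannot vanish at two \emph{consecutive} points of the cycle. Writing $f_{c_0}^k=f_{c_0}^{-1}\circ f_{c_0}^k\circ f_{c_0}$ near $\alpha$ and differentiating twice, one obtains an expression for $g''(\alpha)$ in terms of $g''(f_{c_0}(\alpha))$ plus two further terms. Using that $f_{c_0}''\equiv 2$ (so $(f_{c_0}^{-1})''$ and $(f_{c_0}^{-1})'$ have opposite signs) and that $(f_{c_0}^k)'=-1$ along the cycle, these two extra terms are seen to have the same sign and hence cannot cancel; thus $g''(\alpha)=0$ forces $g''(f_{c_0}(\alpha))\neq 0$. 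This argument exploits the specific fact that $f_c$ is quadratic (constant second derivative) rather than any global petal/critical-orbit count.
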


\begin{proof}
If $f_{c_0}$ has parabolic fixed point (i.e. $k=1$), then $f''_{c_0}\equiv2$. So, we can assume that $k>1$.

We will prove that if $(f^k_{c_0})''(\alpha)=0$, then $(f^k_{c_0})''(f_{c_0}(\alpha))\neq0$, where $\alpha$ is a point from the parabolic cycle.

Let $f^{-1}_{c_0}$ denote the inverse branch of $f_{c_0}$, such that $f^{-1}_{c_0}(f_{c_0}(\alpha))=\alpha$. Note that $(f^k_{c_0})'=-1$ at every point from the parabolic cycle, and $f''_{c_0}\equiv2$.

Since $(f^k_{c_0})'=(f^{-1}_{c_0}\circ f^k_{c_0}\circ f_{c_0})'= (f^{-1}_{c_0})'(f^k_{c_0}\circ f_{c_0})\cdot (f^k_{c_0})'(f_{c_0})\cdot f_{c_0}'$, we obtain
 \begin{multline}\label{eq:F''}
   (f^k_{c_0})''=(f^{-1}_{c_0})''(f^k_{c_0}\circ f_{c_0})\cdot((f^k_{c_0}\circ f_{c_0})')^2 \\ +(f^{-1}_{c_0})'(f^k_{c_0}\circ f_{c_0})\cdot (f^k_{c_0})''(f_{c_0})\cdot (f'_{c_0})^2 \\ +(f^{-1}_{c_0})'(f^k_{c_0}\circ f_{c_0})\cdot (f^k_{c_0})'(f_{c_0})\cdot f''_{c_0}.
 \end{multline}

Because $1=(f_{c_0}^{-1}\circ f_{c_0})'=(f_{c_0}^{-1})'(f_{c_0})\cdot f_{c_0}'$, we conclude that
   $$0=(f_{c_0}^{-1})''(f_{c_0})\cdot(f_{c_0}')^2+(f_{c_0}^{-1})'(f_{c_0})\cdot f_{c_0}''.$$
So, we have $(f_{c_0}^{-1})''(f_{c_0})\cdot (f_{c_0}^{-1})'(f_{c_0})<0$, therefore
   $$(f_{c_0}^{-1})''(f^k_{c_0}\circ f_{c_0}(\alpha))\cdot (f_{c_0}^{-1})'(f^k_{c_0}\circ f_{c_0}(\alpha))<0.$$
Finally, if $(f^k_{c_0})''(\alpha)=0$, then (\ref{eq:F''}) combined with the above inequality and the fact that $(f^k_{c_0})'(f_{c_0}(\alpha))\cdot f''_{c_0}(\alpha)=-2<0$, leads to $(f^k_{c_0})''(f_{c_0}(\alpha))\neq0$.
\end{proof}

So, we can assume that $(f_{c_0}^k)''(\alpha_{c_0})\neq0$. Later on, we will see that these estimates do not depend on choice of the point from the parabolic cycle of $f_{c_0}$ (see Proposition \ref{prop:reDF} and definitions from Sections \ref{sec:cylindry} and \ref{sec:generaltwopetals}`).

Conjugating $f_c^k$ by $t_c(z)= z-\alpha_c$ we obtain
   $$t_c\circ f_c^k\circ t_c^{-1}(z)=\la(c)z+a(c)z^2+b(c)z^3+O(z^4).$$
Since $(f_{c_0}^k)''(\alpha_{c_0})\neq0$, we see that $a(c_0)\neq0$. Then, after conjugating by $s_c(z)=z\cdot a(c)/a(c_0)$, we get
\begin{multline*}
   F_{\la(c)}(z):=s_c\circ t_c\circ f_c^k\circ t_c^{-1}\circ s_c^{-1}(z)\\
   =\la(c) z+a(c_0) z^2+b(c)\frac{a^2(c_0)}{a^2(c)}z^3+O(z^4).
\end{multline*}

Because the coefficients $a(c)$, $b(c)$ are polynomials, and of course $a(c)\rightarrow a(c_0)$, $b(c)\rightarrow b(c_0)$ when $c\rightarrow c_0$, we obtain
   $$F_{\la(c)}(z)=\la(c) z+a(c_0) z^2+b(c_0) z^3+O((c-c_0)z^3)+O(z^4).$$
Since $\la'(c_0)>0$, $c-c_0=O(\la(c)-\la(c_0))=O(\la(c)+1)$. Omitting $c$, $c_0$, and writing
   $$\delta_\la:=\la+1$$
(i.e. $\la=-1+\delta_\la$) we have
   $$F_\la(z)=\la z+a z^2+bz^3+O(\delta_\la z^3)+O(z^4).$$

Next, we get
\begin{equation}\label{eq:F0}
 \begin{array}{ll}
   F'_\la(z)=\la+2az+3bz^2+O(\delta_\la z^2)+O(z^3),\\
   F''_\la(z)=2a+6bz+O(\delta_\la z)+O(z^2).
 \end{array}
\end{equation}
and
\begin{equation}\label{eq:F1}
   \frac{\partial}{\partial\la}F_\la(z)=z+O(z^3),\;\;\;\;\;\; \frac{\partial}{\partial\la}F'_\la(z)=1+O(z^2).
\end{equation}
Note that
\begin{multline}\label{eq:F2}
   F_\la^2(z)=(1-2\delta_\la+\delta_\la^2)z+a(-1+\delta_\la)\delta_\la z^2-2(a^2+b)z^3\\+O(\delta_\la z^3)+O(z^4).
\end{multline}

Moreover, let us define
\begin{multline}\label{eq:ftilde}
   \tilde f_\la(z)=\tilde f_{\la(c)}(z):=s_c\circ t_c\circ f_c\circ t_c^{-1}\circ s_c^{-1}(z)\\
   =\frac{a(c_0)}{a(c)}z^2+2\alpha_c z+\frac{a(c)}{a(c_0)}(c+\alpha_c^2-\alpha_c).
\end{multline}
So, we have $\tilde f_\la^k=F_\la$.

\subsection{}\label{ssc:hm}
There exists $\ve>0$, such that the Julia set $\mc J_\la$ of $F_{\la}$ moves holomorphically on an open and disjoint sets $\Lambda_l\supset(-1-\ve,-1)$, and $\Lambda_r \supset(-1,-1+\ve)$.

Let us first fix $\la_l\in\Lambda_l$, $\la_r\in\Lambda_r$ (for instance taking the values corresponding to the center of the component). Then there exist two families of injections $$\varphi_\la^{l,r}:\mc J_{\la_{l,r}}\rightarrow\mc J_\la, \la\in\Lambda_{l,r},$$
conjugating $F_{\la_l}|_{\mc J_{\la_l}}$ to $F_\la|_{\mc J_\la}$ if $\la\in\Lambda_l$, and $F_{\la_r}|_{\mc J_{\la_r}}$ to $F_\la|_{\mc J_\la}$ if $\la\in\Lambda_r$.

Note that for every $s\in \mc J_{\la_{l,r}}$ the function $\la\rightarrow\varphi_\la(s)$ is holomorphic on $\Lambda_{l,r}$.

The families $\varphi_{\la}^{l,r}$, $\la\in\Lambda_{l,r}\cap\R$ are equicontinuous, and thus, taking uniform limit as $\la\rightarrow-1$ from the right or the left, there exist two functions $\varphi_{-1}^{l,r}:\mc J_{\la_{l,r}}\rightarrow\mc J_{-1}$ such that $\varphi_{-1}^{l,r}\circ F_{\la_{l,r}}=F_{-1}\circ\varphi_{-1}^{l,r}$

In the sequel, when the context is clear, we will allow ourselves to skip the subscript $l,r$, and denote $\la_{l,r}$ by $\la_\dt$.

\subsection{}
If $c\in\mathcal M\cap \R$ then the trajectory of the critical point of $f_c$ is included in the real line. So, the trajectories of the critical points of $F_{-1}$ (which tends to the parabolic points) are included in $\R$. Therefore the horizontal directions for the parabolic points are stable, whereas the vertical directions are unstable. Next, because
   $$F_{-1}^2(z)=z-2(a^2+b)z^3+O(z^4),$$
we conclude that $a^2+b>0$ (see the Fatou's flower theorem \cite{ADU}). The function $F_{-1}^2(z)$ is conjugated to $G(z)=z-2z^3+o(z^3)$ by $z\mapsto z\sqrt{a^2+b}$. So, let $A$ denote the scaling factor:
   $$A:=\sqrt{a^2+b}.$$

Let us assume that the parameter $\la$ is close to $-1$ (but $\la\neq-1$). Then, near the fixed point $0$, there exists a periodic orbit $\{p_\la^+,p_\la^-\}$ of period two for $F_\la$, such that $p_\la^{\pm}\rightarrow0$ when $\la\rightarrow-1$. We conclude from (\ref{eq:F2}) that $z=p_\la^{\pm}$ satisfies
   $$z^2=\frac{1}{a^2+b}\Big(-\delta_\la+\frac{\delta_\la^2}{2}\Big)+O(\delta_\la z)+O(z^3).$$
Since $|p_\la^\pm|$ is small, we see that $p_\la^\pm=O(\sqrt{\delta_\la})$, and then
   $$p_\la^\pm=\pm\frac{\sqrt{-\delta_\la}}{\sqrt{a^2+b}}+O(\delta_\la) =\pm\frac{\sqrt{-\delta_\la}}{A}+O(\delta_\la),$$
where in the case $\delta_\la>0$, we denote by $\sqrt{-\delta_\la}$ the principal square root i.e. $\sqrt{-\delta_\la}=i\sqrt{\delta_\la}$.

So, if $\la>-1$ (i.e. $\delta_\la>0$) then the cycle is repelling (hence $p_\la^\pm\in\mc J_\la$) and the periodic points are conjugated. For $\la<-1$ (i.e. $\delta_\la<0$) the cycle is attracting, and the periodic points are real.

\subsection{}
If $z\in \C^*$, then we shall assume that $\arg z\in(-\frac34\pi,\frac54\pi]$.
Let us define:
\begin{equation*}
 \begin{array}{llll}
   S^+(\theta,r):=\{z\in\C^*:|\arg z|\leeq\theta,\:|z|<r\}\cup\{0\},\\
   S^-(\theta,r):=\{z\in\C^*:|\arg z-\pi|\leeq\theta,\:|z|<r\}\cup\{0\},\\
   S^\uparrow(\theta,r):=\{z\in\C^*:|\arg z-\pi/2|\leeq\theta,\:|z|<r\}\cup\{0\},\\
   S^\downarrow(\theta,r):=\{z\in\C^*:|\arg z+\pi/2|\leeq\theta,\:|z|<r\}\cup\{0\}.
 \end{array}
\end{equation*}

Next, for $\delta_\la>0$:
\begin{equation*}
 \begin{array}{ll}
   \hat S^\uparrow(\theta,r):=\{p^+_\lambda+z:z\in S^\uparrow(\theta,r)\}\cap B(0,r),\\
   \hat S^\downarrow(\theta,r):=\{p^-_\lambda+z:z\in S^\downarrow(\theta,r)\}\cap B(0,r).
 \end{array}
\end{equation*}
Moreover, later on we will need:
\begin{equation*}
 \begin{array}{ll}
   \hat S^+(\theta,r):=\{\sqrt{\delta_\la}+z:z\in S^+(\theta,r)\}\cap B(0,r),\\
   \hat S^-(\theta,r):=\{-\sqrt{\delta_\la}+z:z\in S^-(\theta,r)\}\cap B(0,r).
 \end{array}
\end{equation*}

The Fatou's flower theorem (see \cite{ADU}) shows that the Julia set $\mc J_{-1}$ approaches the fixed point $0$ tangentially to the vertical direction. Now we state the perturbed version of this theorem.

\begin{lem}\label{lem:kat}
   For every $\theta>0$ there exist $r>0$ and $\eta>0$ such that
 \begin{enumerate}
    \item\label{lit:kat1}
      if $\delta_\la\in(-\eta,0]$, then
        $$(\mc J_\la\cap B(0,r))\subset (S^\uparrow(\theta,r)\cup S^\downarrow(\theta,r)),$$
    \item\label{lit:kat2}
      if $\delta_\la\in(0,\eta)$, then
        $$(\mc J_\lambda\cap B(0,r))\subset (\hat S^\uparrow(\theta,r)\cup \hat S^\downarrow(\theta,r))\subset (S^\uparrow(\theta,r)\cup S^\downarrow(\theta,r)).$$
 \end{enumerate}
\end{lem}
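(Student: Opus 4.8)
\textbf{Proof plan for Lemma \ref{lem:kat}.}

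The plan is to reduce the statement to the standard Fatou-flower picture for the model map $G(z)=z-2z^3+o(z^3)$ and then to control the perturbation uniformly in $\la$. First I would observe that it suffices to work with $F_\la^2$ rather than $F_\la$, since the Julia set is $F_\la$-invariant and the two petals (vertical directions) are exchanged or preserved appropriately; by (\ref{eq:F2}) and the conjugation $z\mapsto z\sqrt{a^2+b}=Az$ introduced above, $F_\la^2$ becomes $G_\la(z)=(1-2\delta_\la+\delta_\la^2)z-2z^3+O(\delta_\la z^3)+O(z^4)$ after rescaling, which for $\delta_\la=0$ is exactly the parabolic model $G(z)=z-2z^3+o(z^3)$. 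For this model the attracting and repelling petals point in the $\pm$ imaginary directions (because the coefficient $-2$ of $z^3$ is real and negative), and the classical Fatou flower theorem (\cite{ADU}) tells us that $\mc J_{-1}$ enters $B(0,r_0)$ inside an arbitrarily thin cone around the imaginary axis once $r_0$ is small enough. This gives the limiting statement and anchors the argument.

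Next I would set up the perturbation argument. Fix $\theta>0$. The key point is that the change of coordinate that straightens the parabolic (Leau–Fatou) picture — a Fatou coordinate on each petal, or equivalently the coordinate $w=-1/(2z^2)$ (up to lower order) that conjugates $G$ to a near-translation $w\mapsto w+1+\cdots$ — depends holomorphically, hence continuously, on $\la$ near $-1$, including the degenerate value $\la=-1$, provided we stay away from the fixed point by a definite (but small) amount. For $\delta_\la<0$ the two fixed points $p_\la^\pm=\pm\sqrt{-\delta_\la}/A+O(\delta_\la)$ are real and attracting (so not in $\mc J_\la$), and the Julia set near $0$ is pinched between the two basins; the Fatou-coordinate estimates, applied on the region $B(0,r)\sms(\text{small disks around }p_\la^\pm)$, force $\mc J_\la\cap B(0,r)$ into the union of the two vertical cones $S^\uparrow(\theta,r)\cup S^\downarrow(\theta,r)$, which is item (\ref{lit:kat1}). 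For $\delta_\la>0$ the cycle $\{p_\la^+,p_\la^-\}$ is repelling (so $p_\la^\pm\in\mc J_\la$), purely imaginary to leading order, $p_\la^\pm=\pm i\sqrt{\delta_\la}/A+O(\delta_\la)$; here the natural statement is that $\mc J_\la$ lies in thin vertical cones \emph{based at the repelling points} $p_\la^\pm$, i.e. in $\hat S^\uparrow(\theta,r)\cup\hat S^\downarrow(\theta,r)$, and one checks the trivial inclusion $\hat S^\uparrow(\theta,r)\cup\hat S^\downarrow(\theta,r)\subset S^\uparrow(\theta,r)\cup S^\downarrow(\theta,r)$ for $r$ small since $|p_\la^\pm|=O(\sqrt{\delta_\la})\to0$. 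In both regimes the uniformity (the same $r,\eta$ work for all admissible $\la$) comes from the holomorphic dependence of the coordinates together with a compactness/normal-families argument on $\Lambda_l,\Lambda_r$.

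The concrete mechanism I would use for the cone confinement: a point $z$ with $|z|<r$ and $\arg z$ not within $\theta$ of $\pm\pi/2$ lies, in the model coordinates, in the attracting or repelling petal of $G$, hence its forward (resp. backward) orbit under $F_\la^2$ is well controlled — either it converges to $0$ (or to $p_\la^\pm$) staying in the basin, or it leaves $B(0,r)$ in a bounded number of steps while staying in the corresponding horizontal sector $S^+\cup S^-$; either way $z$ is in the Fatou set, not the Julia set. One must be slightly careful that the petals of $F_\la^2$ degenerate as $\la\to-1$ only in the ``width'' of the horizontal channel between them, which shrinks at scale $\sqrt{|\delta_\la|}$, but that is exactly the scale at which the $\hat S$ cones sit, so the estimates match up. The main obstacle, and the place where real care is needed, is making the Fatou-coordinate estimates \emph{uniform} down to $\la=-1$: one has to check that the implied constants in ``$O(\delta_\la z^3)$'' and in the petal-size estimates do not blow up, which is where the explicit expansions (\ref{eq:F0})–(\ref{eq:F2}), the holomorphic motion $\varphi_\la$, and the equicontinuity noted in Section \ref{ssc:hm} are used; once that uniformity is in hand the geometric conclusion is routine.
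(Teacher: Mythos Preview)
The paper does not actually prove Lemma~\ref{lem:kat}: it is stated as ``the perturbed version'' of the Fatou flower theorem immediately after invoking \cite{ADU}, and the text then passes directly to Section~\ref{sec:fatou} without argument. So there is no paper proof to compare against; your plan must be judged on its own.

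Your outline is essentially correct and is the standard way to prove such a statement. The reduction to $F_\la^2$, the identification of the horizontal directions as attracting (so that points in $B(0,r)\setminus(S^\uparrow\cup S^\downarrow)$ lie in the Fatou set), the bifurcation picture for $p_\la^\pm$ in the two regimes, and the recognition that uniformity in $\la$ is the only genuine issue --- all of this is right. Two small points of precision: first, your sentence ``lies \ldots\ in the attracting \emph{or repelling} petal of $G$'' is misleading, since the Julia set does approach along the repelling (vertical) directions; the argument only works because a point outside the vertical cones lies in an \emph{attracting} petal, and you should say so cleanly. Second, for $\delta_\la<0$ the fixed point $0$ is repelling for $F_\la^2$ (multiplier $(1-\delta_\la)^2>1$) while the real points $p_\la^\pm$ are attracting, so the basin into which horizontal-sector points fall is that of $p_\la^\pm$, not of $0$; you allude to this but it would be worth stating explicitly.

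For the uniformity, rather than appealing loosely to ``holomorphic dependence of the Fatou coordinates,'' note that the paper's own Section~\ref{sec:fatou} sets up exactly the coordinates $Z_\la$ and invokes the Buff--Tan~Lei machinery \cite{BT} to get estimates uniform in $\la$; if you were writing a full proof you could either cite \cite{BT} directly for the $\theta$-stability of the family (their Example~1) or run the elementary argument: on the complement of $S^\uparrow(\theta,r)\cup S^\downarrow(\theta,r)$ one has $|F_\la^2(z)|<|z|$ (resp.\ the orbit moves monotonically toward the real attracting point) by the explicit expansion (\ref{eq:F2}), with constants depending only on $\theta$ and the size of the $O(\cdot)$ terms, hence uniform for $|\delta_\la|<\eta$.
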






\section{Fatou coordinates}\label{sec:fatou}

In this section we introduce coordinates that we will Fatou-coordinates, even if they do not conjugate to an exact translation. We prove, that in this coordinates the family $F_\la^2$ (after a modification) is close to the translation by 2, on the set $\mc J_\la$ near to the fixed point $0$. We shall use results of Buff and Tan Lei (see \cite{BT}).

For $\lambda\neq-1$ (i.e. $\delta_\la\neq0$) we define
   $$h_\la(z):=i\frac{\sqrt{-\delta_\la}}{A} \cdot\frac{(p_\la^+-p_\la^-)z}{(p_\la^++p_\la^-)z-2p_\la^+p_\la^-}.$$
If $\delta_\la>0$ then, as before, we take $\sqrt{-\delta_\la}=i\sqrt{\delta_\la}$. Notice that if $\delta_\la\rightarrow0$, then $h_\la(z)\rightarrow iz=:h_{-1}(z)$.

Write
   $$\hat F_\la=h_\la\circ F_\la\circ h^{-1}_\la.$$
Since $h_\la(0)=0$ and $h_\la(p_\la^\pm)=\pm i\sqrt{-\delta_\la}/A$, we conclude that
$0$ and $\pm\sqrt{\delta_\la}/A$ are the fixed points of $\hat F_\la^2$.


The derivative $h_\la'$ is close to $i$ in a small neighborhood of $0$, whereas the distortion is close to 1. So, using Lemma \ref{lem:kat}, we see that $J(\hat F_\la)\cap B(0,r)$ is included in $S^+(\theta,r)\cup S^-(\theta,r)$. Moreover, if $\delta_\la>0$ then $J(\hat F_\la)\cap B(0,r)\subset\hat S^+(\theta,r)\cup\hat S^-(\theta,r)$.


We define the Fatou coordinates as follows (cf. \cite[Example 1]{BT}):
\begin{equation*}\label{eq:wspolrzednef}
   Z_\la(z):=\frac{1}{2\delta_\la}\log\Big(1-\frac{\delta_\la}{A^2z^2}\Big).
\end{equation*}

The inverse functions are given by
\begin{equation}\label{eq:zet}
   z=Z^{-1}_\la(Z)=\frac{1}{A}\Big(\frac{\delta_\la}{1-e^{2\delta_\la Z}}\Big)^{\frac{1}{2}}.
\end{equation}

Set $S^\pm(\theta):=S^\pm(\theta,\infty)$, and then (cf. \cite{BT})
   $$S^\pm(\theta)_R:=S^\pm(\theta)\cap\{z\in\C:|\re z|>R\}.$$
If $\delta_\la<0$ then, as in \cite{Ji} we have $Z_\la(S^+(\theta))\subset S^-(2\theta)$, and next we can get $Z^{-1}_\la(S^-(2\theta)_{1/(4A^2r^2)})\supset S^\pm(\theta,r)$, where $\theta<\pi/8$ and $\delta_\la$ is close to $0$. In the case $\delta_\la>0$, we similarly obtain $Z_\la(\hat S^+(\theta))\subset S^-(2\theta)$ and $Z^{-1}_\la(S^-(2\theta)_{1/(4A^2r^2)})\supset\hat S^\pm(\theta,r)$. Thus, we can assume that $J(\hat F_\la)\cap B(0,r)$ is included in $Z^{-1}_\la(S^-(2\theta)_{1/(4A^2r^2)})$.


\begin{lem}\label{lem:translation}
  For every $\theta\in(0,\pi/2)$ and $\varepsilon>0$ there exist $R>0$ and $\eta>0$ such that if $|\delta_\la|<\eta$, then
    $$\sup_{n\in\N,\:z\in Z^{-1}_\la(S^-(\theta)_R)} |Z_\la(\hat F^{-2n}_\la(z))-(Z_\la(z)-2n)|<\varepsilon n.$$
\end{lem}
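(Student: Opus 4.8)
The plan is to work entirely in Fatou coordinates and show that $\hat F_\la^2$, transported by $Z_\la$, is a small perturbation of the translation $Z\mapsto Z-2$ on the relevant horn-shaped region, then iterate the estimate. First I would compute the local expression of $\hat F_\la^2$ near $0$: combining the expansion \eqref{eq:F2} for $F_\la^2$ with the conjugacy $h_\la$ (which is close to multiplication by $i$ with distortion near $1$), one gets $\hat F_\la^2(z)=z\big(1-2A^2z^2+\tilde o(z^2)\big)$ up to the corrections recorded by the error terms, uniformly for $|\delta_\la|$ small; the point of the particular choice of $h_\la$ (matching the period-two points $\pm i\sqrt{-\delta_\la}/A$) is precisely that the fixed points of $\hat F_\la^2$ sit at $0$ and $\pm\sqrt{\delta_\la}/A$, which is exactly the normalization built into $Z_\la$. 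Then I would differentiate $Z_\la$ to get $Z_\la'(z)=\tfrac{1}{A^2z^3}\big(1-\tfrac{\delta_\la}{A^2z^2}\big)^{-1}$ and write, via the chain rule,
\begin{equation*}
   Z_\la(\hat F_\la^{-2}(w)) - (Z_\la(w)-2) = \int \text{(error)},
\end{equation*}
i.e. estimate directly the difference between $Z_\la\circ\hat F_\la^{-2}\circ Z_\la^{-1}$ and the shift $Z\mapsto Z-2$ as a function on $S^-(\theta)_R$.

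\textbf{Key steps.} (1) From \eqref{eq:zet}, for $Z\in S^-(\theta)_R$ with $R$ large, $z=Z_\la^{-1}(Z)$ is small and, crucially, $|A^2z^2|\asymp|\delta_\la|/|1-e^{2\delta_\la Z}|$ is comparable to the quantity controlling the logarithm; a Koebe-type distortion bound for the branch of $\hat F_\la^{-2}$ fixing $0$ gives $\hat F_\la^{-2}(z)=z+2A^2z^3+\tilde o(z^3)$ plus $O(\delta_\la z^3)$-type terms. (2) Plugging into the explicit formula for $Z_\la$ and expanding the logarithm, the leading terms cancel by design, leaving an error of the form $\tilde o(1)$ in $Z$-coordinates, uniformly in $\delta_\la$ — this is where the definition of $\tilde o$ from the Notation paragraph does the bookkeeping. (3) Write $w_j=\hat F_\la^{-2j}(z)$ and telescope:
\begin{equation*}
   Z_\la(\hat F_\la^{-2n}(z))-(Z_\la(z)-2n)=\sum_{j=0}^{n-1}\Big[Z_\la(\hat F_\la^{-2}(w_j))-(Z_\la(w_j)-2)\Big].
\end{equation*}
(4) The remaining task is to check that every $w_j$ stays in a slightly enlarged horn $S^-(\theta')_{R'}$ where the per-step estimate applies: since $\re Z_\la(w_j)\approx \re Z_\la(z)-2j$ moves leftward (hence $|\re|$ grows, since we are in the left half-plane region $\re Z<-R$) and the imaginary part drifts by at most the accumulated small errors, an induction keeps the orbit in the region with $\theta$ only slightly inflated and $R$ only slightly deflated — this forces choosing $R$ large given $\theta,\ve$, exactly as in the statement. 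Summing the $n$ error terms, each bounded by $\ve$, yields the bound $\ve n$.

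\textbf{Main obstacle.} The delicate point is the uniformity in $\delta_\la$: the Fatou coordinate $Z_\la$ degenerates as $\delta_\la\to0$ (it becomes $\tfrac1{A^2z^2}$-like), and the region $S^-(\theta)_R$ pulled back by $Z_\la^{-1}$ changes shape with $\delta_\la$, so one must verify that the single-step error is $\tilde o(1)$ with constants not blowing up as $\delta_\la\to0$, and simultaneously not degenerating for $\delta_\la$ bounded away from $0$ (where $\hat F_\la^2$ is genuinely hyperbolic near the repelling or attracting two-cycle). This is handled by playing off the two regimes against each other through the common quantity $u:=\delta_\la Z$: when $|u|$ is bounded, $1-e^{2\delta_\la Z}\asymp\delta_\la Z$ and $z^2\asymp 1/(A^2 Z)$ so the parabolic asymptotics govern the error; when $|u|$ is large (only possible for $\delta_\la$ not too small), $e^{2\delta_\la Z}$ is either tiny or huge and the map is contracting/expanding by a definite factor, so the perturbation-of-translation estimate is even easier. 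I would also lean on \cite{BT} and the computation already cited in \cite{Ji} for the inclusion $Z_\la(S^+(\theta))\subset S^-(2\theta)$ to control how the horn maps, rather than re-deriving it. Verifying the invariance of the enlarged horn under backward iteration — step (4) — is the part requiring the most care, since an error in the imaginary-part drift that accumulates linearly could eventually push $w_j$ out of any fixed horn; the saving grace is that the drift per step is $\tilde o(1)$ and one only needs $n$ steps with a horn whose opening is fixed in advance, so the total drift $\tilde o(n)$ is reabsorbed into the final $\ve n$ bound.
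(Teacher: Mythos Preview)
Your approach differs substantially from the paper's, which is not a direct computation at all: the paper observes that since $0$ and $\pm\sqrt{\delta_\la}/A$ are fixed points of $\hat F_\la^2$, one can factor
\[
\hat F_\la^2(z)-z=z(A^2z^2-\delta_\la)\,u_\la(z),\qquad u_\la(0)\to 2\ \text{as }\la\to -1,
\]
and then invokes \cite[Lemma 5.1]{BT}, which compares any map of the shape $z+z(A^2z^2-\delta_\la)(1+s_\la(z))$ with the time-$1$ flow of $\dot z=z(A^2z^2-\delta_\la)$. Since $Z_\la$ is precisely a time coordinate for this vector field, the flow becomes the translation $Z\mapsto Z+1$, and the lemma follows with the translation step $u_\la(0)\approx 2$.

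Your direct telescoping strategy could be made to work, but as written it has a real gap at step~(1)--(2). Your expansion $\hat F_\la^2(z)=z\big(1-2A^2z^2+\tilde o(z^2)\big)$ is wrong in two ways. The sign of the $z^3$ term is off (the paper computes $\hat F_{-1}^2(z)=z+2A^2z^3+o(z^3)$), but more seriously you have dropped the linear correction: conjugation by $h_\la$ preserves the multiplier at $0$, so $(\hat F_\la^2)'(0)=\la^2=1-2\delta_\la+\delta_\la^2$ and hence
\[
\hat F_\la^2(z)-z=-2\delta_\la z+2A^2z^3+\cdots=2z(A^2z^2-\delta_\la)+\text{higher order}.
\]
The ``$O(\delta_\la z^3)$-type'' corrections you allow for the inverse are not enough; the missing term is of order $\delta_\la z$, not $\delta_\la z^3$. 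Without it, the cancellation you claim in step~(2) after plugging into $Z_\la$ simply fails when $|\delta_\la|\gtrsim |A^2z^2|$, which is exactly the regime $|u|=|\delta_\la Z|$ bounded that you single out as delicate. Once you restore the factorization $\hat F_\la^2(z)-z=z(A^2z^2-\delta_\la)(2+o(1))$, the per-step estimate does go through uniformly (this is essentially what \cite{BT} proves in general), and your telescoping and horn-invariance steps (3)--(4) are then fine.
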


\begin{proof}
Let us consider a family $G_\la$ of the form
   $$G_\la(z)=z+z(A^2z^2-\delta_\la)(1+s_\la(z)),$$
where $(\la,z)\mapsto s_\la(z)$ is holomorphic and $s_\la(0)=0$. We see from \cite[Lemma 5.1]{BT} that $G_\la$ can be approximated by the flow of differential equation $\dot z=z(A^2z^2-\delta_\la)$ ($Z_\la^{-1}$ is solution of this equation), and then for every $\theta\in(0,\pi/2)$, $\varepsilon>0$ and $R>0$ big enough, we can get
   $$\sup_{n\in\N,\:z\in Z^{-1}_\la(S^-(\theta)_R)} |Z_\la(G^{-n}_\la(z))-(Z_\la(z)-n)|<\varepsilon n.$$
The assumptions of \cite[Lemma 5.1]{BT} are satisfied because $G_\la$ is $\theta$-stable for every $\theta\in(0,\pi/2)$ (see \cite[Section 2, Example 1 (continued)]{BT}), and then the assumptions follows from \cite[proof of Lemma 5.3]{BT}.

Since $\pm \sqrt{\delta_\la}/A$ and $0$ are the fixed points of $\hat F_\la^2$, we conclude that in some neighborhood of 0, $\hat F_\la^2(z)-z$ can be written in the form $z(A^2z^2-\delta_\la)u_\la(z)$ where $u_\la(z)\neq0$, and next
   $$\hat F_\la^2(z)=z+z(A^2z^2-\delta_\la)u_\la(0)(1+w_\la(z)),$$
where $w_\la(0)=0$. Thus, analogously to $G_\la$, the family $(\hat F_\la^2)$ can be approximated by the flow of the equation $\dot z=u_\la(0)z(A^2z^2-\delta_\la)$, and we get
\begin{equation}\label{eq:u0}
   \sup_{n\in\N,\:z\in Z^{-1}_\la(S^-(\theta)_R)} |Z_\la(\hat F^{-2n}_\la(z))-(Z_\la(z)-u_\la(0)n)|<\varepsilon n.
\end{equation}
Since $F_{-1}^2(z)=z-2A^2z^3+o(z^3)$ and $h_{-1}(z)=iz$, we get
   $$h_{-1}\circ F_{-1}^2\circ h_{-1}^{-1}(z)=\hat F_{-1}^2(z)=z+2A^2z^3+o(z^3).$$
So, we see that $u_{-1}(0)=2$, and $u_\la(0)\rightarrow 2$ when $\la\rightarrow-1$. Thus, the assertion follows from (\ref{eq:u0}).
\end{proof}

\section{Cylinders}\label{sec:cylindry}

In this section, we will define a partition of a neighborhood of fixed/perio\-dic points of $F_\la$, which becomes the parabolic fixed points for $\la=-1$. Pieces of this partition will be called cylinders.

\subsection{}
Let us consider the family $f_c(z)=z^2+c$. First, we will define a partition of a subset of the Julia set $J_{-3/4}$.

Let $f^{-1}$ denote the inverse branch of $f_{-3/4}$ which keeps the parabolic fixed point $-1/2$ ($f^{-1}$ can be defined between the external rays $\mc R(1/6),\mc R(5/12)$ and between the rays $\mc R(7/12),\mc R(5/6)$).

The set
   $$\mc R:=\{\mc R(1/3),\mc R(2/3)\},$$
consists of the external rays which land at the parabolic point. Next, let us consider two pairs of external rays which land at preparabolic points,
\begin{equation*}
 \begin{array}{ll}
    \mc R^+_0:=\{\mc R(5/24),\mc R(7/24)\},\\
    \mc R^-_0:=\{\mc R(17/24),\mc R(19/24)\}.
 \end{array}
\end{equation*}
Note that $f_{-3/4}^3(\mc R_0^\pm)=\mc R$. We define
\begin{equation*}
 \begin{array}{llll}
    \mc R^+_1:=f^{-1}(\mc R^-_0)=\{\mc R(17/48),\mc R(19/48)\},\\
    \mc R^-_1:=f^{-1}(\mc R^+_0)=\{\mc R(29/48),\mc R(31/48)\},\\
    \mc R^+_2:=f^{-1}(\mc R^-_1)=\{\mc R(29/96),\mc R(31/96)\},\\
    \mc R^-_2:=f^{-1}(\mc R^+_1)=\{\mc R(65/96),\mc R(67/96)\}.
 \end{array}
\end{equation*}
Next, we take $\mc R^+_{n+1}:=f^{-1}(\mc R^-_n)$ and $\mc R^-_{n+1}:=f^{-1}(\mc R^+_n)$.

We define the cylinder $C_1(-3/4)$ as the set which consists of four closed connected components: the part of the Julia set between the rays $\mc R(7/24)$, $\mc R(29/96)$ (i.e. between $\mc R^+_0$ and $\mc R^+_2$), the part between the rays $\mc R(17/48)$, $\mc R(19/48)$ (i.e. inside $\mc R^+_1$), and symmetrically in the lower half-plane, between the rays $\mc R(17/24)$, $\mc R(67/96)$ and $\mc R(31/48)$, $\mc R(29/48)$.

Next, for $n\greq1$ we take $C_{n+1}(-3/4):=f^{-1}(C_n(-3/4))$. Thus, $f_{-3/4}$ maps $C_{n+1}(-3/4)$ onto $C_{n}(-3/4)$ bijectively.
Observe that this definition is slightly different from \cite{J} and \cite{Ji}.

\subsection{}\label{ssc:cylinders}
Let $\{\alpha^0,\alpha^1,...,\alpha^{k-1}\}$, where $\alpha^0=0$, be the set of all parabolic points of $F_{-1}$, where $\tilde f_\la(\alpha^j)=\alpha^{j+1}$, $0\leeq j\leeq k-2$ (it corresponds to the parabolic cycle of $f_{c_0}$).

Each parabolic point $\alpha^j$ belongs to a small Julia set $J(j)$ on which $F_{-1}$ is conjugated to $f_{-3/4}$. Since there are precisely two external rays of $\mc J_{-1}$ landing at $\alpha^j$, there is one-to-one correspondence between the external rays landing at preparabolic points of $\mc J_{-1}$ that are included in $J(j)$, and the external rays landing at preparabolic points of $J_{-3/4}$.

This correspondence allows us to define a partition of a neighborhood of $\alpha^j\in\mc J_{-1}$, for $0\leeq j\leeq k-1$, onto cylinders $\mc C^j_n(-1)\subset\mc J_{-1}$, where $n\greq1$.

Next, for $\la\in(-1-\ve,-1)$, there exist repelling fixed points $\alpha^j_\la$ of $F_\la$, such that $\alpha^j_\la\rightarrow\alpha^j$, when $\la\nearrow-1$. So, using external rays of the same arguments, we obtain partition of neighborhoods of $\alpha^j_\la$, onto cylinders $\mc C^j_n(\la)\subset\mc J_\la$.

For $\la\in(-1,-1+\ve)$, there exist repelling cycles $\{p_{\la}^{j+},p_{\la}^{j-}\}$, such that $p^{j\pm}_{\la}\rightarrow\alpha^j$, when $\la\searrow-1$.
Since the external rays landing at $p^{j\pm}_\la$ have the same arguments as the rays landing at $\alpha^j$, analogously as before, we obtain partition of neighborhoods of $p^{j\pm}_\la$, onto cylinders $\mc C^j_n(\la)\subset\mc J_\la$.
\begin{figure}[h!]
\begin{center}
  \includegraphics[width=8cm]{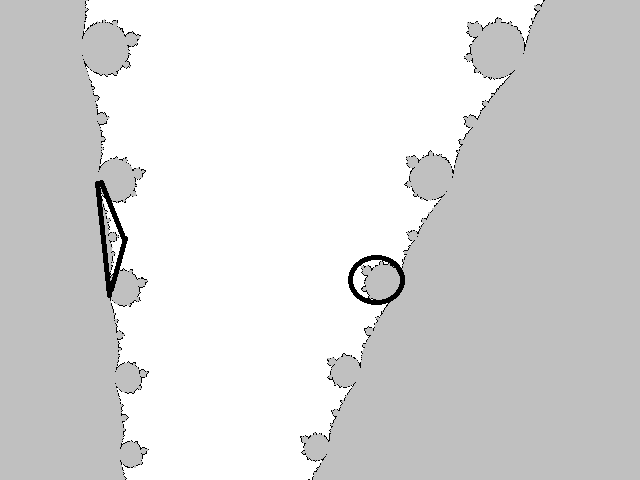}
  \caption{The "upper" part of a cylinder}
\end{center}
\end{figure}

The set of all points $\alpha^j_\la$ (if $\la<-1$) or $p^{j\pm}_\la$ (if $\la>-1$) will be denoted by 
\begin{equation*}
\textbf{P}(\la).
\end{equation*}

Note that $F_{\la}$ maps $\mc C^j_{n+1}(\la)$ onto $\mc C^j_{n}(\la)$ bijectively, for $\la\in(-1-\ve,-1+\ve)$. The union of the components of $\mc C^j_{n}(\la)$ included in the upper and lower half-plane, will be denoted by $\mc C^{j+}_{n}(\la)$ and $\mc C^{j-}_{n}(\la)$ respectively.

If $j=0$, the cylinders will be denoted by $\mc C_{n}(\la)$, $\mc C^+_{n}(\la)$ and $\mc C^-_{n}(\la)$.

For $N\in\N$, $n\greq1$, let us write
   $$\mathcal{M}_N^*(\la):=\bigcup_{n>N}\mc C_n(\la),\;\;\;\mathcal{M}_N(\la):=\overline{\mathcal{M}_N^*(\la)}=\bigcup_{n>N}\mc C_n(\la)\cup\{0\}.$$
and
   $$\textbf{C}_n(\la):=\bigcup_{0\leeq j\leeq k-1}\mc C^j_n(\la),\;\;\textbf{M}_N^*(\la):=\bigcup_{n>N} \textbf{C}_n(\la),\;\;\textbf{M}_N(\la):=\overline{\textbf{M}_N^*(\la)}.$$
Note that $\textbf{M}_N(\la)=\textbf{M}_N^*(\la)\cup\textbf{P}(\la)$. Next
   $$\textbf{B}_N(\la):=\mc J_\la\sms\textbf{M}_N(\la).$$
If $\la=\la_\dt$ we will write $\textbf{B}_N$, $\textbf{M}_N$, $\textbf{C}_n$, $\textbf{P}$, $\mc C_n$ etc.
\begin{figure}[h!]
\begin{center}
  \includegraphics[width=10cm]{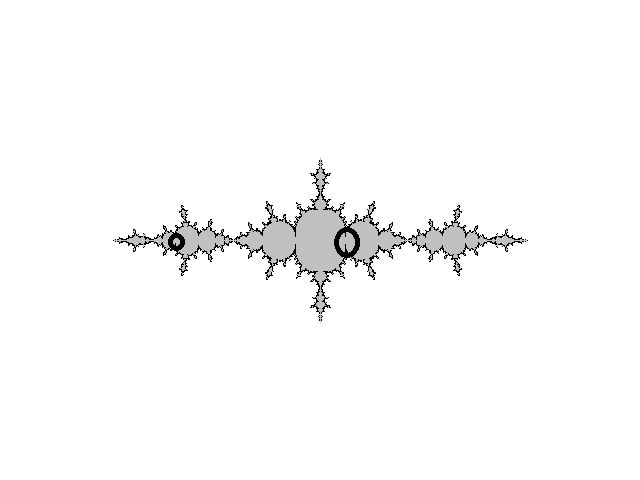}
  \caption {$c_0=-5/4$. Cycle of order $2$. The set $\textbf{M}_N$.}
\end{center}
\end{figure}

\subsection{}
Instead of the diameters of the cylinders, we use a quantity which will be called {\em the size of the cylinder} and denoted by $|\mc C_n(\la)|$, and which is more or less the diameter of $\mc C^+_n(\la)$ (or $\mc C^-_n(\la)$, by symmetry). More precisely, let $z_i$, where $i\in\N$, be the preparabolic point of $F_\la$ which is included in the small Julia set containing $\alpha^0=0$, and corresponds with the common landing point of the external rays from $\mc R^+_i$. Then, for $n\greq1$
   $$|\mc C_n(\la)|:=\frac{1}{2}\,\big|z_{n+1}-z_{n-1}\big|.$$

We have already seen that the set of the trajectories of the critical points is included in the real line. Thus, Lemma \ref{lem:kat} and the Koebe Distortion Theorem (see \cite{G}) imply the following propositions (cf. \cite[Lemma 5.6]{Ji}):

\begin{prop}\label{prop:dyst}
   There exist $K>1$ and $\eta>0$ such that if $|\delta_\la|<\eta$ and $n\greq1$, then
      $$K^{-1}\diam(\mc C_n^\pm(\la))\leeq|\mc C_n(\la)|\leeq K\diam(\mc C_n^\pm(\la)).$$
\end{prop}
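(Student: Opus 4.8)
The plan is to compare $\diam(\mc C_n^\pm(\la))$ with the quantity $|\mc C_n(\la)| = \frac12|z_{n+1}-z_{n-1}|$, where $z_i$ are the preparabolic landing points of the rays from $\mc R^+_i$ sitting in the small Julia set through $\alpha^0=0$. First I would recall the geometric picture: by Lemma \ref{lem:kat}, $\mc J_\la$ near $0$ lies inside a pair of narrow cones $S^\uparrow(\theta,r)\cup S^\downarrow(\theta,r)$ (resp. the hatted versions $\hat S^\uparrow,\hat S^\downarrow$ when $\delta_\la>0$) tangent to the vertical direction, so that $\mc C_n^+(\la)$ is a small connected piece of Julia set squeezed into the upper cone. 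The points $z_{n-1}, z_n, z_{n+1}$ are consecutive preparabolic points (landing points of the ray-pairs $\mc R^+_{n-1}, \mc R^+_n, \mc R^+_{n+1}$), and $\mc C_n^+(\la)$ is, by the definition of the cylinders in Section \ref{sec:cylindry}, the part of $\mc J_\la$ trapped between (the forward/backward images corresponding to) $\mc R^+_{n-1}$ and $\mc R^+_{n+1}$ together with the piece inside $\mc R^+_n$; in particular $z_{n-1}$ and $z_{n+1}$ are (essentially) its two extreme points along the cone direction and $z_n$ is interior to it.

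Next I would set up the Koebe argument. Because $c\in\mathcal M\cap\R$, the critical orbit is real, so the critical values stay away from the relevant cylinders; concretely, there is a fixed $N_0$ and a definite-size neighborhood $V$ of $\mc C_{N_0}^\pm(\la)$ on which the inverse branch $\Phi$ of $F_\la^{\,m}$ sending $\mc C_{N_0}^\pm(\la)$ onto $\mc C_{n}^\pm(\la)$ (for $n = N_0 + m$) is univalent, with the domain $V$ containing a definite-modulus annulus around $\mc C_{N_0}^\pm(\la)$ — this is where one invokes that the post-critical set does not interfere, uniformly in $\la$ for $|\delta_\la|$ small, via the holomorphic motion and Lemma \ref{lem:kat}. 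The Koebe Distortion Theorem then gives that on $\mc C_{N_0}^\pm(\la)$ the derivative $|\Phi'|$ varies by a bounded factor $K_1$. Applying this to the chord realizing $\diam(\mc C_{N_0}^\pm(\la))$ shows $\diam(\mc C_n^\pm(\la)) \asymp |\Phi'(\zeta)|\cdot\diam(\mc C_{N_0}^\pm(\la))$ for any base point $\zeta$; applying it to the segment joining the pullbacks of the two extreme preparabolic points of $\mc C_{N_0}^\pm(\la)$ (which land at $z_{n\pm1}$ after $m$ steps, up to relabeling) shows $|z_{n+1}-z_{n-1}| \asymp |\Phi'(\zeta)|\cdot|z_{N_0+1}-z_{N_0-1}|$ with the same comparison constant. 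Dividing, the $|\Phi'(\zeta)|$ cancels and one obtains $\diam(\mc C_n^\pm(\la)) \asymp |\mc C_n(\la)|$ with a constant depending only on $K_1$ and on the fixed data at level $N_0$, hence uniform in $n\greq1$ and in $\la$; the finitely many cylinders with $n\leeq N_0$ are handled directly by compactness and continuity of the holomorphic motion.

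The main obstacle, as usual in this parabolic-perturbation setting, is obtaining the Koebe-neighborhood and the distortion bound \emph{uniformly in $\la$} as $\delta_\la\to 0$: the inverse branches $\Phi$ are iterating near the (perturbed) parabolic/period-two points $\textbf{P}(\la)$, so naively their domains of univalence could shrink. The point to exploit is that the cylinders $\mc C_n^\pm(\la)$ stay inside the vertical cones of Lemma \ref{lem:kat} and that the Fatou-coordinate estimate of Lemma \ref{lem:translation} shows $F_\la^2$ is, in the coordinate $Z_\la$, close to the translation by $2$ on $\mc J_\la\cap B(0,r)$; this provides a $\la$-uniform lower bound for the modulus of the annular neighborhood available around each $\mc C_{N_0}^\pm(\la)$ inside which the relevant inverse branch is univalent, because translation by an integer obviously pulls back round neighborhoods to round neighborhoods. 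With that uniformity in hand, the Koebe estimate and the telescoping cancellation above go through, and one reads off $K$ and $\eta$ from the constants in Lemmas \ref{lem:kat} and \ref{lem:translation} together with the fixed geometry at level $N_0$.
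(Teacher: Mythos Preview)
Your plan is correct and follows essentially the same route as the paper: the authors simply remark that, since the critical orbits of $F_\la$ lie on the real line, Lemma~\ref{lem:kat} together with the Koebe Distortion Theorem yields the result (with a reference to \cite[Lemma~5.6]{Ji}). Your pull-back-from-a-fixed-level $N_0$ argument, comparing $\diam(\mc C_n^\pm(\la))$ and $|z_{n+1}-z_{n-1}|$ via the same Koebe factor, is exactly what is intended.

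The one place where you work harder than necessary is the uniformity in $\la$. You invoke the Fatou coordinate and Lemma~\ref{lem:translation} to secure a $\la$-uniform Koebe annulus, but this is not needed: the postcritical set of $F_\la$ is contained in $\R$, while by Lemma~\ref{lem:kat} the base cylinder $\mc C_{N_0}^\pm(\la)$ sits inside the vertical cone $S^{\uparrow}(\theta,r)\cup S^{\downarrow}(\theta,r)$ at a definite positive distance from $\R$, uniformly for $|\delta_\la|<\eta$. Hence a fixed round neighborhood of $\mc C_{N_0}^\pm(\la)$ avoids all critical values of every iterate $F_\la^m$, giving the required univalent domain for the inverse branch directly. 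Everything else in your outline goes through unchanged.
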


Note that the constant $K$ depends on the family $(F_\la)$.

\begin{prop}\label{prop:C/im}
For every $\varepsilon>0$ there exists $N\in\N$ and $\eta>0$ such that
 \begin{equation*}
    |\mc C_n(\la)|\leeq\ve|\im z|,
 \end{equation*}
where $z\in{\mc C_n(\la)}$, $n>N$ and $|\delta_\la|<\eta$.
\end{prop}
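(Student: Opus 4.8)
\textbf{Proof plan for Proposition \ref{prop:C/im}.}
The statement asserts that deep cylinders $\mc C_n(\la)$ are, in diameter, negligible compared to the imaginary part of any of their points. Since by Proposition \ref{prop:dyst} the size $|\mc C_n(\la)|$ is comparable to $\diam(\mc C_n^\pm(\la))$, it suffices to show that $\diam(\mc C_n^+(\la))/\inf_{z\in\mc C_n^+(\la)}|\im z|\to 0$ as $n\to\infty$, uniformly for $|\delta_\la|$ small; the component $\mc C_n^-(\la)$ is handled by symmetry, and points of $\mc C_n(\la)$ lie in $\mc C_n^+(\la)\cup\mc C_n^-(\la)$. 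So the plan is to control the vertical position of a cylinder from below while controlling its diameter from above.

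First I would reduce to the Fatou coordinates of Section \ref{sec:fatou}. After conjugating $F_\la$ by $h_\la$ (which is close to multiplication by $i$, with distortion close to $1$), the Julia set near $0$ lands inside $S^+(\theta,r)\cup S^-(\theta,r)$, i.e. the ``horizontal'' sectors, and inside $Z^{-1}_\la(S^-(2\theta)_{1/(4A^2r^2)})$. The cylinders $\mc C^j_n$ are defined combinatorially via external rays, and $F_\la$ maps $\mc C^{}_{n+1}$ to $\mc C^{}_{n}$ bijectively, so in Fatou coordinates $Z_\la$ the images $Z_\la(h_\la(\mc C_n^+))$ are (up to the bounded error in Lemma \ref{lem:translation}) translates by $\sim 2n$ of a fixed bounded ``initial'' cylinder $Z_\la(h_\la(\mc C_N^+))$; more precisely, using Lemma \ref{lem:translation} applied to $\hat F_\la^{-2n}$, the real part of $Z_\la(h_\la(z))$ for $z\in\mc C_n^+(\la)$ grows like $n$ (with multiplicative error $1+\tilde o(1)$), while its imaginary part stays in a bounded strip, so $|Z_\la(h_\la(z))|\asymp n$ and the ``$Z$-diameter'' of $Z_\la(h_\la(\mc C_n^+))$ stays bounded. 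Here I would be careful that Lemma \ref{lem:translation} is stated for forward-conjugated inverse iterates with an error $\ve n$; choosing $\ve$ small gives $\re Z_\la(h_\la(z))\in[(2-\ve)n - O(1),(2+\ve)n + O(1)]$ and a uniform bound on the oscillation of $Z_\la(h_\la(\mc C_n^+))$.

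Next I would push these $Z$-coordinate estimates back through $z=Z^{-1}_\la(Z)=\frac1A(\delta_\la/(1-e^{2\delta_\la Z}))^{1/2}$ and through $h_\la^{-1}$. For $|\delta_\la Z|$ bounded (the regime $|\re Z|\gg 1/|\delta_\la|$ does not occur for the relevant $n$, or is handled separately using that then $z$ is exponentially close to $p_\la^\pm$, whose imaginary parts are of order $\sqrt{|\delta_\la|}$ while the cylinder diameters there are even smaller), we have $z\asymp A^{-1}|Z|^{-1/2}\asymp n^{-1/2}$, and the derivative $|(Z^{-1}_\la)'(Z)|\asymp |Z|^{-3/2}\asymp n^{-3/2}$, so $\diam(Z^{-1}_\la(S\text{-image of }\mc C_n^+))\asymp n^{-3/2}$ by the Koebe distortion theorem (valid because the $Z$-cylinder has bounded diameter and sits at distance $\asymp n$ from the critical points of $Z^{-1}_\la$, i.e. from $\re Z = 0$ and $\im Z = \pm$const). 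Meanwhile, because $h_\la(\mc J_\la)$ sits in the thin horizontal sectors $S^\pm(\theta)$ and $Z_\la(h_\la(\mc C_n^+))\subset S^-(2\theta)_R$ with $\re Z_\la(h_\la(z))<0$ of order $-n$ say (the ``$+$'' component mapping to a half-sector where $\im Z$ has a definite sign, separated from $0$), the point $z = Z^{-1}_\la(Z)$ has argument close to $\pi/2$, i.e. $|\im z|\asymp |z|\asymp n^{-1/2}$; since $h_\la^{-1}$ is close to $-i\cdot{}$ with distortion near $1$, the same holds for the corresponding point of $\mc C_n^+(\la)$. Combining, $\diam(\mc C_n^+(\la))\asymp n^{-3/2}$ while $\inf_{z\in\mc C_n^+(\la)}|\im z|\asymp n^{-1/2}$, so their ratio is $\asymp n^{-1}\to 0$ uniformly in $\la$ with $|\delta_\la|<\eta$, which gives the claim upon choosing $N$ large.

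The main obstacle I anticipate is the uniformity in $\la$ near $-1$: the asymptotics $|\mc C_n(\la)|\asymp n^{-3/2}$ and $|\im z|\asymp n^{-1/2}$ hold cleanly for $\la=-1$ (the parabolic case), but for $\delta_\la\ne0$ they only persist for $n$ in the range where $|\delta_\la|n$ stays bounded; for much larger $n$ the cylinders accumulate on the periodic orbit $\textbf{P}(\la)$ and the geometry changes scale. The fix is that for such large $n$ one does not need the sharp power law — one only needs the crude inequality $|\mc C_n(\la)|\le\ve|\im z|$, which follows because in that regime both quantities are controlled by $\sqrt{|\delta_\la|}$ (the distance scale of $p_\la^\pm$) while the cylinder is a $\delta_\la$-dependent but still definitely small piece of it; making this transition rigorous, i.e. splicing the ``moderate $n$'' Fatou-coordinate estimate with the ``large $n$'' near-periodic-orbit estimate so that a single $N$ and $\eta$ work, is the part requiring the most care, and is exactly the sort of argument carried out in \cite[Lemma 5.6]{Ji} in the comparable setting there.
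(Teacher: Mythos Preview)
Your approach is correct but heavier than what the paper has in mind. The paper deduces Proposition~\ref{prop:C/im} directly from Lemma~\ref{lem:kat} and the Koebe Distortion Theorem (with a pointer to \cite[Lemma~5.6]{Ji}), without passing through Fatou coordinates. Concretely: Lemma~\ref{lem:kat} gives $|z|\asymp|\im z|$ for $z\in\mc J_\la$ near $0$; Koebe (in the form of Proposition~\ref{prop:aad}\,(\ref{pit:aad1})) gives $|\mc C_n(\la)|\approx\tfrac12|F_\la^2(z)-z|$; and from the expansion~(\ref{eq:F2}) one reads off $|F_\la^2(z)-z|\leeq C(|\delta_\la|\,|z|+|z|^3)$. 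Dividing by $|\im z|\asymp|z|$ yields $|\mc C_n(\la)|/|\im z|\leeq C'(|\delta_\la|+|\im z|^2)$, which is $<\ve$ once $|\delta_\la|<\eta$ and $\mc C_n(\la)\subset B(0,r)$ with $r$ small (guaranteed for $n>N$). This handles both the parabolic and hyperbolic regimes at once, so the splicing you worry about at the end is unnecessary.

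Your route via Fatou coordinates essentially re-derives the content of Proposition~\ref{prop:przes}, Corollary~\ref{col:odl}, and Proposition~\ref{prop:mod} along the way, and then combines $|\mc C_n(\la)|\asymp n^{-3/2}$ with $|\im z|\asymp n^{-1/2}$ (in the range $n|\delta_\la|\leeq1$) plus a separate argument for $n|\delta_\la|>1$. That is valid and is indeed how the sharper later estimates are obtained, but for the bare statement of Proposition~\ref{prop:C/im} it is overkill: you only need the qualitative smallness of $|\delta_\la|+|\im z|^2$, not the precise power laws. The payoff of the paper's shortcut is that it avoids the regime split entirely; the payoff of your approach is that it gives the quantitative rate $|\mc C_n(\la)|/|\im z|\asymp n^{-1}$ in the parabolic range, which is more than what is asked.
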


\begin{prop}\label{prop:aad}
   For every $\ve>0$ there exist $N\in\N$, $\eta>0$ such that if $n>N$, $k\greq1$ and $|\delta_\la|<\eta$, then
 \begin{enumerate}
    \item\label{pit:aad1}
       $(\frac12-\varepsilon)|F_\la^2(z)-z|\leeq|\mc C_n(\la)|\leeq(\frac12+\varepsilon)|F_\la^2(z)-z|$, provided $z\in \mc C_n(\la)$,
    \item\label{pit:aad2}
       $(1-\varepsilon)\frac{|\mc C_n(\la)|}{|\mc C_{n+k}(\la)|}\leeq|(F_\la^k)'(z)|\leeq(1+\varepsilon)\frac{|\mc C_n(\la)|}{|\mc C_{n+k}(\la)|}$, provided $z\in \mc C_{n+k}(\la)$.
 \end{enumerate}
\end{prop}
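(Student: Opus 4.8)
The plan is to reduce everything to the behaviour of the flow $\dot z = u_\la(0)z(A^2z^2-\delta_\la)$ via the Fatou coordinates $Z_\la$ introduced in Section \ref{sec:fatou}, using Lemma \ref{lem:translation} as the main analytic input. The key observation is that the size $|\mc C_n(\la)|$, defined through the preparabolic points $z_{n\pm1}$, can be transported to the $Z$-coordinate, where the cylinders become (up to a bounded perturbation) unit-length intervals translated by $2$ under $\hat F_\la^2$. Concretely, I would first show that in Fatou coordinates the preparabolic point $z_i$ is sent to a point whose real part is $\asymp -i$ (with an error $\tilde o(i)$, uniformly in $\la$ near $-1$ and $i$ large), so that $|Z_\la(z_{n+1})-Z_\la(z_{n-1})|\to 2$. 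Then, pulling back via $Z_\la^{-1}$ and using that $Z_\la^{-1}$ has derivative $(Z_\la')^{-1} = A^2z^3/( \delta_\la - A^2 z^2)$, which on $\mc J_\la\cap B(0,r)$ is comparable to $A^2 |z|^3/|\delta_\la - A^2z^2|$, I would get the relation $|\mc C_n(\la)| \asymp |z|\cdot|A^2z^2 - \delta_\la|/A^2$ for $z\in\mc C_n(\la)$, but with the $\asymp$ replaced by $(1\pm\ve)$ once $N$ is large, because the distortion of $Z_\la^{-1}$ on a single cylinder tends to $1$.

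Part \eqref{pit:aad1} would then follow by comparing this with $|F_\la^2(z)-z|$. Recall from the proof of Lemma \ref{lem:translation} that $\hat F_\la^2(z) - z = z(A^2z^2-\delta_\la)u_\la(0)(1+w_\la(z))$ with $w_\la(0)=0$ and $u_\la(0)\to 2$; transporting this back through $h_\la$ (whose derivative is close to $i$ near $0$, so it changes lengths by a factor close to $1$) gives $|F_\la^2(z)-z| = |z|\,|A^2z^2-\delta_\la|\cdot(2+\tilde o(1))$ on $\mc J_\la$ near $0$. Dividing the two estimates, the factors $|z|\,|A^2z^2-\delta_\la|$ cancel and one is left with $|\mc C_n(\la)| = (\tfrac12 + \tilde o(1))|F_\la^2(z)-z|$, which after choosing $N$ large and $\eta$ small yields \eqref{pit:aad1}. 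One subtlety here is that within a single cylinder $z$ ranges over a set of diameter $\asymp|\mc C_n(\la)|$, so one must check that $|z|$ and $|A^2z^2-\delta_\la|$ do not vary by more than a factor $(1+\ve)$ across $\mc C_n(\la)$; this follows from Proposition \ref{prop:C/im} (the cylinder is small compared with $|\im z|$, hence compared with $|z|$) together with Lemma \ref{lem:kat} controlling the argument, which keeps $|A^2z^2-\delta_\la|$ from near-cancellation in a way that would blow up the relative variation.

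Part \eqref{pit:aad2} is then a consequence of \eqref{pit:aad1} together with Proposition \ref{prop:dyst} and the chain rule, or more directly from the additivity of Fatou coordinates: since $F_\la$ maps $\mc C_{n+k}^j(\la)$ onto $\mc C_n^j(\la)$ bijectively, $F_\la^k$ maps $\mc C_{n+k}(\la)$ onto $\mc C_n(\la)$, and in Fatou coordinates this is $\tfrac{k}{2}$-fold iteration of the near-translation-by-$2$ map (from Lemma \ref{lem:translation}), whose derivative in the $Z$-coordinate is $1+\tilde o(1)$. Converting back to the $z$-coordinate via the chain rule $|(F_\la^k)'(z)| = \frac{|(Z_\la^{-1})'(Z_\la(z) - k)|}{|(Z_\la^{-1})'(Z_\la(z))|}\cdot(1+\tilde o(1))$ and using that $|(Z_\la^{-1})'(W)|$ at the endpoints of the $Z$-images of the cylinders is comparable to $|\mc C_n(\la)|$ and $|\mc C_{n+k}(\la)|$ respectively (with ratio tending to $1$), one obtains \eqref{pit:aad2}. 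The main obstacle, I expect, is the uniformity in $\la$ near $-1$ of the error terms: one must make sure all the $\tilde o$-estimates are genuinely uniform as $\delta_\la\to 0$, in particular that the passage between the "attracting" side $\delta_\la<0$ (real periodic points) and the "repelling" side $\delta_\la>0$ (complex conjugate periodic points) does not introduce a discontinuity — this is exactly where the careful choice of $h_\la$ and the Buff–Tan Lei stability estimates in Lemma \ref{lem:translation} are needed, and where one should lean on the fact that the cylinders are defined by external rays of fixed argument, so their combinatorics are literally constant in $\la$.
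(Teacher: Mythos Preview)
Your approach works but is much heavier than the paper's. The paper does not use Fatou coordinates here at all: in the sentence preceding Propositions \ref{prop:dyst}--\ref{prop:aad} it simply notes that the critical orbits are real while, by Lemma \ref{lem:kat}, the cylinders $\mc C_n(\la)$ lie in thin sectors about the imaginary axis, so the Koebe Distortion Theorem applies to the inverse branches $F_\la^{-k}$ on disks whose radius (relative to the distance to the postcritical set) tends to infinity as $N\to\infty$. Part (\ref{pit:aad2}) is then immediate, uniformly in $k$; part (\ref{pit:aad1}) follows because $F_\la^2(z_{n+1})=z_{n-1}$ gives $|\mc C_n(\la)|=\tfrac12|F_\la^2(z_{n+1})-z_{n+1}|$ \emph{exactly}, and the same distortion bound (or simply $(F_\la^2)'=1+\tilde o(1)$ near $0$) propagates this to every $z\in\mc C_n(\la)$. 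Your Fatou-coordinate route is the right machinery for the sharper Propositions \ref{prop:przes}--\ref{prop:mod}, but for the present statement it forces you to treat odd $k$ separately (Lemma \ref{lem:translation} only controls even iterates $\hat F_\la^{-2n}$, so ``$k/2$-fold iteration of the near-translation'' is not literally available when $k$ is odd), and your formula for $(Z_\la')^{-1}$ is wrong --- it is $z(A^2z^2-\delta_\la)$, not $A^2z^3/(\delta_\la-A^2z^2)$. Neither issue is fatal, but the Koebe argument bypasses both.
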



Let $\hat{\mc C}_n(\la)\subset J(\hat F_\la)$ denote the image of $\mc C_n(\la)$ under the map conjugating $F_\la$ to $\hat F_\la$. In the same way as in \cite[Lemma 5.2]{J}, we can prove:
\begin{prop}\label{prop:przes}
   For every $\varepsilon>0$ there exist $N\in\N$, $\eta>0$ such that for every $n\greq N$ and $|\delta_\la|<\eta$ if $z\in\hat{\mc C}_n(\la)$, then
 \begin{enumerate}
    \item\label{lit:przes1}
      $-(1+\varepsilon)n\leeq\re Z_\la(z)\leeq-(1-\varepsilon)n$,
    \item\label{lit:przes2}
      $|\im Z_\la(z)|\leeq -\varepsilon\re Z_\la(z)$.
 \end{enumerate}
\end{prop}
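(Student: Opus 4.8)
The plan is to propagate the estimate from one fixed, deep pair of cylinders to all the others by iterating the inverse branch of $\hat F_\la^2$ and invoking Lemma~\ref{lem:translation}, the classical picture of $F_{-1}$ near its parabolic point supplying the starting data; assertion~(\ref{lit:przes2}) will come essentially for free. \textbf{Set-up.} Given the target $\ve>0$, apply Lemma~\ref{lem:translation} with a fixed $\theta_0\in(0,\pi/2)$ and with $\ve/8$ in place of $\ve$, obtaining $R>0$ and $\eta>0$. Shrink the radius $r$ from Section~\ref{sec:fatou} so that the angle $2\theta_1$ appearing there satisfies $2\theta_1\leeq\theta_0$, $\tan(2\theta_1)<\ve$, and $1/(4A^2r^2)\greq R$. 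By the construction of the cylinders and Lemma~\ref{lem:kat} there is $N_1\in\N$ such that, after shrinking $\eta$, one has $\hat{\mc C}_n(\la)\subset B(0,r)$ for all $n>N_1$ and $|\delta_\la|<\eta$. Together with the inclusion $J(\hat F_\la)\cap B(0,r)\subset Z_\la^{-1}(S^-(2\theta_1)_{1/(4A^2r^2)})$ recorded at the end of Section~\ref{sec:fatou}, this gives
\begin{equation*}
   Z_\la(\hat{\mc C}_n(\la))\subset S^-(2\theta_1)_{1/(4A^2r^2)}\subset S^-(\theta_0)_R\qquad(n>N_1,\ |\delta_\la|<\eta).
\end{equation*}
The first inclusion already yields assertion~(\ref{lit:przes2}) for $n>N_1$, because there $|\im Z_\la(z)|\leeq\tan(2\theta_1)\,|\re Z_\la(z)|<-\ve\,\re Z_\la(z)$.

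\textbf{Base estimate.} At $\la=-1$ we have $Z_{-1}(z)=-\tfrac1{2A^2z^2}$, and the Fatou flower theorem \cite{ADU} together with classical Fatou-coordinate theory for $F_{-1}$ at $0$ --- equivalently, the asymptotics $z_n\asymp n^{-1/2}$ of the preparabolic points and the fact that $\hat F_{-1}$ increases $\re Z_{-1}$ by $1+o(1)$ per iterate --- shows that $\re Z_{-1}(\zeta)=-n+O(1)$ and $\im Z_{-1}(\zeta)=O(1)$ uniformly for $\zeta\in\hat{\mc C}_n(-1)$. For each fixed $n$, $\hat{\mc C}_n(\la)\to\hat{\mc C}_n(-1)$ in the Hausdorff metric while $Z_\la\to Z_{-1}$ uniformly on a neighbourhood of $\hat{\mc C}_n(-1)$ as $\la\to-1$; hence these estimates persist, with a somewhat larger additive constant, for $|\delta_\la|$ small. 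Choose an integer $N_0>N_1$ large enough that --- after shrinking $\eta$ once more --- for $n\in\{N_0,N_0+1\}$, $|\delta_\la|<\eta$ and $z\in\hat{\mc C}_n(\la)$ one has
\begin{equation*}
   \bigl|\re Z_\la(z)+n\bigr|\leeq\tfrac{\ve}{8}\,n,\qquad |\im Z_\la(z)|\leeq C_0,
\end{equation*}
with $C_0$ depending only on the family; this is possible since the additive constant is $O(1)$ whereas $N_0$ is large.

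\textbf{Iteration.} Each $n\greq N_0$ is of the form $n=n_0+2m$ with $n_0\in\{N_0,N_0+1\}$ and $m\greq0$. Since $\hat F_\la$ maps $\hat{\mc C}_{j+1}(\la)$ onto $\hat{\mc C}_j(\la)$ bijectively, the branch of $\hat F_\la^{-2m}$ staying near $0$ maps $\hat{\mc C}_{n_0}(\la)$ onto $\hat{\mc C}_n(\la)$. Fix $z\in\hat{\mc C}_n(\la)$ and put $z'=\hat F_\la^{2m}(z)\in\hat{\mc C}_{n_0}(\la)$, so $z=\hat F_\la^{-2m}(z')$ with $z'\in Z_\la^{-1}(S^-(\theta_0)_R)$ by the Set-up; Lemma~\ref{lem:translation} gives $|Z_\la(z)-(Z_\la(z')-2m)|<\tfrac{\ve}{8}m$. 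Taking real parts and using $n_0+2m=n$, $m\leeq n/2$ and the base estimate,
\begin{equation*}
   \bigl|\re Z_\la(z)+n\bigr|\leeq\bigl|\re Z_\la(z')+n_0\bigr|+\tfrac{\ve}{8}\,m\leeq\tfrac{\ve}{8}\,n_0+\tfrac{\ve}{16}\,n\leeq\tfrac{\ve}{4}\,n,
\end{equation*}
which is assertion~(\ref{lit:przes1}) once $N\greq N_0$; assertion~(\ref{lit:przes2}) already holds for all $n>N_1$, so in particular for $n\greq N$. Taking $N\greq N_0$ large enough and $\eta$ as above completes the proof.

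\textbf{Main obstacle.} The substantive step is the base estimate: locating $\re Z_\la\asymp-n$ with an $O(1)$ error and a bounded imaginary part on a fixed finite band of cylinders, uniformly as $\la\to-1$. Once $r$ has been shrunk so that $1/(4A^2r^2)\greq R$, membership of those cylinders in the domain of Lemma~\ref{lem:translation} is automatic, so the real work is the parabolic-linearization picture at $\la=-1$ combined with the continuity of the cylinders and of the Fatou coordinates in $\la$; everything else --- the two parities of $n$, the choice of inverse branch, and absorbing additive constants by enlarging $N$ --- is bookkeeping, carried out exactly as in \cite[Lemma~5.2]{J}.
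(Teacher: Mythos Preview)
Your proof is correct and follows essentially the same approach as the paper, which defers to \cite[Lemma~5.2]{J}: anchor the estimate on a fixed deep pair of cylinders via the parabolic picture at $\la=-1$ and continuity in $\la$, then propagate to all deeper cylinders using Lemma~\ref{lem:translation}, with assertion~(\ref{lit:przes2}) coming directly from the sector inclusion established at the end of Section~\ref{sec:fatou}. The bookkeeping with the two parities and the absorption of additive constants by enlarging $N$ is handled correctly.
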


The next two results precise the location of the cylinder $\mc C_n(\la)$; they follow from \cite[Section 5]{Ji}

\begin{prop}\label{prop:rnn}
   For every $\varepsilon>0$ there exist $N\in\N$, $\eta>0$ such that for every $z\in{\mc C_n(\la)}$ and $n>N$ we have
 \begin{enumerate}
    \item\label{pit:rnn1}
       if $0<|\delta_\la|<\eta$ then
     \begin{equation*}
        (1-\varepsilon)\frac{\delta_\la\cdot e^{2n\delta_\la-\varepsilon n|\delta_\la|}}{e^{2n\delta_\la}-1}\leeq A^2\im^2z\leeq (1+\varepsilon)\frac{\delta_\la\cdot e^{2n\delta_\la+\varepsilon n|\delta_\la|}}{e^{2n\delta_\la}-1}.
     \end{equation*}
    \item\label{pit:rnn2}
       if $\delta_\la\in(0,\eta)$ then
     \begin{equation*}
        (1-\varepsilon)\frac{\delta_\la}{1-e^{-2n\delta_\la}}\leeq A^2\im^2z\leeq (1+\varepsilon)\frac{\delta_\la}{1-e^{-2n\delta_\la}}.
     \end{equation*}
 \end{enumerate}
\end{prop}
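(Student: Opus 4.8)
The plan is to read off $A^2\im^2 z$ from the Fatou coordinate of $z$. Recall from (\ref{eq:zet}) that $A^2\big(Z^{-1}_\la(Z)\big)^2=\delta_\la/(1-e^{2\delta_\la Z})$. Applying this with $Z=Z_\la(\hat z)$, where $\hat z=h_\la(z)\in\hat{\mc C}_n(\la)$ is the $\hat F_\la$-coordinate of $z\in\mc C_n(\la)$, gives
$$A^2\hat z^2=\frac{\delta_\la}{1-e^{2\delta_\la Z}},\qquad Z:=Z_\la(\hat z).$$
By Proposition \ref{prop:przes}, $\re Z=-\rho$ with $\rho\in[(1-\ve)n,(1+\ve)n]$ and $|\im Z|\leeq\ve\rho$. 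The proof then has two parts: (i) replace $A^2\hat z^2$ by $A^2\im^2 z$ up to a factor $1+O(\ve)$, and (ii) estimate the right-hand side with the above control on $Z$.

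For (i): from the explicit formula for $h_\la$ together with $p_\la^\pm=\pm\sqrt{-\delta_\la}/A+O(\delta_\la)$ one computes $h_\la(z)=iz\big(1+O(|z|)+O(\sqrt{|\delta_\la|})\big)$, hence $\hat z^2=-z^2\big(1+O(|z|)+O(\sqrt{|\delta_\la|})\big)$; and since, by Lemma \ref{lem:kat}, $\mc J_\la\cap B(0,r)$ lies in a cone of half-angle $\theta$ about the imaginary axis, $z^2=-\im^2 z\,(1+O(\theta))$. Choosing $\theta$ small, then $N$ large (so $n>N$ forces $|z|$ small for $z\in\mc C_n(\la)$), then $\eta$ small, this yields $A^2\hat z^2=A^2\im^2 z\,(1+O(\ve))$. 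In particular $\delta_\la/(1-e^{2\delta_\la Z})$ is, up to such a factor, the positive real $A^2\im^2 z$, so it suffices to estimate its modulus.

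For (ii): split off the phase, $e^{2\delta_\la Z}=e^{-2\rho\delta_\la}e^{i\psi}$ with $|\psi|=2|\delta_\la\im Z|\leeq2\ve\rho|\delta_\la|$, and check — distinguishing $n|\delta_\la|$ bounded from $n|\delta_\la|$ large — that $|1-e^{2\delta_\la Z}|=|1-e^{-2\rho\delta_\la}|(1+O(\ve))$; we are then reduced to $|\delta_\la|/|1-e^{-2\rho\delta_\la}|$ with $\rho=n(1+O(\ve))$. For statement (1), valid for both signs of $\delta_\la$, the discrepancy $\rho-n=O(\ve n)$ sits in an exponent and produces a factor $e^{\pm O(\ve n|\delta_\la|)}$ that cannot be absorbed into $1+O(\ve)$ when $n|\delta_\la|$ is unbounded; keeping it explicit, $|\delta_\la|/|1-e^{-2\rho\delta_\la}|=|\delta_\la|e^{2n\delta_\la+O(\ve n|\delta_\la|)}/|e^{2n\delta_\la}-1|$, which is the stated two-sided bound. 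For statement (2), where $\delta_\la>0$: if $n\delta_\la\leeq M$ then $(\rho-n)\delta_\la=O(\ve M)$ and $\frac{1-e^{-2n\delta_\la}}{1-e^{-2\rho\delta_\la}}=1+O(\ve)$, since $e^{-2n\delta_\la}-e^{-2\rho\delta_\la}=e^{-2n\delta_\la}O(\ve n\delta_\la)$ is a small multiple of $1-e^{-2n\delta_\la}$ (which is comparable to $\min(n\delta_\la,1)$); if $n\delta_\la>M$ then $e^{-2\rho\delta_\la},e^{-2n\delta_\la}\leeq e^{-2(1-\ve)M}$, so both $1-e^{-2\rho\delta_\la}$ and $1-e^{-2n\delta_\la}$ are $1+O(\ve)$. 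Choosing $M$ large, then the $\ve$ in Proposition \ref{prop:przes} small (with $\ve M$ small), then $\eta$ small, both cases give $A^2\im^2 z=\frac{\delta_\la}{1-e^{-2n\delta_\la}}(1+O(\ve))$.

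I expect the main obstacle to be exactly this two-scale bookkeeping. Proposition \ref{prop:przes} controls $\re Z_\la$ only up to a relative error $\ve$, an error that is invisible in the $z$-plane but gets exponentiated to $e^{\pm\ve n|\delta_\la|}$ in the formula — which is why (1) must carry that extra exponential factor while (2) takes the clean multiplicative form only for $\delta_\la>0$. Organising the choices of the cone angle $\theta$, of the $\ve$ in Proposition \ref{prop:przes}, and of the cut-off $M$ so that they are mutually consistent is the delicate point; the computation in (i) and the elementary estimates in (ii), together with Lemma \ref{lem:kat} and Proposition \ref{prop:przes}, are otherwise routine.
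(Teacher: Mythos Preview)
Your argument is correct and follows exactly the route the paper intends: the paper does not prove Proposition~\ref{prop:rnn} in the text but refers to \cite[Section 5]{Ji}, where the same computation---invert the Fatou coordinate via (\ref{eq:zet}), feed in the location of $Z_\la(\hat z)$ from Proposition~\ref{prop:przes}, and pass from $\hat z$ back to $\im z$ using Lemma~\ref{lem:kat} and $h_\la(z)=iz(1+\tilde o(1))$---is carried out. Your two-scale bookkeeping (the $e^{\pm\ve n|\delta_\la|}$ factor in (1) versus the clean $1+O(\ve)$ in (2) for $\delta_\la>0$) is exactly the point, and your handling of it is sound.
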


The above implies the following corollary:
\begin{col}\label{col:odl}
   There exist $K>1$, $\eta>0$ such that for every $|\delta_\la|<\eta$ and $z\in{\mc C_n(\la)}$, $n\greq1$
 \begin{enumerate}
    \item\label{cit:odl1}
       if $n|\delta_\la|\leeq1$ then $\:\:\:\frac1K\,n^{-1/2}\leeq|\im z|\leeq Kn^{-1/2}$,
    \item\label{cit:odl2}
       if $n\delta_\la>1$ (so $\delta_\la>0$) then $\:\:\:\frac1K\,{\delta_\la}^{1/2}\leeq|\im z|\leeq K{\delta_\la}^{1/2}$.
 \end{enumerate}
   Moreover, for every $\ve>0$ there exists $N\in\N$ such that
 \begin{enumerate}
    \item[(3)]\label{cit:odl3}
       if $n\delta_\la<-1$ (so $\delta_\la<0$) and $n>N$, then \\ $\frac1K\,|\delta_\la|^{1/2}e^{(1+\ve)n\delta_\la}\leeq|\im z|\leeq K|\delta_\la|^{1/2}e^{(1-\ve)n\delta_\la}$.
 \end{enumerate}
\end{col}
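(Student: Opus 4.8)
The plan is to deduce the three estimates directly from Proposition \ref{prop:rnn} by analysing the function $\phi(x)=x/(e^x-1)$ (with $x=2n\delta_\la$), which governs the size of $A^2\im^2 z$. First I would record the elementary facts that $\phi$ is positive, continuous and decreasing on $\R$, with $\phi(0)=1$, $\phi(x)\to 0$ as $x\to+\infty$, and $\phi(x)\sim -x = |x|$ as $x\to-\infty$; more precisely $\phi(x)\asymp\max(1,|x|)$ for $x\le 0$ and $\phi(x)\asymp 1$ for $x$ in a bounded neighbourhood of $0$. For part \eqref{cit:odl1}, when $|n\delta_\la|\le 1$ both of the exponential correction factors $e^{\pm\varepsilon n|\delta_\la|}$ in Proposition \ref{prop:rnn}\eqref{pit:rnn1} lie in a fixed compact subset of $(0,\infty)$ (namely between $e^{-\varepsilon}$ and $e^{\varepsilon}$), and $\delta_\la e^{2n\delta_\la}/(e^{2n\delta_\la}-1)=\tfrac1{2n}\,\phi(2n\delta_\la)\cdot e^{2n\delta_\la}$, which is comparable to $1/n$ since the argument $2n\delta_\la$ stays bounded. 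Taking square roots and absorbing $A$ and all bounded factors into a single constant $K$ gives $|\im z|\asymp n^{-1/2}$.

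For part \eqref{cit:odl2}, one is in the case $\delta_\la>0$ with $2n\delta_\la>2$, so I would use Proposition \ref{prop:rnn}\eqref{pit:rnn2}: there $A^2\im^2 z\asymp \delta_\la/(1-e^{-2n\delta_\la})$, and since $n\delta_\la>1$ the denominator $1-e^{-2n\delta_\la}$ lies between $1-e^{-2}$ and $1$, hence is bounded above and below by absolute constants. Thus $A^2\im^2 z\asymp\delta_\la$, and taking square roots yields $|\im z|\asymp\delta_\la^{1/2}$. (One should note that \eqref{pit:rnn1} and \eqref{pit:rnn2} agree up to the $e^{\pm\varepsilon n|\delta_\la|}$ factors when $\delta_\la>0$, so either could be used; \eqref{pit:rnn2} is cleaner here because it has no such factor.)

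For part (3), $\delta_\la<0$ and $n\delta_\la<-1$, so I apply Proposition \ref{prop:rnn}\eqref{pit:rnn1} again. Now $2n\delta_\la<-2$, so $e^{2n\delta_\la}-1\in(-1,e^{-2}-1)$, i.e.\ $|e^{2n\delta_\la}-1|\asymp 1$; hence $\delta_\la e^{2n\delta_\la\mp\varepsilon n|\delta_\la|}/(e^{2n\delta_\la}-1)\asymp |\delta_\la|\,e^{2n\delta_\la\mp\varepsilon n|\delta_\la|}=|\delta_\la|\,e^{(2\pm\varepsilon)n\delta_\la}$ (using $|\delta_\la|=-\delta_\la$ and $n|\delta_\la|=-n\delta_\la$). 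Taking square roots gives $A|\im z|\asymp|\delta_\la|^{1/2}e^{(1\pm\varepsilon/2)n\delta_\la}$; after relabelling $\varepsilon/2$ as $\varepsilon$ and choosing $N$ large enough that the hidden $(1\pm\varepsilon)$ multiplicative errors from Proposition \ref{prop:rnn} can be absorbed, this is exactly the claimed two-sided bound. I do not expect a genuine obstacle here: everything is a routine estimate of $x/(e^x-1)$ in the three regimes $|x|$ bounded, $x\to+\infty$, $x\to-\infty$; the only point requiring a little care is bookkeeping the $\varepsilon$'s so that the uniform constant $K$ in parts \eqref{cit:odl1}–\eqref{cit:odl2} is genuinely independent of $\varepsilon$ (it is, because in those two cases the exponential corrections are themselves bounded by absolute constants once the range of $n\delta_\la$ is fixed), whereas in part (3) the constant is allowed to depend on the chosen $\varepsilon$ through $N$.
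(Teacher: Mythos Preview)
Your proposal is correct and matches the paper's approach: the paper states without further detail that the corollary follows from Proposition~\ref{prop:rnn}, and your analysis of $x/(e^x-1)$ in the three regimes is precisely the computation this entails. One small point you should make explicit is that Proposition~\ref{prop:rnn} only applies for $n>N(\varepsilon)$, so in parts~(1)--(2) you must fix a single $\varepsilon$ once and then cover the finitely many remaining cylinders $\mc C_n(\la)$ with $1\le n\le N$ by a direct compactness argument (they have $|\im z|$ uniformly bounded away from $0$ and $\infty$ for $\la$ near $-1$), absorbing this into $K$.
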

In particular, we can assume that $\mathcal M_N(\la)$ is included in arbitrary small neighborhood of $0$.



The next results give precise estimate of the size of the cylinder. They also follow from \cite[Section 5]{Ji}.

\begin{prop}\label{prop:rn}
   For every $\ve>0$ there exist $N\in\N$, $\eta>0$ such that if $z\in{\mc C_n(\la)}$, $n>N$ and $|\delta_\la|<\eta$, then
 \begin{equation*}
    (1-\varepsilon)A^2|\im z|^3e^{-2n\delta_\la-\ve n|\delta_\la|}\leeq|\mc C_n(\la)|\leeq(1+\varepsilon)A^2|\im z|^3e^{-2n\delta_\la+\ve n|\delta_\la|}.
 \end{equation*}
\end{prop}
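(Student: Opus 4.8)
The plan is to combine the Fatou-coordinate estimates with the geometry of the cylinders. The starting point is that $|\mc C_n(\la)|=\frac12|z_{n+1}-z_{n-1}|$, where the preparabolic points $z_i$ are the images under $Z_\la^{-1}$ of points near the integers on the real axis (more precisely, the $\hat z_i$ in $\hat F_\la$-coordinates satisfy $\re Z_\la(\hat z_i)\approx -i$ by Proposition \ref{prop:przes}, and the imaginary parts are controlled by Proposition \ref{prop:przes}(\ref{lit:przes2})). So the first step is to write $z_{n\pm1}$, or rather their images under $h_\la$, via the explicit inverse Fatou coordinate (\ref{eq:zet}), $z=\frac1A\big(\frac{\delta_\la}{1-e^{2\delta_\la Z}}\big)^{1/2}$, evaluated at $Z=Z_\la(\hat z_{n\pm1})$ with $\re Z\approx -(n\pm1)$.

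The second step is a mean-value / Taylor estimate: $|z_{n+1}-z_{n-1}|$ is approximately $|\tfrac{d}{dZ}Z_\la^{-1}(Z)|\cdot|Z_\la(\hat z_{n+1})-Z_\la(\hat z_{n-1})|$, and by Lemma \ref{lem:translation} the latter increment is close to $2$ (the translation length for $\hat F_\la^2$). Differentiating (\ref{eq:zet}) gives
\begin{equation*}
  \frac{d}{dZ}Z_\la^{-1}(Z)=\frac1A\cdot\frac{\delta_\la^{3/2}e^{2\delta_\la Z}}{(1-e^{2\delta_\la Z})^{3/2}},
\end{equation*}
and using $A^2\im^2 z\asymp \delta_\la/(1-e^{-2n\delta_\la})$ or the $\delta_\la<0$ analogue from Proposition \ref{prop:rnn}, together with $e^{2\delta_\la Z}=1-\delta_\la/(A^2z^2)$, one rewrites this derivative in terms of $A^2|\im z|^3$ and the factor $e^{2\delta_\la Z}$, which is comparable to $e^{-2n\delta_\la}$ (up to the $e^{\pm\ve n|\delta_\la|}$ slack, because $\re Z_\la(z)=-n(1+O(\ve))$ and $\im Z_\la(z)=O(\ve n)$ from Proposition \ref{prop:przes}). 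Multiplying by the translation length $2$ and the prefactor $\frac12$ in the definition of $|\mc C_n(\la)|$, and folding all the $O(\ve)$ errors (from the holomorphic-motion distortion of $h_\la$, from Lemma \ref{lem:translation}, and from Proposition \ref{prop:przes}) into the single $e^{\pm\ve n|\delta_\la|}$ term, yields exactly the claimed two-sided bound $(1\pm\ve)A^2|\im z|^3e^{-2n\delta_\la\mp\ve n|\delta_\la|}$.

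The main obstacle I expect is bookkeeping the error terms uniformly in $\la$ and $n$: one needs that the distortion of $h_\la$ near $0$ and the passage between $\mc C_n(\la)$ and $\hat{\mc C}_n(\la)$ contribute only multiplicative $(1+O(\ve))$ errors (this is why $\eta$ must be taken small and $N$ large depending on $\ve$), and that replacing "$z\in\mc C_n(\la)$, arbitrary" by "one representative point" costs nothing, which again uses Proposition \ref{prop:dyst} and Proposition \ref{prop:C/im} to compare $|\im z|$ across the cylinder. A secondary subtlety is the regime $n|\delta_\la|$ large with $\delta_\la<0$, where $1-e^{-2n\delta_\la}$ blows up; there one should instead expand $1-e^{2\delta_\la Z}\approx -e^{2\delta_\la Z}$ and check the formula degenerates consistently with Corollary \ref{col:odl}(3). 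Since the paper says this "follows from \cite[Section 5]{Ji}", I would present it as: set up the identification with the Fatou coordinate, quote Lemma \ref{lem:translation} for the increment, differentiate (\ref{eq:zet}), substitute Proposition \ref{prop:rnn}, and absorb errors — the analogous computation in \cite{Ji} being the template.
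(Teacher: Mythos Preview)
Your proposal is correct and is precisely the argument the paper defers to \cite[Section~5]{Ji}: pass to the $\hat F_\la$-coordinates via $h_\la$, use Lemma~\ref{lem:translation} and Proposition~\ref{prop:przes} to locate $Z_\la(\hat z_{n\pm1})$ near $-(n\pm1)$, differentiate the explicit inverse~(\ref{eq:zet}) to get $\tfrac{d\hat z}{dZ}=A^2\hat z^3 e^{2\delta_\la Z}$, and then read off $|\mc C_n(\la)|\approx A^2|\im z|^3e^{-2n\delta_\la}$ with the $e^{\pm\ve n|\delta_\la|}$ slack coming from Proposition~\ref{prop:przes}. Two small clarifications: (i) you do not actually need to invoke Proposition~\ref{prop:rnn} separately, since the identity $A^2\hat z^2=\delta_\la/(1-e^{2\delta_\la Z})$ is built into~(\ref{eq:zet}) and gives the substitution directly; (ii) be careful to keep the hat on $\hat z$ when applying the mean-value step (the derivative $\tfrac{d}{dZ}Z_\la^{-1}$ lands in $\hat F_\la$-coordinates), and only afterwards pull back through $h_\la^{-1}$, whose derivative is $-i+o(1)$ near $0$---your later remark about the distortion of $h_\la$ shows you have this in mind, but the line ``$|z_{n+1}-z_{n-1}|\approx|\tfrac{d}{dZ}Z_\la^{-1}(Z)|\cdot\ldots$'' as written conflates the two coordinate systems.
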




\begin{prop}\label{prop:mod}
   There exist $K>1$, $\eta>0$ such that if $z\in{\mc C_n(\la)}$, $n\greq1$ and $|\delta_\la|<\eta$, then
 \begin{enumerate}
    \item\label{pit:mod1}
       hyperbolic estimate: if $n|\delta_\la|>1$, then
     \begin{equation*}
        \frac{1}{K}|\delta_\la|^{3/2}e^{-Kn|\delta_\la|}\leeq|\mc C_n(\la)|\leeq K|\delta_\la|^{3/2}e^{-\frac{1}{K}n|\delta_\la|},
     \end{equation*}
    \item\label{pit:mod2}
       parabolic estimate: if $n|\delta_\la|\leeq1$, then
     \begin{equation*}
        \frac{1}{K}n^{-3/2}\leeq|\mc C_n(\la)|\leeq{K}n^{-3/2}.
     \end{equation*}
 \end{enumerate}
\end{prop}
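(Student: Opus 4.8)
The plan is to derive both estimates from the precise asymptotic for $|\im z|$ provided by Corollary \ref{col:odl} and the formula for the size in Proposition \ref{prop:rn}, namely $|\mc C_n(\la)|\asymp A^2|\im z|^3 e^{-2n\delta_\la+\tilde o(n|\delta_\la|)}$, after plugging in the value of $|\im z|$ in each of the two regimes $n|\delta_\la|>1$ and $n|\delta_\la|\leeq1$.

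\emph{Parabolic estimate.} Suppose $n|\delta_\la|\leeq1$. Then by part (1) of Corollary \ref{col:odl} we have $|\im z|\asymp n^{-1/2}$, so $|\im z|^3\asymp n^{-3/2}$. Moreover, in this regime $|2n\delta_\la|\leeq2$ and $\ve n|\delta_\la|\leeq\ve$, so the exponential factor $e^{-2n\delta_\la\pm\ve n|\delta_\la|}$ is bounded above and below by absolute constants. Hence Proposition \ref{prop:rn} gives $|\mc C_n(\la)|\asymp n^{-3/2}$ directly, which is part (2). (For the finitely many small $n\leeq N$ not covered by Proposition \ref{prop:rn} the estimate is trivial since $|\mc C_n(\la)|$ is bounded away from $0$ and $\infty$ uniformly in $\la$ near $-1$, by continuity of the holomorphic motion.)

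\emph{Hyperbolic estimate.} Suppose $n|\delta_\la|>1$. I will split into the cases $\delta_\la>0$ and $\delta_\la<0$. If $n\delta_\la>1$, part (2) of Corollary \ref{col:odl} gives $|\im z|\asymp|\delta_\la|^{1/2}$, so $|\im z|^3\asymp|\delta_\la|^{3/2}$; plugging into Proposition \ref{prop:rn} yields
$$
   |\mc C_n(\la)|\asymp|\delta_\la|^{3/2}\,e^{-2n\delta_\la+\tilde o(n\delta_\la)},
$$
and since $-2n\delta_\la+\tilde o(n\delta_\la)$ lies between $-Kn\delta_\la$ and $-\frac1K n\delta_\la$ for suitable $K>1$ once $n\delta_\la$ is large (the $\tilde o$ term being, say, at most $n\delta_\la$ in absolute value), this gives exactly the claimed two-sided bound with $n|\delta_\la|=n\delta_\la$. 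If $n\delta_\la<-1$, part (3) of Corollary \ref{col:odl} gives $\frac1K|\delta_\la|^{1/2}e^{(1+\ve)n\delta_\la}\leeq|\im z|\leeq K|\delta_\la|^{1/2}e^{(1-\ve)n\delta_\la}$, hence $|\im z|^3\asymp|\delta_\la|^{3/2}e^{(3+\tilde o(1))n\delta_\la}$ in the crude sense that the exponent lies in $[(3+3\ve)n\delta_\la,(3-3\ve)n\delta_\la]$ (recall $n\delta_\la<0$). Combining with the factor $e^{-2n\delta_\la}$ from Proposition \ref{prop:rn}, the total exponent is of the form $(1+O(\ve))n\delta_\la$, i.e. negative and comparable to $n\delta_\la=-n|\delta_\la|$; absorbing constants we again obtain $\frac1K|\delta_\la|^{3/2}e^{-Kn|\delta_\la|}\leeq|\mc C_n(\la)|\leeq K|\delta_\la|^{3/2}e^{-\frac1K n|\delta_\la|}$. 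Note one must choose $\ve$ small (e.g. $\ve<1/6$) before fixing $N$ and $K$, so that the net exponent stays strictly negative; this is the only point requiring a little care.

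The main (minor) obstacle is bookkeeping: one has to make sure the various $\tilde o$ and $(1\pm\ve)$ error terms coming from Corollary \ref{col:odl} and Proposition \ref{prop:rn} combine so that the net exponent in the hyperbolic range remains squeezed between two negative linear functions of $n|\delta_\la|$, uniformly for all $n$ with $n|\delta_\la|>1$ and all $\la$ with $|\delta_\la|$ small. Since each ingredient already has this uniform quantified form, the argument is just a matter of choosing $\ve$ small enough first and then enlarging $K$ to swallow the contribution of the boundedly many indices $n\leeq N$ and the multiplicative constants $A^2$ and the Koebe constants. No new analytic input is needed beyond the propositions already established in this section.
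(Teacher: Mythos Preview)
Your proposal is correct. The paper does not actually prove this proposition in the text; it simply states that it (together with Proposition~\ref{prop:rn}) follows from \cite[Section~5]{Ji}. Your approach---deducing the statement directly from Proposition~\ref{prop:rn} and Corollary~\ref{col:odl}, which the paper has already imported from \cite{Ji}---is a clean internal derivation and essentially what one would do to unpack the citation. The only point worth making explicit is that by taking $\eta<1/N$ (with $N$ coming from Proposition~\ref{prop:rn} and part~(3) of Corollary~\ref{col:odl} after $\ve$ is fixed), the hyperbolic regime $n|\delta_\la|>1$ never occurs for $n\leeq N$, so the finitely many small indices automatically fall into the parabolic case you already handled.
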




The above implies the following corollary:

\begin{col}\label{col:mod}
   There exist  $K,K_1,K_2>0$ and $\eta>0$ such that for every $n\greq1$ and $|\delta_\la|<\eta$, we have
 \begin{equation*}
    K_1|\delta_\la|^{3/2}e^{-Kn|\delta_\la|}\leeq|\mc C_n(\la)|\leeq K_2n^{-3/2}.
 \end{equation*}
\end{col}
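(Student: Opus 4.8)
Corollary \ref{col:mod} is an immediate consequence of Proposition \ref{prop:mod}, so the plan is simply to combine the hyperbolic and parabolic estimates into a single two-sided bound valid for all $n\greq1$, at the cost of replacing the sharp exponential rate by cruder (but uniform) constants. I will split into the two regimes $n|\delta_\la|>1$ and $n|\delta_\la|\leeq1$ given by Proposition \ref{prop:mod} and check that in each regime the claimed bounds hold.

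For the lower bound I want $|\mc C_n(\la)|\greq K_1|\delta_\la|^{3/2}e^{-Kn|\delta_\la|}$, where $K$ is the constant from Proposition \ref{prop:mod}(\ref{pit:mod1}). If $n|\delta_\la|>1$ this is exactly the left inequality of the hyperbolic estimate (take $K_1=1/K$). If $n|\delta_\la|\leeq1$, the parabolic estimate gives $|\mc C_n(\la)|\greq\tfrac1K n^{-3/2}$; on the other hand $|\delta_\la|^{3/2}e^{-Kn|\delta_\la|}\leeq|\delta_\la|^{3/2}\leeq n^{-3/2}$ since $|\delta_\la|\leeq 1/n$, so $\tfrac1K n^{-3/2}\greq \tfrac1K|\delta_\la|^{3/2}e^{-Kn|\delta_\la|}$ and the bound holds with the same $K_1=1/K$. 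For the upper bound I want $|\mc C_n(\la)|\leeq K_2 n^{-3/2}$. If $n|\delta_\la|\leeq1$ this is the parabolic estimate directly (take $K_2$ to be the constant $K$ there). If $n|\delta_\la|>1$, the hyperbolic estimate gives $|\mc C_n(\la)|\leeq K|\delta_\la|^{3/2}e^{-\frac1K n|\delta_\la|}$; writing $x:=n|\delta_\la|>1$ this equals $K\,x^{3/2}e^{-x/K}\,n^{-3/2}$, and since $x\mapsto x^{3/2}e^{-x/K}$ is bounded on $[1,\infty)$ by some constant $M=M(K)$, we get $|\mc C_n(\la)|\leeq KM\,n^{-3/2}$. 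Taking $K_2:=\max\{K,\,KM\}$ finishes the upper bound.

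Collecting, with $K$ the constant of Proposition \ref{prop:mod}(\ref{pit:mod1}), $K_1:=1/K$, and $K_2$ as above, we obtain $K_1|\delta_\la|^{3/2}e^{-Kn|\delta_\la|}\leeq|\mc C_n(\la)|\leeq K_2 n^{-3/2}$ for all $n\greq1$ and $|\delta_\la|<\eta$, which is the assertion. There is no real obstacle here; the only point requiring a moment's care is the case analysis on which side of $n|\delta_\la|=1$ one lies, together with the elementary fact that $x^{3/2}e^{-cx}$ is bounded on $[1,\infty)$ for any $c>0$ — this is what lets the hyperbolic decay absorb the polynomial factor and be dominated by $n^{-3/2}$.
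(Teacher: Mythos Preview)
Your argument is correct and is exactly the intended derivation: the paper states this corollary with no proof, simply noting that it follows from Proposition~\ref{prop:mod}, and your case split on $n|\delta_\la|\lessgtr 1$ together with the elementary bound on $x^{3/2}e^{-x/K}$ is precisely how one extracts the uniform two-sided estimate from the hyperbolic and parabolic regimes.
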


We end this section by the following lemma, which will be used in Section \ref{sec:generaltwopetals}.
Recall that if $w\in\C^*$, then we assume that $\arg w\in[-\frac34\pi,\frac54\pi)$.
\begin{lem}\label{lem:arg}
   For every $\ve>0$ there exist $\eta>0$, and $\tilde n\in\N$, such that
 \begin{equation*}
    |\arg (F_\la^{j})'(z)|\leeq\ve,
 \end{equation*}
   where $|\delta_\la|<\eta$, $z\in \mc C_{n}(\la)$, and $j\in\N$ is an even number, satisfying $n>\tilde n+j$. Analogously, if $j$ is an odd number, then $\arg (F_\la^{j})'(z)$ is close to $\pi$.
\end{lem}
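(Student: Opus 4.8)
The plan is to track the argument of $(F_\la^j)'(z)$ by decomposing the chain-rule product and using the geometric control already established for the cylinders. First I would observe that $(F_\la^j)'(z)=\prod_{i=0}^{j-1}F_\la'(F_\la^i(z))$, and that since $z\in\mc C_n(\la)$ with $n>\tilde n+j$, each iterate $F_\la^i(z)$ lies in $\mc C_{n-i}(\la)$ with $n-i>\tilde n$, hence (by Corollary \ref{col:odl} and the remark after it) inside an arbitrarily small neighborhood of the fixed point $0$, where by (\ref{eq:F0}) we have $F_\la'(w)=\la+2aw+3bw^2+\tilde o(w)$. Since $\la=-1+\delta_\la$ with $|\delta_\la|$ small, the dominant term is $-1$, so each factor $F_\la'(F_\la^i(z))$ is close to $-1$; thus the product of $j$ such factors is close to $(-1)^j$, which is $+1$ for $j$ even and $-1$ for $j$ odd. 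This already suggests the result, but the subtlety is that "close to $-1$" must be made uniform and the accumulated error over $j$ factors must be controlled — and $j$ is not bounded, so a naive $\ve^j\to 0$-type bound is not available; instead one needs that the \emph{sum} of the individual argument contributions is small.

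So the key step is to estimate $\arg F_\la'(F_\la^i(z))+\pi \pmod{2\pi}$, i.e. how far each factor is from the negative real axis, and to show $\sum_{i=0}^{j-1}|\arg F_\la'(F_\la^i(z))-(\text{nearest multiple of }\pi)|$ is small. Writing $w_i:=F_\la^i(z)\in\mc C_{n-i}(\la)$ and using $F_\la'(w_i)=-1+\delta_\la+2aw_i+O(w_i^2)$, the deviation from $-1$ is of order $|\delta_\la|+|w_i|$. By Corollary \ref{col:odl}, $|w_i|\asymp|\im w_i|\asymp (n-i)^{-1/2}$ in the parabolic regime $(n-i)|\delta_\la|\le 1$, and $|w_i|\asymp|\delta_\la|^{1/2}$ in the hyperbolic regime. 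Hence $|\arg F_\la'(w_i)-\pi|\lesssim |\delta_\la|+(n-i)^{-1/2}$, and summing over $i=0,\dots,j-1$ — equivalently over the indices $m=n-j+1,\dots,n$, all of which exceed $\tilde n$ — gives a bound of order $\sum_{m>\tilde n} m^{-1/2}$, which is \emph{not} summable. This is the main obstacle: the crude term-by-term argument fails.

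To overcome it, I would exploit cancellation coming from the fact that the critical orbit is real, so that (modulo the sign $(-1)^j$) the quantities $F_\la'(w_i)$ have arguments that come in a controlled pattern — more precisely, one should use Proposition \ref{prop:aad}\eqref{pit:aad2} which says $|(F_\la^j)'(z)|\asymp |\mc C_{n-j}(\la)|/|\mc C_n(\la)|$, together with the observation that $F_\la^j$ maps the cylinder $\mc C_n(\la)$, which is squeezed into a thin vertical sliver near $0$ (Lemma \ref{lem:kat} and Proposition \ref{prop:C/im}), bijectively onto $\mc C_{n-j}(\la)$, another thin vertical sliver. Since both cylinders are nearly vertical segments and, when $j$ is even, $F_\la^j$ preserves the upper/lower half-plane (when $j$ is odd it swaps them), the derivative $(F_\la^j)'$ maps the tangent direction of one near-vertical sliver to the tangent direction of another, forcing $\arg(F_\la^j)'(z)$ to be near $0$ (resp. near $\pi$). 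Concretely I would pass to Fatou coordinates: by Proposition \ref{prop:przes} the cylinder $\hat{\mc C}_m(\la)$ sits in the sector $S^-(2\theta)$ with $\re Z_\la\approx -m$, and by Lemma \ref{lem:translation} the map $\hat F_\la^{2}$ acts almost like the translation by $2$ on these coordinates; since a translation has derivative $1$, the derivative of $\hat F_\la^{2\ell}$ in Fatou coordinates is close to $1$, and transporting back through $Z_\la$ (whose derivative's argument is controlled on the relevant sectors by Proposition \ref{prop:przes}\eqref{lit:przes2}) and through $h_\la$ (whose derivative is close to $i$, contributing nothing after composing with its inverse) shows $\arg(F_\la^{2\ell})'(z)$ is small; the odd case follows by composing with one extra application of $F_\la$, whose derivative is close to $-1$ near $0$, contributing the extra $\pi$.

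The main obstacle, then, is precisely the non-summability of the naive estimate, and the resolution is to avoid summing term by term and instead use the Fatou-coordinate linearization (Lemma \ref{lem:translation}) plus the geometry of the cylinders (Propositions \ref{prop:przes}, \ref{prop:C/im}, and \ref{prop:dyst}) to get the argument of the whole product $(F_\la^j)'(z)$ in one stroke, with error controlled by the $\ve$ in Lemma \ref{lem:translation} and by how thin the cylinders are. I expect the bookkeeping to require choosing $\tilde n$ large first (to make all cylinders $\mc C_m(\la)$, $m>\tilde n$, thin and close to $0$, and to make the Fatou-coordinate approximation valid), then $\eta$ small depending on $\tilde n$ and the target $\ve$.
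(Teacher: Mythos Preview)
Your identification of the obstacle is exactly right: the naive per-factor bound $|\arg F_\la'(w_i)-\pi|\lesssim (n-i)^{-1/2}$ is not summable, so one needs cancellation. But your proposed resolution via Fatou coordinates has a genuine gap. Lemma~\ref{lem:translation} controls \emph{positions} of orbits under $\hat F_\la^{-2n}$, not derivatives: from $|Z_\la(\hat F_\la^{-2n}(z))-(Z_\la(z)-2n)|<\ve n$ you cannot directly conclude that $(\hat F_\la^{2\ell})'$ is close to $1$ in the Fatou chart without differentiating the error term, and no such bound is provided. Likewise, Proposition~\ref{prop:przes} locates cylinders in Fatou coordinates but says nothing about $\arg Z_\la'$. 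Your alternative ``thin sliver / tangent direction'' picture is only heuristic: the cylinders are fractal pieces of the Julia set, not smooth arcs, so there is no tangent direction to which $(F_\la^j)'$ could be compared.

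The paper's argument is far more elementary and exploits a cancellation you did not use: the real symmetry $F_\la'(\overline w)=\overline{F_\la'(w)}$ for real $\la$. Pairing consecutive factors,
\[
\arg(F_\la^2)'(z)=\arg F_\la'(z)+\arg F_\la'(F_\la(z))-2\pi=\arg F_\la'(z)-\arg F_\la'\big(\overline{F_\la(z)}\big).
\]
Now if $z\in\mc C_n^\pm(\la)$ then $\overline{F_\la(z)}\in\mc C_{n-1}^\pm(\la)$ (same half-plane), so $z$ and $\overline{F_\la(z)}$ differ by at most a few cylinder widths; since $F_\la'$ is Lipschitz and bounded away from $0$ there, $|\arg(F_\la^2)'(z)|\leeq K|\mc C_n(\la)|$. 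By Corollary~\ref{col:mod} this is $\lesssim n^{-3/2}$, which \emph{is} summable: the chain rule gives
\[
|\arg(F_\la^{2m})'(z)|\leeq K'\sum_{l=0}^{m-1}(n-2l)^{-3/2}\leeq K''(n-2m)^{-1/2}<\ve
\]
once $n-2m>\tilde n$. The odd case follows by one extra factor of $F_\la'$, close to $-1$. So the cancellation is not a Fatou-coordinate phenomenon but simply the pairing of a step with the conjugate of the next step; this turns the non-summable $m^{-1/2}$ into the summable $m^{-3/2}$.
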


\begin{proof}
Let $\tilde n$ be an integer large enough, and let $z\in\mc C_n(\la)$, where $n>\tilde n$.
Thus, we can assume that for $z\in\mc M_{\tilde n}(\la)$, $F_\la'(z)$ is close to $-1$, so we see that $\arg F_\la'(z)$ is close to $\pi$. Then
\begin{equation*}
   \arg (F_\la^2)'(z)=\arg F_\la'(z)+\arg F_\la'(F_\la(z))-2\pi
   =\arg F_\la'(z)-\arg F_\la'(\overline{F_\la(z)}).
\end{equation*}
Let $z\in\mc C^\pm_n(\la)$, then $\overline{F_\la(z)}\in\mc C^\pm_{n-1}(\la)$. Since $z$ and $\overline{F_\la(z)}$ are separated from the critical points, we conclude that $F_\la'(z)$, and $F_\la'(\overline{F_\la(z)})$, are separated from zero. Therefore,
\begin{equation*}
   |\arg (F_\la^2)'(z)|
   <K_1|F_\la'(z)-F_\la'(\overline{F_\la(z)})|<K_2|z-\overline{F_\la(z)}|\leeq K_3|\mc C_n(\la)|.
\end{equation*}
We have $F^{2l}_\la(z)\in\mc C_{n-2l}(\la)$, so the chain rule and Corollary \ref{col:mod} leads to
\begin{equation*}
   \arg (F_\la^{2m})'(z)\leeq K_4\sum_{l=0}^{m-1}(n-2l)^{-3/2}\leeq K_5(n-2m)^{-1/2}.
\end{equation*}
So, for $n-2m>\tilde n$, we see that $(F_\la^{2m})'(z)$ is "close" to real positive numbers, whereas $(F_\la^{2m-1})'(z)$ is "close" to real negative numbers.
\end{proof}

\section{Invariant measures}\label{sec:miary}

We have already mentioned in Section \ref{sec:formalizmt} that for a hyperbolic polynomial there exists a unique normalized invariant measure equivalent to conformal (Hausdorff) measure.

In the parabolic case the situation is more complicated and depends on the dimension of the Julia set. More precisely Denker and Urba\'nski (see \cite{DU} and \cite{Ui}) have shown that if the polynomial $f$ has one and only one parabolic fixed point whose Leau-Fatou flower has $p$ petals then:
\begin{itemize}
  \item[-] if the Julia set of $f$ has dimension greater than $\frac{2p}{p+1}$, there exists a unique normalized invariant measure equivalent to the conformal measure,
  \item[-] if the dimension belongs to $[1,\frac{2p}{p+1}]$, then there exists a unique (up to a positive factor) $\sigma$-finite invariant measure equivalent to the conformal measure.
\end{itemize}

So, in our case, if $f_{c_0}$ has a parabolic cycle with two petals, then the existence of a normalized invariant measure absolutely continuous to Hausdorff is equivalent to $d(c_0)>4/3$.

Having this in mind, we now estimate $\mu(\mc C_n(\la))=\tilde \mu_n(\mc C_n)$, for $\la\neq-1$ (close to $-1$), and $n$ large.

The first result follows from \cite[Lemma 7.3]{J} and \cite[Lemma 6.2]{Ji}

\begin{lem}\label{lem:L}
   There exist $D>1$ and $\eta>0$ such that for every $N\in\N$ there exists a constant $K(N)>1$ for which
      $$D^{-1}\leeq\frac{d{\mu_\la}}{d{\omega_\la}}\Big|_{\textbf{B}_N(\la)}\leeq K(N),$$
   provided $|\delta_\la|<\eta$.
\end{lem}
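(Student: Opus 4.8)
The plan is to compare the invariant measure $\mu_\la$ and the conformal measure $\omega_\la$ on the "bounded" part $\textbf{B}_N(\la)$ of the Julia set, i.e. on the portion bounded away from the (perturbed) parabolic cycle, and to show the Radon--Nikodym derivative there is bounded above and below by constants that depend only on $N$ (and the family), uniformly in $\la$ near $-1$. Recall $\mu_\la = \tilde h_{\phi_\la}\,\omega_\la$ up to the obvious push-forward, so the statement is really an assertion about the eigenfunction $\tilde h_{\phi_\la}$ of the transfer operator: it is uniformly bounded below by some $D^{-1}>0$ on all of $\mc J_{\la_\dt}$ (this gives the left inequality, with $D$ independent of $N$), and it is bounded above on $\textbf{B}_N$ by a constant $K(N)$ that is allowed to blow up as $N\to\infty$ (reflecting the fact that $\tilde h_{\phi_{-1}}$ is unbounded near the parabolic point when $d(c_0)\leeq 4/3$).

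First I would recall the representation of the density via the transfer operator: since $\mathcal L_{\phi_\la}^*\tilde\omega_\la=\tilde\omega_\la$ and $\mathcal L_{\phi_\la}\tilde h_{\phi_\la}=\tilde h_{\phi_\la}$ (eigenvalue $\beta=1$), one has $\tilde h_{\phi_\la}(s) = \lim_{m} \mathcal L_{\phi_\la}^m \mathbf 1(s)$ in an averaged sense, or more precisely $\tilde h_{\phi_\la}$ is, up to normalization, the density of the conditional measures under the natural extension. Concretely, the key mechanism is the standard iterated-sum estimate: for $s$ in a cylinder far from the parabolic cycle, $\mathcal L_{\phi_\la}^m \mathbf 1(s) = \sum_{\bar s\in F_\la^{-m}(s)} |(F_\la^m)'(\bar s)|^{-\mc D(\la)}$ splits into the contribution of preimages that stay away from $\textbf{P}(\la)$ — which is comparable to $\mathcal L_{\phi_\la}^m\mathbf 1$ of a nearby reference point by bounded distortion on the hyperbolic part — and the contribution of preimages that pass close to the parabolic cycle. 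The latter is controlled by summing over the cylinders $\mc C_n(\la)$: by Proposition \ref{prop:aad}(\ref{pit:aad2}) the derivative along $\mc C_{n}(\la)\to\mc C_{n-j}(\la)$ is $\asymp |\mc C_{n-j}(\la)|/|\mc C_n(\la)|$, so the conformal mass $\omega_\la(\mc C_n(\la))\asymp |\mc C_n(\la)|^{\mc D(\la)}$, and Corollary \ref{col:mod} gives $|\mc C_n(\la)|\leeq K_2 n^{-3/2}$, whence $\sum_n |\mc C_n(\la)|^{\mc D(\la)}$ behaves like $\sum_n n^{-3\mc D(\la)/2}$ on the parabolic-regime range $n\lesssim 1/|\delta_\la|$ and is dominated by a geometric tail (hyperbolic estimate, Proposition \ref{prop:mod}(\ref{pit:mod1})) beyond it. The point is that this whole parabolic sum, as seen from a point of $\textbf{B}_N$, is bounded (the series $\sum n^{-3\mc D(\la)/2}$ has its first $N$ terms removed, since a point of $\textbf{B}_N$ only "sees" parabolic excursions of depth $>N$ via one extra iterate), giving the upper bound $K(N)$; and the lower bound $D^{-1}$ comes simply from the fact that $\mathcal L_{\phi_\la}^m\mathbf 1(s)$ is bounded below by the single term corresponding to following a fixed orbit into the hyperbolic part, uniformly in $\la$.

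The uniformity in $\la$ is what makes this delicate, and I expect it to be the main obstacle. One must show that the comparison constants in the distortion estimates on $\textbf{B}_N(\la)$, the comparability $\omega_\la(\mc C_n(\la))\asymp |\mc C_n(\la)|^{\mc D(\la)}$, and the summability of the parabolic tail are all uniform for $|\delta_\la|<\eta$. For the hyperbolic part this is standard (the maps $F_\la$ converge uniformly to $F_{-1}$ on compact subsets of $\mc J_\la$ away from $\textbf P(\la)$, and expansion is uniform there), but near the parabolic cycle one needs the $\la$-uniform cylinder estimates from Section \ref{sec:cylindry} — precisely Propositions \ref{prop:aad}, \ref{prop:mod} and Corollary \ref{col:mod} — together with the fact that $\mc D(\la)\to\mc D(-1)$ so that the exponent in $\sum n^{-3\mc D(\la)/2}$ stays bounded away from the critical behavior uniformly (or, when $\mc D(-1)\leeq 4/3$, one keeps the dependence on $N$ but not on $\la$). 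I would assemble the argument by: (i) fixing a finite "core" piece of $\mc J_{\la_\dt}$ on which everything is uniformly hyperbolic and invoking the classical Perron--Frobenius bounds there; (ii) writing any $\tilde h_{\phi_\la}(s)$, $s\in\textbf{B}_N(\la)$, as a finite sum of core contributions plus parabolic-excursion contributions; (iii) bounding each parabolic excursion of depth $n$ by $C\,|\mc C_n(\la)|^{\mc D(\la)}\asymp C n^{-3\mc D(\la)/2}$ via Propositions \ref{prop:aad} and \ref{prop:mod}, summing over $n>N$; and (iv) citing \cite[Lemma 7.3]{J} and \cite[Lemma 6.2]{Ji} for the template of this computation, checking that the present (slightly modified) definition of the cylinders does not affect it. The case distinction $d(c_0)\gtrless 4/3$ does not enter this lemma itself — here we only need an upper bound depending on $N$, which holds regardless — but it is the reason the bound is stated with a constant $K(N)$ rather than a universal one.
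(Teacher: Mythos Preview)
The paper does not actually prove this lemma: it simply records that the result ``follows from \cite[Lemma 7.3]{J} and \cite[Lemma 6.2]{Ji}'' and moves on. Your proposal is a plausible sketch of what those cited arguments contain --- bounding the eigenfunction $\tilde h_{\phi_\la}=d\tilde\mu_\la/d\tilde\omega_\la$ from below uniformly (whence the $N$-independent $D$) and from above on $\textbf{B}_N$ by controlling the parabolic-excursion tail $\sum_{n>N}|\mc C_n(\la)|^{\mc D(\la)}$ using the $\la$-uniform cylinder estimates --- and you correctly flag that the only delicate point is the uniformity in $\la$, which is exactly what Propositions~\ref{prop:aad}, \ref{prop:mod} and Corollary~\ref{col:mod} are set up to provide. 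Since you end by invoking the same two references as the paper, there is no genuine divergence of approach; you have simply unpacked what the paper leaves packaged.

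One small caution: your outline leans on the representation $\tilde h_{\phi_\la}=\lim_m\mathcal L_{\phi_\la}^m\mathbf 1$ ``in an averaged sense'', which is a bit loose. In the cited works the argument is organised slightly differently --- one works directly with the construction of the invariant density as a sum over returns to the hyperbolic core (an inducing/first-return scheme), rather than via Ces\`aro limits of $\mathcal L^m\mathbf 1$ --- but the underlying estimates (bounded distortion on the core, summability of $|\mc C_n(\la)|^{\mc D(\la)}$ over $n>N$ uniformly in $\la$) are exactly the ones you identify. If you were to write this out in full you would want to follow that inducing formulation rather than the transfer-operator-iterate heuristic.
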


\begin{col}\label{col:lim}
   For every neighborhood $U$ of the parabolic cycle, and every continuous function $g:\C\rightarrow\R$ such that $g|_U=0$, we have
      $$\lim_{\la\rightarrow-1}\int g\,d{\mu_\la}=\int g\,d\mu_{-1}.$$
\end{col}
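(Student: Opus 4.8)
The plan is to combine Lemma \ref{lem:L}, which controls the Radon--Nikodym derivative $d\mu_\la/d\omega_\la$ uniformly away from the parabolic cycle, with the fact that the conformal measures $\omega_\la$ converge weakly to $\omega_{-1}$ as $\la\to-1$. First I would fix the neighborhood $U$ of $\textbf{P}$ and the continuous function $g$ vanishing on $U$, and pick $N\in\N$ large enough that $\textbf{M}_N(\la)\subset U$ for all $\la$ sufficiently close to $-1$; this is possible by Corollary \ref{col:odl} (the last sentence there: $\textbf{M}_N(\la)$ lies in an arbitrarily small neighborhood of the parabolic cycle). Consequently $\supp g\cap\textbf{M}_N(\la)=\emptyset$, so the support of $g$ is contained in $\textbf{B}_N(\la)$, and
$$\int g\,d\mu_\la=\int_{\textbf{B}_N(\la)} g\,\frac{d\mu_\la}{d\omega_\la}\,d\omega_\la.$$

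Next I would pass to a $1$-parameter subfamily $\la=\la(c)$ with $c\in W_r$ (or $W_l$) approaching $c_0$, where $F_\la$ is honestly hyperbolic, so that the holomorphic motion $\varphi_\la$ and the measures $\omega_\la=(\varphi_\la)_*\tilde\omega_\la$ are genuinely defined and depend continuously (indeed the density $d\mu_\la/d\omega_\la$ is bounded below by $D^{-1}$ everywhere and above by $K(N)$ on $\textbf{B}_N$). The key analytic input is that $\omega_\la\to\omega_{-1}$ weakly-$*$ and $\mc D(\la)\to\mc D(-1)=d(c_0)$: this follows because the equilibrium states/conformal measures of the perturbed hyperbolic maps converge to the conformal measure of the parabolic map (a standard consequence of the Perron--Frobenius--Ruelle theory together with the convergence $\varphi_\la\to\varphi_{-1}$ established in Section \ref{ssc:hm}; the Julia sets $\mc J_\la$ are uniformly bounded and move continuously). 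Then I would write $h_\la:=\bigl(d\mu_\la/d\omega_\la\bigr)\cdot\mathbf{1}_{\textbf{B}_N(\la)}$ and argue that $g\,h_\la\,d\omega_\la$ converges weakly to $g\,h_{-1}\,d\omega_{-1}$. Since $g$ has compact support disjoint from $\textbf{P}$ and $h_\la$ is uniformly bounded there by $K(N)$, the only issue is lower semicontinuity near $\partial\textbf{B}_N$; but one can replace the sharp cutoff by a continuous bump $\chi$ equal to $1$ on $\supp g$ and supported in $\textbf{B}_N(\la)$ for all $\la$ near $-1$, and observe that $g\,(d\mu_\la/d\omega_\la)$ is a bona fide continuous function on a fixed compact set (the density is real-analytic in the hyperbolic regime, Hölder in the limit), so the weak-$*$ convergence of $\omega_\la$ applies directly.

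Finally, putting these together,
$$\lim_{\la\to-1}\int g\,d\mu_\la=\lim_{\la\to-1}\int g\,\frac{d\mu_\la}{d\omega_\la}\,d\omega_\la=\int g\,\frac{d\mu_{-1}}{d\omega_{-1}}\,d\omega_{-1}=\int g\,d\mu_{-1},$$
where the middle equality uses that the integrand, restricted to the fixed compact set $\supp g$, converges uniformly and $\omega_\la\to\omega_{-1}$ weakly. The main obstacle I anticipate is making rigorous the convergence of the densities $d\mu_\la/d\omega_\la$ to $d\mu_{-1}/d\omega_{-1}$ on $\supp g$: Lemma \ref{lem:L} gives uniform two-sided bounds but not convergence. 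One resolves this by noting that on $\textbf{B}_N$ the maps $F_\la$ are uniformly expanding with uniformly bounded distortion (the parabolic mechanism is confined to $\textbf{M}_N$), so the transfer operators $\mathcal L_{\phi_\la}$ restricted to this part depend continuously on $\la$ in the appropriate $C^0$ or Hölder topology, whence their fixed functions $\tilde h_{\phi_\la}$ and the eigenmeasures converge; pushing forward by $\varphi_\la\to\varphi_{-1}$ gives the claim. Alternatively, one can avoid densities entirely by a direct weak-$*$ compactness argument: any weak limit $\nu$ of $\mu_\la$ satisfies $D^{-1}\omega_{-1}\leq\nu\leq(\text{const})\,\omega_{-1}$ on $\textbf{B}_N$ and is $F_{-1}$-invariant, and by the uniqueness part of the Denker--Urbański dichotomy recalled at the start of this section (valid whether $d(c_0)>4/3$ or not, up to normalization) $\nu$ must agree with $\mu_{-1}$ on sets disjoint from $\textbf{P}$, which suffices since $g$ vanishes on $U\supset\textbf{P}$.
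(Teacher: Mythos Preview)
The paper does not supply an explicit proof of this corollary; it is stated immediately after Lemma~\ref{lem:L} as a direct consequence, with the underlying construction and convergence arguments deferred to the cited references \cite[Lemma 7.3]{J} and \cite[Lemma 6.2]{Ji}. Your approach captures exactly the intended mechanism: choose $N$ so that $\textbf{M}_N(\la)\subset U$ for $\la$ near $-1$, reduce the integral to $\textbf{B}_N(\la)$, and combine the uniform density bounds of Lemma~\ref{lem:L} with weak convergence of the conformal measures $\omega_\la\to\omega_{-1}$.

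You are also right to flag that Lemma~\ref{lem:L} by itself yields only two-sided bounds on $d\mu_\la/d\omega_\la$, not convergence. Your first proposed fix --- continuity of the transfer operators and their eigenfunctions on the uniformly expanding piece $\textbf{B}_N$ --- is precisely what the cited papers establish, and is the cleanest route. Your alternative via weak-$*$ compactness and uniqueness needs more care, however: when $\mc D(-1)\leq 4/3$ the total mass $\tilde\mu_\la(\mc J_{\la_\dt})$ diverges (cf.\ Lemma~\ref{lem:mj}), so the full measures $\mu_\la$ have no weak-$*$ accumulation point, and restricting to $\textbf{B}_N$ destroys $F_{-1}$-invariance of the limit. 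Moreover the Denker--Urba\'nski $\sigma$-finite measure is unique only up to a positive scalar, so matching the normalization with the limit of the hyperbolic $\mu_\la$ requires an additional argument. Stick with the transfer-operator continuity route; it is what the references actually prove and avoids these issues.
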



The next lemmas are Lemma 6.4 and 6.5 in \cite{Ji}.

\begin{lem}\label{lem:sumofmeasures}
   There exists constant $H>0$, such that for every $\varepsilon>0$ there exist $N\in\N$, $\eta>0$ such that if $n>N$ and $|\delta_\la|<\eta$ then
      $$(1-\varepsilon)H\sum_{m=n}^{\infty}\tilde{\omega_\la}(\mc C_m)
      \leeq\tilde{\mu_\la}(\mc C_n)\leeq(1+\varepsilon)H\sum_{m=n}^{\infty}\tilde{\omega_\la}(\mc C_m).$$
\end{lem}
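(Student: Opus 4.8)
\textbf{Proof strategy for Lemma \ref{lem:sumofmeasures}.}

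The plan is to exploit the quasi-invariance relation between $\tilde\mu_\la$ and $\tilde\omega_\la$ on the cylinders near the parabolic point, together with the fact that $F_\la$ maps $\mc C_{n+1}$ bijectively onto $\mc C_n$ (with nearly-linear distortion, by Proposition \ref{prop:aad}\eqref{pit:aad2} and the Koebe distortion theorem). Since $\tilde\mu_\la = \tilde h_{\phi_\la}\tilde\omega_\la$ with $\tilde h_{\phi_\la}>0$, and $\tilde h_{\phi_\la}$ is an eigenfunction of $\mathcal L_{\phi_\la}$ with eigenvalue $1$, I would first write down, for $z$ a point in $\mc C_n$ and its successive $F_\la$-preimages $z_m\in\mc C_m$ ($m\geq n$) lying in the same ``tower'' over $z$, the estimate
\begin{equation*}
   \tilde h_{\phi_\la}(z) = \sum_{\overline z\in F_\la^{-1}(z)} \tilde h_{\phi_\la}(\overline z)\,|F_\la'(\overline z)|^{-\mc D(\la)}.
\end{equation*}
Iterating this and separating the single preimage inside $\mathcal M^*_N$ from the preimages landing in $\textbf{B}_N$, one controls the ``escaping'' part of $\tilde h_{\phi_\la}$. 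Equivalently, and more directly: on $\textbf{B}_N(\la)$ the density $d\tilde\mu_\la/d\tilde\omega_\la$ is bounded above and below by Lemma \ref{lem:L}, and the conformality of $\omega_\la$ with exponent $\mc D(\la)$ gives $\omega_\la(F_\la(A))=\int_A|F_\la'|^{\mc D(\la)}d\omega_\la$; pulling back repeatedly along the tower and comparing with Proposition \ref{prop:aad} yields
\begin{equation*}
   \tilde\omega_\la(\mc C_m) \asymp \Big(\frac{|\mc C_m(\la)|}{|\mc C_{n}(\la)|}\Big)^{\mc D(\la)}\,\tilde\omega_\la(\mc C_n\cap\{\text{base}\}),
\end{equation*}
but this is only auxiliary; the real input for the $\mu$-$\omega$ comparison is the structure of the transfer operator.

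The key step is to identify the constant $H$. I expect $H$ to be, up to normalization, $\int \tilde h_{\phi_{-1}}\,d(\text{something})$ — more precisely, $H$ should come from the value of the invariant density ``at the parabolic point'' in the $\sigma$-finite picture, i.e. from the limit of $\tilde h_{\phi_\la}$ restricted near the tip, which by the conformality relation and Lemma \ref{lem:translation} (the Fatou-coordinate near-translation) converges to a definite positive constant independent of $\la$ and of $n$ as $\la\to-1$, $n\to\infty$. Concretely, I would show
\begin{equation*}
   \tilde\mu_\la(\mc C_n) = \int_{\mc C_n}\tilde h_{\phi_\la}\,d\tilde\omega_\la
   = \int_{\mc C_n}\Big(\sum_{k\geq 0}\mathcal L_{\phi_\la}^{k}\mathbf 1_{?}\Big)\cdots
\end{equation*}
splits as the sum over $m\geq n$ of the $\tilde\omega_\la$-mass of the part of $\mc C_m$ whose forward orbit first exits $\mathcal M_N^*(\la)$ precisely at level $n$; using bijectivity of $F_\la$ on cylinders this ``first-exit at level $n$'' part of $\mc C_m$ is comparable, with multiplicative error $1\pm\varepsilon$ for $N$ large (hence $\delta_\la$ small and $n>N$), to the whole of $\mc C_m$, and the cumulative Jacobian factor along $F_\la^{m-n}$ is $1+\tilde o(1)$ because $|(F_\la^{m-n})'|$ is close to $|\mc C_n|/|\mc C_m|$ by Proposition \ref{prop:aad}\eqref{pit:aad2} while $\tilde h_{\phi_\la}$ is slowly varying on $\textbf{M}_N$. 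Summing over $m$ gives the stated two-sided bound with a uniform $H$.

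The main obstacle I anticipate is controlling the density $\tilde h_{\phi_\la}$ uniformly in $\la$ near $-1$ on the degenerating region $\mathcal M_N(\la)$: as $\la\to-1$ the map loses hyperbolicity, the transfer operator $\mathcal L_{\phi_\la}$ ceases to have a spectral gap in the limit, and $\tilde h_{\phi_\la}$ could a priori blow up or oscillate near the (pre)parabolic points. The resolution is exactly the content of Lemma \ref{lem:L} together with the cylinder geometry of Section \ref{sec:cylindry}: on $\textbf{B}_N$ the density is pinched between $D^{-1}$ and $K(N)$ uniformly in $\la$, and the ``induced'' transfer operator obtained by inducing on $\textbf{B}_N$ (first return to $\textbf{B}_N$) is genuinely hyperbolic with uniform estimates, so that the mass $\tilde\mu_\la(\mc C_n)$ is governed by the return-time series $\sum_{m\geq n}\tilde\omega_\la(\mc C_m)$ with a constant $H$ equal to the expected first-return Jacobian weight, which converges as $\la\to-1$. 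Once this uniform control is in place, matching the constant $H$ to its $\la=-1$ value and absorbing all remaining errors into $1\pm\varepsilon$ is routine, invoking Propositions \ref{prop:dyst}, \ref{prop:aad}, \ref{prop:rn} and Corollary \ref{col:mod} as needed.
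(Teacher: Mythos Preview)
The paper does not prove this lemma; it simply cites it as Lemma~6.4 of \cite{Ji}. So the comparison is with the argument there.

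Your proposal assembles the right ingredients --- $F_{\la_\dt}$-invariance of $\tilde\mu_\la$, conformality of $\tilde\omega_\la$, Lemma~\ref{lem:L} for the density on $\textbf{B}_N$, and the bounded-distortion cylinder estimates of Proposition~\ref{prop:aad} --- but the central decomposition is stated incorrectly. You say $\tilde\mu_\la(\mc C_n)$ splits as a sum over $m\geq n$ of the $\tilde\omega_\la$-mass of ``the part of $\mc C_m$ whose forward orbit first exits $\mathcal M_N^*(\la)$ precisely at level $n$''. But every point of $\mc C_m$ has the \emph{same} forward itinerary $\mc C_m\to\mc C_{m-1}\to\cdots$ through the tower; there is no subset that exits at level $n$ rather than at some other level, so the decomposition is empty as written. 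The telescoping that actually works runs the other way: from invariance and the splitting $F_\la^{-1}(\mc C_n(\la))=\mc C_{n+1}(\la)\sqcup E_n(\la)$, where $E_n(\la)\subset\textbf{B}_N(\la)$ consists of the non-tower preimages of $\mc C_n(\la)$, one gets $\tilde\mu_\la(\mc C_n)=\tilde\mu_\la(\mc C_{n+1})+\tilde\mu_\la(E_n)$ and hence
\[
   \tilde\mu_\la(\mc C_n)=\sum_{m\geq n}\tilde\mu_\la(E_m).
\]
As $m\to\infty$ the sets $E_m$ shrink to the finitely many non-tower preimages $q$ of the parabolic fixed point, all of which lie in $\textbf{B}_N$. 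There $\tilde\mu_\la=\tilde h_{\phi_\la}\tilde\omega_\la$ with $\tilde h_{\phi_\la}$ close to $\tilde h_{\phi_\la}(q)$, while conformality gives $\tilde\omega_\la(\mc C_m)=\int_{E_m}|F_\la'|^{\mc D(\la)}d\tilde\omega_\la$ with $|F_\la'|$ close to $|F_\la'(q)|$. This both identifies $H$ (essentially $\sum_q \tilde h_{\phi_\la}(q)\,|F_\la'(q)|^{-\mc D(\la)}$) and shows its stability as $\la\to-1$ via Lemma~\ref{lem:L} and the uniform convergence of $\varphi_\la$. Your instinct that the whole issue is uniform control of $\tilde h_{\phi_\la}$ through Lemma~\ref{lem:L} is exactly right, but that control must be invoked on $E_m\subset\textbf{B}_N$, not on the tower $\textbf{M}_N$ where the density is \emph{not} uniformly bounded as $\la\to-1$.
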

Note that the constant $H$ depends on the choice of the fixed point $\alpha_c$, that was moving to $0$.

For $h\greq1$, $|\ve|\leeq1$ and $u\neq0$ let us define
\begin{equation}\label{eq:A}
   \Lambda_\ve^h(u):=\Big(\frac{e^{u}}{|e^{2u}-1|^{3/2}}\Big)^he^{\ve u}= \Big(\frac{e^{-2u}}{|1-e^{-2u}|^{3/2}}\Big)^he^{\ve u}.
\end{equation}
If $h=\mc D(\la)$, then $\mc D(\la)-\ve$ is separated from zero for $\la$ close to $-1$. Thus, for all sufficiently small $s>0$, we get
   $$\int_{s}^{\infty}\Lambda_\delta^{\mc D(\la)}(u)du\asymp
   \int_{-\infty}^{-s}\Lambda_\delta^{\mc D(\la)}(u) du\asymp s^{-3\mc D(\la)/2+1}.$$
\begin{lem}\label{lem:est}
   There exists $M>0$, and for every $\varepsilon\in(0,1)$ there exist $N\in\N$, $\eta>0$ such that
 \begin{enumerate}
    \item\label{lit:est1}
       if $\delta_\la\in(0,\eta)$, and $n>N$, then
     \begin{equation*}
        (1-\varepsilon)M\int^\infty_{n\delta_\la}\Lambda_{-\varepsilon}^{\mc D(\la)}(u) du\leeq\frac{\tilde{\mu_\la}(\mc C_n)}{\delta_\la^{3\mc D(\la)/2-1}}\leeq (1+\varepsilon)M\int^\infty_{n\delta_\la}\Lambda_{\varepsilon}^{\mc D(\la)}(u) du,
     \end{equation*}
    \item\label{lit:est2}
       if $\delta_\la\in(-\eta,0)$, and $n>N$, then
     \begin{equation*}
        (1-\varepsilon)M\int_{-\infty}^{n\delta_\la} \Lambda_{-\varepsilon}^{\mc D(\la)}(u) du\leeq\frac{\tilde{\mu_\la}(\mc C_n)}{|\delta_\la|^{3\mc D(\la)/2-1}}\leeq (1+\varepsilon)M \int_{-\infty}^{n\delta_\la}\Lambda_{\varepsilon}^{\mc D(\la)}(u) du.
     \end{equation*}
 \end{enumerate}
\end{lem}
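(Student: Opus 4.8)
The plan is to bootstrap from Lemma \ref{lem:sumofmeasures}, which already reduces $\tilde\mu_\la(\mc C_n)$ (up to a multiplicative factor $(1\pm\ve)H$) to the tail sum $\sum_{m\geq n}\tilde\omega_\la(\mc C_m)$ of the conformal masses of cylinders. So the first step is to obtain sharp two-sided estimates for the individual conformal masses $\tilde\omega_\la(\mc C_m)$. Since $\omega_\la$ is $F_\la$-conformal with exponent $\mc D(\la)$ and $F_\la^k$ maps $\mc C_{m+k}$ bijectively onto $\mc C_m$ with derivative controlled by Proposition \ref{prop:aad}(\ref{pit:aad2}), one gets $\tilde\omega_\la(\mc C_{m})\asymp |(F_\la^{\,m-n_0})'|^{-\mc D(\la)}\,\tilde\omega_\la(\mc C_{n_0})\asymp (|\mc C_m(\la)|/|\mc C_{n_0}(\la)|)^{\mc D(\la)}$ for a fixed reference level $n_0$; equivalently, up to constants, $\tilde\omega_\la(\mc C_m)\asymp |\mc C_m(\la)|^{\mc D(\la)}$, where here the $\asymp$ constant may depend on $\la$ only through quantities bounded away from $0$ and $\infty$ as $\la\to-1$ (using that $\mc D(\la)$ stays in a compact subinterval of $(1,2)$, by the continuity results and Zdunik's theorem $\mc D(-1)>1$). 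This is essentially the conformal-measure version of the distortion estimate and is the analogue of the corresponding step in \cite{Ji}.

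The second step is to insert the precise size estimate for $|\mc C_m(\la)|$. By Proposition \ref{prop:rn} together with Corollary \ref{col:odl}, for $m>N$ we have $|\mc C_m(\la)|\asymp A^2|\im z|^3 e^{-2m\delta_\la}$ with $A^2|\im z|^2\asymp \delta_\la/(1-e^{-2m\delta_\la})$ (the content of Proposition \ref{prop:rnn}, valid uniformly for both signs of $\delta_\la$ up to $e^{\pm\ve m|\delta_\la|}$ errors). Hence $|\mc C_m(\la)|\asymp \big(\delta_\la/(1-e^{-2m\delta_\la})\big)^{3/2} e^{-2m\delta_\la}\cdot e^{\pm\ve m|\delta_\la|}$, which, after raising to the power $\mc D(\la)$, is precisely $\delta_\la^{3\mc D(\la)/2}\cdot\Lambda_{\pm\ve}^{\mc D(\la)}(m\delta_\la)$ in the notation of \eqref{eq:A} (recall $\Lambda_\ve^h(u)=\big(e^{-2u}/|1-e^{-2u}|^{3/2}\big)^h e^{\ve u}$, and $e^{-2\mc D(\la)m\delta_\la}=e^{-2m\delta_\la\cdot\mc D(\la)}$; the leftover $|\delta_\la|^{-1}$-type bookkeeping is absorbed so that after dividing by $|\delta_\la|^{3\mc D(\la)/2-1}$ the extra $|\delta_\la|$ rescales the sum $\sum_m$ into the integral $\int du$). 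Thus $\tilde\omega_\la(\mc C_m)/|\delta_\la|^{3\mc D(\la)/2}\asymp\Lambda_{\pm\ve}^{\mc D(\la)}(m\delta_\la)$ with a fixed multiplicative constant (this produces the $M$).

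The third step is to pass from the sum to the integral. Because $\Lambda_{\ve}^{\mc D(\la)}$ is (for $\ve$ small enough and $\mc D(\la)$ bounded away from $0$) piecewise monotone with a controlled single maximum near $u=0$, and because the step size in the Riemann sum $\sum_{m\ge n}\Lambda_{\pm\ve}^{\mc D(\la)}(m\delta_\la)$ is $|\delta_\la|$, we get $|\delta_\la|\sum_{m\ge n}\Lambda_{\pm\ve}^{\mc D(\la)}(m\delta_\la)=(1\pm\ve)\int_{n\delta_\la}^{\infty}\Lambda_{\pm\ve'}^{\mc D(\la)}(u)\,du$ when $\delta_\la>0$, and similarly with $\int_{-\infty}^{n\delta_\la}$ when $\delta_\la<0$ (the monotonicity away from the origin controls the comparison, and near the origin the tail integral $\int_{\sim 0}\Lambda^{\mc D(\la)}_\ve$ converges and dominates the finitely many near-maximal terms, using that $3\mc D(\la)/2-1>0$). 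Combining with Lemma \ref{lem:sumofmeasures}, $\tilde\mu_\la(\mc C_n)=(1\pm\ve)H\sum_{m\ge n}\tilde\omega_\la(\mc C_m)=(1\pm\ve)HM|\delta_\la|^{3\mc D(\la)/2-1}\int_{n\delta_\la}^{\infty\text{ or }-\infty}\Lambda_{\pm\ve}^{\mc D(\la)}(u)\,du$, absorbing $HM$ into the constant $M$ of the statement; note $M$ is independent of $\ve,\la,n$ (the $\la$-dependence has been funneled entirely into $\mc D(\la)$ inside $\Lambda$). The main obstacle is the uniformity in $\la$: every $\asymp$ along the way carries constants that a priori could blow up as $\la\to-1$, so one must check at each stage — the conformal distortion estimate, Propositions \ref{prop:rn}, \ref{prop:rnn}, and the sum-to-integral passage — that the constants depend only on the family $(F_\la)$ and on $\mc D(\la)$ through its (compact) range, exactly as in the parabolic-perturbation machinery of \cite{J} and \cite{Ji}.
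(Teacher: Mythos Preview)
The paper does not give its own proof of this lemma; it simply records it as Lemma~6.5 of \cite{Ji}. Your three-step outline---reduce $\tilde\mu_\la(\mc C_n)$ to the tail $\sum_{m\ge n}\tilde\omega_\la(\mc C_m)$ via Lemma~\ref{lem:sumofmeasures}, estimate each $\tilde\omega_\la(\mc C_m)$ by $|\mc C_m(\la)|^{\mc D(\la)}$ using conformality and Proposition~\ref{prop:aad}(\ref{pit:aad2}), insert the size formula from Propositions~\ref{prop:rn} and~\ref{prop:rnn} to identify $|\delta_\la|^{3\mc D(\la)/2}\Lambda_{\pm\ve}^{\mc D(\la)}(m\delta_\la)$, and finally trade the Riemann sum with step $|\delta_\la|$ for the integral---is exactly the argument carried out there, and the algebra you wrote checks out.

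One point is worth making more explicit than you do. The statement asserts a \emph{fixed} constant $M$, not merely a two-sided bound with $\la$-dependent constants. In your scheme $M$ arises as the product of the constant $H$ from Lemma~\ref{lem:sumofmeasures}, the factor $A^{-\mc D(\la)}$, and the density ratio $\omega_\la(\mc C_{n_0}(\la))/|\mc C_{n_0}(\la)|^{\mc D(\la)}$ at a reference level $n_0$. That this last quantity stabilises as $\la\to-1$ (rather than merely staying bounded) is a genuine input---it comes from weak convergence of the conformal measures $\omega_\la\to\omega_{-1}$ together with continuity of $\mc D$ at $-1$---and is what turns your chain of $\asymp$'s into the $(1\pm\ve)M$ form of the conclusion. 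You allude to this (``the $\la$-dependence has been funneled entirely into $\mc D(\la)$''), but it should be stated rather than hidden inside an $\asymp$.
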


Analogously as in \cite[Lemma 6.6 (2)]{Ji}, we get

\begin{lem}\label{lem:mm}
   If $\mc D(\la)<4/3$, then for every $\ve>0$ there exists $\eta>0$ such that
 \begin{enumerate}
   \item\label{lit:mm1}
       if $\delta_\la\in(0,\eta)$, then
     \begin{equation*}
        \sum_{n=1}^{\infty}\bigg|\tilde{\mu_\la}(\mc C_n)-M\delta_\la^{\frac{3}{2}\mc D(\la)-1}\int^{\infty}_{n\delta_\la}\Lambda_0^{\mc D(\la)}(u)du\bigg|\leeq \ve\delta_\la^{\frac32\mc D(\la)-2},
     \end{equation*}
    \item\label{lit:mm2}
       if $\delta_\la\in(-\eta,0)$, then
     \begin{equation*}
        \sum_{n=1}^{\infty}\bigg|\tilde{\mu_\la}(\mc C_n)-M\delta_\la^{\frac{3}{2}\mc D(\la)-1}\int_{-\infty}^{n\delta_\la}\Lambda_0^{\mc D(\la)}(u)du\bigg|\leeq \ve|\delta_\la|^{\frac32\mc D(\la)-2},
     \end{equation*}
 \end{enumerate}
\end{lem}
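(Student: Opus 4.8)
\textbf{Proof strategy for Lemma \ref{lem:mm}.}

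The plan is to reduce the estimate to the two cases of Lemma \ref{lem:est} according to whether $n\delta_\la$ is small or large, and then to sum carefully. Fix $\ve>0$ and assume first $\delta_\la\in(0,\eta)$. I would split the sum $\sum_{n\geq1}$ at a threshold, say at $n_0:=\lceil 1/\delta_\la\rceil$ (equivalently, at $n\delta_\la\asymp1$), and further isolate the finitely many small indices $1\leeq n\leeq N$ coming from Lemma \ref{lem:est}. For $n\leeq N$ both $\tilde\mu_\la(\mc C_n)$ (bounded, by Corollary \ref{col:mod} and Lemma \ref{lem:sumofmeasures}) and the integral term $M\delta_\la^{\frac32\mc D(\la)-1}\int_{n\delta_\la}^\infty\Lambda_0^{\mc D(\la)}(u)\,du$ are $O(\delta_\la^{\frac32\mc D(\la)-1})$, hence (using $\mc D(\la)<4/3$, so the exponent $\frac32\mc D(\la)-1<1$) each is $o(\delta_\la^{\frac32\mc D(\la)-2})$, and summing $N$ of them stays within $\ve\delta_\la^{\frac32\mc D(\la)-2}$ once $\delta_\la$ is small; the constant $N$ is allowed to depend on $\ve$ exactly as in Lemma \ref{lem:est}.

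For $n>N$ I would use Lemma \ref{lem:est}(\ref{lit:est1}): each term $|\tilde\mu_\la(\mc C_n)-M\delta_\la^{\frac32\mc D(\la)-1}\int_{n\delta_\la}^\infty\Lambda_0^{\mc D(\la)}(u)\,du|$ is controlled by $\delta_\la^{\frac32\mc D(\la)-1}$ times the discrepancy between $\int_{n\delta_\la}^\infty\Lambda_{\pm\ve}^{\mc D(\la)}$ and $\int_{n\delta_\la}^\infty\Lambda_0^{\mc D(\la)}$, i.e. by something like $\ve'\delta_\la^{\frac32\mc D(\la)-1}\int_{n\delta_\la}^\infty\Lambda_0^{\mc D(\la)}(u)\,du$ with $\ve'\to0$ as the parameter $\ve$ in Lemma \ref{lem:est} shrinks. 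Summing over $n>N$ and comparing the sum $\sum_{n}\int_{n\delta_\la}^\infty\Lambda_0^{\mc D(\la)}(u)\,du$ with the double integral $\frac1{\delta_\la}\int_0^\infty\int_u^\infty\Lambda_0^{\mc D(\la)}(v)\,dv\,du\asymp\frac1{\delta_\la}\int_0^\infty v\,\Lambda_0^{\mc D(\la)}(v)\,dv$, and using the displayed asymptotic $\int_s^\infty\Lambda_\delta^{\mc D(\la)}(u)\,du\asymp s^{-\frac32\mc D(\la)+1}$ together with the convergence of $\int_0^1 v\,\Lambda_0^{\mc D(\la)}(v)\,dv$ (here $\Lambda_0^{\mc D(\la)}(v)\sim v^{-3\mc D(\la)/2}$ near $0$, integrable against $v\,dv$ precisely when $\mc D(\la)<4/3$), one gets $\sum_{n>N}\int_{n\delta_\la}^\infty\Lambda_0^{\mc D(\la)}\asymp\delta_\la^{-1}\cdot\delta_\la^{-\frac32\mc D(\la)+2}=\delta_\la^{-\frac32\mc D(\la)+1}$, so the whole tail contributes at most $\ve'\cdot\delta_\la^{\frac32\mc D(\la)-1}\cdot\delta_\la^{-\frac32\mc D(\la)+1}=\ve'$, which is not yet $o$ of the target. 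The right bookkeeping is instead to keep the factor $\delta_\la^{\frac32\mc D(\la)-1}$ attached and note $\delta_\la^{\frac32\mc D(\la)-1}=\delta_\la\cdot\delta_\la^{\frac32\mc D(\la)-2}$, so one really needs $\delta_\la\cdot\sum_{n>N}(\cdots)=O(\ve'\delta_\la^{\frac32\mc D(\la)-2})$; this works because the Riemann-sum comparison gives $\delta_\la\sum_{n>N}\int_{n\delta_\la}^\infty\Lambda_0^{\mc D(\la)}(u)\,du\asymp\int_{N\delta_\la}^\infty\int_u^\infty\Lambda_0^{\mc D(\la)}(v)\,dv\,du$, finite and $\asymp(N\delta_\la)^{-\frac32\mc D(\la)+2}\cdot\text{const}$ only for the hyperbolic part — so one further separates $N<n\leeq n_0$ (parabolic regime, where one uses $\Lambda_0^{\mc D(\la)}(u)\asymp u^{-3\mc D(\la)/2}$ and the convergence of $\int_0^1 u^{1-3\mc D(\la)/2}\,du$) from $n>n_0$ (hyperbolic regime, exponential decay), summing each against $\delta_\la^{\frac32\mc D(\la)-2}$ with a constant that can be made $<\ve$ by taking first $N$ large, then $\eta$ small.

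The case $\delta_\la\in(-\eta,0)$ is handled identically, replacing $\int_{n\delta_\la}^\infty$ by $\int_{-\infty}^{n\delta_\la}$, using Lemma \ref{lem:est}(\ref{lit:est2}) and Corollary \ref{col:odl}(3) for the hyperbolic tail (where now the relevant decay is $e^{(1-\ve)n\delta_\la}$ with $n\delta_\la\to-\infty$); the integrand $\Lambda_0^{\mc D(\la)}(u)$ for $u<0$ behaves like $|u|^{-3\mc D(\la)/2}$ near $0$ and decays exponentially as $u\to-\infty$, so the same convergence criterion $\mc D(\la)<4/3$ applies. The main obstacle is the uniformity: the constants $N$ and $\eta$ in Lemma \ref{lem:est} depend on its own $\ve$-parameter, and one must choose that parameter, then $N$, then $\eta$ in the correct order so that the error accumulated over the infinitely many terms — especially the slowly-decaying parabolic range $N<n\lesssim1/\delta_\la$, which is where the hypothesis $\mc D(\la)<4/3$ is essential — is genuinely bounded by $\ve|\delta_\la|^{\frac32\mc D(\la)-2}$ and not merely by $\ve$ or by $\ve|\delta_\la|^{\frac32\mc D(\la)-1}$. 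Everything else is the routine Riemann-sum/integral comparison already implicit in the displayed asymptotic preceding Lemma \ref{lem:est}.
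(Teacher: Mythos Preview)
Your strategy is correct and is the same as the argument the paper invokes (the paper does not prove this lemma here; it simply refers to \cite[Lemma 6.6(2)]{Ji}, where exactly this split into a finite initial block, a parabolic range, and a hyperbolic tail is carried out, followed by the Riemann--sum comparison you describe). One slip: for $1\leeq n\leeq N$ neither $\tilde\mu_\la(\mc C_n)$ nor the integral term is $O(\delta_\la^{\frac32\mc D(\la)-1})$; by Proposition~\ref{prop:meas}(\ref{pit:meas2}) and the displayed asymptotic $\int_s^\infty\Lambda_0^{\mc D(\la)}\asymp s^{-\frac32\mc D(\la)+1}$ both are $\asymp n^{-\frac32\mc D(\la)+1}=O(1)$, hence still $o(\delta_\la^{\frac32\mc D(\la)-2})$ since that quantity blows up, so your conclusion for the initial block is unaffected.
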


Lemma \ref{lem:est}, or Lemma \ref{lem:sumofmeasures} and Proposition \ref{prop:mod}, lead to

\begin{prop}\label{prop:meas}
   There exist $K>1$ and $\eta>0$ such that
 \begin{enumerate}
   \item\label{pit:meas1}
      if $n|\delta_\la|>1$, then
    \begin{equation*}
       \frac1K|\delta_\la|^{\frac32\mc D(\la)-1}e^{-Kn|\delta_\la|}\leeq \tilde{\mu_\la}(\mc C_n)\leeq K|\delta_\la|^{\frac32\mc D(\la)-1}e^{-\frac1K n|\delta_\la|},
    \end{equation*}
   \item\label{pit:meas2}
      if $n|\delta_\la|\leeq1$, then
    \begin{equation*}
       \frac1K n^{-\frac32\mc D(\la)+1}\leeq \tilde{\mu_\la}(\mc C_n)\leeq K n^{-\frac32\mc D(\la)+1},
    \end{equation*}
 \end{enumerate}
   where $|\delta_\la|<\eta$ and $n\greq1$.
\end{prop}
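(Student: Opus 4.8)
The plan is to derive Proposition~\ref{prop:meas} directly from the already-established estimates on $\tilde\mu_\la(\mc C_n)$, namely Lemma~\ref{lem:est} (or the combination of Lemma~\ref{lem:sumofmeasures} with Proposition~\ref{prop:mod}), by carefully tracking the asymptotic behaviour of the integral $\int\Lambda_{\pm\ve}^{\mc D(\la)}(u)\,du$. First I would recall from \eqref{eq:A} that $\Lambda_\ve^h(u)=\bigl(e^{-2u}/|1-e^{-2u}|^{3/2}\bigr)^h e^{\ve u}$, and split into the two regimes according to whether $n|\delta_\la|$ exceeds $1$.

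For part \eqref{pit:meas1}, when $n|\delta_\la|>1$: here the lower limit $n\delta_\la$ of the integral in Lemma~\ref{lem:est} is bounded away from $0$, so the factor $|1-e^{-2u}|^{3/2}$ is comparable to a constant on the tail $|u|\geq n|\delta_\la|$, and the integrand decays essentially like $e^{-2\mc D(\la)|u|}e^{\pm\ve u}$; thus $\int_{n\delta_\la}^\infty\Lambda_{\pm\ve}^{\mc D(\la)}(u)\,du\asymp e^{-c\,n|\delta_\la|}$ for suitable constants $c$ depending on $\mc D(\la)$ and $\ve$, which (since $\mc D(\la)$ stays bounded and bounded away from $0$ for $\la$ near $-1$, using $\mc D(-1)\in(1,2)$ from Zdunik's theorem and continuity) can be absorbed into $e^{-Kn|\delta_\la|}$ and $e^{-\frac1K n|\delta_\la|}$ by enlarging $K$. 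Multiplying by the prefactor $|\delta_\la|^{\frac32\mc D(\la)-1}$ from the denominator in Lemma~\ref{lem:est} gives the claimed two-sided bound. The case $\delta_\la<0$ is identical using part \eqref{lit:est2} and the second form of $\Lambda$.

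For part \eqref{pit:meas2}, when $n|\delta_\la|\leeq1$: now the lower limit $n\delta_\la$ is small, and the displayed asymptotic $\int_s^\infty\Lambda_\delta^{\mc D(\la)}(u)\,du\asymp s^{-3\mc D(\la)/2+1}$ (valid for small $s>0$, stated just before Lemma~\ref{lem:est}) applies with $s=n|\delta_\la|$, so $\int_{n\delta_\la}^\infty\Lambda_{\pm\ve}^{\mc D(\la)}(u)\,du\asymp (n|\delta_\la|)^{-\frac32\mc D(\la)+1}$; multiplying by $|\delta_\la|^{\frac32\mc D(\la)-1}$ the powers of $|\delta_\la|$ cancel exactly and one is left with $\asymp n^{-\frac32\mc D(\la)+1}$, as claimed. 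Alternatively, and perhaps more cleanly, for this regime one can bypass the integral and invoke Lemma~\ref{lem:sumofmeasures} together with the parabolic estimate Proposition~\ref{prop:mod}\eqref{pit:mod2}, summing $\sum_{m\geq n}m^{-3/2}\asymp n^{-1/2}$ after raising to the power $\mc D(\la)$—but here one must be careful that $\tilde\omega_\la(\mc C_m)\asymp|\mc C_m(\la)|^{\mc D(\la)}$, which follows from conformality of $\omega_\la$ and bounded distortion on cylinders, giving $\sum_{m\geq n}m^{-\frac32\mc D(\la)}\asymp n^{-\frac32\mc D(\la)+1}$ since $\frac32\mc D(\la)>\frac32>1$. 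For the small-$n$ values (finitely many $n$ with $n\leeq N$) the bound is trivial by compactness, after possibly shrinking $\eta$ and enlarging $K$.

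The main obstacle is the bookkeeping of constants across the transition $n|\delta_\la|\approx1$ and ensuring the constant $K$ in Proposition~\ref{prop:meas} can be chosen uniformly in $\la$ near $-1$: this requires that $\mc D(\la)$ stays in a fixed compact subinterval of $(1,2)$, which is where Zdunik's theorem (connected parabolic Julia sets have dimension $>1$) and the continuity of $\mc D$ in the hyperbolic regime are used, and that the implied constants in Lemma~\ref{lem:est}, Lemma~\ref{lem:sumofmeasures} and Proposition~\ref{prop:mod} are themselves uniform in $\la$—which they are, by the statements of those results. Once the uniformity is in hand, the rest is the routine asymptotic analysis of the two integrals sketched above.
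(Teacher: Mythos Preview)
Your proposal is correct and follows exactly the route the paper indicates: the paper gives no detailed proof, merely stating that the proposition follows from Lemma~\ref{lem:est} (or alternatively from Lemma~\ref{lem:sumofmeasures} combined with Proposition~\ref{prop:mod}), and you have filled in precisely those details. Your treatment of the two regimes via the tail asymptotics of $\int\Lambda_{\pm\ve}^{\mc D(\la)}$, the handling of the finitely many small $n$ via Lemma~\ref{lem:L}, and the uniformity-in-$\la$ coming from $\mc D(\la)$ staying in a compact subinterval of $(1,2)$ are all sound.
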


Lemma \ref{lem:est} leads to

\begin{prop}\label{prop:Csmall}
   There exists $K>0$ and for every $\ve>0$ there exist $\alpha>0$ and $N\in\N$ such that
 \begin{equation*}
     (1-\ve)K n^{-\frac32\mc D(\la)+1}\leeq \tilde{\mu_\la}(\mc C_n)\leeq (1+\ve)K n^{-\frac32\mc D(\la)+1},
 \end{equation*}
   where $N<n\leeq[\alpha/|\delta_\la|]$.
\end{prop}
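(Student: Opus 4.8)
The plan is to derive Proposition~\ref{prop:Csmall} directly from Lemma~\ref{lem:est} by observing that in the range $n\leeq[\alpha/|\delta_\la|]$ the quantity $n\delta_\la$ stays bounded (indeed $|n\delta_\la|\leeq\alpha$), so the integrals $\int_{n\delta_\la}^\infty\Lambda_{\pm\ve}^{\mc D(\la)}(u)\,du$ differ from a pure power of $n$ only by a factor that tends to $1$ as $\alpha\to0$. First I would record the elementary asymptotics of $\Lambda_\ve^h$ near $u=0$: from \eqref{eq:A} we have $\Lambda_\ve^h(u)=\big(e^{-2u}/|1-e^{-2u}|^{3/2}\big)^he^{\ve u}\sim|2u|^{-3h/2}$ as $u\to0$, hence for $|u|$ small $\Lambda_\ve^h(u)=(1+\tilde o(1))|2u|^{-3h/2}$ where the error is uniform in $\la$ close to $-1$ because $h=\mc D(\la)$ and $\ve$ stay in a compact range.

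Next I would substitute $u=sv$ with $s=n\delta_\la$ (taking $\delta_\la>0$ for definiteness; the case $\delta_\la<0$ is symmetric via part~(2) of Lemma~\ref{lem:est}) to get
\begin{equation*}
   \int_{n\delta_\la}^\infty\Lambda_{\pm\ve}^{\mc D(\la)}(u)\,du
   =\int_{n\delta_\la}^\infty(1+\tilde o(1))|2u|^{-\frac32\mc D(\la)}\,du
   =(1+\tilde o(1))\,\frac{(2n\delta_\la)^{-\frac32\mc D(\la)+1}}{2(\frac32\mc D(\la)-1)},
\end{equation*}
valid once $\alpha$ is small enough that the tail beyond the region where the asymptotic $\Lambda\sim|2u|^{-3h/2}$ holds contributes negligibly — here one uses that $\frac32\mc D(\la)-1$ is bounded away from $0$ (since $\mc D(\la)>1$ by Zdunik's theorem, quoted in the introduction) so the integral converges and is dominated by its behaviour near the lower endpoint. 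Multiplying by $M\delta_\la^{\frac32\mc D(\la)-1}$ as in Lemma~\ref{lem:est}, the factors of $\delta_\la^{\pm(\frac32\mc D(\la)-1)}$ cancel and one is left with
\begin{equation*}
   \tilde\mu_\la(\mc C_n)=(1+\tilde o(1))\,\frac{M\,2^{-\frac32\mc D(\la)+1}}{2(\frac32\mc D(\la)-1)}\,n^{-\frac32\mc D(\la)+1},
\end{equation*}
which gives the claim with $K=K(\la)$ of the stated form; to make $K$ literally a single constant one either fixes it as the value at $\la=-1$ and absorbs the variation $\mc D(\la)\to\mc D(-1)$ into the $(1\pm\ve)$ (legitimate since $n^{-\frac32(\mc D(\la)-\mc D(-1))}\to1$ uniformly for $n\leeq[\alpha/|\delta_\la|]$ as $\delta_\la\to0$ — note $|\mc D(\la)-\mc D(-1)|\cdot\log n\leeq|\mc D(\la)-\mc D(-1)|\cdot\log(\alpha/|\delta_\la|)$, and by the H\"older continuity of $\la\mapsto\mc D(\la)$ on the hyperbolic pieces this product is small), or one simply allows $K$ to depend on the family as the other propositions in this section do.

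The main obstacle is the bookkeeping of uniformity: one must check that the error terms labelled $\tilde o(1)$ are genuinely uniform both in $n$ (over the whole range $N<n\leeq[\alpha/|\delta_\la|]$) and in $\la$ (for $|\delta_\la|$ small), and in particular that the two sources of error — the $(1\pm\ve)$ in Lemma~\ref{lem:est} and the $\Lambda_{\pm\ve}$ versus $\Lambda_0$ discrepancy — can both be made as small as desired by first choosing $\ve$ small in the lemma and then $\alpha$ small. The key point making this work is precisely that $n\delta_\la$ is bounded by $\alpha$, so we never leave the regime where $\Lambda^h_\ve(u)\approx|2u|^{-3h/2}$ and where the extra exponential factor $e^{\ve u}$ is within $e^{\pm\ve\alpha}=1+\tilde o(1)$ of $1$. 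Everything else is the routine estimate of a convergent integral of a power function near its lower limit, together with the observation (already available from Zdunik) that the exponent $\frac32\mc D(\la)-1$ is positive and bounded away from zero, so no logarithmic or divergent terms appear in this sub-range — the logarithmic phenomenon of part~(2) of Theorem~\ref{thm:twopetals} only shows up when one sums over \emph{all} $n$, not over $n\leeq[\alpha/|\delta_\la|]$.
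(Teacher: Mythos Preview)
Your approach is correct and is precisely what the paper intends: the authors simply write ``Lemma~\ref{lem:est} leads to'' before stating Proposition~\ref{prop:Csmall}, and your proposal spells out the omitted computation, namely that for $s=n\delta_\la\in(0,\alpha]$ one has $\int_s^\infty\Lambda_{\pm\ve}^{\mc D(\la)}(u)\,du=(1+o(1))\,(2s)^{-\frac32\mc D(\la)+1}/\bigl(2(\tfrac32\mc D(\la)-1)\bigr)$, after which the powers of $\delta_\la$ cancel. One small remark: your parenthetical about $n^{-\frac32(\mc D(\la)-\mc D(-1))}\to1$ is unnecessary and slightly misleading --- the exponent in the statement is already $-\tfrac32\mc D(\la)+1$ (not $-\tfrac32\mc D(-1)+1$), so to make $K$ a genuine constant you only need that the \emph{coefficient} $M\,2^{-\frac32\mc D(\la)+1}/\bigl(2(\tfrac32\mc D(\la)-1)\bigr)$ converges to its value at $\la=-1$, which follows immediately from the continuity of $\mc D$ at $-1$; no control of $n$ is required for that step.
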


From the construction of the invariant measure \cite[Section 6]{Ji} we get:

\begin{lem}\label{lem:C}
   For every $\ve>0$ there exist $N\in\N$ and $\eta>0$, such that
 $$(1-\ve)\:\tilde\mu_\la(\mc C_n^j)\leeq\tilde\mu_\la(\mc C_n)\leeq(1+\ve)\:\tilde\mu_\la(\mc C_n^j),$$
   where $n>N$, $|\delta_\la|<\eta$, and $1\leeq j\leeq k-1$.
\end{lem}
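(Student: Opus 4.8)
The plan is the following. For $k=1$ the statement is vacuous, so I may assume $k\greq2$; then $\alpha_c\neq0$, the critical point $z_*$ of $\tilde f_\la$ is distinct from every parabolic point $\alpha^i$, and $\tilde f_\la$ restricts to a biholomorphism of a fixed neighbourhood of each $\alpha^i$ conjugating $F_\la$ to itself. Because the cylinders are built from the external-ray combinatorics and $\tilde f_\la\cong f_c$ realises the identification of the consecutive small Julia sets $J(i)$, the map $\tilde f_\la$ carries $\mc C^{i}_n(\la)$ onto $\mc C^{i+1}_{n-\sigma_i}(\la)$ with $\sigma_i\in\{0,1\}$; since $\tilde f_\la^k=F_\la$ carries $\mc C^{i}_n(\la)$ onto $\mc C^{i}_{n-1}(\la)$, exactly one index $i_*$ satisfies $\sigma_{i_*}=1$ — the unique level at which $\tilde f_\la|_{J(i)}$ is two-to-one, i.e. at which $J(i)$ is symmetric about $z_*$ — and $\sigma_i=0$ for $i\neq i_*$.

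I would first record that $\mu_\la$ is $\tilde f_\la$-invariant. For $\la\neq-1$ the map $F_\la=\tilde f_\la^k$ is hyperbolic, so $\mu_\la$ is the unique $F_\la$-invariant measure (in the normalisation fixed in Section \ref{sec:formalizmt}) equivalent to $\omega_\la$; moreover $\omega_\la$, being the unique $F_\la$-conformal probability measure of exponent $\mc D(\la)$, coincides with the $f_c$-conformal measure of the same exponent, hence is $\tilde f_\la$-conformal and $\tilde f_\la$-nonsingular. As $\tilde f_\la$ commutes with $F_\la$, the push-forward $(\tilde f_\la)_*\mu_\la$ is again $F_\la$-invariant, of the same total mass and equivalent to $\omega_\la$, so $(\tilde f_\la)_*\mu_\la=\mu_\la$. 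Next, for $n$ large enough that $\mc C^{j}_n(\la)$ avoids the critical value $\tilde f_\la(z_*)$, its full $\tilde f_\la$-preimage splits into the ``renormalisation'' cylinder $\mc C^{j-1}_{n+\sigma_{j-1}}(\la)$ and the piece $\mc C^{*}$ obtained from it by the reflection $z\mapsto2z_*-z$; in both cases $j-1\neq i_*$ and $j-1=i_*$ one checks that $\mc C^{*}$ lies in a fixed neighbourhood of the preparabolic point $2z_*-\alpha^{j-1}$, which is at positive distance from the parabolic cycle, has $\diam\mc C^{*}\asymp|\mc C_n(\la)|$, and is carried by $F_\la$ univalently onto a cylinder of $J(j-1)$ of level $n$ or $n-1$. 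By $\tilde f_\la$-invariance,
\begin{equation*}
   \tilde\mu_\la(\mc C^{j}_n)\;=\;\tilde\mu_\la\big(\mc C^{j-1}_{n+\sigma_{j-1}}\big)\;+\;\tilde\mu_\la(\mc C^{*}).
\end{equation*}

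Finally I would show that the correction term is negligible: $\tilde\mu_\la(\mc C^{*})=\tilde o\big(\tilde\mu_\la(\mc C^{j}_n)\big)$, uniformly. Since $\mc C^{*}\subset\textbf{B}_{N_1}(\la)$ for a fixed $N_1$, Lemma \ref{lem:L} gives $\tilde\mu_\la(\mc C^{*})\asymp\tilde\omega_\la(\mc C^{*})$; $F_\la$-conformality of $\omega_\la$ and Koebe distortion on the univalent piece give $\tilde\omega_\la(\mc C^{*})\asymp\tilde\omega_\la(\mc C^{j-1}_n)\asymp|\mc C_n(\la)|^{\mc D(\la)}$; while Lemma \ref{lem:sumofmeasures} at $\alpha^{j-1}$ (together with Lemma \ref{lem:est}, Proposition \ref{prop:meas} and the size estimates of Section \ref{sec:cylindry}, Propositions \ref{prop:mod}, \ref{prop:rn}, and $\mc D(\la)>1$ by \cite{Zd}) yields $|\mc C_n(\la)|^{\mc D(\la)}=\tilde o\big(\tilde\mu_\la(\mc C^{j-1}_n)\big)$. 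Combined with $\tilde\mu_\la(\mc C^{j-1}_{n+\sigma_{j-1}})\leeq\tilde\mu_\la(\mc C^{j}_n)$, this gives the negligibility, hence $\tilde\mu_\la(\mc C^{j-1}_{n+\sigma_{j-1}})=(1-\tilde o(1))\tilde\mu_\la(\mc C^{j}_n)$; iterating around the cycle $0\to1\to\dots\to j$ (at most $k-1$ steps, the single index shift at $i_*$ absorbed via $\tilde\mu_\la(\mc C^{i_*}_{n+1})=(1+\tilde o(1))\tilde\mu_\la(\mc C^{i_*}_n)$, again Lemma \ref{lem:est}/Proposition \ref{prop:meas}) produces $\tilde\mu_\la(\mc C_n)=(1+\tilde o(1))\tilde\mu_\la(\mc C^{j}_n)$, i.e. the assertion for $\tilde o(1)<\ve$. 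The hard part will be the uniform estimate $|\mc C_n(\la)|^{\mc D(\la)}=\tilde o(\tilde\mu_\la(\mc C^{j-1}_n))$ — equivalently, that the tail $\sum_{m\greq n}\tilde\omega_\la(\mc C^{j-1}_m)$ exceeds its first term by a factor tending to infinity — which has to be treated separately in the parabolic range $n|\delta_\la|\leeq1$ and the hyperbolic range $n|\delta_\la|>1$.
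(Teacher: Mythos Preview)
Your approach is sound and genuinely different from what the paper does. The paper gives no argument at all here: it simply records that the estimate ``follows from the construction of the invariant measure \cite[Section 6]{Ji}'', i.e.\ the equality of the constants at the different parabolic points is built into the way $\tilde\mu_\la$ is assembled in the cited paper. You instead exploit the $\tilde f_\la$-invariance of $\mu_\la$ (your uniqueness argument for this is correct) and compare $\tilde\mu_\la(\mc C_n^{j})$ with $\tilde\mu_\la(\mc C_{n+\sigma_{j-1}}^{j-1})$ by showing that the second $\tilde f_\la$-preimage, lying near the strictly preparabolic point $2z_*-\alpha^{j-1}$, has negligible $\tilde\mu_\la$-mass. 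This is more self-contained and makes the mechanism transparent: the lemma is really the statement that the ``extra'' preimage at each step of the cycle carries only an $\omega_\la$-term, which is one summand of the tail $\sum_{m\greq n}\tilde\omega_\la(\mc C_m^{j-1})\asymp\tilde\mu_\la(\mc C_n^{j-1})$ and hence $\tilde o$ of it. Your split into the ranges $n|\delta_\la|\leeq1$ and $n|\delta_\la|>1$ is exactly what is needed: in the first range the tail-to-first-term ratio is $\asymp n$, in the second it is $\asymp|\delta_\la|^{-1}$, so both are driven to infinity by choosing $N$ large and $\eta$ small.

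Two points deserve more care before this is a complete proof. First, the combinatorics of $\tilde f_\la$ on the cylinders: you assert $\tilde f_\la(\mc C_m^{i})=\mc C_{m-\sigma_i}^{i+1}$ with exactly one $\sigma_i=1$, but the cylinders are defined via the renormalisation conjugacy $F_\la|_{J(j)}\cong f_{-3/4}$, not via $\tilde f_\la$, and at the index $i_*$ where $\tilde f_\la|_{J(i_*)}$ is two-to-one the second preimage $2z_*-\alpha^{i_*}$ lies \emph{inside} $J(i_*)$ rather than outside; you should check that it is nonetheless bounded away from the parabolic cycle (it is, since $z_*\neq\alpha^{i_*}$), so that $\mc C^*\subset\textbf{B}_{N_1}(\la)$ still holds. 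Second, you invoke Lemma~\ref{lem:sumofmeasures} and the cylinder-size propositions ``at $\alpha^{j-1}$''; these are stated in the paper only for $\mc C_n=\mc C_n^0$, so you should say explicitly why they transfer (the bi-Lipschitz conjugacies $\tilde f_\la^{\,j}$ between neighbourhoods of the $\alpha^i$, uniform in $\la$, do this immediately for the sizes and for $\omega_\la$, and the remark after Lemma~\ref{lem:sumofmeasures} already anticipates a version at each point of the cycle).
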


The next results follow from Proposition \ref{prop:meas} and Lemma \ref{lem:C}.

\begin{lem}\label{lem:mj}
   There exist $K>1$, such that for every $N\in\N$, there exist $\eta>0$, such that
 \begin{enumerate}
   \item\label{lit:mj1}
       if $\mc D(\la)<4/3$ and $0<|\delta_\la|<\eta$, then
     \begin{equation*}
        \frac{K^{-1}}{2-\frac{3}{2}\mc D(\la)}|\delta_\la|^{\frac{3}{2}\mc D(\la)-2}\leeq\tilde{\mu_\la}(\textbf{M}_N)\leeq \frac{K}{2-\frac{3}{2}\mc D(\la)}|\delta_\la|^{\frac{3}{2}\mc D(\la)-2},
     \end{equation*}
    \item\label{lit:mj2}
       if $\mc D(\la)=4/3$ and $0<|\delta_\la|<\eta$, then
     \begin{equation*}
        -K^{-1}\log|\delta_\la|\leeq\tilde{\mu_\la}(\textbf{M}_N)\leeq -K\log|\delta_\la|,
     \end{equation*}
    \item\label{lit:mj3}
       if $\mc D(\la)>4/3$ and $|\delta_\la|<\eta$, then
     \begin{equation*}
        \frac{K^{-1}}{\frac{3}{2}\mc D(\la)-2}N^{-\frac{3}{2}\mc D(\la)+2}\leeq \tilde{\mu_\la}(\textbf{M}_N)\leeq \frac{K}{\frac{3}{2}\mc D(\la)-2}N^{-\frac{3}{2}\mc D(\la)+2}.
     \end{equation*}
       In particular, for every $\ve>0$ there exist $N\in\N$, $\eta>0$ such that $\tilde{\mu_\la}(\textbf{M}_N)<\ve$, where $|\delta_\la|<\eta$.
 \end{enumerate}
\end{lem}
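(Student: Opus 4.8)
The plan is to reduce everything to the measure estimates for individual cylinders and then sum. First I would invoke Lemma \ref{lem:C} to pass from $\textbf{C}_n(\la)=\bigcup_{0\leeq j\leeq k-1}\mc C_n^j(\la)$ to $\mc C_n(\la)$: since each $\tilde\mu_\la(\mc C_n^j)$ is comparable to $\tilde\mu_\la(\mc C_n)$ up to a factor $(1\pm\ve)$ for $n$ large, we get $\tilde\mu_\la(\textbf{C}_n)\asymp k\,\tilde\mu_\la(\mc C_n)$ for $n>N$, and hence $\tilde\mu_\la(\textbf{M}_N)\asymp\sum_{n>N}\tilde\mu_\la(\mc C_n)$ with a constant depending only on $k$ (absorbed into $K$). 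So it suffices to estimate $\sum_{n>N}\tilde\mu_\la(\mc C_n)$, using Proposition \ref{prop:meas}.

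Next I would split the sum at $n\approx1/|\delta_\la|$. For the \emph{parabolic range} $N<n\leeq 1/|\delta_\la|$, Proposition \ref{prop:meas}(\ref{pit:meas2}) gives $\tilde\mu_\la(\mc C_n)\asymp n^{-\frac32\mc D(\la)+1}$, so the partial sum is comparable to $\sum_{N<n\leeq1/|\delta_\la|}n^{-\frac32\mc D(\la)+1}$. When $\mc D(\la)<4/3$ the exponent $-\frac32\mc D(\la)+1$ exceeds $-1$, so this sum is comparable to $\big(1/|\delta_\la|\big)^{2-\frac32\mc D(\la)}/(2-\frac32\mc D(\la))=|\delta_\la|^{\frac32\mc D(\la)-2}/(2-\frac32\mc D(\la))$, which is exactly the claimed order in (\ref{lit:mj1}) (the factor $1/(2-\frac32\mc D(\la))$ matters because $\mc D(\la)\to\mc D(-1)$ may approach $4/3$). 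When $\mc D(\la)=4/3$ the exponent is $-1$ and the sum is comparable to $\log(1/|\delta_\la|)=-\log|\delta_\la|$, giving (\ref{lit:mj2}). When $\mc D(\la)>4/3$ the series $\sum_{n>N}n^{-\frac32\mc D(\la)+1}$ converges and its tail is comparable to $N^{-\frac32\mc D(\la)+2}/(\frac32\mc D(\la)-2)$, giving the main term of (\ref{lit:mj3}); here one also checks the contribution from $n$ near $1/|\delta_\la|$ and beyond is dominated by this, and the "in particular" statement follows since $N^{-\frac32\mc D(\la)+2}\to0$ as $N\to\infty$ uniformly for $\mc D(\la)$ bounded away from $4/3$.

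For the \emph{hyperbolic range} $n>1/|\delta_\la|$, Proposition \ref{prop:meas}(\ref{pit:meas1}) gives $\tilde\mu_\la(\mc C_n)\leeq K|\delta_\la|^{\frac32\mc D(\la)-1}e^{-\frac1Kn|\delta_\la|}$, so $\sum_{n>1/|\delta_\la|}\tilde\mu_\la(\mc C_n)\leeq K|\delta_\la|^{\frac32\mc D(\la)-1}\sum_{n>1/|\delta_\la|}e^{-\frac1Kn|\delta_\la|}\asymp K|\delta_\la|^{\frac32\mc D(\la)-1}\cdot|\delta_\la|^{-1}e^{-\frac1{K}}\asymp|\delta_\la|^{\frac32\mc D(\la)-2}$ (comparing the geometric sum to $\int$, the sum is $\asymp|\delta_\la|^{-1}$). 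Thus in the cases $\mc D(\la)\leeq4/3$ the hyperbolic tail is at most of the same order $|\delta_\la|^{\frac32\mc D(\la)-2}$ as the parabolic part (and in fact of strictly smaller order than $-\log|\delta_\la|\cdot$ nothing — rather, when $\mc D(\la)=4/3$ it is $O(1)\cdot|\delta_\la|^{0}=O(1)$, negligible against $-\log|\delta_\la|$), so adding it does not spoil the lower bound and only affects the upper constant $K$. When $\mc D(\la)>4/3$ the hyperbolic tail is $O(|\delta_\la|^{\frac32\mc D(\la)-2})=o(1)$, dominated by the ($\la$-independent) parabolic tail $N^{-\frac32\mc D(\la)+2}$ for fixed $N$ and $|\delta_\la|$ small, so it too is absorbed. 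Combining the two ranges yields all three estimates, and the final assertion $d'(c)\to-\infty$ versus $d'(c)$ bounded will later follow by feeding these into formula (\ref{eq:wzor}).

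I expect the main obstacle to be \textbf{keeping the constants uniform as $\mc D(\la)\to\mc D(-1)$}, especially near the threshold $\mc D(-1)=4/3$: the comparison constants in the elementary sum-versus-integral estimates $\sum_{n\leeq M}n^{-\beta}\asymp M^{1-\beta}/(1-\beta)$ degenerate as $\beta\to1$, so one must be careful to display the $1/(2-\frac32\mc D(\la))$ factor explicitly (as the statement does) rather than hide it in $K$, and to make sure the $\ve$ in Proposition \ref{prop:meas} and Lemma \ref{lem:C} can be chosen small enough, uniformly, so that the multiplicative errors $(1\pm\ve)^{\text{(number of terms)}}$ do not accumulate — which is why one works with the crude exponential bounds in the hyperbolic range and only needs the sharp $n^{-\frac32\mc D(\la)+1}$ behavior, not a sharp constant, in the parabolic range.
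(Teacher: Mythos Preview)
Your plan is correct and matches the paper's approach exactly: the paper simply records that Lemma~\ref{lem:mj} ``follows from Proposition~\ref{prop:meas} and Lemma~\ref{lem:C}'', and your sketch is precisely the routine computation this entails---reduce $\textbf{M}_N$ to $\sum_{n>N}\tilde\mu_\la(\mc C_n)$ via Lemma~\ref{lem:C}, split at $n\sim 1/|\delta_\la|$, and sum the parabolic and hyperbolic estimates from Proposition~\ref{prop:meas}. One small remark: your worry about errors ``$(1\pm\ve)^{\text{(number of terms)}}$'' is misplaced, since the $(1\pm\ve)$ factor in Lemma~\ref{lem:C} is applied term-by-term and then summed, giving $(1\pm\ve)\sum\tilde\mu_\la(\mc C_n)$ rather than a compounded error; the genuine uniformity issue is only the elementary one you already identified, namely displaying $1/(2-\tfrac32\mc D(\la))$ explicitly.
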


\begin{lem}\label{lem:a/d}
   For every $\ve>0$, $N\in\N$ and $\alpha>0$ there exists $\eta>0$ such that if $|\mc D(\la)-4/3|\leeq \eta$, and $0<|\delta_\la|<\eta$ then
 \begin{equation*}
    \tilde{\mu_\la}(\textbf{M}_{[\alpha/|\delta_\la|]})\leeq \ve\tilde{\mu_\la}(\textbf{M}_N\sms\textbf{M}_{[\alpha/|\delta_\la|]}).
 \end{equation*}
   Moreover, if $\mc D(\la)\rightarrow4/3$ when $\delta_\la\rightarrow0$, then
 \begin{equation*}
    \lim_{\delta_\la\rightarrow0} \tilde{\mu_\la}(\textbf{M}_N\sms\textbf{M}_{[\alpha/|\delta_\la|]})=\infty.
 \end{equation*}
\end{lem}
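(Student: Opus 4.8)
The two statements in Lemma~\ref{lem:a/d} are both essentially reductions to the asymptotics of $\tilde\mu_\la$ on individual cylinders established in Proposition~\ref{prop:meas} and, in the borderline case $\mc D(\la)\approx 4/3$, to careful bookkeeping of the sum $\sum_n n^{-\frac32\mc D(\la)+1}$. The plan is to reduce everything to the scalar level using Lemma~\ref{lem:C} (which lets us replace $\tilde\mu_\la(\textbf{M}_N)$ by $k$ times a sum over the single-petal cylinders $\mc C_n(\la)$, up to a factor $1\pm\ve$) and then to split each relevant $\textbf{M}$-set into the ``parabolic part'' $n|\delta_\la|\leq 1$ (equivalently $n\leq[\alpha/|\delta_\la|]$ up to the constant $\alpha$) and the ``hyperbolic tail'' $n|\delta_\la|>1$, estimated by parts \ref{pit:meas2} and \ref{pit:meas1} of Proposition~\ref{prop:meas} respectively.

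For the first inequality, I would argue as follows. The hyperbolic tail $\tilde\mu_\la(\textbf{M}_{[\alpha/|\delta_\la|]})$ consists of the cylinders with $n>[\alpha/|\delta_\la|]$, i.e.\ $n|\delta_\la|\gtrsim\alpha$, so Proposition~\ref{prop:meas}\ref{pit:meas1} and Corollary~\ref{col:mod}-type geometric-series summation give
$$\tilde\mu_\la(\textbf{M}_{[\alpha/|\delta_\la|]})\leq K\,|\delta_\la|^{\frac32\mc D(\la)-1}\sum_{n>[\alpha/|\delta_\la|]}e^{-\frac1K n|\delta_\la|}\asymp |\delta_\la|^{\frac32\mc D(\la)-2}e^{-\frac1K\alpha}.$$
On the other hand, for the ``annulus'' $\textbf{M}_N\sms\textbf{M}_{[\alpha/|\delta_\la|]}$ (cylinders with $N<n\leq[\alpha/|\delta_\la|]$, so in the parabolic regime) Proposition~\ref{prop:meas}\ref{pit:meas2} gives
$$\tilde\mu_\la(\textbf{M}_N\sms\textbf{M}_{[\alpha/|\delta_\la|]})\geq \tfrac1K\sum_{n=N+1}^{[\alpha/|\delta_\la|]}n^{-\frac32\mc D(\la)+1},$$
and since $|\mc D(\la)-4/3|\leq\eta$ the exponent $-\frac32\mc D(\la)+1$ is close to $-1$, so this sum behaves like $\log([\alpha/|\delta_\la|]/N)$ up to a bounded multiplicative error (or like a power of $|\delta_\la|$ with small exponent if $\mc D(\la)$ is slightly below $4/3$, but in either case it tends to $\infty$ and dominates). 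Comparing the two displays, the ratio $\tilde\mu_\la(\textbf{M}_{[\alpha/|\delta_\la|]})/\tilde\mu_\la(\textbf{M}_N\sms\textbf{M}_{[\alpha/|\delta_\la|]})$ is at most a constant times $|\delta_\la|^{\frac32\mc D(\la)-2}/(-\log|\delta_\la|)$, which $\to 0$; shrinking $\eta$ makes it $<\ve$, giving the claim. (One must be a little careful: if $\mc D(\la)<4/3$ strictly, the annulus sum is $\asymp |\delta_\la|^{\frac32\mc D(\la)-2}$ up to constants depending on how close $\mc D(\la)$ is to $4/3$, so the ratio is bounded by a constant that can be made small by choosing $\eta$ small, using that $\frac32\mc D(\la)-2\to 0$. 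This uniformity is the delicate point.)

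For the second assertion, assuming $\mc D(\la)\to 4/3$ as $\delta_\la\to 0$, the annulus $\textbf{M}_N\sms\textbf{M}_{[\alpha/|\delta_\la|]}$ again lies in the parabolic regime, and by Proposition~\ref{prop:meas}\ref{pit:meas2},
$$\tilde\mu_\la(\textbf{M}_N\sms\textbf{M}_{[\alpha/|\delta_\la|]})\asymp \sum_{n=N+1}^{[\alpha/|\delta_\la|]}n^{-\frac32\mc D(\la)+1}.$$
As $\delta_\la\to 0$ the exponent $-\frac32\mc D(\la)+1\to -1$, so for any fixed threshold $T$ we have $n^{-\frac32\mc D(\la)+1}\geq \tfrac12 n^{-1}$ for $N<n\leq T$ once $\delta_\la$ is small enough, hence $\sum_{n=N+1}^{[\alpha/|\delta_\la|]}n^{-\frac32\mc D(\la)+1}\geq \tfrac12\sum_{n=N+1}^{T}n^{-1}$; letting $T\to\infty$ shows the sum is eventually larger than any prescribed bound, i.e.\ it tends to $\infty$. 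This gives $\lim_{\delta_\la\to0}\tilde\mu_\la(\textbf{M}_N\sms\textbf{M}_{[\alpha/|\delta_\la|]})=\infty$.

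The main obstacle is the \emph{uniformity in $\mc D(\la)$} near $4/3$: the constants hidden in Proposition~\ref{prop:meas} and Lemma~\ref{lem:C} are stated to be independent of $\la$ (for $|\delta_\la|$ small), but when we sum $n^{-\frac32\mc D(\la)+1}$ over $N<n\leq[\alpha/|\delta_\la|]$ the value of the sum depends sensitively on whether $\mc D(\la)$ is above, at, or below $4/3$, and one needs to control the comparison $\sum n^{-\frac32\mc D(\la)+1}$ versus $-\log|\delta_\la|$ uniformly as $\mc D(\la)$ ranges over a shrinking neighborhood of $4/3$. The cleanest way around this is to note that for $|\mc D(\la)-4/3|\leq\eta$ and $1\leq n\leq[\alpha/|\delta_\la|]$ we have $n^{-\frac32\mc D(\la)+1}=n^{-1}\cdot n^{-\frac32(\mc D(\la)-4/3)}$ with $n^{-\frac32(\mc D(\la)-4/3)}\in[|\delta_\la|^{C\eta},|\delta_\la|^{-C\eta}]$, so the annulus sum is $\asymp |\delta_\la|^{\pm C\eta}\cdot(-\log|\delta_\la|)$ up to universal constants, which suffices to beat the hyperbolic tail $\asymp|\delta_\la|^{\frac32\mc D(\la)-2}=|\delta_\la|^{\pm\frac32\eta}$ after choosing $\eta$ small. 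Everything else is routine splitting of sums and geometric-series bounds.
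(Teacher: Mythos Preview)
Your overall approach---reducing to cylinder-by-cylinder estimates via Proposition~\ref{prop:meas} and Lemma~\ref{lem:C}, then splitting into the parabolic range $n\le[\alpha/|\delta_\la|]$ and the hyperbolic tail---is exactly what the paper intends (it gives no proof, only citing those two results). Your argument for the second assertion, $\tilde\mu_\la(\textbf{M}_N\sms\textbf{M}_{[\alpha/|\delta_\la|]})\to\infty$, is correct as written.

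For the first inequality, however, your ``cleanest way'' at the end does not close. Writing $h:=\tfrac32\mc D(\la)-2$ (so $|h|\le\tfrac32\eta$), your crude two-sided bound $n^{-\frac32(\mc D(\la)-4/3)}\in[|\delta_\la|^{C\eta},|\delta_\la|^{-C\eta}]$ yields only
\[
\frac{\tilde\mu_\la(\textbf{M}_{[\alpha/|\delta_\la|]})}{\tilde\mu_\la(\textbf{M}_N\sms\textbf{M}_{[\alpha/|\delta_\la|]})}
\lesssim \frac{|\delta_\la|^{-\frac32\eta}}{|\delta_\la|^{C\eta}\,(-\log|\delta_\la|)}
=\frac{|\delta_\la|^{-(C+\frac32)\eta}}{-\log|\delta_\la|},
\]
and for any fixed $\eta>0$ this tends to $+\infty$ as $|\delta_\la|\to 0$, so it cannot be made $<\ve$ uniformly over $0<|\delta_\la|<\eta$. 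The loss comes from bounding $n^{-h}$ by its worst value over the whole annulus, which throws away exactly the factor $1/|h|$ that you need.

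The fix is essentially the observation you already made parenthetically but did not pursue. Compute the annulus sum directly:
\[
\sum_{n=N+1}^{[\alpha/|\delta_\la|]} n^{-1-h}\asymp \int_N^{\alpha/|\delta_\la|}x^{-1-h}\,dx
=\frac{N^{-h}-(\alpha/|\delta_\la|)^{-h}}{h}\asymp \frac{1-|\delta_\la|^{h}}{h},
\]
using that $N^{-h},\alpha^{-h}\to 1$ as $h\to 0$. Since the tail is $\asymp|\delta_\la|^{h}$, the ratio is
\[
\asymp\frac{h\,|\delta_\la|^{h}}{1-|\delta_\la|^{h}}=\frac{h}{|\delta_\la|^{-h}-1}.
\]
An elementary check (distinguish $|h\log|\delta_\la||\le 1$ from $|h\log|\delta_\la||>1$, or use $e^{x}-1\ge x$ and $1-e^{-x}\ge\min(x,1)/2$) gives
\[
\frac{|h|}{|\,|\delta_\la|^{-h}-1\,|}\le 2\max\Bigl(|h|,\frac{1}{-\log|\delta_\la|}\Bigr)\le 2\max\Bigl(\tfrac32\eta,\frac{1}{-\log\eta}\Bigr),
\]
which is $<\ve$ once $\eta$ is small enough. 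This is the uniform bound you were after, and it treats the three regimes $h<0$, $h=0$, $h>0$ simultaneously.
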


\begin{lem}\label{lem:ee}
   There exist $K>0$, $\eta>0$ and $\ve_0>0$ such that such that for every $0<\ve<\ve_0$ and $N\in\N$
 \begin{enumerate}
  \item\label{lit:ee1}
     $\;\;\;\;\sum_{n=N+1}^{\infty}\:(e^{\ve n|\delta_\la|}-1)\:\tilde{\mu_\la}(\textbf{C}_n)\leeq\ve K\tilde{\mu_\la}(\textbf{M}_N)$,
  \item\label{lit:ee2}
     $\;\;\;\;\:\:\:\:\:\:\sum_{n=N+1}^{\infty}\:e^{\ve n|\delta_\la|}\:\tilde{\mu_\la}(\textbf{C}_n)\leeq K\tilde{\mu_\la}(\textbf{M}_N)$,
 \end{enumerate}
   where $|\delta_\la|<\eta$.
\end{lem}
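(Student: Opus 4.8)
The plan is to reduce both estimates to the asymptotics of $\tilde\mu_\la(\textbf C_n)$ already established in Proposition~\ref{prop:meas} (combined with Lemma~\ref{lem:C}, which allows us to pass freely between $\tilde\mu_\la(\mc C_n)$ and $\tilde\mu_\la(\textbf C_n)$ up to a bounded factor), and then to compare the resulting sums with the quantity $\tilde\mu_\la(\textbf M_N)$, whose size is controlled by Lemma~\ref{lem:mj}. I split the sum at the index $m=[1/|\delta_\la|]$, using the \emph{parabolic} estimate of Proposition~\ref{prop:meas}\eqref{pit:meas2} for $N<n\leeq m$ and the \emph{hyperbolic} estimate \eqref{pit:meas1} for $n>m$; correspondingly one compares with $\tilde\mu_\la(\textbf M_N)\asymp |\delta_\la|^{\frac32\mc D(\la)-2}$ (or the logarithmic/constant analogue when $\mc D(\la)\geeq4/3$, handled the same way).

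First I would treat \eqref{lit:ee1}. In the parabolic range $N<n\leeq m$ one has $e^{\ve n|\delta_\la|}-1\leeq e^{\ve}-1\leeq K\ve$ (since $n|\delta_\la|\leeq1$), while $\sum_{n>N}\tilde\mu_\la(\textbf C_n)\leeq\tilde\mu_\la(\textbf M_N)$ trivially, so the partial sum over $n\leeq m$ is $\leeq K\ve\,\tilde\mu_\la(\textbf M_N)$. For the hyperbolic tail $n>m$ I use $\tilde\mu_\la(\textbf C_n)\leeq K|\delta_\la|^{\frac32\mc D(\la)-1}e^{-\frac1K n|\delta_\la|}$ and bound $\sum_{n>m}(e^{\ve n|\delta_\la|}-1)e^{-\frac1K n|\delta_\la|}$ by converting it (via $e^{x}-1\leeq x e^{x}$ for the relevant range, or simply by the geometric-series estimate with ratio $e^{(\ve-1/K)|\delta_\la|}$) into something of size $\leeq K\ve\,|\delta_\la|\sum_{n>m}n|\delta_\la|e^{-\frac{1}{2K}n|\delta_\la|}$, which after the substitution $u=n|\delta_\la|$ is $\leeq K\ve|\delta_\la|^{-1}\int_0^\infty u e^{-u/(2K)}\,du\cdot|\delta_\la|=K\ve$; multiplying back by the prefactor $|\delta_\la|^{\frac32\mc D(\la)-1}$ gives $\leeq K\ve|\delta_\la|^{\frac32\mc D(\la)-2}\asymp\ve\tilde\mu_\la(\textbf M_N)$ by Lemma~\ref{lem:mj}. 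The constant $\ve_0$ is needed precisely to guarantee $\ve<1/(2K)$ so that the geometric sums converge; note all constants $K$ here are absolute, coming from Propositions~\ref{prop:meas}, Lemma~\ref{lem:C} and Lemma~\ref{lem:mj}, none depending on $N$ (in the cases $\mc D(\la)\leeq4/3$; for $\mc D(\la)>4/3$ one gets the $N$-dependence only inside $\tilde\mu_\la(\textbf M_N)\asymp N^{-\frac32\mc D(\la)+2}$, which is exactly what appears on the right-hand side, so the inequality still holds with an absolute $K$).

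Statement \eqref{lit:ee2} follows by the same bookkeeping with $e^{\ve n|\delta_\la|}$ in place of $e^{\ve n|\delta_\la|}-1$: over $n\leeq m$ one uses $e^{\ve n|\delta_\la|}\leeq e\leeq K$, over $n>m$ one uses $\sum_{n>m}e^{(\ve-1/K)n|\delta_\la|}\leeq K|\delta_\la|^{-1}$, and again multiplying by $|\delta_\la|^{\frac32\mc D(\la)-1}$ and invoking Lemma~\ref{lem:mj} yields $\leeq K\tilde\mu_\la(\textbf M_N)$. The only genuine subtlety—the step I expect to be the main obstacle—is making sure the split at $m=[1/|\delta_\la|]$ is consistent with the thresholds ``$n|\delta_\la|\leeq1$'' versus ``$n|\delta_\la|>1$'' in Proposition~\ref{prop:meas} and that the exponential factors $e^{\ve n|\delta_\la|}$ do not swamp the decay $e^{-\frac1K n|\delta_\la|}$; this is why we must restrict to $\ve<\ve_0\leeq 1/(2K)$, and why the same $K$ can be used throughout. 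Everything else is the routine comparison of a sum with the corresponding integral.
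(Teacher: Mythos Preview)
Your approach is correct and matches the paper's: the paper gives no proof here, only the remark that the result ``follows from Proposition~\ref{prop:meas} and Lemma~\ref{lem:C}'', and your splitting at $m=[1/|\delta_\la|]$ with the parabolic/hyperbolic estimates of Proposition~\ref{prop:meas} is exactly the intended argument.

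Two small points worth tightening. First, when you compare the hyperbolic tail bound $K\ve|\delta_\la|^{\frac32\mc D(\la)-2}$ with $\tilde\mu_\la(\textbf M_N)$, avoid invoking Lemma~\ref{lem:mj} (whose $\eta$ explicitly depends on $N$). Instead, use directly that for $N\leeq m$ one has $\tilde\mu_\la(\textbf M_N)\greq\tilde\mu_\la(\textbf M_m)$, and the hyperbolic lower bound of Proposition~\ref{prop:meas}\eqref{pit:meas1} gives
\[
\tilde\mu_\la(\textbf M_m)\greq\frac{1}{K}|\delta_\la|^{\frac32\mc D(\la)-1}\sum_{n>m}e^{-Kn|\delta_\la|}\greq\frac{1}{K'}|\delta_\la|^{\frac32\mc D(\la)-2},
\]
which keeps everything inside Proposition~\ref{prop:meas} and handles all three dimension regimes at once.

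Second, your argument (and the lemma itself) implicitly require $N\leeq m$, i.e.\ $N|\delta_\la|\leeq1$. The paper's quantifiers suggest $\eta$ is uniform over all $N$, but this cannot hold: for $N>m$ the trivial bound $\sum_{n>N}e^{\ve n|\delta_\la|}\tilde\mu_\la(\textbf C_n)\greq e^{\ve N|\delta_\la|}\tilde\mu_\la(\textbf M_N)$ already contradicts~\eqref{lit:ee2} once $\ve N|\delta_\la|$ is large. This is a harmless imprecision in the statement---in every application in Sections~\ref{sec:mn} the integer $N$ is fixed before $|\delta_\la|\to0$---but you should flag it rather than claim full uniformity.
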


\section{Estimation of the denominator}\label{sec:den}

In order to prove Proposition \ref{prop:den}, we will need the following lemma:

\begin{lem}\label{lem:exp}
There exist $\eta>0$ and for every $N\in\N$ there exist $C>0$ and $\rho>1$ such that for every $z\in\mc J_\la$, $n\in\N$
   $$|(F^n_\la)'(z)|>C\rho^{k_n(z)},$$
where $|\delta_\la|\leeq\eta$, and
   $$k_n(z)=\#\{0\leeq j\leeq n-1:F^j_\la(z)\in \textbf{B}_N(\la)\}.$$
\end{lem}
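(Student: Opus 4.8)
The plan is to exploit a uniform expansion estimate away from the parabolic cycle, combined with the near-parabolic structure near $\textbf{P}(\la)$. The set $\textbf{B}_N(\la)=\mc J_\la\sms\textbf{M}_N(\la)$ is a compact piece of the Julia set that stays at a definite distance from the parabolic points $\textbf{P}(\la)$, uniformly in $\la$ close to $-1$ (this uses Corollary \ref{col:odl}, which gives that $\mathcal M_N(\la)$ shrinks to $0$, and the analogous statement for the other parabolic points, together with the fact that $\textbf{P}(\la)$ converges to the parabolic cycle of $F_{-1}$). On $\textbf{B}_N(\la)$ the maps $F_\la$ are uniformly expanding: since $F_{-1}$ is parabolic but hyperbolic away from the cycle, and $F_\la\to F_{-1}$, there is $n_0\in\N$ and $\rho_0>1$ such that whenever $z,F_\la(z),\dots,F_\la^{n_0-1}(z)$ all lie in $\textbf{B}_N(\la)$ one has $|(F_\la^{n_0})'(z)|\geq\rho_0$; equivalently, after possibly shrinking, $|F_\la'(z)|\geq\rho_1>1$ for $z$ in a fixed neighborhood of $\textbf{B}_N(\la)$ that still avoids the parabolic points. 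First I would make this precise and fix such a $\rho_1>1$ depending only on $N$ (and the family), valid for $|\delta_\la|<\eta$.

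Next I would handle the excursions into $\textbf{M}_N(\la)$. The point is that on each cylinder block $\textbf{C}_n(\la)$ the derivative of $F_\la$ is close to $-1$ (hence $|F_\la'|$ is bounded below by something like $1-\varepsilon$, and actually $|F_\la'(z)|\geq 1$ up to a controlled factor), so a trip through the parabolic region, although it may contract by a bounded factor on each single step, does not destroy the overall growth: using Proposition \ref{prop:aad}(\ref{pit:aad2}) and Corollary \ref{col:mod}, an orbit segment that enters $\textbf{M}_N(\la)$ at level $n$, descends to level $N$, and exits, has total derivative bounded below by a constant $c_N>0$ depending only on $N$ (the ratio $|\mc C_n(\la)|/|\mc C_N(\la)|$ telescopes, and $|\mc C_N(\la)|$ is bounded below for $|\delta_\la|$ small). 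Crucially this lower bound $c_N$ is uniform over $n$ and over $\la$ near $-1$, because of the hyperbolic/parabolic dichotomy in Proposition \ref{prop:mod}: in the parabolic regime ($n|\delta_\la|\leq1$) the size is comparable to $n^{-3/2}$ and in the hyperbolic regime the exponential decay only helps the ratio. So each maximal excursion into $\textbf{M}_N$ costs at most a fixed multiplicative factor $c_N\in(0,1)$, and between excursions the orbit sits in $\textbf{B}_N$ where it genuinely expands.

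Then I would assemble the estimate by the chain rule. Decompose the orbit $z,F_\la(z),\dots,F_\la^{n-1}(z)$ into the $k_n(z)$ times it visits $\textbf{B}_N(\la)$ and the intervening maximal blocks inside $\textbf{M}_N(\la)$. Writing $|(F_\la^n)'(z)|$ as the product of the one-step derivatives, each visit to $\textbf{B}_N$ contributes a factor $\geq\rho_1>1$, each maximal $\textbf{M}_N$-block contributes a factor $\geq c_N$, and there are at most $k_n(z)+1$ such blocks (an entry into $\textbf{M}_N$ must be preceded by a point of $\textbf{B}_N$ or be the start of the orbit, and similarly for exits). Hence
$$|(F_\la^n)'(z)|\geq \rho_1^{k_n(z)}\,c_N^{k_n(z)+1}=c_N\,(\rho_1 c_N)^{k_n(z)}.$$
This is not yet of the desired form because $\rho_1 c_N$ need not exceed $1$. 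To fix this I would group consecutive returns: iterate $F_\la$ enough times ($\geq n_0$ steps) between successive uses of the bound so that the accumulated $\textbf{B}_N$-expansion $\rho_0$ dominates the single worst $c_N$ factor, i.e. replace $\rho_1$ by $\rho_0$ and absorb finitely many small factors, obtaining $\rho:=(\rho_0 c_N)^{1/n_0}>1$ after choosing $n_0$ large (possible since $\rho_0$ can be taken as large as we like by iterating on $\textbf{B}_N$, while $c_N$ is a fixed constant). With $C$ collecting the leftover bounded factors, this yields $|(F_\la^n)'(z)|>C\rho^{k_n(z)}$.

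The main obstacle I anticipate is the uniformity over $\la$ and the correct bookkeeping of the excursions: one must be sure that the contraction incurred inside $\textbf{M}_N(\la)$ per excursion is bounded \emph{independently of how deep the excursion goes} and \emph{independently of $\la$}, and that the number of ``bad'' factors is genuinely controlled by $k_n(z)$ rather than by $n$. The hyperbolic-versus-parabolic split of Proposition \ref{prop:mod} (and its Corollary \ref{col:mod}) is exactly what rescues this: in the worst (parabolic) regime the cylinder sizes are polynomial, so the telescoped ratio $|\mc C_n(\la)|/|\mc C_N(\la)|$ is bounded below by $(N/n)^{3/2}\cdot\text{const}$ which is still $\geq c_N>0$ after noting $n$ is the entry level and the product over a descending block telescopes to just the endpoint ratio — there is no accumulation over many excursions because each excursion's cost is the \emph{same} endpoint-ratio bound, not a product of them.
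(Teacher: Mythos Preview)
Your Euclidean-metric approach has a genuine gap. The assertion that ``after possibly shrinking, $|F_\la'(z)|\geq\rho_1>1$ for $z$ in a fixed neighborhood of $\textbf{B}_N(\la)$'' is simply false: $\textbf{B}_N(\la)$ contains the cylinders $\textbf{C}_1(\la),\dots,\textbf{C}_N(\la)$, which lie near the parabolic points, and there $|F_\la'(z)|$ is close to $1$ (it can even be $<1$). So no per-step Euclidean lower bound $\rho_1>1$ is available on all of $\textbf{B}_N(\la)$. Your weaker formulation---expansion by $\rho_0$ after $n_0$ consecutive $\textbf{B}_N$-steps---is reasonable, but then your ``fix'' breaks down: you cannot decide to iterate $n_0$ times inside $\textbf{B}_N$, because the orbit may make only a single $\textbf{B}_N$-visit before plunging back into $\textbf{M}_N$. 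Your final bound $c_N(\rho_1 c_N)^{k_n(z)}$ therefore cannot be upgraded as you suggest, and the argument does not close.

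The paper sidesteps this completely by passing to the hyperbolic metric on $\hat\C\setminus P(F_\la)$. By \cite[Theorem~3.5]{M} one has $\|F_\la'(z)\|>1$ at \emph{every} point of the Julia set (off the postcritical set), so there is never any contraction to compensate. On the compact set $\overline{\textbf{X}_N(\la)}=\overline{\textbf{B}_{N+1}(\la)\setminus F_\la^{-1}(\textbf{M}_N(\la))}$, which is uniformly bounded away from $P(F_\la)$, compactness (and uniform convergence of $\varphi_\la$) gives $\|F_\la'\|>\rho>1$ uniformly in $|\delta_\la|\leq\eta$. Hence each $\textbf{B}_N$-step contributes at least $\rho$ in hyperbolic norm and every other step contributes at least $1$, yielding $\|(F_\la^n)'(z)\|>\rho^{k_n(z)-1}$; one then converts to Euclidean norm at the endpoints (where hyperbolic and Euclidean metrics are comparable). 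The bounded-derivative estimate you obtain from Proposition~\ref{prop:aad}(\ref{pit:aad2}) for a single $\textbf{M}_N$-block is correct and is exactly what the paper uses to reduce to orbits starting and ending in $\textbf{X}_N$, but it is the hyperbolic-metric step---not a balancing of Euclidean expansion against contraction---that produces the factor $\rho^{k_n(z)}$.
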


\begin{proof}
Fix $N\in\N$ and $\eta>0$ small enough. We conclude from Proposition \ref{prop:aad} (\ref{pit:aad2}) that there exist $C>0$, such that if $z\in \textbf{C}_{N+j}(\la)$, $|\delta_\la|\leeq\eta$ and $1\leeq n \leeq j-1$, then
   $$|(F^n_\la)'(z)|>C.$$
Thus, if the whole trajectory is included in $\textbf{M}_N(\la)$, the assertion holds. Moreover, the assertion also holds if $z\in \textbf{B}_N(\la)$ and the rest of the trajectory is included in $\textbf{M}_N(\la)$ (so in particular $z\in F_\la^{-1}(\textbf{M}_N(\la))$).

Next, because of the chain rule, we can assume that $z\in\textbf{B}_{N+1}(\la)$, $F_\la(z)\in\textbf{B}_{N}(\la)$ and $F^n_\la(z)\in\textbf{B}_N(\la)$. That is
   $$z, F^{n-1}_\la(z)\in\textbf{X}_N(\la):=\textbf{B}_{N+1}(\la)\sms F_\la^{-1}(\textbf{M}_N(\la)).$$

Let $P(F_\la)$ be the postcritical set of $F_\la$, i.e. closure of the strict forward orbits of the critical points of $F_\la$.
Let $||F_\la'||$ denote the norm of the derivative with respect to the hyperbolic metric on $\hat\C\sms P(F_\la)$.

Since $P(F_\la)$ is a proper subset of $Q(F_\la)=F^{-1}_\la(P(F_\la))$, \cite[Theorem 3.5]{M} leads to
 \begin{equation}\label{eq:>1}
    ||F'_\la(z)||>1,
 \end{equation}
provided $F_\la(z)\notin P(F_\la)$.

Since $\varphi_\la$ converges uniformly, the set
   $$\{(z,\delta_\la)\in\C\times [-\eta,\eta]:z\in \overline{\textbf{X}_N(\la)}\}$$
is compact. Moreover, $\overline{\textbf{X}_N(\la)}$ is separated from the postcritical set, thus there exist $\rho>1$ such that
 \begin{equation}\label{eq:greq1}
    ||F'_\la(z)||>\rho,
 \end{equation}
where $z\in\overline{\textbf{X}_N(\la)}$ and $|\delta_\la|\leeq \eta$. So, if $z, F^{n-1}_\la(z)\in\textbf{X}_N(\la)$, then we conclude from (\ref{eq:>1}) and (\ref{eq:greq1}) that
   $$||(F_\la^n)'(z)||>\rho^{k_n(z)-1}.$$
The statement follows from the fact that hyperbolic metric is comparable to Euclidian metric on the set $\textbf{B}_{N+1}(\la)$.
\end{proof}

\begin{prop}\label{prop:den}
There exists a constant $\chi>0$ such that
   $$\int_{\mc J_{\la_\dt}}\log|F'_\la(\varphi_\la)|d\tilde{\mu_\la}\rightarrow\chi,$$
where $\la\rightarrow-1$.
\end{prop}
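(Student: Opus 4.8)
The plan is to split the integral $\int_{\mc J_{\la_\dt}}\log|F'_\la(\varphi_\la)|\,d\tilde\mu_\la$ according to the partition $\mc J_{\la_\dt}=\textbf{B}_N\cup\textbf{M}_N$ and show that the ``bulk'' part stabilizes as $\la\to-1$ while the ``tail'' part contributes negligibly, uniformly in $\la$. First I would fix a large $N$ and write
\begin{equation*}
   \int_{\mc J_{\la_\dt}}\log|F'_\la(\varphi_\la)|\,d\tilde\mu_\la
   =\int_{\textbf{B}_N}\log|F'_\la(\varphi_\la)|\,d\tilde\mu_\la
   +\int_{\textbf{M}_N}\log|F'_\la(\varphi_\la)|\,d\tilde\mu_\la.
\end{equation*}
On $\textbf{B}_N$ the integrand is uniformly bounded (the set is separated from the critical points and $\varphi_\la\to\varphi_{-1}$ uniformly), so by Corollary \ref{col:lim} — applied to a continuous function agreeing with $\log|F'_{-1}|$ off a small neighbourhood of the parabolic cycle, together with the uniform convergence $\varphi_\la\to\varphi_{-1}$ and $F_\la\to F_{-1}$ — the first term converges to $\int_{\textbf{B}_N}\log|F'_{-1}(\varphi_{-1})|\,d\tilde\mu_{-1}$.

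The core of the argument is to control the tail term $\int_{\textbf{M}_N}\log|F'_\la(\varphi_\la)|\,d\tilde\mu_\la$ uniformly for $\la$ near $-1$, and to show it tends to $0$ as $N\to\infty$ uniformly in $\la$. Since on $\mc M_{\tilde n}(\la)$ we have $F'_\la$ close to $-1$, $\log|F'_\la(\varphi_\la)|$ is small there; more precisely, using \eqref{eq:F0} and Corollary \ref{col:odl}, on a cylinder $\mc C_n(\la)$ one has $|\,|F'_\la|-1\,|=O(|\im z|)=O(n^{-1/2})$ in the parabolic range $n|\delta_\la|\leeq1$, and exponentially small in the hyperbolic range. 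Hence
\begin{equation*}
   \Big|\int_{\textbf{M}_N}\log|F'_\la(\varphi_\la)|\,d\tilde\mu_\la\Big|
   \leeq K\sum_{n>N} n^{-1/2}\,\tilde\mu_\la(\textbf{C}_n),
\end{equation*}
and the right-hand side I would estimate via Proposition \ref{prop:meas}: in the parabolic range $\tilde\mu_\la(\textbf{C}_n)\asymp n^{-\frac32\mc D(\la)+1}$, so the summand is $\asymp n^{-\frac32\mc D(\la)+\frac12}$; since $\mc D(\la)>1$ by Zdunik's theorem, $-\frac32\mc D(\la)+\frac12<-1$, so this converges and the tail over $n>N$ is $O(N^{-\frac32\mc D(\la)+\frac32})\to0$; the hyperbolic range contributes an even smaller amount because of the extra exponential decay (or one can simply invoke Lemma \ref{lem:ee} after bounding $n^{-1/2}\leeq e^{\ve n|\delta_\la|}$ on the relevant range). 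This gives $\sup_{|\delta_\la|<\eta}\big|\int_{\textbf{M}_N}\log|F'_\la(\varphi_\la)|\,d\tilde\mu_\la\big|\to0$ as $N\to\infty$.

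Combining the two, a standard $\ve/3$ argument finishes the proof: given $\ve>0$, choose $N$ so the tail terms for $\la$ and for $-1$ are each $<\ve/3$, then let $\la\to-1$ in the bulk term. It follows that $\int_{\mc J_{\la_\dt}}\log|F'_\la(\varphi_\la)|\,d\tilde\mu_\la$ converges to $\chi:=\int_{\mc J_{-1}}\log|F'_{-1}(\varphi_{-1})|\,d\tilde\mu_{-1}$, and $\chi>0$ because this is the Lyapunov exponent of the (non-normalized) equilibrium state of the parabolic map $F_{-1}$, which is strictly positive — the measure is not supported at the parabolic points, and expansion dominates. The main obstacle I anticipate is making the tail estimate genuinely uniform in $\la$ across the transition between the parabolic regime $n|\delta_\la|\leeq1$ and the hyperbolic regime $n|\delta_\la|>1$; this is exactly what Proposition \ref{prop:meas} and Corollary \ref{col:odl} are designed to handle, so the estimate should go through, but care is needed near $\mc D(\la)=1$ to ensure the exponent $-\frac32\mc D(\la)+\frac12$ stays bounded away from $-1$, which is guaranteed by Zdunik's theorem giving $\mc D(\la)>1$ with a uniform gap.
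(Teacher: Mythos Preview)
Your argument for the existence of the limit is essentially identical to the paper's: the same decomposition $\mc J_{\la_\dt}=\textbf{B}_N\cup\textbf{M}_N$, Corollary \ref{col:lim} for the bulk, and the tail estimate via $|\log|F'_\la||\lesssim |F'_\la+1|\lesssim n^{-1/2}$ (resp.\ $|\delta_\la|^{1/2}$) on $\textbf{C}_n$ together with Proposition \ref{prop:meas}, giving a bound $K N^{-\frac32\mc D(\la)+\frac32}+K|\delta_\la|^{\frac32\mc D(\la)-\frac32}$ which is small by $\mc D(-1)>1$.

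The one genuine difference is in the positivity $\chi>0$. You invoke ``the Lyapunov exponent of the parabolic equilibrium state is positive'' as a known fact. The paper instead proves this \emph{uniformly in $\la$} using Lemma \ref{lem:exp}: from $|(F_\la^n)'(z)|>C\rho^{k_n(z)}$ and $F_{\la_\dt}$-invariance of $\tilde\mu_\la$ one gets
\[
   \int_{\mc J_{\la_\dt}}\log|F'_\la(\varphi_\la)|\,d\tilde\mu_\la
   =\tfrac1n\int\log|(F_\la^n)'(\varphi_\la)|\,d\tilde\mu_\la
   \greq \tfrac{\log C}{n}\,\tilde\mu_\la(\mc J_{\la_\dt})+\log\rho\cdot\tilde\mu_\la(\textbf{B}_N),
\]
hence a lower bound $\tfrac12\log\rho\cdot\tilde\mu_\la(\textbf{B}_N)$ for large $n$, and then $\tilde\mu_\la(\textbf{B}_N)\to\tilde\mu_{-1}(\textbf{B}_N)>0$. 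Your route is legitimate if you supply a reference (e.g.\ Denker--Urba\'nski or \cite{ADU}) for positivity of the exponent in the parabolic case, but the paper's argument via Lemma \ref{lem:exp} is self-contained and explains why that lemma was set up in the first place; it also gives a uniform positive lower bound along the approach $\la\to-1$, not merely positivity at the limit.
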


\begin{proof}
First, note that $\log|F'_\la(\varphi_\la)|$ converges uniformly to $\log|F'_{-1}(\varphi_{-1})|$. So, Corollary \ref{col:lim} leads to
\begin{equation}\label{eq:bb}
\int_{\textbf{B}_N}\log|F'_\la(\varphi_\la)|d\tilde{\mu_\la}\rightarrow \int_{\textbf{B}_N}\log|F'_{-1}(\varphi_{-1})|d\tilde\mu_{-1}.
\end{equation}

Now we estimate the integral over $\textbf{M}_N$.
We have $F_\la'(0)=-1+\delta_\la$. Because $F_\la'$ is a Lipschitz function on filled-in Julia set, we get
$$|F_\la'(z)+1|\leeq |\delta_\la|+K_1\dist(0,z),$$
for suitable constant $K_1$.
Since $\dist(0,z)$ is close to $|\im z|$, Corollary \ref{col:odl} gives us
\begin{equation*}
 \begin{array}{ll}
    |F_\la'(z)+1|\leeq |\delta_\la|+K_2n^{-1/2}\leeq K_3n^{-1/2}\;\;\; \textrm{ if }\; n|\delta_\la|\leeq1,\\
    |F_\la'(z)+1|\leeq |\delta_\la|+K_2|\delta_\la|^{1/2}\leeq K_3|\delta_\la|^{1/2}\; \textrm{ if }\; n|\delta_\la|>1.
 \end{array}
\end{equation*}
Thus, Proposition \ref{prop:meas}, and an easy computation, lead to
 \begin{multline*}
   \int_{\textbf{M}_N}\big|\log|F'_\la(\varphi_\la)|\big|d\tilde{\mu_\la}\leeq K_4\sum_{n=N+1}^{\infty}|F'_\la(\varphi_\la)-1|\tilde{\mu_\la}(\textbf{C}_n)\\
   \leeq K_5\sum_{n=N+1}^{[1/|\delta_\la|]}n^{-1/2}n^{-\frac32\mc D(\la)+1} +K_5\sum_{n=[1/|\delta_\la|]+1}^{\infty}|\delta_\la|^{1/2}|\delta_\la|^{\frac32\mc D(\la)-1}e^{-\frac1K n|\delta_\la|}\\
   \leeq K_6 N^{\frac32-\frac32\mc D(\la)}+K_6 |\delta_\la|^{\frac32\mc D(\la)-\frac32}.
 \end{multline*}
Since $\mc D(-1)>1$ and the constant $K_6$ does not depend on $N$, we see that the integral over $\textbf{M}_N$ is less than an arbitrary $\ve>0$, for $N$ large enough, and $\delta_\la$ close to $0$.
Thus we conclude from (\ref{eq:bb}) that there exists
\begin{equation}
\chi:=\lim_{\la\rightarrow-1}\int_{\mc J_{\la_\dt}}\log|F'_\la(\varphi_\la)|d\tilde{\mu_\la}.
\end{equation}

Now we prove that $\chi>0$. Fix $\delta_\la$ close to 0, and $N\in\N$. Since the measure $\tilde{\mu_\la}$ is $F_{\la_\dt}$-invariant, Lemma \ref{lem:exp} gives us
\begin{equation*}
\int_{\mc J_{\la_\dt}}\log|(F^n_\la)'(\varphi_\la)|d\tilde{\mu_\la}\greq \log(C)\,\tilde{\mu_\la}(\mc J_{\la_\dt})+n\log(\rho)\,\tilde{\mu_\la}(\textbf{B}_{N}).
\end{equation*}
Therefore, we see that
\begin{equation*}
\int_{\mc J_{\la_\dt}}\log|F'_\la(\varphi_\la)|d\tilde{\mu_\la}=\frac1n\int_{\mc J_{\la_\dt}}\log|(F^n_\la)'(\varphi_\la)|d\tilde{\mu_\la}\greq \frac12\log(\rho)\,\tilde{\mu_\la}(\textbf{B}_{N})>0,
\end{equation*}
for $n$ large enough. Since $\rho>1$, the statement follows from the fact that $\tilde{\mu_\la}(\textbf{B}_{N})\rightarrow\tilde\mu_{-1}(\textbf{B}_{N})>0$.
\end{proof}

\section{Estimates close to the parabolic point $0$}\label{sec:generaltwopetals}

In this section, we will estimate a "principal part" of the function
\begin{equation}\label{eq:ff}
   \frac{\partial}{\partial\la}\log\big|F'_\la(\varphi_\la)\big|= \re\Big(\frac{\frac{\partial}{\partial \la} (F_\la'(\varphi_\la))}{F_\la'(\varphi_\la)}\Big),
\end{equation}
close to the parabolic point $0$ (cf. formula \ref{eq:wzor}). Note that
\begin{equation}\label{eq:fff}
   \frac{\partial}{\partial \la} (F_\la'(\varphi_\la))= \Big(\frac{\partial}{\partial \la} F_\la'\Big)(\varphi_\la)+\frac{\partial}{\partial\la}\varphi_\la\cdot F_\la''(\varphi_\la).
\end{equation}

\subsection{}
We define
   $$\dot\varphi_\la(s):=\frac{\partial}{\partial\la}\varphi_\la(s).$$
Since the Julia set moves holomorphically on $\Lambda_l$ and $\Lambda_r$ (see Section \ref{ssc:hm}), the derivative is well defined if $\la\in(-1-\ve,-1)\cup(-1,-1+\ve)$, for suitably chosen $\ve>0$. Moreover
   $$\varphi_\la(F_{\la_\dt}(s))=F_\la(\varphi_\la(s)).$$
Thus, differentiating  both sides with respect to $\la$, we get
   $$\dot\varphi_\la(F_{\la_\dt}(s))=
   \Big(\frac{\partial}{\partial\la}F_\la\Big)(\varphi_\la(s))+
   F_\la'(\varphi_\la(s))\cdot\dot\varphi_\la(s).$$
Hence,
   $$\dot\varphi_\la(s)=
   -\frac{(\frac{\partial}{\partial\la}F_\la)(\varphi_\la(s))}{F_\la'(\varphi_\la(s))}+
   \frac{\dot\varphi_\la(F_{\la_\dt}(s))}{F_\la'(\varphi_\la(s))}.$$
Next, replacing $s$ by $F_{\la_\dt}(s), F^2_{\la_\dt}(s),\ldots,F^{m-1}_{\la_\dt}(s)$, and using the fact that $\varphi_\la(F_{\la_\dt}^{j-1}(s))=F_\la^{j-1}(\varphi_\la(s))$, we obtain
\begin{equation}\label{eq:sumas}
   \dot\varphi_\la(s)=
   -\sum_{j=1}^{m}\frac{(\frac{\partial}{\partial\la}F_\la)
   (F_\la^{j-1}(\varphi_\la(s)))}{(F_\la^j)'(\varphi_\la(s))}+
   \frac{\dot\varphi_\la(F_{\la_\dt}^{m}(s))}{(F_\la^m)'(\varphi_\la(s))}.
\end{equation}
Since $\frac{\partial}{\partial\la}F_\la(z)=z+O(z^3)$, we rewrite the above formula as follows:
\begin{multline}\label{eq:101}
   \dot\varphi_\la(s)=
   -\sum_{j=1}^{m}\frac{F_\la^{j-1}(\varphi_\la(s))}{(F_\la^j)'(\varphi_\la(s))} \\ -\sum_{j=1}^{m}\frac{(\frac{\partial}{\partial\la}F_\la)(F_\la^{j-1}(\varphi_\la(s)))- F_\la^{j-1}(\varphi_\la(s))} {(F_\la^j)'(\varphi_\la(s))} +\frac{\dot\varphi_\la(F_{\la_\dt}^{m}(s))}{(F_\la^m)'(\varphi_\la(s))}.
\end{multline}

\subsection{}
Let $s\in \textbf{C}_n$, where $n\greq1$. The function $\dot\Psi_\la$ is defined on the set $\textbf{M}_0^*$ as follows:
\begin{equation}\label{eq:Psi}
   \dot\Psi_\la(s):=-\sum_{j=1}^{n}\frac{(\frac{\partial} {\partial\la}F_\la)(F_\la^{j-1}(\varphi_\la(s)))} {(F_\la^j)'(\varphi_\la(s))}.
\end{equation}
Next, if $\varphi_\la(s)=z\in\mc C_n(\la)$, where $n\greq1$. We define the function $\dot\psi_\la(z)$ on $\mathcal{M}_0^*(\la)$ (i.e. close to the parabolic point $0$), by setting
\begin{equation}\label{eq:psi}
   \dot\psi_\la(z):=-\sum_{j=1}^{n}\frac{F_\la^{j-1}(z)}{(F_\la^j)'(z)}.
\end{equation}
We are going to study $\dot\psi_\la(z)$, which is a "principal part" of $\dot\varphi_\la(s)$. 
But first, for $z\in\mathcal{M}_0^*(\la)$, let us write
\begin{equation}\label{eq:beta}
   \beta_\la(z):=\im z\cdot\im\dot\psi_\la(z).
\end{equation}
So we see that $\beta_\la(z)$ is related to $\im \dot\psi_\la(z)$. We will prove that  $\beta_\la(z)$ is also related to $\re\dot\psi_\la(z)$, as well as to the function from (\ref{eq:ff}) (see subsections \ref{ssc:re}, \ref{ssc:ff}). The function $\beta_\la(z)$ will be estimated at the end of this section (see subsection \ref{ssc:beta})

Next, in Section \ref{sec:pp}, we will prove that the function $\dot\Psi_\la$ is "close" to $\dot\psi_\la$, and we will consider the other points of the parabolic cycle.

\subsection{}\label{ssc:re}
Note that, for $n_0<n$, we have
   $$\dot\psi_\la(z)=
   -\sum_{j=1}^{n_0}\frac{F_\la^{j-1}(z)}{(F_\la^j)'(z)}+
   \frac{\dot\psi_\la(F_\la^{n_0}(z))}{(F_\la^{n_0})'(z)}.$$
In particular, if $n_0=1$, we get
\begin{equation}\label{eq:onestep}
   \dot\psi_\la(z)\cdot F_\la'(z)=-z+\dot\psi_\la(F_\la(z)).
\end{equation}

It follows from definition of $\tilde o$ and Corollary \ref{col:odl}, that
$$A_\la(z)=\tilde o(B_\la(z)),$$
if for every $\ve>0$ there exist $\eta>0$ and $N\in\N$ such that $|A_\la(z)/B_\la(z)|\leeq \ve$, where $z\in\mc C_n(\la)$, $n>N$ and $0<|\la|<\eta$.

\begin{lem}\label{lem:const}
   We have:
 \begin{enumerate}
    \item\label{lit:const1}
       if $z_1,z_2\in\mc C^+_n(\la)$ (or $z_1,z_2\in\mc C^-_n(\la)$), then
          $$\dot\psi_\la(z_1)-\dot\psi_\la(z_2)=\tilde o(1),$$
    \item\label{lit:const2}
       if $z\in\mathcal M_N^*(\la)$, then
          $$\dot\psi_\la(z)-\overline{\dot\psi_\la(F_\la(z))}=\tilde o(1).$$
 \end{enumerate}
\end{lem}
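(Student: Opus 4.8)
The plan is to exploit the functional equation \eqref{eq:onestep}, namely $\dot\psi_\la(z)\cdot F_\la'(z)=-z+\dot\psi_\la(F_\la(z))$, which is the only relation we have to push information around in the cylinder tower. For part \eqref{lit:const1} I would iterate this $n$ steps: if $z_1,z_2\in\mc C_n^+(\la)$, then $F_\la^{n-m}(z_i)\in\mc C_m^\pm(\la)$, and after applying the chain rule we can express $\dot\psi_\la(z_1)-\dot\psi_\la(z_2)$ as a sum over $j$ of terms $\bigl(F_\la^{j-1}(z_1)-F_\la^{j-1}(z_2)\bigr)/(F_\la^j)'(z_i)$ (plus lower-order corrections coming from the derivative factors not being equal at $z_1$ and $z_2$). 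The key point is that the numerator differences are controlled by the diameter of the cylinder containing $F_\la^{j-1}(z_i)$, i.e. by $\diam(\mc C_{n-j+1}^\pm(\la))\asymp|\mc C_{n-j+1}(\la)|$ (Proposition \ref{prop:dyst}), while the denominators $|(F_\la^j)'(z_i)|$ are controlled from below by $|\mc C_{n-j+1}(\la)|/|\mc C_n(\la)|$ via Proposition \ref{prop:aad}\eqref{pit:aad2}. Dividing, the $j$-th term is $\asymp|\mc C_n(\la)|$, and summing over $1\leeq j\leeq n$ gives something of order $n\,|\mc C_n(\la)|$. By Proposition \ref{prop:mod}\eqref{pit:mod2} (parabolic estimate) $|\mc C_n(\la)|\leeq Kn^{-3/2}$ in the parabolic regime $n|\delta_\la|\leeq1$, so $n|\mc C_n(\la)|\leeq Kn^{-1/2}\to0$ uniformly; and in the hyperbolic regime $n|\delta_\la|>1$ Proposition \ref{prop:mod}\eqref{pit:mod1} gives exponential decay $|\mc C_n(\la)|\leeq K|\delta_\la|^{3/2}e^{-n|\delta_\la|/K}$, so $n|\mc C_n(\la)|\leeq K\,(n|\delta_\la|)\,|\delta_\la|^{1/2}e^{-n|\delta_\la|/K}$, which is also small once $|\delta_\la|$ is small (the function $t e^{-t/K}$ is bounded). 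Hence the difference is $\tilde o(1)$.

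For part \eqref{lit:const2}, the observation is that $\overline{\dot\psi_\la(F_\la(z))}=\dot\psi_\la(\overline{F_\la(z)})$ by the symmetry $F_{\overline\la}(\bar w)=\overline{F_\la(w)}$ and the reality of $\la,\la_\dt$, and that $\overline{F_\la(z)}$ lies in the same half-plane component $\mc C_{n-1}^\pm(\la)$ as $F_\la(z)$ reflected — more precisely, from Lemma \ref{lem:arg} the iterates $F_\la^{2l}$ have argument close to $0$ and $F_\la^{2l-1}$ close to $\pi$ on $\mc M_{\tilde n}(\la)$, which is what makes the conjugate land back in the tower in a controlled way. So I would first rewrite $\dot\psi_\la(z)-\overline{\dot\psi_\la(F_\la(z))}$ using \eqref{eq:onestep} as
$$\dot\psi_\la(z)-\overline{\dot\psi_\la(F_\la(z))}=\dot\psi_\la(z)-\dot\psi_\la(\overline{F_\la(z)})=\bigl(\text{something involving }F_\la'(z)^{-1}\bigr),$$
and then compare $\dot\psi_\la(z)$ at the point $z\in\mc C_n^\pm(\la)$ with its value at a point of $\mc C_n^\pm(\la)$ obtained by pulling back $\overline{F_\la(z)}$ one step — reducing part \eqref{lit:const2} to part \eqref{lit:const1} together with the fact that $|F_\la'(z)+1|=\tilde o(1)$ on $\mc M_N^*(\la)$ (which follows, as in the proof of Proposition \ref{prop:den}, from $F_\la'(0)=-1+\delta_\la$, the Lipschitz property of $F_\la'$, and Corollary \ref{col:odl} giving $|\im z|\to0$). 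The normalization $\frac{\partial}{\partial\la}F_\la(z)=z+O(z^3)$ and the smallness of $z$ on $\mc M_N^*(\la)$ keep all the error terms $\tilde o$.

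The main obstacle I anticipate is bookkeeping the accumulation of errors along the $n$-fold iteration: each application of \eqref{eq:onestep} introduces a multiplicative factor $1/F_\la'$ which is only approximately $-1$, and a controlled-but-nonzero discrepancy between the derivative factors at $z_1$ and at $z_2$ (these are what force the "lower-order correction" terms above). One must check that these corrections, after dividing by the appropriate products of derivatives and summing over $j$, still form a convergent series whose total is $\tilde o(1)$ — this requires the hyperbolicity input of Lemma \ref{lem:exp} (the factor $\rho^{k_n(z)}$) to dominate any polynomial growth in $n$ coming from the number of terms, in the regime where $z$ has escaped the tower, and the parabolic/hyperbolic dichotomy of Proposition \ref{prop:mod} in the regime where it has not. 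Making the two regimes $n|\delta_\la|\leeq1$ and $n|\delta_\la|>1$ interface cleanly — in particular checking that the bound is uniform in $\la$ as $\la\to-1$, as the definition of $\tilde o$ demands — is the delicate point.
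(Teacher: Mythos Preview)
Your approach for part \eqref{lit:const1} is correct and matches the paper's (which refers to \cite[Lemma 9.1]{Ji} and to the proof of Lemma \ref{lem:const2} in the present paper for the details). The term-by-term comparison gives each summand of size $\asymp|\mc C_n(\la)|$, hence the total $\leeq K\, n\,|\mc C_n(\la)|$. Two simplifications, however. First, Corollary \ref{col:mod} already gives the uniform bound $|\mc C_n(\la)|\leeq K_2\, n^{-3/2}$ for all $n\greq1$ and $|\delta_\la|<\eta$, so $n\,|\mc C_n(\la)|\leeq K_2\, n^{-1/2}$ directly; the parabolic/hyperbolic split you describe is unnecessary. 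Second, Lemma \ref{lem:exp} is irrelevant here: for $z\in\mc C_n(\la)$ and $1\leeq j\leeq n$ the iterate $F_\la^{j-1}(z)$ lies in $\mc C_{n-j+1}(\la)\subset\mc M_0^*(\la)$, so nothing ``escapes the tower'' and all derivative control comes from Proposition \ref{prop:aad}\eqref{pit:aad2}. The ``lower-order corrections'' from the derivative factors differing at $z_1$ and $z_2$ are handled by Koebe distortion exactly as in the proof of Lemma \ref{lem:const2}, and contribute the same order $|\mc C_n(\la)|$ per term.

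For part \eqref{lit:const2}, your identification $\overline{\dot\psi_\la(F_\la(z))}=\dot\psi_\la(\overline{F_\la(z)})$ via the reality of the coefficients of $F_\la$ is the essential point. After that the paper simply says ``the second statement follows from the first'', meaning: if $z\in\mc C_n^+(\la)$ then $\overline{F_\la(z)}\in\mc C_{n-1}^+(\la)$, so the two points lie on the same side in adjacent cylinders, and the \emph{same} term-by-term estimate as in part \eqref{lit:const1} applies (the sums have $n$ and $n-1$ terms respectively; the extra term is $O(|(F_\la^n)'(z)|^{-1})=O(n^{-3/2})$ by Lemma \ref{lem:derivativefn}). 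There is no need to pull back, to invoke the functional equation \eqref{eq:onestep}, or to use $|F_\la'(z)+1|=\tilde o(1)$ separately --- your proposed route for \eqref{lit:const2} would work but is more circuitous than necessary.
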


\begin{proof}
The first statement can be proven in a similar way as in \cite[Lemma 9.1]{Ji}, but we do not have to group terms by pairs (see also the proof of Lemma \ref{lem:const2}).
The second statement follows from the first.
\end{proof}

It follows from Lemma \ref{lem:kat}, that $\re z=\tilde o(\im z)$. So, (\ref{eq:F0}) gives us
\begin{equation}\label{eq:reimF'}
 \begin{array}{ll}
    \re F_\la'(z)=-1+\tilde o(1),\\
    \im F_\la'(z)=2a\im z+\tilde o(\re z).
 \end{array}
\end{equation}

\begin{lem}\label{lem:realpart}
   If $z\in\mathcal{M}_0^*(\la)$, then
\begin{equation*}
   \re\dot\psi_\la(z)=-a\beta_\la(z)+\tilde o(1)+\tilde o(\beta_\la(z)).
\end{equation*}
\end{lem}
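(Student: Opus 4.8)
The plan is to extract the real part of $\dot\psi_\la(z)$ from the recursion (\ref{eq:onestep}) by comparing one step of the dynamics against its complex conjugate. First I would write (\ref{eq:onestep}) as $\dot\psi_\la(z) F_\la'(z) = -z + \dot\psi_\la(F_\la(z))$, and also take the complex conjugate of the same identity evaluated at $\overline z$ (or equivalently use that $F_{\overline\la}(\overline z)=\overline{F_\la(z)}$ together with $\la\in\R$), so that $\overline{\dot\psi_\la(z)}\,\overline{F_\la'(z)} = -\overline z + \overline{\dot\psi_\la(F_\la(z))}$. By Lemma \ref{lem:const}(\ref{lit:const2}) the term $\dot\psi_\la(F_\la(z))$ differs from $\overline{\dot\psi_\la(F_\la^2(z))}$ by $\tilde o(1)$; but since $F_\la^2(z)$ lies in $\mc C^\pm_{n-2}(\la)$, which is essentially the same cylinder as $\mc C^\pm_n(\la)$ at this scale, one more application of Lemma \ref{lem:const}(\ref{lit:const1}) lets me replace $\dot\psi_\la(F_\la(z))$ by $\overline{\dot\psi_\la(F_\la(z))}$ up to $\tilde o(1)$ — which is just the statement that $\im\dot\psi_\la(F_\la(z)) = \tilde o(1)$. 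Actually, the cleaner route: combine (\ref{eq:onestep}) with Lemma \ref{lem:const}(\ref{lit:const2}) applied at $z$ itself, namely $\dot\psi_\la(z) = \overline{\dot\psi_\la(F_\la(z))} + \tilde o(1)$.

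Now substitute this into (\ref{eq:onestep}): we get $\dot\psi_\la(z) F_\la'(z) = -z + \dot\psi_\la(F_\la(z))$, and $\dot\psi_\la(F_\la(z)) = \overline{\dot\psi_\la(z)} + \tilde o(1)$ (again from Lemma \ref{lem:const}(\ref{lit:const2}), noting $F_\la(z)\in\mc C^\mp_{n-1}(\la)$ and $F_\la^2(z)\in\mc C^\pm_{n-2}(\la)$ is comparable to $z$'s cylinder). Hence
\begin{equation*}
   \dot\psi_\la(z) F_\la'(z) + z = \overline{\dot\psi_\la(z)} + \tilde o(1).
\end{equation*}
Write $\dot\psi_\la(z) = u + iv$ with $u = \re\dot\psi_\la(z)$, $v = \im\dot\psi_\la(z)$, and use (\ref{eq:reimF'}): $F_\la'(z) = -1 + iq + \tilde o(1)$ where $q = 2a\im z + \tilde o(\re z)$, so $q = 2a\im z + \tilde o(1)$ since $\re z = \tilde o(\im z)$ and $\im z \to 0$. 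Then $\dot\psi_\la(z)F_\la'(z) = (u+iv)(-1+iq) + \tilde o(|\dot\psi_\la(z)|) = -u - vq + i(uq - v) + \tilde o(1) + \tilde o(\dot\psi_\la)$. Taking real parts of the displayed identity: $-u - vq + \re z = u + \tilde o(1) + \tilde o(\dot\psi_\la)$, i.e. $2u = -vq + \re z + \tilde o(1) + \tilde o(\dot\psi_\la) = -2a\, v\im z + \tilde o(1) + \tilde o(u) + \tilde o(v)$, using $\re z = \tilde o(\im z) = \tilde o(1)$ and absorbing $v\cdot\tilde o(1)$ into $\tilde o(v)$. By definition $\beta_\la(z) = \im z \cdot v$, so $2u = -2a\beta_\la(z) + \tilde o(1) + \tilde o(u) + \tilde o(v)$. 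Finally I need to convert $\tilde o(v)$ into $\tilde o(\beta_\la(z))$ and absorb the $\tilde o(u)$ on the left: this requires the a priori bound that $v = O(\beta_\la(z)/\im z)$ is controlled — more precisely I would argue that the imaginary part of (\ref{eq:onestep}) gives $\im z \cdot v$ bounded in terms of $\beta_\la$ and that $u$ is itself $\tilde o(1)$-small compared to the right side, so that $(1+\tilde o(1))u = -a\beta_\la(z) + \tilde o(1) + \tilde o(\beta_\la(z))$, which gives the claim after dividing.

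The main obstacle I anticipate is the bookkeeping of the $\tilde o$ terms: specifically, justifying that $\tilde o(v)$ can be rewritten as $\tilde o(1) + \tilde o(\beta_\la(z))$, which is not automatic since $v$ and $\beta_\la(z) = v\im z$ differ by the factor $\im z \to 0$. The resolution should come from a companion estimate — presumably the imaginary-part analysis of the recursion showing $v$ grows at most like $1/\im z$ times something controlled, or equivalently that $\beta_\la(z)$ stays bounded away from degeneracy on the relevant cylinders — together with the self-improving nature of $\tilde o$: once $u = O(\beta_\la) + \tilde o(1)$ is known crudely, feeding it back sharpens the error to the stated $\tilde o(1) + \tilde o(\beta_\la(z))$. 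The remaining steps — the one-step recursion, the conjugation symmetry, and the linear-algebra extraction of $\re$ from $\dot\psi_\la F_\la' + z = \overline{\dot\psi_\la} + \tilde o(1)$ — are routine given (\ref{eq:F0}), (\ref{eq:reimF'}), and Lemma \ref{lem:const}.
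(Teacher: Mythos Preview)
Your approach is essentially the paper's: take the real part of the one-step recursion \eqref{eq:onestep}, use Lemma~\ref{lem:const}(\ref{lit:const2}) to replace $\dot\psi_\la(F_\la(z))$ by $\overline{\dot\psi_\la(z)}+\tilde o(1)$, plug in \eqref{eq:reimF'}, and absorb the $\tilde o(\re\dot\psi_\la(z))$ term on the left. The paper writes this as
\[
\re\dot\psi_\la(F_\la(z))-\re\dot\psi_\la(z)\cdot\re F_\la'(z)=\re z-\im\dot\psi_\la(z)\cdot\im F_\la'(z),
\]
estimates each side separately via \eqref{eq:reimF'} and Lemma~\ref{lem:const}(\ref{lit:const2}), and obtains $2\re\dot\psi_\la(z)=-2a\beta_\la(z)+\tilde o(1)+\tilde o(\beta_\la(z))+\tilde o(\re\dot\psi_\la(z))$, from which the claim follows by absorption.

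The obstacle you anticipate with the $\tilde o(v)$ term is self-inflicted and does not actually arise. It comes from two lossy simplifications. First, in $q=2a\im z+\tilde o(\re z)$ you replaced $\tilde o(\re z)$ by $\tilde o(1)$; keep it instead as $\tilde o(\im z)$ (since $\re z=\tilde o(\im z)$), so that $vq=2a\beta_\la(z)+v\cdot\tilde o(\im z)=2a\beta_\la(z)+\tilde o(\beta_\la(z))$ directly. Second, the $\tilde o(1)$ in $F_\la'(z)=-1+\tilde o(1)+iq$ is the error in the \emph{real} part of $F_\la'$ and is therefore real; multiplying by $u+iv$ it contributes $\tilde o(u)$ to the real part, not $\tilde o(v)$. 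With these two corrections your computation yields exactly $2u=-2a\beta_\la(z)+\tilde o(1)+\tilde o(\beta_\la(z))+\tilde o(u)$, and no auxiliary bound on $v$ or on $\beta_\la$ is needed to finish.
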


\begin{proof}
It follows from (\ref{eq:onestep}) that
   $$\re\dot\psi_\la(z)\cdot\re F_\la'(z)-\im\dot\psi_\la(z)\cdot\im F_\la'(z)=-\re z+\re\dot\psi_\la(F_\la(z)),$$
and then
   $$\re\dot\psi_\la(F_\la(z))-\re\dot\psi_\la(z)\cdot\re F_\la'(z)=\re z-\im\dot\psi_\la(z)\cdot\im F_\la'(z).$$
Using Lemma \ref{lem:const} (\ref{lit:const2}), and (\ref{eq:reimF'}) we can estimate both sides of the above expression:
\begin{equation*}
 \begin{array}{ll}
    \re\dot\psi_\la(F_\la(z))-\re\dot\psi_\la(z)\cdot\re F_\la'(z)=2\re\dot\psi_\la(z)+\tilde o(\re\dot\psi_\la(z))+\tilde o(1),\\
    \re z-\im\dot\psi_\la(z)\cdot\im F_\la'(z)=-2a\beta_\la(z)+\tilde o(\beta_\la(z))+\tilde o(1).
 \end{array}
\end{equation*}
Hence
   $$2\re\dot\psi_\la(z)=-2a\beta_\la(z)+\tilde o(1)+\tilde o(\beta_\la(z))+\tilde o(\re\dot\psi_\la(z)).$$
We see that $\re\dot\psi_\la(z)$ cannot be to large with respect to $\beta_\la(z)$, so the lemma follows.
\end{proof}

\subsection{}\label{ssc:ff}
Now, we will estimate a "principal part" of (\ref{eq:ff}).

First, if $u_\la$ is a complex function defined on a subset of $\mc J_{\la_\dt}$, then
\begin{equation}\label{eq:D2}
   \mathfrak{D}^2_{F_\la}(\varphi_\la,u_\la):=\Big(\frac{\partial}{\partial \la} F_\la'\Big)(\varphi_\la)+u_\la F_\la''(\varphi_\la).
\end{equation}
Thus, using (\ref{eq:fff}) and (\ref{eq:ff}), we get
\begin{equation}\label{eq:D2cor}
   \frac{\partial}{\partial \la} \big(F_\la'(\varphi_\la)\big)= \mathfrak{D}^2_{F_\la}(\varphi_\la,\dot\varphi_\la) \textrm{, and }
   \frac{\partial}{\partial\la}\log\big|F'_\la(\varphi_\la)\big|= \re\Big(\frac{\mathfrak{D}^2_{F_\la}(\varphi_\la,\dot\varphi_\la)} {F_\la'(\varphi_\la)}\Big).
\end{equation}
Next, using (\ref{eq:F0}) and (\ref{eq:F1}), we see that
\begin{equation*}
   \mathfrak{D}^2_{F_\la}(\varphi_\la,\dot\varphi_\la)= 1+\dot\varphi_\la(2a+6b\varphi_\la)+O(\delta_\la\varphi_\la\dot\varphi_\la)+ O(\varphi_\la^2\dot\varphi_\la)+O(\varphi_\la^2).
\end{equation*}

Now, instead of $\dot\varphi_\la$, we will deal with the function $\dot\psi_\la$. If $\varphi_\la(s)=z$, then $\dot\psi_\la(\varphi_\la(s))=\dot\psi_\la(z)$, therefore
\begin{multline}\label{eq:D}
   \mathfrak{D}^2_{F_\la}(\varphi_\la,\dot\psi_\la(\varphi_\la))= \mathfrak{D}^2_{F_\la}(z,\dot\psi_\la(z))\\
   =1+\dot\psi_\la(z)(2a+6bz)
   +O(\delta_\la z\dot\psi_\la(z))+O(z^2\dot\psi_\la(z))+O(z^2).
\end{multline}

The two following Propositions are the most important steps in the proof of the Theorem \ref{thm:twopetals} in the case $d(c_0)<4/3$.

\begin{prop}\label{prop:re=}
If $\varphi_\la(s)=z\in\mathcal{M}_0^*(\la)$, then
 \begin{equation*}
    \re\Big(\frac{\mathfrak{D}^2_{F_\la}(\varphi_\la,\dot\psi_\la(\varphi_\la))} {F_\la'(\varphi_\la)}\Big) = 6A^2\beta_\la(\varphi_\la)-1+\tilde o(1)+\tilde o(\beta_\la(\varphi_\la)).
 \end{equation*}
\end{prop}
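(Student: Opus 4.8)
The goal is to compute the real part of $\mathfrak{D}^2_{F_\la}(z,\dot\psi_\la(z))/F_\la'(z)$ up to $\tilde o(1)+\tilde o(\beta_\la(z))$ errors, starting from the explicit expansion \eqref{eq:D}. The plan is to first simplify the numerator $\mathfrak{D}^2_{F_\la}(z,\dot\psi_\la(z)) = 1+\dot\psi_\la(z)(2a+6bz)+O(\delta_\la z\dot\psi_\la(z))+O(z^2\dot\psi_\la(z))+O(z^2)$ using the size estimates we have: by Corollary \ref{col:odl}, $z\to 0$ on $\mathcal M_0^*(\la)$, so $z^2=\tilde o(\im z)$; by Lemma \ref{lem:kat}, $\re z=\tilde o(\im z)$; and $\beta_\la(z)=\im z\cdot\im\dot\psi_\la(z)$, while $\re\dot\psi_\la(z)=-a\beta_\la(z)+\tilde o(1)+\tilde o(\beta_\la(z))$ by Lemma \ref{lem:realpart}. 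The term $6bz\dot\psi_\la(z)$ should be handled by writing $z\dot\psi_\la(z)$ in terms of its real and imaginary parts: $\re(z\dot\psi_\la(z)) = \re z\,\re\dot\psi_\la(z)-\im z\,\im\dot\psi_\la(z)$, whose dominant piece is $-\beta_\la(z)$ (since $\re z=\tilde o(\im z)$ and $\re\dot\psi_\la(z)$ is controlled), and similarly $\im(z\dot\psi_\la(z)) = \re z\,\im\dot\psi_\la(z)+\im z\,\re\dot\psi_\la(z)$, which is $\tilde o(\beta_\la(z)/\im z)\cdot\im z + \im z\cdot(-a\beta_\la(z)+\cdots)$; one must track that $\im\dot\psi_\la(z)=\beta_\la(z)/\im z$ could a priori be large, but the error terms are all relative to $\beta_\la(z)$ so this is absorbed.

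Next I would divide by $F_\la'(z)$. From \eqref{eq:reimF'} we have $\re F_\la'(z)=-1+\tilde o(1)$ and $\im F_\la'(z)=2a\im z+\tilde o(\re z)=\tilde o(1)$ (since $\im z\to0$), so $F_\la'(z)=-1+\tilde o(1)+2ai\,\im z$. The key move is $1/F_\la'(z) = -1 + \tilde o(1) + 2ai\,\im z + \tilde o(\im z)$ — i.e., to first order $1/F_\la'(z)\approx -1-2ai\,\im z$, using $1/(-1+w)=-1-w+O(w^2)$ with $w=\tilde o(1)+2ai\,\im z$. Then
\begin{equation*}
   \frac{\mathfrak{D}^2_{F_\la}(z,\dot\psi_\la(z))}{F_\la'(z)} = \big(1+2a\dot\psi_\la(z)+6bz\dot\psi_\la(z)+\tilde o(\cdots)\big)\cdot\big(-1-2ai\,\im z+\tilde o(\cdots)\big),
\end{equation*}
and I would expand, keeping only terms contributing to the real part at the level of $\beta_\la(z)$ or constants. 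The crucial cancellation/combination: $\re\big((2a\dot\psi_\la(z))\cdot(-2ai\,\im z)\big) = 4a^2\,\im z\,\im\dot\psi_\la(z) = 4a^2\beta_\la(z)$; and $\re\big(2a\dot\psi_\la(z)\cdot(-1)\big) = -2a\re\dot\psi_\la(z) = 2a^2\beta_\la(z)+\tilde o(1)+\tilde o(\beta_\la(z))$ by Lemma \ref{lem:realpart}; and $\re\big(6bz\dot\psi_\la(z)\cdot(-1)\big) = -6b\re(z\dot\psi_\la(z)) = 6b\beta_\la(z)+\tilde o(\beta_\la(z))$; the term $6bz\dot\psi_\la(z)\cdot(-2ai\,\im z)$ carries an extra factor $\im z$ and is $\tilde o(\beta_\la(z))$. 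Finally the $1\cdot(-1)=-1$ term gives the constant $-1$, and $\re(1\cdot(-2ai\,\im z))=0$. Summing: $4a^2\beta_\la(z)+2a^2\beta_\la(z)+6b\beta_\la(z)-1 = 6(a^2+b)\beta_\la(z)-1 = 6A^2\beta_\la(z)-1$, using $A^2=a^2+b$. That is exactly the claimed formula.

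The main obstacle I anticipate is bookkeeping the error terms correctly — in particular, making sure that terms of the form $\tilde o(1)\cdot\dot\psi_\la(z)$ or $\tilde o(\im z)\cdot\dot\psi_\la(z)$ are genuinely $\tilde o(\beta_\la(z))+\tilde o(1)$ and not something uncontrolled. This requires knowing that $\dot\psi_\la(z)$ cannot be "too large" relative to $\beta_\la(z)=\im z\,\im\dot\psi_\la(z)$ and $\re\dot\psi_\la(z)$; the real part is pinned by Lemma \ref{lem:realpart}, and the imaginary part enters $\beta_\la$ always multiplied by $\im z\to 0$, so a bound like $|\dot\psi_\la(z)|\cdot\tilde o(1) = \tilde o(|\re\dot\psi_\la(z)|)+\tilde o(|\im z\,\im\dot\psi_\la(z)|/\im z)$ must be argued — but since every stray error is multiplied either by a quantity that is $\tilde o(\im z)$ (killing the $1/\im z$ blow-up of $\im\dot\psi_\la$) or is a genuine $O(1)$ coefficient hitting $\re\dot\psi_\la(z)$ which is $O(\beta_\la(z))+\tilde o(1)$, everything closes. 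A secondary point is justifying $\re z=\tilde o(\im z)$ and $z=\tilde o(\im z)$ uniformly on $\mathcal M_0^*(\la)$, which follows from Lemma \ref{lem:kat} and Corollary \ref{col:odl} respectively; I would state these reductions explicitly at the start of the proof.
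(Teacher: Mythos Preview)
Your proposal is correct and follows essentially the same route as the paper. The only cosmetic difference is that you expand $1/F_\la'(z)\approx -1-2ai\,\im z$ and multiply out, whereas the paper computes $\re\mathfrak D^2$, $\im\mathfrak D^2$ separately and then uses $\re(N/D)=(\re N\,\re D+\im N\,\im D)/|F_\la'|^2$ with $|F_\la'|^2=1+\tilde o(1)$; both routes produce the identical combination $-1+2a^2\beta_\la+6b\beta_\la+4a^2\beta_\la=6A^2\beta_\la-1$, and your error bookkeeping (the observation that every appearance of the potentially large $\im\dot\psi_\la=\beta_\la/\im z$ is paired with a factor that is $O(\im z)$ or $\tilde o(\im z)$) is exactly the mechanism that makes either computation close. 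Two tiny slips to fix: the sign in your displayed expansion of $1/F_\la'$ should read $-2ai\,\im z$ (as you in fact use in the computation), and ``$z=\tilde o(\im z)$'' is false as written --- you mean $z=\tilde o(1)$ (equivalently $|z|\asymp|\im z|\to 0$), which is what Lemma~\ref{lem:kat} and Corollary~\ref{col:odl} give.
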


\begin{proof}
The fact that $\re z=\tilde o(\im z)$, and Lemma \ref{lem:realpart} gives us
\begin{equation*}
 \begin{array}{lll}
    \re(z\cdot\dot\psi_\la(z))&=-\im z\im\dot\psi_\la(z)+\re z\re\dot\psi_\la(z)\\&=-\beta_\la(z)+\tilde o(\im z)+\tilde o(\beta_\la(z)),\\
    \im(z\cdot\dot\psi_\la(z))&=\re z\im\dot\psi_\la(z)+\im z\re\dot\psi_\la(z)=\tilde o(\im z)+\tilde o(\beta_\la(z)),
 \end{array}
\end{equation*}
and
\begin{equation*}
   z^2\cdot\dot\psi_\la(z)=\tilde o(\im^2 z)+\tilde o(\beta_\la(z)).
\end{equation*}
So, (\ref{eq:D}) leads to
\begin{equation*}\label{eq:reimD}
 \begin{array}{ll}
    \re \mathfrak{D}^2_{F_\la}(z,\dot\psi_\la(z))=1+2a\re\dot\psi_\la(z)-6b\beta_\la(z)+\tilde o(\im z)+\tilde o(\beta_\la(z)),\\
    \im \mathfrak{D}^2_{F_\la}(z,\dot\psi_\la(z))=2a\im\dot\psi_\la(z)+\tilde o(\im z)+\tilde o(\beta_\la(z)).
 \end{array}
\end{equation*}
Next, using (\ref{eq:reimF'}) and Lemma \ref{lem:realpart}, we obtain
\begin{equation*}
 \begin{array}{ll}
    \re \mathfrak{D}^2_{F_\la}(z,\dot\psi_\la(z))\cdot\re F_\la'(\varphi_\la)=&
    -\big(1+2a(-a\beta_\la(z))-6b\beta_\la(z)\big)\\&+\tilde o(1)+\tilde o(\beta_\la(z))\\
    \im \mathfrak{D}^2_{F_\la}(z,\dot\psi_\la(z))\cdot\im F_\la'(\varphi_\la)=&
    4a^2\beta_\la(z)+\tilde o(\im^2 z)+\tilde o(\beta_\la(z)).
 \end{array}
\end{equation*}
Finally, we have
\begin{multline*}
   \re\Big(\frac{\mathfrak{D}^2_{F_\la}(\varphi_\la,\dot\psi_\la(\varphi_\la))} {F_\la'(\varphi_\la)}\Big)\\= \frac{\re \mathfrak{D}^2_{F_\la}(z,\dot\psi_\la(z))\cdot\re F_\la'(\varphi_\la)+\im \mathfrak{D}^2_{F_\la}(z,\dot\psi_\la(z))\cdot\im F_\la'(\varphi_\la)}{|F'_\la(\varphi_\la)|^2}\\ =-1+2a^2\beta_\la(z)+6b\beta_\la(z)+4a^2\beta_\la(z)+\tilde o(1)+\tilde o(\beta_\la(z)).
\end{multline*}
Since $A^2=a^2+b$, the assertion follows.
\end{proof}

\subsection{}\label{ssc:beta}
Now we will estimate the function $\beta_\la$.

Let us consider the function $\Gamma:\R\rightarrow\R$,
\begin{equation}\label{eq:f}
   \Gamma(x)=\frac14\Big(1+\frac{\sinh 2x-2x}{\cosh 2x-1}\Big)=\frac{e^{2x}-1-2x}{2(e^{2x}-1)^2}\:e^{2x},
\end{equation}
where $x\neq0$, and $\Gamma(0)=1/4$. We see that $\Gamma$ is continuous, whereas $\lim_{x\rightarrow\infty}\Gamma(x)=1/2$ and $\lim_{x\rightarrow-\infty}\Gamma(x)=0$.
Next we get
   $$\Gamma'(x)=\frac{x\cosh x-\sinh x}{2\sinh^3x},$$
where $x\neq0$, and $\Gamma'(0)=1/6$. So, the function $\Gamma$ is monotone increasing, and we conclude that $\Gamma(x)\in (0,1/2)$ for all $x\in\R$.
\begin{figure}
\begin{center}
\includegraphics[width=8cm]{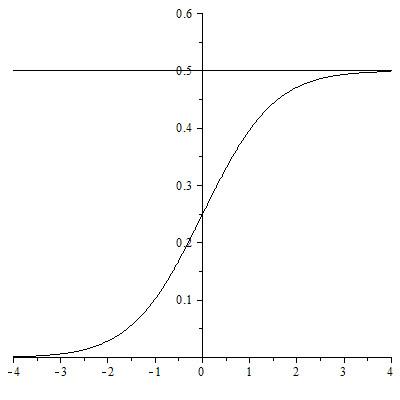}
\caption{The graph of the function $\Gamma$}
\end{center}
\end{figure}

\begin{prop}\label{prop:beta}
   For every $\varepsilon>0$ there exist $\eta>0$, $N\in\N$ such that if $z\in\mc C_n(\la)$, $n>N$ and $0<|\delta_\la|<\eta$ then
 \begin{equation*}
    \frac{1}{A^2}\:\Gamma(n\delta_\la)e^{-\ve n|\delta_\la|}-\varepsilon\leeq\beta_\la(z)\leeq \frac{1}{A^2}\:\Gamma(n\delta_\la)e^{\ve n|\delta_\la|}+\varepsilon.
 \end{equation*}
\end{prop}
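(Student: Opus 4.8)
The plan is to compute $\beta_\la(z) = \im z \cdot \im \dot\psi_\la(z)$ directly from the defining series (\ref{eq:psi}), $\dot\psi_\la(z) = -\sum_{j=1}^n F_\la^{j-1}(z)/(F_\la^j)'(z)$, by identifying each summand up to lower-order error and then summing. First I would fix $z\in\mc C_n(\la)$ and set $z_j := F_\la^{j-1}(z)\in\mc C_{n-j+1}(\la)$ for $1\leeq j\leeq n$; by Lemma \ref{lem:arg} (taking $j$ even/odd), $(F_\la^{j-1})'(z)$ is close to a positive or negative real depending on the parity of $j-1$, so $F_\la^{j-1}(z)/(F_\la^j)'(z)$ has the same vertical/horizontal orientation as $z_j$ itself. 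Since $\re z_j = \tilde o(\im z_j)$ (Lemma \ref{lem:kat}), the imaginary part of the $j$-th term is, up to a $\tilde o$-relative error, $\pm \im z_j / |(F_\la^j)'(z)|$ with a sign alternating in $j$; but the geometry arranges these so that $\im z_j$ itself alternates in sign along the orbit (successive iterates flip half-planes), so all contributions to $\im\dot\psi_\la(z)$ add constructively. The magnitude $|(F_\la^j)'(z)|$ is controlled by Proposition \ref{prop:aad}(\ref{pit:aad2}): $|(F_\la^j)'(z)| \asymp |\mc C_{n-j+1}(\la)| / |\mc C_{n+1}(\la)|$ up to $(1\pm\ve)$.

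The second step is to convert the resulting sum into an integral governed by the function $\Gamma$. Using Proposition \ref{prop:rn}, $|\mc C_m(\la)| \approx A^2 |\im z^{(m)}|^3 e^{-2m\delta_\la}$ where $z^{(m)}$ ranges over $\mc C_m(\la)$, and using Proposition \ref{prop:rnn}, $A^2\im^2 z^{(m)} \approx \delta_\la e^{2m\delta_\la}/(e^{2m\delta_\la}-1)$. Substituting, $|\im z_j| \cdot |\mc C_{n+1}(\la)|/|\mc C_{n-j+1}(\la)|$ becomes, after simplification, a quantity of the form $\frac{1}{A^2}(\text{explicit function of } n\delta_\la \text{ and } j\delta_\la)$. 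Summing over $j$ from $1$ to $n$ and recognizing the sum as a Riemann sum (valid since $|\delta_\la|$ is small, so the step $\delta_\la$ is fine), one gets $\frac{1}{A^2}\int_0^{n\delta_\la}(\cdots)\,dt$, and a direct computation identifies this integral with $\Gamma(n\delta_\la)$ — indeed $\Gamma(x) = \frac{e^{2x}-1-2x}{2(e^{2x}-1)^2}e^{2x}$ is exactly the antiderivative-type expression one expects from integrating $t\mapsto e^{2t}/(e^{2t}-1)$ against the appropriate weight. The error terms: each summand carries a multiplicative factor $e^{\pm\ve m|\delta_\la|}$ from Propositions \ref{prop:rnn}, \ref{prop:rn}, which propagates (via Lemma \ref{lem:ee}-type estimates) to the global factor $e^{\pm\ve n|\delta_\la|}$; the additive $\pm\ve$ absorbs the finitely many initial terms ($j$ near $n$, i.e. $z_j$ near the boundary of $\mc M_N$) and the $\tilde o$ contributions, which is why $N$ must be taken large.

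The main obstacle I anticipate is controlling the tail of the sum — the terms with $j$ close to $n$, where $z_j = F_\la^{j-1}(z)$ has escaped the region $\mc M_N(\la)$ where all the sharp cylinder estimates hold. For those indices the individual terms are $O(1)$ (bounded, since derivatives are bounded away from $0$ and $\infty$ on the compact part), and there are boundedly many of them relative to $N$, but they do not have the clean asymptotic form; this is precisely what forces the $+\ve$ (resp. $-\ve$) additive slack in the statement rather than a purely multiplicative estimate. A secondary technical point is justifying that the orientation/sign bookkeeping in Lemma \ref{lem:arg} is uniform in $\la$ and compatible with the half-plane alternation of the orbit $\{F_\la^{j}(z)\}$ on $\mc J_\la$; this should follow from the fact that the critical orbit is real, so $F_\la$ maps $\mc C_m^{\pm}$ to $\mc C_{m-1}^{\mp}$ (one application of $F_\la$, close to $z\mapsto -z$, flips the half-plane), exactly as exploited in the proof of Lemma \ref{lem:arg}. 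Once these two points are handled, combining Lemma \ref{lem:realpart}-style control of $\re\dot\psi_\la$ is not needed here — $\beta_\la$ is built only from $\im\dot\psi_\la$ — so the estimate reduces cleanly to the summation described above.
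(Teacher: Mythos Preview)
Your proposal is correct and follows essentially the same route as the paper: the paper likewise uses Lemma~\ref{lem:arg} and Lemma~\ref{lem:kat} to extract $\im(-F_\la^{j-1}(z)/(F_\la^j)'(z))$, then Proposition~\ref{prop:aad}(\ref{pit:aad2}) together with Propositions~\ref{prop:rn} and~\ref{prop:rnn}(\ref{pit:rnn1}) to rewrite each term as $|\mc C_n(\la)|\,(e^{2(n-j)\delta_\la}-1)/\delta_\la$ up to $e^{\pm\ve(n-j)|\delta_\la|}$, sums (via an integral comparison, exactly your Riemann-sum idea) to $|\mc C_n(\la)|\,n(g(2n\delta_\la)-1)/\delta_\la$ with $g(x)=(e^x-1)/x$, and finally multiplies by $\im z$ and resubstitutes to recognise $\Gamma(n\delta_\la)/A^2$. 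The tail $j>n-\tilde n$ is handled precisely as you anticipate: those finitely many terms are bounded by a constant $K(\tilde n)$, and multiplication by $|\im z|$ (small for $n>N$) absorbs them into the additive~$\pm\ve$.
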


\begin{proof}
Fix $\ve>0$.
Let $\tilde n$ be an integer large enough, and let $z\in\mc C_n(\la)$, where $n>\tilde n$. Moreover, we can assume that $\im z>0$.

{\em Step 1.}
Lemma \ref{lem:kat} and Lemma \ref{lem:arg} gives us
\begin{equation}\label{eq:a1}
   (1-\ve)\frac{|\im F_\la^{j-1}(z)|}{|(F_\la^j)'(z)|}\leeq \im\bigg(-\frac{F_\la^{j-1}(z)}{(F_\la^j)'(z)}\bigg) \leeq(1+\ve)\frac{|\im F_\la^{j-1}(z)|}{|(F_\la^j)'(z)|},
\end{equation}
where $n>\tilde n+j$, and $\delta_\la$ is close to $0$. Next, Proposition \ref{prop:aad} (\ref{pit:aad2}) leads to
\begin{multline*}
   (1-2\ve)|\mc C_n(\la)|\frac{|\im F_\la^{j-1}(z)|}{|\mc C_{n-j}(\la)|}\leeq\im\bigg(-\frac{F_\la^{j-1}(z)}{(F_\la^j)'(z)}\bigg)\\ \leeq(1+2\ve)|\mc C_n(\la)|\frac{|\im F_\la^{j-1}(z)|}{|\mc C_{n-j}(\la)|}.
\end{multline*}
Since $|\im F_\la^{j}(z)|$ is close to $|\im F_\la^{j-1}(z)|$, and $F_\la^{j}(z)\in\mc C_{n-j}(\la)$, Proposition \ref{prop:rn} gives us
\begin{multline}\label{eq:a2}
   (1-3\varepsilon)|\mc C_n(\la)|\frac{e^{2(n-j)\delta_\la}}{A^2|\im F_\la^{j}(z)|^2}\:e^{-\ve (n-j)|\delta_\la|}
   \leeq\im\bigg(-\frac{F_\la^{j-1}(z)}{(F_\la^j)'(z)}\bigg)\\
   \leeq(1+3\varepsilon)|\mc C_n(\la)|\frac{e^{2(n-j)\delta_\la}}{A^2|\im F_\la^{j}(z)|^2}\:e^{\ve (n-j)|\delta_\la|}.
\end{multline}
Using Proposition \ref{prop:rnn} (\ref{pit:rnn1}) we get
\begin{multline}\label{eq:s0}
   (1-4\varepsilon)|\mc C_n(\la)|\frac{e^{2(n-j)\delta_\la}-1}{\delta_\la}\:e^{-2\ve(n-j)|\delta_\la|}
   \leeq\im\bigg(-\frac{F_\la^{j-1}(z)}{(F_\la^j)'(z)}\bigg)\\
   \leeq(1+4\varepsilon)|\mc C_n(\la)|\frac{e^{2(n-j)\delta_\la}-1}{\delta_\la}\:e^{2\ve(n-j)|\delta_\la|},
\end{multline}
where $n-j>\tilde n$.

{\em Step 2.}
Now we will estimate $\sum_{j=1}^{n-\tilde n} (e^{2(n-j)\delta_\la}-1)$. We have
\begin{equation*}
   \int_{0}^{n-\tilde n}(e^{\gamma(n-x)\delta_\la}-1)\:dx =\frac{1}{\gamma\delta_\la}(e^{\gamma n\delta_\la}-e^{\gamma\tilde n\delta_\la})-(n-\tilde n),
\end{equation*}
Since $(e^{2m\delta_\la}-1)/(e^{2(m+1)\delta_\la}-1)$ is close to $1$, where $m\greq\tilde n$ (for sufficiently large $\tilde n$, and small $\delta_\la$), the sum can be estimated by the integral as follows:
\begin{multline}\label{eq:s1}
   (1-\ve)\Big(\frac{1}{2\delta_\la}(e^{2n\delta_\la}-e^{2\tilde n\delta_\la})-(n-\tilde n)\Big)\leeq\sum_{j=1}^{n-\tilde n}\big(e^{2(n-j)\delta_\la}-1\big)\\
   \leeq(1+\ve)\Big(\frac{1}{2\delta_\la}(e^{2n\delta_\la}-e^{2\tilde n\delta_\la})-(n-\tilde n)\Big).
\end{multline}

Let
   $$g(x):=\frac{e^x-1}{x}.$$
If we put $g(0)=1$, then $g$ is a positive increasing function on $\R$, and
\begin{equation}\label{eq:s2}
   \frac{1}{2\delta_\la}(e^{2n\delta_\la}-e^{2\tilde n\delta_\la})-(n-\tilde n)=n\big(g(2n\delta_\la)-g(2\tilde n\delta_\la)\big).
\end{equation}
For small $\delta_\la$ and $n>N$, where $N$ is large enough, $g(2\tilde n\delta_\la)-1$ is small with respect to $g(2 n\delta_\la)-1$, thus we get
\begin{equation}\label{eq:s3}
   (1-\ve)\big(g(2n\delta_\la)-1\big)\leeq g(2n\delta_\la)-g(2\tilde n\delta_\la)\\
   \leeq(1+\ve)\big(g(2n\delta_\la)-1\big).
\end{equation}
Combining (\ref{eq:s1}) with (\ref{eq:s2}) and (\ref{eq:s3}), we obtain
\begin{equation}\label{eq:s4}
   (1-\ve)^2n\big(g(2n\delta_\la)-1\big)\leeq\sum_{j=1}^{n-\tilde n}\big(e^{2(n-j)\delta_\la}-1\big)\leeq(1+\ve)^2n\big(g(2n\delta_\la)-1\big).
\end{equation}

{\em Step 3.}
For every $1\leeq j\leeq n-\tilde n$, we have $e^{-2\ve n|\delta_\la|}< e^{\pm2\ve(n-j)|\delta_\la|}< e^{2\ve n|\delta_\la|}$. Thus, (\ref{eq:s0}) and (\ref{eq:s4}) leads to
\begin{multline*}
   (1-6\ve)|\mc C_n(\la)|\frac{n\big(g(2n\delta_\la)-1\big)}{\delta_\la}\:e^{-2\ve n|\delta_\la|} \leeq \im\bigg(-\sum_{j=1}^{n-\tilde n}\frac{F_\la^{j-1}(z)}{(F_\la^j)'(z)}\bigg)\\
   \leeq(1+7\ve)|\mc C_n(\la)|\frac{n\big(g(2n\delta_\la)-1\big)}{\delta_\la}\:e^{2\ve n|\delta_\la|}.
\end{multline*}
By the definition of the function $g$,
\begin{multline*}
   (1-6\ve)|\mc C_n(\la)|\Big(\frac{e^{2 n\delta_\la}-1- 2n\delta_\la}{2\delta^2_\la}\Big)\:e^{-2\ve n|\delta_\la|}
   \leeq\im\bigg(-\sum_{j=1}^{n-\tilde n}\frac{F_\la^{j-1}(z)}{(F_\la^j)'(z)}\bigg)\\
   \leeq(1+7\ve)|\mc C_n(\la)|\Big(\frac{e^{2 n\delta_\la}-1-2n\delta_\la}{2\delta^2_\la}\Big)\:e^{2\ve n|\delta_\la|}.
\end{multline*}
Multiplying the above inequalities by $\im z$, and using Proposition \ref{prop:rn}, we obtain
\begin{multline*}
   (1-7\ve)A^2\im^4z\Big(\frac{e^{2 n\delta_\la}-1- 2n\delta_\la}{2\delta^2_\la}\Big)\:e^{-2n\delta_\la-3\ve n|\delta_\la|}\\
   \leeq\im z\cdot\im\bigg(-\sum_{j=1}^{n-\tilde n}\frac{F_\la^{j-1}(z)}{(F_\la^j)'(z)}\bigg)\\
   \leeq(1+8\ve)A^2\im^4z\Big(\frac{e^{2 n\delta_\la}-1-2n\delta_\la}{2\delta^2_\la}\Big)\:e^{-2n\delta_\la+3\ve n|\delta_\la|}.
\end{multline*}
Next, Proposition \ref{prop:rnn} (\ref{pit:rnn1}) leads to
\begin{multline}\label{eq:nnn}
   (1-8\ve)\frac{1}{A^2}\Big(\frac{e^{2 n\delta_\la}-1-2n\delta_\la}{2(e^{2n\delta_\la}-1)^2}\Big)\:e^{2n\delta_\la-4\ve n|\delta_\la|}\\
   \leeq\im z\cdot\im\bigg(-\sum_{j=1}^{n-\tilde n}\frac{F_\la^{j-1}(z)}{(F_\la^j)'(z)}\bigg)\\
   \leeq(1+9\ve)\frac{1}{A^2}\Big(\frac{e^{2 n\delta_\la}-1-2n\delta_\la}{2(e^{2n\delta_\la}-1)^2}\Big)\:e^{2n\delta_\la+4\ve n|\delta_\la|},
\end{multline}
where $n>N$ and $\delta_\la$ is close to $0$.

{\em Step 4.}
We have
\begin{equation*}
   \bigg|\sum_{j=n-\tilde n+1}^{n}\frac{F_\la^{j-1}(z)}{(F_\la^j)'(z)}\bigg|\leeq K(\tilde n).
\end{equation*}
Therefore, we can assume that
\begin{equation*}
   \bigg|\im z\cdot\im\bigg(-\sum_{j=n-\tilde n+1}^{n}\frac{F_\la^{j-1}(z)}{(F_\la^j)'(z)}\bigg)\bigg|\leeq\ve,
\end{equation*}
where $z\in\mc C_n(\la)$ and $n>N$.
So, combining the above with (\ref{eq:nnn}), and (\ref{eq:psi}), 
we obtain
\begin{multline*}
   (1-8\ve)\frac{1}{A^2}\Big(\frac{e^{2 n\delta_\la}-1-2n\delta_\la}{2(e^{2n\delta_\la}-1)^2}\Big)\:e^{2n\delta_\la-4\ve n|\delta_\la|}-\ve
   \leeq\im z\cdot\im\dot\psi_\la(z)
   \\
   \leeq(1+9\ve)\frac{1}{A^2}\Big(\frac{e^{2 n\delta_\la}-1-2n\delta_\la}{2(e^{2n\delta_\la}-1)^2}\Big)\:e^{2n\delta_\la+4\ve n|\delta_\la|}+\ve.
\end{multline*}
Thus, the statement follows from definitions (\ref{eq:f}), (\ref{eq:beta}) and the fact that the function $\Gamma$ is bounded.
\end{proof}

We end this section with an easy corollary:

\begin{col}\label{col:modpsi}
   For every $\ve>0$ there exists $K>0$ and $\eta>0$ such that
 \begin{equation*}
    \big|\dot\psi_\la(z)\big|\leeq K\frac{e^{\ve n|\delta_\la|}}{|\im z|}
 \end{equation*}
   where $z\in\mc C_n(\la)$, $n\greq1$ and $0<|\delta_\la|<\eta$.
\end{col}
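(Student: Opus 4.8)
The plan is to bound $\re\dot\psi_\la$ and $\im\dot\psi_\la$ separately, since $|\dot\psi_\la(z)|\leeq|\re\dot\psi_\la(z)|+|\im\dot\psi_\la(z)|$, and to treat the finitely many cylinders of small index by hand. Two elementary remarks make the bookkeeping work: first, $\mc M_0^*(\la)$ lies in a fixed small neighbourhood of $0$, so there is a constant $R>0$ with $|\im z|\leeq R$ for all cylinders under consideration; second, $e^{\ve n|\delta_\la|}\greq1$. Together these let me replace any additive term $C$ by $CR\,e^{\ve n|\delta_\la|}/|\im z|$, which is the shape we want.

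Fix $\ve>0$. Applying Proposition \ref{prop:beta} with this $\ve$ yields $N_0\in\N$ and $\eta_0>0$ such that, for $z\in\mc C_n(\la)$ with $n>N_0$ and $0<|\delta_\la|<\eta_0$, the two-sided bound together with $0<\Gamma<1/2$ gives $|\beta_\la(z)|\leeq\frac{1}{2A^2}e^{\ve n|\delta_\la|}+\ve\leeq\big(\frac{1}{2A^2}+\ve\big)e^{\ve n|\delta_\la|}$. Since $\im\dot\psi_\la(z)=\beta_\la(z)/\im z$ by (\ref{eq:beta}), this bounds $|\im\dot\psi_\la(z)|$ by $\big(\frac{1}{2A^2}+\ve\big)e^{\ve n|\delta_\la|}/|\im z|$. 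For the real part I use Lemma \ref{lem:realpart}, $\re\dot\psi_\la(z)=-a\beta_\la(z)+\tilde o(1)+\tilde o(\beta_\la(z))$: enlarging $N_0$ and shrinking $\eta_0$ so that the two $\tilde o$-terms are at most $1$ and $|\beta_\la(z)|$ respectively, and inserting the bound on $|\beta_\la(z)|$, I obtain $|\re\dot\psi_\la(z)|\leeq(|a|+1)|\beta_\la(z)|+1\leeq K'e^{\ve n|\delta_\la|}$ for a suitable $K'$; dividing by $|\im z|\leeq R$ puts this in the required form. Adding the two estimates proves the corollary for all $n>N_0$.

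For $1\leeq n\leeq N_0$ the sum $\dot\psi_\la(z)=-\sum_{j=1}^{n}F_\la^{j-1}(z)/(F_\la^j)'(z)$ has at most $N_0$ terms, all of whose arguments $z,F_\la(z),\dots,F_\la^{n-1}(z)$ lie in cylinders of index $\leeq N_0$, i.e. in a fixed neighbourhood of the parabolic cycle that stays uniformly away from the critical points of $F_\la$ as $\la\to-1$. Hence each numerator is at most $R$ and each $|(F_\la^j)'(z)|$, $j\leeq N_0$, is bounded below by a fixed positive constant, so $|\dot\psi_\la(z)|\leeq K''$ for some $K''$; since $|\im z|\leeq R$ and $e^{\ve n|\delta_\la|}\greq1$ this gives $|\dot\psi_\la(z)|\leeq K''R\,e^{\ve n|\delta_\la|}/|\im z|$. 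Choosing $\eta\leeq\eta_0$ and $K$ the maximum of the constants from the two cases finishes the proof. I do not expect a genuine obstacle: the statement is essentially a repackaging of Proposition \ref{prop:beta} and Lemma \ref{lem:realpart}, and the only point requiring care is keeping every additive error (the $+\ve$ in Proposition \ref{prop:beta}, the $\tilde o$-terms in Lemma \ref{lem:realpart}, and the bounded-index tail) inside the prescribed form $K\,e^{\ve n|\delta_\la|}/|\im z|$, which works precisely because $|\im z|$ stays bounded near $0$ and $e^{\ve n|\delta_\la|}\greq1$.
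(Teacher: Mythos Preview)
Your proposal is correct and follows essentially the same route as the paper's own proof: both bound $|\im\dot\psi_\la(z)|=|\beta_\la(z)|/|\im z|$ via Proposition~\ref{prop:beta}, bound $|\re\dot\psi_\la(z)|$ via Lemma~\ref{lem:realpart}, and then absorb the finitely many small-index cylinders by enlarging the constant. The paper states the combination a bit more compactly as $|\dot\psi_\la(z)|\leeq \beta_\la(z)/|\im z|+K_1\beta_\la(z)+\ve$ and then invokes Proposition~\ref{prop:beta}, while you spell out the finite-$n$ case more explicitly; there is no substantive difference.
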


\begin{proof}
Fix $\ve>0$. We conclude from definition (\ref{eq:beta}), combined with Lemma \ref{lem:realpart}, that there exists $K_1>1$, such that
   $$\big|\dot\psi_\la(z)\big|\leeq \frac{\beta_\la(z)}{|\im z|}+K_1\beta_\la(z)+\ve.$$
Next, since the function $|\Gamma|$ is bounded by $1/2$, Proposition \ref{prop:beta} leads to
\begin{equation*}
   \big|\dot\psi_\la(z)\big|\leeq K_{2}\frac{e^{\ve n|\delta_\la|}}{|\im z|},
\end{equation*}
where $z\in\mc C_n(\la)$, $n>N$, $0<|\delta_\la|<\eta$, for suitably chosen $N\in\N$ and $\eta>0$.
Moreover, increasing the constant $K_{2}$ if necessary, we can assume that the above inequality holds for $z\in \mc C_n(\la)$ where $n\greq1$.
\end{proof}

Note that the constant in the above corollary can be chosen independently of $\ve>0$.

\section{Controlling the other points of the parabolic cycle}\label{sec:pp}

The goal of this section is to establish Proposition \ref{prop:reDF}.

\begin{lem}\label{lem:xi}
   There exists $K>0$, and for every $\ve>0$ there exists $\eta>0$, such that
 \begin{equation*}
    |\dot\Psi_\la(s)-\dot\psi_\la(z)|\leeq K|\im z|\: e^{\ve n|\delta_\la|},
 \end{equation*}
   where $\varphi_\la(s)=z\in\mc C_n(\la)$, $n\greq1$ and $0<|\delta_\la|<\eta$.
\end{lem}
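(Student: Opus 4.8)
The plan is to write the difference as a single sum and estimate it term by term via the cylinder geometry, following Steps~1--3 of the proof of Proposition~\ref{prop:beta}. Comparing the definitions (\ref{eq:Psi}) and (\ref{eq:psi}) --- equivalently, subtracting the two sums in (\ref{eq:101}) with $m=n$ --- for $\varphi_\la(s)=z\in\mc C_n(\la)$ we obtain
$$\dot\Psi_\la(s)-\dot\psi_\la(z)=-\sum_{j=1}^{n}\frac{\big(\tfrac{\partial}{\partial\la}F_\la\big)(F_\la^{j-1}(z))-F_\la^{j-1}(z)}{(F_\la^j)'(z)}.$$
By (\ref{eq:F1}) the numerator is $O(|F_\la^{j-1}(z)|^3)$ uniformly in $j$ (and in $\la$ near $-1$), so it is enough to bound $\sum_{j=1}^{n}|F_\la^{j-1}(z)|^3/|(F_\la^j)'(z)|$.

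For the ``bulk'' indices $1\leeq j\leeq n-N$, where $N=N(\ve)$ is supplied by the propositions of Section~\ref{sec:cylindry}, I would estimate a single term as in Proposition~\ref{prop:beta}: Lemma~\ref{lem:kat} gives $|F_\la^{j-1}(z)|\asymp|\im F_\la^{j-1}(z)|$, with consecutive imaginary parts along the orbit comparable; Proposition~\ref{prop:aad}~(\ref{pit:aad2}) gives $|(F_\la^j)'(z)|\asymp|\mc C_{n-j}(\la)|/|\mc C_n(\la)|$; and Proposition~\ref{prop:rn}, applied at $z\in\mc C_n(\la)$ and at $F_\la^j(z)\in\mc C_{n-j}(\la)$, rewrites both sizes as $A^2 e^{-2m\delta_\la\pm\ve m|\delta_\la|}$ times the cube of the relevant imaginary part. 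The cube of $|\im F_\la^j(z)|$ coming from the numerator cancels the one hidden in $|\mc C_{n-j}(\la)|$, leaving
$$\frac{|F_\la^{j-1}(z)|^3}{|(F_\la^j)'(z)|}\leeq K_1\,|\im z|^3\,e^{-2j\delta_\la}\,e^{C\ve n|\delta_\la|}$$
with absolute constants $K_1,C$. One then sums the geometric-type series $\sum_{j=1}^{n}e^{-2j\delta_\la}$ and uses the two-sided estimates of Corollary~\ref{col:odl} ($|\im z|\asymp n^{-1/2}$ if $n|\delta_\la|\leeq1$, $|\im z|\asymp|\delta_\la|^{1/2}$ if $n\delta_\la>1$, and $|\im z|\asymp|\delta_\la|^{1/2}e^{(1\pm\ve)n\delta_\la}$ if $n\delta_\la<-1$): in each of the four regimes (the sign of $\delta_\la$ and whether $n|\delta_\la|\leeq1$) one checks $|\im z|^3\sum_{j=1}^{n}e^{-2j\delta_\la}\leeq K\,|\im z|\,e^{C'\ve n|\delta_\la|}$, so that, after replacing $\ve$ by $\ve/(C+C')$ at the outset, the bulk part of the sum is $\leeq K\,|\im z|\,e^{\ve n|\delta_\la|}$ with $K$ independent of $\ve$.

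The remaining at most $N$ terms, $n-N<j\leeq n$ (present only when $n>N$), are controlled as in Step~4 of the proof of Proposition~\ref{prop:beta}: for those $j$ the point $F_\la^{j-1}(z)$ stays in a fixed compact set away from $0$ while $1/|(F_\la^j)'(z)|\asymp|\mc C_n(\la)|$, so this part of the sum is $\leeq K(N)|\mc C_n(\la)|$, and Proposition~\ref{prop:mod} together with Corollary~\ref{col:odl} compare $|\mc C_n(\la)|$ with $|\im z|$ so that $K(N)|\mc C_n(\la)|\leeq K_2\,|\im z|\,e^{\ve n|\delta_\la|}$ for an absolute $K_2$, provided $|\delta_\la|<\eta$ with $\eta$ small enough (here $\eta$ is allowed to depend on $\ve$ and $N$). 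Finally, for $n$ below a fixed absolute bound the whole sum has boundedly many terms, all the data lies in a compact set as $\la\to-1$, and $|\im z|$ is bounded below there, so the bound holds trivially with an absolute constant. I expect the regime-by-regime bookkeeping of the geometric series to be the main obstacle, and in particular the deep hyperbolic case $\delta_\la<0$ with $n|\delta_\la|\to\infty$ --- where $|\im z|$ and $|\mc C_n(\la)|$ are exponentially small --- to be the delicate one; there one exploits the freedom to enlarge $N$ and to shrink $\eta$ in order to keep the constant $K$ uniform in $\ve$.
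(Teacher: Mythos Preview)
Your approach is correct and follows the same overall strategy as the paper: write the difference as the sum $\sum_{j=1}^n (F_\la^{j-1}(z))^3/(F_\la^j)'(z)$ (up to a constant from (\ref{eq:F1})), bound each term via Propositions~\ref{prop:aad}~(\ref{pit:aad2}) and~\ref{prop:rn}, and sum. The difference is only in how the geometric series is handled. You convert to $|\im z|^3 e^{-2j\delta_\la}$ first and then do a four-regime case analysis through Corollary~\ref{col:odl}; the paper instead keeps everything expressed through Proposition~\ref{prop:rnn}~(\ref{pit:rnn1}) (so that the term bound reads $K_1|\mc C_n(\la)|e^{2(n-j)\delta_\la}e^{\ve(n-j)|\delta_\la|}$, then $|\mc C_n(\la)|$ is rewritten as $\asymp|\delta_\la e^{2n\delta_\la}/(e^{2n\delta_\la}-1)|^{3/2}e^{-2n\delta_\la}$), sums $\sum_j e^{-2j\delta_\la}\asymp(1-e^{-2n\delta_\la})/\delta_\la$, and observes the algebraic identity $e^{2n\delta_\la}(e^{2n\delta_\la}-1)^{-1}(1-e^{-2n\delta_\la})=1$, which collapses the expression to $|\delta_\la e^{2n\delta_\la}/(e^{2n\delta_\la}-1)|^{1/2}\asymp|\im z|$ without any case split. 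This is cleaner and sidesteps exactly the ``regime-by-regime bookkeeping'' you flag as the main obstacle.

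One point to tighten: in your tail estimate you bound the last $N$ terms by $K(N)|\mc C_n(\la)|$, appealing only to ``$F_\la^{j-1}(z)$ stays in a fixed compact set''. That crude bound gives $K(N)$ growing like a power of $N$, and then $K(N)|\mc C_n(\la)|/|\im z|\lesssim K(N)/n$ need not be bounded by an absolute constant just from $n>N$. The paper avoids this by noting that the \emph{same} per-term bound $\leeq\tilde K|\mc C_n(\la)|e^{2(n-j)\delta_\la}$ holds (with an absolute $\tilde K$) also for $0\leeq n-j\leeq N$, essentially because $|F_\la^{j-1}(z)|^3\asymp(n-j+1)^{-3/2}$ and $|\mc C_{n-j}(\la)|\asymp(n-j)^{-3/2}$ cancel. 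With that refinement your tail gives $K(N)\asymp N$, and since $n>N$ there, $K(N)/n$ is indeed bounded by an absolute constant and your argument closes.
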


\begin{proof}
Fix $\tilde\ve>0$, and let $z\in\mc C_n(\la)$. First, note that
\begin{equation}\label{eq:F3}
   \Big|\big(\frac{\partial}{\partial\la}F_\la\big)(F_\la^{j-1}(z))- F_\la^{j-1}(z)\Big|= O\big(|F_\la^{j-1}(z)|^3\big),
\end{equation}
(cf. (\ref{eq:F1}) and definitions (\ref{eq:Psi}), (\ref{eq:psi})).

{\em Step 1.}
We can assume that $|F_\la^{j-1}(z)|$ is close to $|\im F_\la^{j-1}(z)|$ (see Lemma \ref{lem:kat}).
Thus, similarly as in the proof of Proposition \ref{prop:beta} ({\em Step 1.}), using Proposition \ref{prop:aad} (\ref{pit:aad2}) and Proposition \ref{prop:rn} (cf. right hand sides of (\ref{eq:a1}) and (\ref{eq:a2})), we can get
\begin{equation*}\label{eq:F^3}
   \bigg|\frac{(F_\la^{j-1}(z))^3}{(F_\la^j)'(z)}\bigg|
   \leeq(1+\tilde\ve)|\mc C_n(\la)|\frac{e^{2(n-j)\delta_\la}}{A^2}\:e^{\tilde\ve (n-j)|\delta_\la|}.
\end{equation*}
where $n-j>N$, $|\delta_\la|<\eta$, for suitably chosen $N\in\N$ and $\eta>0$.

Observe that we can find a constant $\tilde K>0$ such that the above inequality, with $(1+\tilde\ve)$ replaced by $\tilde K$, holds also for $0\leeq n-j\leeq N$.
Let $K_1=2\tilde K$. Then for every $\ve>0$ there exist $\eta>0$ such that
\begin{equation}\label{eq:C3}
   \bigg|\frac{(F_\la^{j-1}(z))^3}{(F_\la^j)'(z)}\bigg|
   \leeq K_1|\mc C_n(\la)|\frac{e^{2(n-j)\delta_\la}}{A^2}\:e^{\ve (n-j)|\delta_\la|},
\end{equation}
where $n-j\greq0$ and $|\delta_\la|<\eta$.

{\em Step 2.}
Using Proposition \ref{prop:rn} and Proposition \ref{prop:rnn} (\ref{pit:rnn1}), we obtain
\begin{equation*}
   |\mc C_n(\la)|\leeq \frac{1+\tilde\ve}{A}\bigg|\frac{\delta_\la e^{2n\delta_\la}}{e^{2n\delta_\la}-1}\bigg|^{3/2} e^{-2n\delta_\la}\:e^{\tilde\ve n|\delta_\la|},
\end{equation*}
where $n>N$ and $|\delta_\la|<\eta$. As before, we can find a constant $K_2>0$ such that for every $\ve>0$ there exists $\eta>0$ such that
\begin{equation*}
   |\mc C_n(\la)|\leeq \frac{K_2}{A}\bigg|\frac{\delta_\la e^{2n\delta_\la}}{e^{2n\delta_\la}-1}\bigg|^{3/2} e^{-2n\delta_\la}\:e^{\ve n|\delta_\la|},
\end{equation*}
where $n\greq1$, $|\delta_\la|<\eta$.
So, combining this with (\ref{eq:C3}), we get
\begin{equation*}
   \bigg|\frac{(F_\la^{j-1}(z))^3}{(F_\la^j)'(z)}\bigg|
   \leeq K_3\bigg|\frac{\delta_\la e^{2n\delta_\la}}{e^{2n\delta_\la}-1}\bigg|^{3/2} e^{-2j\delta_\la}\:e^{\ve (2n-j)|\delta_\la|},
\end{equation*}

{\em Step 3.}
We have $e^{\ve(2n-j)|\delta_\la|}<e^{2\ve n|\delta_\la|}$, therefore summing from $j=1$ to $n$, we get
\begin{equation*}
   \sum_{j=1}^{n}\bigg|\frac{(F_\la^{j-1}(z))^3}{(F_\la^j)'(z)}\bigg|
   \leeq K_4\bigg|\frac{\delta_\la e^{2n\delta_\la}}{e^{2n\delta_\la}-1}\bigg|^{3/2} \frac{1-e^{-2n\delta_\la}}{2\delta_\la}\:e^{2\ve n|\delta_\la|}.
\end{equation*}
Note that $e^{2n\delta_\la}(e^{2n\delta_\la}-1)^{-1}(1-e^{-2n\delta_\la})=1$. Next, we can conclude from Proposition \ref{prop:rnn} (\ref{pit:rnn1}) that there exists a constant $K_5>0$ such that
\begin{equation*}
   \sum_{j=1}^{n}\bigg|\frac{(F_\la^{j-1}(z))^3}{(F_\la^j)'(z)}\bigg|
   \leeq \frac{K_4}{2}\bigg|\frac{\delta_\la e^{2n\delta_\la}}{e^{2n\delta_\la}-1}\bigg|^{1/2} e^{2\ve n|\delta_\la|}\leeq K_5|\im z|\:e^{3\ve n|\delta_\la|},
\end{equation*}
where $z\in\mc C_n(\la)$, $n\greq1$ and $|\delta_\la|<\eta$. Thus the assertion follows from (\ref{eq:F3}) and definitions (\ref{eq:Psi}), (\ref{eq:psi}).
\end{proof}

\begin{lem}\label{lem:phiPsi}
   There exist $K>0$, $\eta>0$ and a sequence of points $s_n^j\in\mc C_n^j$, such that
 $$\big|\dot\varphi_\la(s_n^j)-\dot\Psi_\la(s_n^j)\big|\leeq K|\mc C_n(\la)|,$$
   where $0<|\delta_\la|<\eta$. Moreover, we can assume that $\tilde f_{\la_\dt}^j(s_n^0)=s_n^j$ and $F_{\la_\dt}(s_{j+1}^j)=s_n^j$ where $1\leeq j\leeq k-1$ and $n\greq1$.
\end{lem}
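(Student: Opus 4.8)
The plan is to exploit the formula (\ref{eq:101}), which splits $\dot\varphi_\la(s)$ into three pieces: the ``principal part'' $-\sum_{j=1}^m F_\la^{j-1}(\varphi_\la(s))/(F_\la^j)'(\varphi_\la(s))$, an ``error part'' coming from the difference $(\frac{\partial}{\partial\la}F_\la)(F_\la^{j-1}(\varphi_\la))-F_\la^{j-1}(\varphi_\la)=O(|F_\la^{j-1}(\varphi_\la)|^3)$, and a ``tail part'' $\dot\varphi_\la(F_{\la_\dt}^m(s))/(F_\la^m)'(\varphi_\la(s))$. The first of these is $\dot\Psi_\la(s)$ once we choose $m=n$ for $s\in\textbf{C}_n$; the second was already controlled in Lemma \ref{lem:xi} (it contributes $O(|\im z|e^{\ve n|\delta_\la|})$, which is $O(|\mc C_n(\la)|)$ after comparing with Proposition \ref{prop:rn} and Corollary \ref{col:mod}); so the real task is to choose the orbit of base points $s_n^j$ so that the tail part is $O(|\mc C_n(\la)|)$.

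\smallskip

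First I would fix the starting point of the orbit. One expects $(F_\la^m)'(\varphi_\la(s))$ to be large — roughly $|\mc C_n(\la)|^{-1}|\mc C_{n-m}(\la)|$ by Proposition \ref{prop:aad}(\ref{pit:aad2}) — provided the orbit of $s$ stays in the cylinder region $\textbf{M}_N$ until it reaches a cylinder of bounded index. So I would first pass through the cylinders $\mc C_n^0,\mc C_{n-1}^0,\dots$ down to some fixed $\mc C_{n_0}^0$ and then exit $\textbf{M}_N$; along such an orbit $k_m(\varphi_\la(s))$ (the number of visits to $\textbf{B}_N$) is bounded, so Lemma \ref{lem:exp} does not immediately give the expansion, but Proposition \ref{prop:aad}(\ref{pit:aad2}) does. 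The point is that once we leave $\textbf{M}_N$ we enter the ``hyperbolic'' part of $\mc J_\la$ where the dynamics is uniformly expanding with a constant independent of $\la$; iterating further until the orbit has length, say, $m=n-n_0+L$ for a suitable fixed $L$, we gain an extra uniform factor $\rho^L$ from Lemma \ref{lem:exp}. Then $|\dot\varphi_\la(F_{\la_\dt}^m(s))|$ is bounded — because $F_{\la_\dt}^m(s)$ lies in a compact piece of $\mc J_{\la_\dt}$ on which $\dot\varphi_\la=\partial_\la\varphi_\la$ is uniformly bounded for $|\delta_\la|<\eta$ (uniform convergence of $\varphi_\la$ on $\textbf{B}_N$, Section \ref{ssc:hm}, plus Cauchy estimates) — while $|(F_\la^m)'(\varphi_\la(s))|\gtrsim|\mc C_n(\la)|^{-1}$. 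This forces the tail part to be $O(|\mc C_n(\la)|)$, as required. Finally, to get the orbit structure $\tilde f_{\la_\dt}^j(s_n^0)=s_n^j$ and $F_{\la_\dt}(s_{j+1}^j)=s_n^j$, I would simply define $s_n^0$ as just described, set $s_n^j:=\tilde f_{\la_\dt}^j(s_n^0)$, and note that since $\tilde f_{\la_\dt}^k=F_{\la_\dt}$ one has $F_{\la_\dt}(s_{j+1}^j)=\tilde f_{\la_\dt}^k(\tilde f_{\la_\dt}^j(s_{j+1}^0))$; a direct bookkeeping of indices (using $\tilde f_{\la_\dt}$ shifts $\mc C_n^j$ appropriately, see Section \ref{ssc:cylinders}) shows the two descriptions are consistent after relabelling the integer $n$ for the different $j$, and the bound $|\dot\varphi_\la(s_n^j)-\dot\Psi_\la(s_n^j)|\leeq K|\mc C_n(\la)|$ is inherited for $1\leeq j\leeq k-1$ because $|\mc C_n^j(\la)|\asymp|\mc C_n(\la)|$ (Proposition \ref{prop:dyst} applied in each small Julia set, equivalently Lemma \ref{lem:C} on the measure side together with the distortion estimates).

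\smallskip

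The main obstacle I anticipate is the control of $|\dot\varphi_\la(F_{\la_\dt}^m(s))|$ uniformly in $\la$ near $-1$: the derivative $\partial_\la\varphi_\la$ is defined only on $\Lambda_l$ and $\Lambda_r$ separately and a priori could blow up as $\la\to-1$. The resolution is that the holomorphic motion $\varphi_\la$ of $\mc J_{\la_\dt}$ restricted to the compact set $\textbf{B}_N$ extends holomorphically in $\la$ to a fixed neighbourhood of $[-1-\ve,-1+\ve]\sms\{-1\}$ and converges uniformly as $\la\to-1$ (Section \ref{ssc:hm}); a Cauchy estimate on a disc of fixed radius then bounds $\partial_\la\varphi_\la$ uniformly on $\textbf{B}_N$. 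One must be a little careful that $F_{\la_\dt}^m(s)$ genuinely lands in (and stays in) such a fixed compact set — this is exactly why the exit index $n_0$ and the extra length $L$ are chosen fixed, independent of $n$ and $\la$ — and that the expansion factor accumulated along the way really is $\gtrsim|\mc C_n(\la)|^{-1}$, which is where Proposition \ref{prop:aad}(\ref{pit:aad2}) for the parabolic part and Lemma \ref{lem:exp} for the hyperbolic part combine.
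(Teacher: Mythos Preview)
Your overall strategy --- apply formula (\ref{eq:sumas}) with $m=n$ and bound the tail term --- is the paper's, but there are two problems.

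First, a definitional slip: by (\ref{eq:Psi}), $\dot\Psi_\la(s)$ is already the full sum $-\sum_{j=1}^{n}(\frac{\partial}{\partial\la}F_\la)(F_\la^{j-1}(\varphi_\la(s)))/(F_\la^j)'(\varphi_\la(s))$, not only its linear part. Hence, directly from (\ref{eq:sumas}),
\[
\dot\varphi_\la(s)-\dot\Psi_\la(s)=\frac{\dot\varphi_\la(F_{\la_\dt}^{n}(s))}{(F_\la^{n})'(\varphi_\la(s))},
\]
which is exactly the tail; there is no separate ``error part'' to control in this lemma. The quantity you call the error part is $\dot\Psi_\la-\dot\psi_\la$, which is the subject of Lemma~\ref{lem:xi} and plays no role here. (And your claim that $|\im z|\,e^{\ve n|\delta_\la|}=O(|\mc C_n(\la)|)$ is false: in the parabolic regime $|\im z|\asymp n^{-1/2}$ while $|\mc C_n(\la)|\asymp n^{-3/2}$ by Corollary~\ref{col:odl} and Proposition~\ref{prop:mod}.)

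Second, and this is the real gap, your Cauchy--estimate argument for a uniform bound on $\dot\varphi_\la$ over $\textbf{B}_N$ does not work. The holomorphic motion $\la\mapsto\varphi_\la(s)$ is defined only on $\Lambda_l$ (respectively $\Lambda_r$), and for $\la$ close to $-1$ the largest disc about $\la$ contained in $\Lambda_l$ has radius comparable to $|\delta_\la|\to 0$; the Cauchy bound therefore blows up. Uniform convergence of $\varphi_\la$ as $\la\to-1$ gives no control on $\partial_\la\varphi_\la$. The paper avoids this issue by a specific choice of base points: take the $s_n^j$ to be preimages of the $\beta$-fixed point of $\tilde f_{\la_\dt}$. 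That fixed point, and each of its preimages along the chosen inverse branches, is a simple zero of a polynomial depending holomorphically on $\la$ in a full neighbourhood of $-1$, so by the implicit function theorem it moves holomorphically \emph{across} $\la=-1$. Consequently $\dot\varphi_\la(F_{\la_\dt}^{n}(s_n^j))$ is uniformly bounded for $0<|\delta_\la|<\eta$. Combined with the bounded distortion estimate $|(F_\la^{n})'(z)|^{-1}\leeq K|\mc C_n(\la)|$ for $z\in\mc C_n(\la)$ (cf.\ Proposition~\ref{prop:aad}~(\ref{pit:aad2})), this bounds the tail by $K|\mc C_n(\la)|$ in one line, and the orbit relations in the ``Moreover'' clause come for free from the construction.
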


\begin{proof}
Let us take a sequence $s_n^j$ consisting of preimages of $\beta$-fixed point of $\tilde f_{\la_\dt}$, satisfying $\tilde f_{\la_\dt}^j(s_n^0)=s_n^j$ and $F_{\la_\dt}(s_{j+1}^j)=s_n^j$ where $s_n^j\in\mc C_n^j$. Then $$\dot\varphi_\la(F_{\la_\dt}^{n}(s_n^j))$$
is uniformly bounded for $0<|\delta_\la|<\eta$ (where $\eta>0$ is small enough).

Next, using the bounded distortion property (cf. Proposition \ref{prop:aad} (\ref{pit:aad2})), we conclude that there exist a constant $K>0$, such that
$$|(F^n_\la)'(z)|^{-1}\leeq K|\mc C_n(\la)|,$$
where $z\in\mc C_n(\la)$. Thus, the assertion follows from (\ref{eq:sumas}) and definition (\ref{eq:Psi}).
\end{proof}

Note that if $N$ and $D$ are a complex functions defined on a set $X$, and $x_1, x_2\in X$, then
\begin{equation}\label{eq:ND}
   \bigg|\frac{N(x_1)}{D(x_1)}-\frac{N(x_2)}{D(x_2)}\bigg| \leeq\bigg|\frac{N(x_1)-N(x_2)}{D(x_1)}\bigg|+
   \bigg|\frac{N(x_2)}{D(x_2)}
   \Big(\frac{D(x_2)-D(x_1)}{D(x_1)}\Big)\bigg|.
\end{equation}

\begin{lem}\label{lem:const2}
   For every $\ve>0$ there exist $N\in\N$ and $\eta>0$, such that for every $0\leeq j \leeq k-1$ and $s_1,s_2\in\mc C^{j+}_n$ (or $z_1,z_2\in\mc C^{j-}_n$), we have
 $$\big|\dot\Psi_\la(s_1)-\dot\Psi_\la(s_2)\big|<\ve,$$
    where $n>N$ and $0<|\delta_\la|<\eta$.
\end{lem}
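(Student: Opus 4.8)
\textbf{Proof strategy for Lemma \ref{lem:const2}.}
The plan is to reduce the statement about $\dot\Psi_\la$ to the already-established statement about $\dot\psi_\la$, namely Lemma \ref{lem:const} (\ref{lit:const1}), using the comparison Lemma \ref{lem:xi}. For $j=0$ this is immediate: by Lemma \ref{lem:xi} we have $|\dot\Psi_\la(s_i)-\dot\psi_\la(z_i)|\leeq K|\im z_i|e^{\ve n|\delta_\la|}$ for $i=1,2$, and since $z_1,z_2\in\mc C^{0+}_n(\la)=\mc C^+_n(\la)$, Corollary \ref{col:odl} bounds $|\im z_i|$ (by $Kn^{-1/2}$ in the parabolic range and by $K|\delta_\la|^{1/2}$ otherwise), so both error terms go to $0$ uniformly as $n\to\infty$, $\delta_\la\to0$; combined with Lemma \ref{lem:const} (\ref{lit:const1}), which gives $\dot\psi_\la(z_1)-\dot\psi_\la(z_2)=\tilde o(1)$, the triangle inequality finishes the case $j=0$.

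For $1\leeq j\leeq k-1$ the point $\alpha^j$ is not the fixed point $0$ we have been working near, so $\dot\psi_\la$ is not defined on $\mc C^{j\pm}_n$. The idea is to transport the problem back to the $j=0$ cylinder via the conjugacy $\tilde f_{\la_\dt}^j$. Concretely, pick $s_1',s_2'\in\mc C^{0+}_n$ with $\tilde f_{\la_\dt}^j(s_i')=s_i$ (these exist since $\tilde f_\la$ maps cylinders over $\alpha^0$ to cylinders over $\alpha^j$ preserving the index $n$). From the cocycle relation \eqref{eq:sumas} applied with $s=s_i'$ and $m=j$, together with definition \eqref{eq:Psi}, one expresses $\dot\Psi_\la(s_i)$ in terms of $\dot\Psi_\la(s_i')$ divided by $(\tilde f_\la^j)'$ evaluated at $\varphi_\la(s_i')$, plus a finite sum of $j\leeq k-1$ bounded terms of the shape $(\frac{\partial}{\partial\la}\tilde f_\la)/(\tilde f_\la^\ell)'$ evaluated at points that lie outside a fixed neighborhood of $0$ (where everything is uniformly comparable as $\la\to-1$). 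Then apply \eqref{eq:ND} with $N=\dot\Psi_\la(s_i')$ plus the finite correction and $D=(\tilde f_\la^j)'(\varphi_\la(s_i'))$: the numerator difference is controlled by the $j=0$ case just proved plus the difference of the finite correction terms (which is $\tilde o(1)$ because $\varphi_\la$ and the maps are uniformly smooth away from $0$ and $|s_1'-s_2'|\to0$ with $\diam\mc C_n$), while the denominator factor is controlled because $(\tilde f_\la^j)'$ is bounded away from $0$ and $\infty$ uniformly near $\alpha^0$, and $|(\tilde f_\la^j)'(\varphi_\la(s_1'))-(\tilde f_\la^j)'(\varphi_\la(s_2'))|$ is $\tilde o(1)$ by Lipschitz continuity and $|z_1'-z_2'|\leeq\diam\mc C^+_n(\la)\to0$ (Proposition \ref{prop:dyst}, Corollary \ref{col:odl}). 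The quotient $N(s_1')/D(s_1')$ stays bounded since $\dot\Psi_\la(s_i')$ is bounded (by Lemma \ref{lem:xi} and Corollary \ref{col:modpsi}, $|\dot\Psi_\la(s_i')|\leeq K/|\im z_i'|\cdot e^{\ve n|\delta_\la|}\cdot|\im z_i'|$ is bounded, or more directly because $\dot\psi_\la(z_i')$ is bounded away from $0$).

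The main obstacle I expect is bookkeeping the uniformity in the two regimes $n|\delta_\la|\leeq1$ (parabolic) versus $n|\delta_\la|>1$ (hyperbolic): the bound $|\im z|e^{\ve n|\delta_\la|}$ from Lemma \ref{lem:xi} is only useful because it is $\tilde o(1)$, which requires feeding in the sharp upper bounds of Corollary \ref{col:odl} for $|\im z|$ in each regime and checking the exponential factor $e^{\ve n|\delta_\la|}$ does not spoil it — in the hyperbolic range $|\im z|\asymp|\delta_\la|^{1/2}$ so $|\im z|e^{\ve n|\delta_\la|}$ need not be small unless one first uses part (3) of Corollary \ref{col:odl} to get the extra decay $e^{(1-\ve)n\delta_\la}$ when $\delta_\la<0$, and for $\delta_\la>0$ one uses that $\mc M_N(\la)$ can be taken in an arbitrarily small neighborhood of $0$; the cleanest route is to simply observe that for any fixed $\ve'>0$ we may choose $N$ and $\eta$ so that $|\im z|e^{\ve n|\delta_\la|}<\ve'$ for all $z\in\mc C_n(\la)$ with $n>N$, $|\delta_\la|<\eta$, which follows by the same computation used at the end of the proof of Proposition \ref{prop:den}. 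Once that uniform smallness is in hand, the rest is the triangle-inequality assembly described above.
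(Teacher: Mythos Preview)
Your reduction to Lemma~\ref{lem:const} via Lemma~\ref{lem:xi} has a genuine gap in the regime $\delta_\la>0$ with $n\delta_\la$ large. The error bound $|\dot\Psi_\la(s)-\dot\psi_\la(z)|\leeq K|\im z|\,e^{\ve n|\delta_\la|}$ from Lemma~\ref{lem:xi} is \emph{not} $\tilde o(1)$: for any fixed small $\delta_\la>0$, Corollary~\ref{col:odl}~(\ref{cit:odl2}) gives $|\im z|\asymp\delta_\la^{1/2}$ for \emph{all} $z\in\mc C_n(\la)$ with $n\delta_\la>1$ (the cylinders accumulate on the repelling cycle $p_\la^\pm$, whose imaginary part is $\asymp\delta_\la^{1/2}$, not $0$), while $e^{\ve n\delta_\la}\to\infty$ as $n\to\infty$. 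Hence $|\im z|\,e^{\ve n|\delta_\la|}\to\infty$, no matter how small the $\ve$ in Lemma~\ref{lem:xi} was chosen. Your appeal to ``$\mc M_N(\la)$ can be taken in an arbitrarily small neighbourhood of $0$'' only makes $|\im z|$ small in $\delta_\la$; it does not offset the exponential growth in $n$. So the triangle inequality route $|\dot\Psi_\la(s_1)-\dot\Psi_\la(s_2)|\leeq |\dot\psi_\la(z_1)-\dot\psi_\la(z_2)|+2K|\im z|\,e^{\ve n|\delta_\la|}$ fails to give a uniform bound.

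The paper proceeds differently: it estimates $\dot\Psi_\la(s_1)-\dot\Psi_\la(s_2)$ term by term directly (exactly the computation underlying Lemma~\ref{lem:const} in \cite{Ji}). Applying~\eqref{eq:ND} to each summand, together with the Lipschitz bound on $\frac{\partial}{\partial\la}F_\la$, Proposition~\ref{prop:aad}~(\ref{pit:aad2}) and Koebe distortion, yields that each of the $n$ terms is $\leeq K|\mc C_n(\la)|$, whence the total is $\leeq K n\,|\mc C_n(\la)|\leeq K'n^{-1/2}$ by Corollary~\ref{col:mod}. This bound is uniform in $\delta_\la$ and goes to $0$ with $n$; the same estimate works verbatim near each $\alpha^j$, so no transport via $\tilde f_\la^j$ is needed. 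Your transport step is in any case problematic: $\dot\Psi_\la$ is a \emph{truncation} of the cocycle sum~\eqref{eq:sumas} and does not itself obey a cocycle identity under $\tilde f_\la^j$; the summands of $\dot\Psi_\la(s_i)$ involve $(\frac{\partial}{\partial\la}F_\la)$ evaluated near $\alpha^j$, which~\eqref{eq:sumas} does not convert into values near $0$.
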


\begin{proof}
Let $z_1$, $z_2\in \mc C_n^+(\la)$. We have $F_\la^{j-1}(z_1)$, $F_\la^{j-1}(z_2)\in \mc C_{n-j+1}(\la)$, so the fact that $\frac{\partial}{\partial\la}F_\la$ is a Lipschitz function, and proposition \ref{prop:aad} (\ref{pit:aad2}) gives us
 \begin{multline*}
    \bigg|\frac{(\frac{\partial}{\partial\la}F_\la)(F_\la^{j-1}(z_1))- (\frac{\partial}{\partial\la}F_\la)(F_\la^{j-1}(z_2))} {(F_\la^j)'(z_1)}\bigg|\\ \leeq K_1|\mc C_{n-j+1}(\la)| \frac{|\mc C_{n}(\la)|}{|\mc C_{n-j}(\la)|}\leeq K_2|\mc C_{n}(\la)|.
 \end{multline*}
Next, the fact that $\frac{\partial}{\partial\la}F_\la(z)=z+O(z^3)$, again Proposition \ref{prop:aad} (\ref{pit:aad2}), and Koebe Distortion Theorem leads to
 \begin{multline*}
    \bigg|\frac{(\frac{\partial}{\partial\la}F_\la)(F_\la^{j-1}(z_2))}{(F_\la^j)'(z_2)} \bigg(\frac{(F_\la^j)'(z_2)} {(F_\la^j)'(z_1)}-1 \bigg) \bigg|\\ \leeq K_3|\im F_\la^{j-1}(z_2)|\frac{|\mc C_{n}(\la)|}{|\mc C_{n-j}(\la)|}\frac{|\mc C_{n-j}(\la)|}{|\im F_\la^{j}(z_2)|}\leeq K_4|\mc C_{n}(\la)|.
 \end{multline*}
Thus, we get (cf. inequality (\ref{eq:ND}))
 \begin{equation*}
    \bigg|\frac{(\frac{\partial}{\partial\la}F_\la)(F_\la^{j-1}(z_1))} {(F_\la^j)'(z_1)}-
    \frac{(\frac{\partial}{\partial\la}F_\la)(F_\la^{j-1}(z_2))} {(F_\la^j)'(z_2)}\bigg|\leeq K_5|\mc C_{n}(\la)|.
 \end{equation*}
So, definition (\ref{eq:Psi}) and Corollary \ref{col:mod} lead to
   \begin{equation*}
      |\dot\Psi_\la(z_1)-\dot\Psi_\la(z_2)|\leeq K_5 n|\mc C_{n}(\la)|\leeq K_6 n^{-1/2}.
   \end{equation*}
Tee same inequalities (for suitable constants) holds also for $z_1$, $z_2\in \mc C_n^{j+}(\la)$ ($z_1$, $z_2\in \mc C_n^{j-}(\la)$) where $0\leeq j\leeq k-1$, thus the statement follows.
\end{proof}

\begin{lem}\label{lem:ppi}
   There exist $K_1$, $K_2>0$ and $\eta>0$ such that if $0<|\delta_\la|<\eta$ then
 \begin{equation*}
    |\dot\varphi_\la(s_1)|\leeq K_1|\dot\varphi_\la(s_2)|+K_2.
 \end{equation*}
   where $s_1$, $s_2\in \mc J_{\la_\dt}$, and $\tilde f_{\la_\dt}^j(s_1)=s_2$ or $\tilde f_{\la_\dt}^j(s_2)=s_1$, for $1\leeq j\leeq k-1$.
\end{lem}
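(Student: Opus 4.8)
The idea is to use formula (\ref{eq:sumas}) with $s$ replaced by $s_2$ (or $s_1$) and $m=j\leeq k-1$, so that only a bounded number of terms enters. Assume first $\tilde f_{\la_\dt}^j(s_1)=s_2$, i.e. $F_{\la_\dt}^{?}$ connects the two points through $j<k$ applications of $\tilde f_{\la_\dt}$; applying the chain rule to $\dot\varphi_\la$ along this finite orbit gives
\begin{equation*}
   \dot\varphi_\la(s_1)=-\sum_{i=1}^{j}\frac{\big(\frac{\partial}{\partial\la}F_\la\big)\big(\tilde f_\la^{i-1}(\varphi_\la(s_1))\big)}{\big(\tilde f_\la^{i}\big)'(\varphi_\la(s_1))}+\frac{\dot\varphi_\la(s_2)}{\big(\tilde f_\la^{j}\big)'(\varphi_\la(s_1))},
\end{equation*}
and symmetrically with the roles of $s_1,s_2$ interchanged. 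The first I would observe that, since $j\leeq k-1$ is bounded and the maps $\tilde f_\la$ converge uniformly to $\tilde f_{-1}$ as $\la\to-1$, the coefficient $|(\tilde f_\la^{j})'(\varphi_\la(s_1))|^{-1}$ is bounded above and below by constants independent of $\la$ (for $|\delta_\la|<\eta$), and likewise the finite sum of the ``source'' terms is uniformly bounded, provided $\varphi_\la(s_1)$ stays in a fixed compact set.

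The one subtlety is that $s_1$ (or $s_2$) may lie arbitrarily close to the parabolic cycle, so $\varphi_\la(s_1)$ need not be bounded away from $\textbf{P}(\la)$; but it is bounded (the whole Julia set is), so $\frac{\partial}{\partial\la}F_\la(z)=z+O(z^3)$ stays bounded, and the only real issue is that $|(\tilde f_\la^{j})'(\varphi_\la(s_1))|$ could be small near a parabolic point. However, if $\varphi_\la(s_1)\in\mc C^0_n(\la)$ for large $n$, then $\tilde f_\la^{j}(\varphi_\la(s_1))=\varphi_\la(s_2)\in\mc C^j_{n-?}(\la)$ and the derivative over this bounded block is comparable to $|\mc C_{n}(\la)|/|\mc C_{n-j}(\la)|\asymp 1$ by Proposition \ref{prop:aad} (\ref{pit:aad2}) (bounded distortion, $j$ bounded). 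So in every case the coefficient is bounded above and below, and we obtain $|\dot\varphi_\la(s_1)|\leeq K_1|\dot\varphi_\la(s_2)|+K_2$ with $K_1$ the bound on $|(\tilde f_\la^{j})'|^{-1}$ and $K_2$ the bound on the finite source-sum. The case $\tilde f_{\la_\dt}^j(s_2)=s_1$ is handled by exchanging $s_1\leftrightarrow s_2$.

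The main obstacle — and the only place where care is needed — is confirming the uniform two-sided bound on $|(\tilde f_\la^{j})'(\varphi_\la(s_1))|$ uniformly in $\la$ near $-1$ and uniformly over the Julia set, including the region close to the parabolic cycle; this is exactly where Proposition \ref{prop:aad} (\ref{pit:aad2}) together with the uniform convergence $\varphi_\la\to\varphi_{-1}$ (Section \ref{ssc:hm}) and the boundedness of $k$ do the work. Once that is in hand the estimate is immediate from (\ref{eq:sumas}).
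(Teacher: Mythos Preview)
Your idea is the same as the paper's, but the execution has a persistent confusion between $F_\la$ and $\tilde f_\la$ that you should clean up. Formula (\ref{eq:sumas}) is derived from the conjugacy $\varphi_\la\circ F_{\la_\dt}=F_\la\circ\varphi_\la$, so setting $m=j\leeq k-1$ there relates $\dot\varphi_\la(s)$ to $\dot\varphi_\la(F_{\la_\dt}^{j}(s))=\dot\varphi_\la(\tilde f_{\la_\dt}^{\,jk}(s))$, not to $\dot\varphi_\la(\tilde f_{\la_\dt}^{\,j}(s))$. The displayed identity you write is the right shape, but the source terms must carry $\frac{\partial}{\partial\la}\tilde f_\la$, not $\frac{\partial}{\partial\la}F_\la$. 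The correct starting point, and what the paper actually uses, is to differentiate $\tilde f_\la^{\,j}\circ\varphi_\la=\varphi_\la\circ\tilde f_{\la_\dt}^{\,j}$ directly:
\[
\frac{\partial \tilde f_\la^{\,j}}{\partial \la}(\varphi_\la(s))+\big(\tilde f_\la^{\,j}\big)'(\varphi_\la(s))\cdot\dot\varphi_\la(s)=\dot\varphi_\la(\tilde f_{\la_\dt}^{\,j}(s)),
\]
from which the lemma follows once $\big|\frac{\partial \tilde f_\la^{\,j}}{\partial \la}\big|$ and $\big|(\tilde f_\la^{\,j})'\big|$ are uniformly bounded on $\mc J_\la$ and $(\tilde f_\la^{\,j})'$ is uniformly bounded away from $0$.

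Your appeal to Proposition~\ref{prop:aad}(\ref{pit:aad2}) for the lower bound on $|(\tilde f_\la^{\,j})'|$ is misapplied: that proposition concerns $(F_\la^m)'$ between cylinders $\mc C_{n}(\la)$ and $\mc C_{n+m}(\la)$ around the \emph{same} parabolic point, whereas $\tilde f_\la^{\,j}$ with $j<k$ moves between \emph{different} points of the cycle. The needed bound is more elementary: the critical point of $\tilde f_\la$ lies in the Fatou set and, by the uniform convergence $\varphi_\la\to\varphi_{-1}$ of Section~\ref{ssc:hm}, stays at uniformly positive distance from $\mc J_\la$ for $|\delta_\la|<\eta$; hence $(\tilde f_\la)'$ is bounded and bounded away from $0$ on $\mc J_\la$, and the same holds for the finite composite $(\tilde f_\la^{\,j})'$, $1\leeq j\leeq k-1$. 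With these corrections your argument coincides with the paper's.
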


\begin{proof}
We have $\tilde f_\la^j(\varphi_\la(s))=\varphi_\la(\tilde f_{\la_\dt}^j(s))$, therefore
   $$\frac{\partial \tilde f_\la^j}{\partial \la}(\varphi_\la(s))+\frac{\partial \tilde f_\la^j}{\partial z}(\varphi_\la(s))\cdot\dot\varphi_\la(s)=\dot\varphi_\la(\tilde f_{\la_\dt}^j(s)).$$
If $0<|\delta_\la|<\eta$, then we can assume that $\big|\frac{\partial \tilde f_\la^j}{\partial \la}(\varphi_\la)\big|$, $\big|\frac{\partial \tilde f_\la^j}{\partial z}(\varphi_\la)\big|$ are uniformly bounded, and $\frac{\partial \tilde f_\la^j}{\partial z}(\varphi_\la)$ is separated from $0$. So, the assertion follows.
\end{proof}

\begin{lem}\label{lem:const3}
   For every $\ve>0$ there exist $N\in\N$ and $\eta>0$, such that for every $0\leeq j \leeq k-1$ and $s_1,s_2\in\mc C^j_n$, we have
 \begin{equation*}
    \bigg|\re\Big(\frac{\mathfrak{D}^2_{F_\la}(\varphi_\la(s_1),\dot\Psi_\la(s_1))} {F_\la'(\varphi_\la(s_1))}\Big)- \re\Big(\frac{\mathfrak{D}^2_{F_\la}(\varphi_\la(s_2),\dot\Psi_\la(s_2))} {F_\la'(\varphi_\la(s_2))}\Big)\bigg|\leeq \ve e^{\ve n|\delta_\la|},
 \end{equation*}
   where $n>N$ and $0<|\delta_\la|<\eta$.
\end{lem}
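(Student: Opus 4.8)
The plan is to bound the difference between the values of the quantity at $s_1$ and at $s_2$ by means of the elementary identity (\ref{eq:ND}), applied with $N(s):=\mathfrak{D}^2_{F_\la}(\varphi_\la(s),\dot\Psi_\la(s))=\big(\tfrac{\partial}{\partial\la}F_\la'\big)(\varphi_\la(s))+\dot\Psi_\la(s)\,F_\la''(\varphi_\la(s))$ and $D(s):=F_\la'(\varphi_\la(s))$, and then to control numerator and denominator separately. Since $|\re w_1-\re w_2|\leeq|w_1-w_2|$, it suffices to bound $|N(s_1)/D(s_1)-N(s_2)/D(s_2)|$. Because the family is symmetric with respect to $\R$ (for real $\la_\dt$ one has $F_\la(\overline z)=\overline{F_\la(z)}$), the motion $\varphi_\la$ respects this symmetry and $\dot\Psi_\la$ transforms covariantly under $z\mapsto\overline z$; hence $\re(N(s)/D(s))$ takes equal values at $s$ and at its complex conjugate, so we may assume that $s_1,s_2$ both lie in one half-plane component $\mc C_n^{j+}$, which makes Lemma \ref{lem:const2} available. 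Write $z_i:=\varphi_\la(s_i)\in\mc C_n^{j+}(\la)$. Throughout, all the cylinder estimates of Section \ref{sec:cylindry} and the bounds for $\dot\psi_\la$, $\dot\Psi_\la$ are used in the form valid near $\alpha^j$, and $N$ is large, $\eta$ small, to be fixed at the end.

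First I would assemble the crude ingredients, valid for $z_i\in\mc C_n^{j}(\la)$ with $n>N$ and $|\delta_\la|<\eta$. By (\ref{eq:F0})--(\ref{eq:F1}) the functions $F_\la'$, $F_\la''$ and $\tfrac{\partial}{\partial\la}F_\la'$ are Lipschitz on a fixed neighbourhood of $\alpha^j$ with uniformly bounded constants, and $\re F_\la'(z)=-1+\tilde o(1)$ gives $|D(s_i)|\greq\tfrac12$. By Proposition \ref{prop:dyst}, $|z_1-z_2|\leeq\diam\mc C_n^{j+}(\la)\leeq K|\mc C_n(\la)|$, hence $|D(s_1)-D(s_2)|\leeq K|\mc C_n(\la)|$, and the same bound holds for $\big|(\tfrac{\partial}{\partial\la}F_\la')(z_1)-(\tfrac{\partial}{\partial\la}F_\la')(z_2)\big|$ and for $|F_\la''(z_1)-F_\la''(z_2)|$. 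Combining Lemma \ref{lem:xi} with Corollary \ref{col:modpsi} (and using that $|\im z_i|$ is small) yields $|\dot\Psi_\la(s_i)|\leeq K\,e^{\ve n|\delta_\la|}/|\im z_i|$, so $|N(s_i)|\leeq K\big(1+e^{\ve n|\delta_\la|}/|\im z_i|\big)$; finally, Lemma \ref{lem:const2} gives $|\dot\Psi_\la(s_1)-\dot\Psi_\la(s_2)|<\ve$.

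Next I would estimate the two summands of (\ref{eq:ND}). The second one is at most $|N(s_2)|\,|D(s_2)-D(s_1)|/|D(s_1)|\leeq K|\mc C_n(\la)|+K\,e^{\ve n|\delta_\la|}\,|\mc C_n(\la)|/|\im z_2|$; since $|\mc C_n(\la)|\to0$ by Corollary \ref{col:mod} and $|\mc C_n(\la)|/|\im z_2|\leeq\ve$ for $n>N$ by Proposition \ref{prop:C/im}, this summand is $\leeq K\ve\,e^{\ve n|\delta_\la|}$. For the first summand, decompose $N(s_1)-N(s_2)=\big[(\tfrac{\partial}{\partial\la}F_\la')(z_1)-(\tfrac{\partial}{\partial\la}F_\la')(z_2)\big]+\dot\Psi_\la(s_1)\big[F_\la''(z_1)-F_\la''(z_2)\big]+\big[\dot\Psi_\la(s_1)-\dot\Psi_\la(s_2)\big]F_\la''(z_2)$: the first bracket is $\leeq K|\mc C_n(\la)|$, the middle term is $\leeq K\,e^{\ve n|\delta_\la|}|\mc C_n(\la)|/|\im z_1|\leeq K\ve\,e^{\ve n|\delta_\la|}$ again by Proposition \ref{prop:C/im}, and the last term is $\leeq K\ve$ since $|F_\la''|$ is bounded. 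Dividing by $|D(s_1)|\greq\tfrac12$ and adding the two summands, the whole difference is $\leeq K\ve\,e^{\ve n|\delta_\la|}$ for a constant $K$ depending only on the family; replacing $\ve$ by $\ve/K$ at the outset, and enlarging $N$ and shrinking $\eta$ accordingly, gives the claim.

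The genuinely delicate point is the bookkeeping of the factors $e^{\ve n|\delta_\la|}$: this factor is not small, so every occurrence of the unbounded quantity $e^{\ve n|\delta_\la|}/|\im z|$ must appear multiplied by $|\mc C_n(\la)|$ (and be absorbed through Proposition \ref{prop:C/im}) or inside the small difference $\dot\Psi_\la(s_1)-\dot\Psi_\la(s_2)$, and one must make sure that no term accumulates a stronger factor such as $e^{2\ve n|\delta_\la|}$ that cannot be re-absorbed by relabelling $\ve$. The reduction to a single half-plane component by complex conjugation, and the verification that $\dot\Psi_\la$ is covariant under it, are routine.
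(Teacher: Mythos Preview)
Your overall strategy is exactly the paper's: apply (\ref{eq:ND}) to $N(s)=\mathfrak{D}^2_{F_\la}(\varphi_\la(s),\dot\Psi_\la(s))$ and $D(s)=F_\la'(\varphi_\la(s))$, use Lipschitz bounds for $F_\la'$, $F_\la''$, $\tfrac{\partial}{\partial\la}F_\la'$ together with Lemma~\ref{lem:const2}, and absorb the unbounded factor $|\dot\Psi_\la|$ against $|\mc C_n(\la)|$ via Proposition~\ref{prop:C/im}. That part is fine.

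There is, however, a genuine gap in your control of $|\dot\Psi_\la(s_i)|$ when $s_i\in\mc C_n^j$ with $j\neq0$. You invoke Lemma~\ref{lem:xi} and Corollary~\ref{col:modpsi} ``in the form valid near $\alpha^j$'', but both are stated and proved only for $\mc C_n=\mc C_n^0$, and their proofs use the special Taylor form (\ref{eq:F0})--(\ref{eq:F1}) of $F_\la$ at $0$. That form was engineered by the conjugation $s_c\circ t_c$ of Section~\ref{sec:twopetals}, which made the coefficient of $z^2$ independent of $\la$ (whence $\tfrac{\partial}{\partial\la}F_\la(z)=z+O(z^3)$) and used $(f_{c_0}^k)''(\alpha_{c_0})\neq0$; at another point $\alpha^j$ of the cycle neither property is available, so neither $\dot\psi_\la$ nor the bound of Corollary~\ref{col:modpsi} has a ready analogue there. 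The cylinder estimates of Section~\ref{sec:cylindry} do transfer via bounded distortion of $\tilde f_\la^j$, but the $\dot\psi_\la$/$\dot\Psi_\la$ bounds do not.

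The paper bridges exactly this point by an indirect transfer through the holomorphic motion: Lemma~\ref{lem:phiPsi} produces reference points $s_n^j\in\mc C_n^j$ where $\dot\Psi_\la$ and $\dot\varphi_\la$ differ by $O(|\mc C_n(\la)|)$; Lemma~\ref{lem:ppi} compares $|\dot\varphi_\la|$ at related points of different $\mc C_n^j$ via $\tilde f_\la$; and Lemma~\ref{lem:const2} says $\dot\Psi_\la$ is essentially constant on $\mc C_n^{j+}$. Chaining these gives $|\dot\Psi_\la(s_2)|\leeq K_5|\dot\Psi_\la(s_2^0)|+K_6$ with $s_2^0\in\mc C_n$, after which Lemma~\ref{lem:xi} and Corollary~\ref{col:modpsi} legitimately apply to $s_2^0$. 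You should either insert this transfer, or else set up local analogues of Section~\ref{sec:generaltwopetals} at each $\alpha^j$ (which requires redoing the conjugation and re-proving those lemmas without the $a\neq0$ hypothesis); the first option is much shorter.
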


\begin{proof}
Fix $\ve>0$. We can assume that $s_1,s_2\in\mc C^{j+}_n$. It follows from definition (\ref{eq:D2}) that
\begin{multline*}
   \mathfrak{D}^2_{F_\la}(\varphi_\la(s_1),\dot\Psi_\la(s_1))- \mathfrak{D}^2_{F_\la}(\varphi_\la(s_2),\dot\Psi_\la(s_2))\\ =\Big(\frac{\partial}{\partial \la} F_\la'\Big)(\varphi_\la(s_1))-\Big(\frac{\partial}{\partial \la} F_\la'\Big)(\varphi_\la(s_2))\\+F_\la''(\varphi_\la(s_1)) \big(\dot\Psi_\la(s_1) -\dot\Psi_\la(s_2)\big)+\dot\Psi_\la(s_2) \big(F_\la''(\varphi_\la(s_1))- F_\la''(\varphi_\la(s_2))\big).
\end{multline*}
Since $\frac{\partial}{\partial \la} F_\la'$ and $F_\la''$ are Lipschitz functions on $\mc J_\la$, Lemma \ref{lem:const2} gives us
\begin{equation}\label{eq:01}
   \big|\mathfrak{D}^2_{F_\la}(\varphi_\la(s_1),\dot\Psi_\la(s_1))- \mathfrak{D}^2_{F_\la}(\varphi_\la(s_2),\dot\Psi_\la(s_2))\big| \leeq \ve+|\dot\Psi_\la(s_2)|K_7|\mc C_{n}(\la)|.
\end{equation}
where $n>N$, $|\delta_\la|<\eta$, for suitably chosen $N\in\N$ and $\eta>0$.
Next, definition (\ref{eq:D2}) leads to
\begin{multline}\label{eq:02}
   \big|\mathfrak{D}^2_{F_\la}(\varphi_\la(s_2),\dot\Psi_\la(s_2)) \big(F_\la'(\varphi_\la(s_2)) - F_\la'(\varphi_\la(s_1))\big)\big|\\ \leeq \big(K_1+K_2|\dot\Psi_\la(s_2)|\big)|\mc C_{n}(\la)|.
\end{multline}
Since $F_\la'$ is separated from $0$, (\ref{eq:01}), (\ref{eq:02}) combined with (\ref{eq:ND}) gives us
\begin{equation}\label{eq:55}
   \bigg|\frac{\mathfrak{D}^2_{F_\la}(\varphi_\la(s_1),\dot\Psi_\la(s_1))} {F_\la'(\varphi_\la(s_1))}- \frac{\mathfrak{D}^2_{F_\la}(\varphi_\la(s_2),\dot\Psi_\la(s_2))} {F_\la'(\varphi_\la(s_2))}\bigg|\leeq \big(K_3+K_4|\dot\Psi_\la(s_2)|\big)|\mc C_{n}(\la)|.
\end{equation}

Since there exist points for which $\dot\Psi_\la$ is close to $\dot\varphi_\la$ (cf. Lemma \ref{lem:phiPsi}), Lemma \ref{lem:ppi} combined with Lemma \ref{lem:const2} lead to
   $$|\dot\Psi_\la(s_2)|\leeq K_5|\dot\Psi_\la(s_2^0)|+K_6,$$
where $\tilde f_{\la_\dt}^j(s_2^0)=s_2$ (i.e. $s_2^0\in\mc C_n$).
Using Corollary \ref{col:modpsi} and Lemma \ref{lem:xi} we obtain
\begin{equation*}
   \big|\dot\Psi_\la(s_2^0)\big|\leeq K_{7}\frac{e^{\ve n|\delta_\la|}}{|\im z|},
\end{equation*}
where $\varphi_\la(s_2^0)=z\in\mc C_n(\la)$. So, Proposition \ref{prop:C/im} gives us
\begin{equation*}
    \big(K_3+K_4|\dot\Psi_\la(s_2)|\big)|\mc C_{n}(\la)|\leeq \big(K_8+K_{9}\frac{e^{\ve n|\delta_\la|}}{|\im z|}\big)|\mc C_{n}(\la)|\leeq \ve e^{\ve n|\delta_\la|}.
\end{equation*}
Therefore assertion follows from (\ref{eq:55}).
\end{proof}

Before stating the next result we recall that $F_\la\circ\varphi_\la=\varphi_\la\circ F_{\la_\dt}$, moreover $\tilde f^k_\la=F_\la$ and $\tilde f_\la\circ\varphi_\la=\varphi_\la\circ\tilde f_{\la_\dt}$.

\begin{lem}\label{lem:pp}
   For every $\ve>0$ there exist $N\in\N$ and $\eta>0$ such that
 \begin{equation*}
    \bigg|\re\Big(\frac{\mathfrak{D}^2_{F_\la}(\varphi_\la(s),\dot\Psi_\la(s))} {F_\la'(\varphi_\la(s))}\Big)- \re\Big(\frac{\mathfrak{D}^2_{F_\la}(\varphi_\la(\tilde f^j_{\la_\dt}(s)),\dot\Psi_\la(\tilde f^j_{\la_\dt}(s)))} {F_\la'(\varphi_\la(\tilde f^j_{\la_\dt}(s)))}\Big)\bigg|\leeq \ve e^{\ve n|\delta_\la|},
 \end{equation*}
   where $s\in \mc C_n$, $1\leeq j\leeq k-1$, $n>N$ and $0<|\delta_\la|<\eta$.
\end{lem}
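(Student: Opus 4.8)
The plan is to reduce the multi-step claim about $\tilde f^j_{\la_\dt}$, $1\leeq j\leeq k-1$, to the already-established one-step cylinder comparison Lemma \ref{lem:const3}, by carefully tracking how the quantity $\re(\mathfrak{D}^2_{F_\la}(\varphi_\la,\dot\Psi_\la)/F_\la')$ transforms under a single application of $\tilde f_{\la_\dt}$. First I would fix $\ve>0$ and let $s\in\mc C_n$ (so $s\in\mc C^0_n$ in the notation of Section \ref{sec:cylindry}); by the symmetry $F_{\overline\la}(z)=\overline{F_\la(\overline z)}$ and the fact that the statement only involves real parts, I may assume $s$ lies in the upper part $\mc C^{0+}_n$. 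The key structural observation is that $\tilde f_{\la_\dt}$ sends $\mc C^{0}_n$ into $\mc C^{1}_{n}$ (using that $\tilde f_{\la_\dt}$ moves the parabolic point $\alpha^0=0$ to $\alpha^1$ and carries external rays of the same argument to external rays of the same argument, as set up in subsection \ref{ssc:cylinders}), and more generally $\tilde f^j_{\la_\dt}(\mc C^0_n)\subset\mc C^j_n$. Thus $\tilde f^j_{\la_\dt}(s)\in\mc C^j_n$, which is exactly the regime where the other-points-of-the-cycle comparison applies.

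Next I would exploit the commutation relations recalled just before the statement: $\tilde f_\la\circ\varphi_\la=\varphi_\la\circ\tilde f_{\la_\dt}$, so $\varphi_\la(\tilde f^j_{\la_\dt}(s))=\tilde f^j_\la(\varphi_\la(s))$. Writing $z=\varphi_\la(s)\in\mc C_n(\la)$ and $w_j=\tilde f^j_\la(z)\in\mc C^j_n(\la)$, the claim becomes a bound on the difference of $\re(\mathfrak{D}^2_{F_\la}(\cdot,\dot\Psi_\la)/F_\la')$ evaluated at two points in the \emph{same} cylinder index $n$ but in different pieces $\mc C^0$ and $\mc C^j$ of the partition. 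Picking a reference sequence $s^j_n\in\mc C^j_n$ as in Lemma \ref{lem:phiPsi} (with $\tilde f^j_{\la_\dt}(s^0_n)=s^j_n$), I would insert these and invoke the triangle inequality: the difference between $s$ and $s^0_n$ is controlled by Lemma \ref{lem:const3} (both in $\mc C^0_n$), the difference between $\tilde f^j_{\la_\dt}(s)$ and $s^j_n$ is controlled by Lemma \ref{lem:const3} again (both in $\mc C^j_n$), and it remains to compare the expression at $s^0_n$ with that at $s^j_n=\tilde f^j_{\la_\dt}(s^0_n)$. For this last comparison I would repeat the estimate (\ref{eq:55}) from the proof of Lemma \ref{lem:const3} verbatim but with the pair $(\varphi_\la(s^0_n),\varphi_\la(s^j_n))$: since $\tilde f^j_\la$ is biholomorphic with derivative bounded and separated from $0$ near the cycle, $|\varphi_\la(s^j_n)-\varphi_\la(s^0_n)|\asymp|\mc C_n(\la)|$, the functions $\partial_\la F_\la'$ and $F_\la''$ are Lipschitz and $F_\la'$ is separated from $0$ on $\mc J_\la$, giving a bound $(K_3+K_4|\dot\Psi_\la(s^j_n)|)|\mc C_n(\la)|$; then $|\dot\Psi_\la(s^j_n)|\leeq K_5|\dot\Psi_\la(s^0_n)|+K_6$ by Lemma \ref{lem:ppi} combined with Lemma \ref{lem:const2}, and $|\dot\Psi_\la(s^0_n)|\leeq K_7 e^{\ve n|\delta_\la|}/|\im z|$ by Lemma \ref{lem:xi} and Corollary \ref{col:modpsi}, so Proposition \ref{prop:C/im} absorbs the factor $|\mc C_n(\la)|$ and yields a bound $\leeq\ve e^{\ve n|\delta_\la|}$.

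The main obstacle I expect is bookkeeping rather than a genuine new difficulty: one must be sure that the point $s^0_n$ produced by Lemma \ref{lem:phiPsi} can simultaneously serve as a reference point both for $\mc C^0_n$ (so that $\dot\Psi_\la$ and $\dot\varphi_\la$ are comparable there, which is what feeds the bound on $|\dot\Psi_\la(s^0_n)|$) and as a preimage of $s^j_n$ under $\tilde f^j_{\la_\dt}$; the statement of Lemma \ref{lem:phiPsi} is phrased precisely so that $\tilde f^j_{\la_\dt}(s^0_n)=s^j_n$, so this is consistent. A secondary care point is that all the error terms carry the factor $e^{\ve n|\delta_\la|}$ and one must check it is never upgraded to $e^{K n|\delta_\la|}$: each of Lemmas \ref{lem:xi}, \ref{lem:const2}, \ref{lem:const3} and Corollary \ref{col:modpsi} is stated with exactly an $e^{\ve n|\delta_\la|}$-type loss, and since $j$ ranges over the finite set $\{1,\dots,k-1\}$, iterating the one-step comparison $j\leeq k-1$ times only multiplies constants and replaces $\ve$ by $(k-1)\ve$, which is harmless after relabeling $\ve$. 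Finally, enlarging $N$ and shrinking $\eta$ at the end makes the finitely many constants collapse into the single $\ve$ in the displayed inequality, completing the proof.
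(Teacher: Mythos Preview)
Your reduction to Lemma \ref{lem:const3} for the comparisons $s\leftrightarrow s^0_n$ and $\tilde f^j_{\la_\dt}(s)\leftrightarrow s^j_n$ is fine, but the central step 3 --- comparing the expression at $s^0_n$ with that at $s^j_n$ by repeating estimate (\ref{eq:55}) --- breaks down. The points $\varphi_\la(s^0_n)$ and $\varphi_\la(s^j_n)=\tilde f^j_\la(\varphi_\la(s^0_n))$ lie near \emph{different} parabolic points $\alpha^0=0$ and $\alpha^j$; their distance is $\asymp|\alpha^j|$, a fixed positive number, not $\asymp|\mc C_n(\la)|$. The boundedness of $(\tilde f^j_\la)'$ only tells you $\diam\mc C^j_n(\la)\asymp|\mc C_n(\la)|$, not that the two cylinders are close to one another. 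Consequently the Lipschitz differences $(\partial_\la F'_\la)(\varphi_\la(s^0_n))-(\partial_\la F'_\la)(\varphi_\la(s^j_n))$, $F''_\la(\varphi_\la(s^0_n))-F''_\la(\varphi_\la(s^j_n))$ and $F'_\la(\varphi_\la(s^0_n))-F'_\la(\varphi_\la(s^j_n))$ are $O(1)$, not $O(|\mc C_n(\la)|)$, and (\ref{eq:01}), (\ref{eq:02}), (\ref{eq:55}) all fail. In the same vein, Lemma \ref{lem:const2} only compares $\dot\Psi_\la$ at two points of the \emph{same} $\mc C^{j\pm}_n$, so it cannot be used to control $\dot\Psi_\la(s^0_n)-\dot\Psi_\la(s^j_n)$.

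The paper avoids this obstruction by a genuinely different device. Rather than comparing $\mathfrak{D}^2_{F_\la}$ directly, it passes to the true derivative $\partial_\la\log|F'_\la(\varphi_\la)|$ via (\ref{eq:20}) and then uses the chain rule for $F_\la=\tilde f_\la^k$ to obtain the telescoping identity (\ref{eq:d}): the difference at $s$ and $\tilde f_{\la_\dt}(s)$ becomes a difference of $\partial_\la\log|\tilde f'_\la(\varphi_\la)|$ evaluated at $s$ and at $F_{\la_\dt}(s)$ (or $\overline{F_{\la_\dt}}(s)$), which are two nearby points of the \emph{same} $\mc C^j$-region. At that stage the Lipschitz bounds work, and crucially one uses that $\tilde f''_\la$ is constant (cf.\ (\ref{eq:ftilde})), so the $\dot\varphi_\la$-terms combine into $|\dot\varphi_\la(s)-\dot\varphi_\la(\overline{F_{\la_\dt}}(s))|$, which is then controlled through the reference sequence $s^j_n$ of Lemma \ref{lem:phiPsi}. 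This chain-rule/telescoping step is the missing idea in your proposal.
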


\begin{proof}
Fix $\ve>0$.
Of course it is enough to consider only consecutive points from a trajectory, i.e. $s$, and $\tilde f_{\la_\dt}(s)$ where $s\in\mc C_n^j$, $0\leeq j\leeq k-2$.

First, we will deal with the function $\dot\varphi_\la$ instead of $\dot\Psi_\la$.
Note that there exists a constant $K_1>0$ (cf. definition (\ref{eq:D2}) and (\ref{eq:D2cor})), such that
\begin{equation}\label{eq:20}
   \bigg|\frac{\partial}{\partial\la}\log\big|F'_\la(\varphi_\la)\big|- \re\Big(\frac{\mathfrak{D}^2_{F_\la}(\varphi_\la,\dot\Psi_\la)} {F_\la'(\varphi_\la)}\Big)\bigg|\leeq K_1\big|\dot\varphi_\la-\dot\Psi_\la\big|.
\end{equation}

{\em Step 1.}
The chain rule leads to
\begin{multline}\label{eq:d}
   \frac{\partial}{\partial\la}\log\big|F'_\la(\varphi_\la)\big|- \frac{\partial}{\partial\la}\log\big|F'_\la(\tilde f_\la(\varphi_\la))\big|\\= \frac{\partial}{\partial\la}\log\big|\tilde f'_\la(\varphi_\la)\big|- \frac{\partial}{\partial\la}\log\big|\tilde f'_\la(F_\la(\varphi_\la))\big|\\ = \re\Big(\frac{\frac{\partial}{\partial\la}(\tilde f_\la'(\varphi_\la))}{\tilde f_\la'(\varphi_\la)}\Big)-\re\Big(\frac{\frac{\partial}{\partial\la}(\tilde f_\la'(\varphi_\la(F_{\la_\dt})))}{\tilde f_\la'(\varphi_\la(F_{\la_\dt}))}\Big).
\end{multline}
Since $F_{\la_\dt}$ can be replaced by $\overline F_{\la_\dt}$ in the rightmost expression, we obtain
\begin{equation}\label{eq:de}
   \Big|\frac{\partial}{\partial\la}\log\big|F'_\la(\varphi_\la)\big|- \frac{\partial}{\partial\la}\log\big|F'_\la(\tilde f_\la(\varphi_\la))\big|\Big| \leeq\bigg|\frac{\frac{\partial}{\partial\la}(\tilde f_\la'(\varphi_\la(\overline F_{\la_\dt})))}{\tilde f_\la'(\varphi_\la(\overline F_{\la_\dt}))}-\frac{\frac{\partial}{\partial\la}(\tilde f_\la'(\varphi_\la))}{\tilde f_\la'(\varphi_\la)}\bigg|.
\end{equation}

{\em Step 2.} Note that
\begin{equation}\label{eq:dd/laz}
   \frac{\partial}{\partial\la}\big(\tilde f'_\la(\varphi_\la)\big)= \Big(\frac{\partial}{\partial\la}\tilde f'_\la\Big)(\varphi_\la)+ \tilde f_\la''(\varphi_\la)\cdot\dot\varphi_\la.
\end{equation}
Therefore
\begin{multline*}
   \frac{\partial}{\partial\la}\big(\tilde f_\la'(\varphi_\la(\overline F_{\la_\dt}))\big)-\frac{\partial}{\partial\la}\big(\tilde f_\la'(\varphi_\la)\big)=
   \Big(\frac{\partial}{\partial\la}\tilde f'_\la\Big)(\varphi_\la(\overline F_{\la_\dt}))-\Big(\frac{\partial}{\partial\la}\tilde f'_\la\Big)(\varphi_\la)\\
   +\tilde f_\la''(\varphi_\la(\overline F_{\la_\dt}))\cdot\dot\varphi_\la(\overline F_{\la_\dt})- \tilde f_\la''(\varphi_\la)\cdot\dot\varphi_\la.
\end{multline*}
We have $s\in\mc C_n^{j}$, so $\varphi_\la(s)\in\mc C_n^{j}(\la)$ and $\varphi_\la(\overline F_{\la_\dt}(s))\in\mc C_{n-1}^{j}(\la)$. Moreover $\varphi_\la(s)$, and $\varphi_\la(\overline F_{\la_\dt}(s))$ are on the same side of the real line.
Since $\frac{\partial}{\partial\la}\tilde f'_\la$ is a Lipschitz function on the Julia set, whereas bounded distortion gives us
  $$\big|\varphi_\la(\overline F_{\la_\dt}(s))-\varphi_\la(s)\big|\leeq K_2|\mc C_n(\la)|,$$
the fact that $\tilde f_\la''(z)$ does not depend on $z$ (cf. (\ref{eq:ftilde})), leads to
\begin{multline*}
   \Big|\frac{\partial}{\partial\la}\big(\tilde f_\la'(\varphi_\la(\overline F_{\la_\dt}(s)))\big)- \frac{\partial}{\partial\la}\big(\tilde f_\la'(\varphi_\la(s))\big)\Big|\\
   \leeq K_3|\mc C_n(\la)|+K_4\big|\dot\varphi_\la(\overline F_{\la_\dt}(s))- \dot\varphi_\la(s)\big|.
\end{multline*}

Next, using (\ref{eq:dd/laz}) and the fact that $\tilde f_\la'$ is a Lipschitz function, we obtain
\begin{equation*}
   \Big|\frac{\partial}{\partial\la}\big(\tilde f_\la'(\varphi_\la(s))\big)\Big(\tilde f_\la'(\varphi_\la(\overline F_{\la_\dt}(s)))-\tilde f_\la'(\varphi_\la(s))\Big)\Big|\leeq \big(K_5+K_6|\dot\varphi_\la(s)|\big)|\mc C_n(\la)|.
\end{equation*}
Since $\tilde f_\la'(\varphi_\la(\overline F_{\la_\dt}))$ and $\tilde f_\la'(\varphi_\la)$ are separated from $0$, the above inequalities combined (\ref{eq:de}) and (\ref{eq:ND}) gives us
\begin{multline}\label{eq:21}
   \Big|\frac{\partial}{\partial\la}\log\big|F'_\la(\varphi_\la(s))\big|- \frac{\partial}{\partial\la}\log\big|F'_\la(\tilde f_\la(\varphi_\la(s)))\big|\Big|\\ \leeq K_7\big|\dot\varphi_\la(s) -\dot\varphi_\la(\overline F_{\la_\dt}(s))\big|+ \big(K_8+K_9|\dot\varphi_\la(s)|\big)|\mc C_n(\la)|
\end{multline}

{\em Step 3.}
Let $\{s^j_n\}_{n\greq1}$ be a sequence from Lemma \ref{lem:phiPsi}. Then, using Lemma \ref{lem:phiPsi} and Lemma \ref{lem:const2}, we can get
\begin{equation*}
   \big|\dot\varphi_\la(s^j_n) -\dot\varphi_\la(\overline F_{\la_\dt}(s^j_n))\big|\leeq K_{10}|\mc C_n(\la)|+\ve
\end{equation*}
where $n>N$ ($N$ is large enough), and $0<|\delta_\la|<\eta$ ($\eta>0$ is close to $0$).
Next, Lemma \ref{lem:ppi} and Lemma \ref{lem:phiPsi} leads to
\begin{equation*}
   \big(K_8+K_9|\dot\varphi_\la(s_n^j)|\big)|\mc C_n(\la)|\leeq \big(K_{11}+K_{12}|\dot\Psi_\la(s^0_n)|\big)|\mc C_n(\la)|.
\end{equation*}

Thus, (\ref{eq:21}) combined with (\ref{eq:20}), Lemma \ref{lem:phiPsi} and the above estimates, gives us
\begin{multline}\label{eq:33}
   \bigg|\re\Big(\frac{\mathfrak{D}^2_{F_\la}(\varphi_\la(s^j_n),\dot\Psi_\la(s^j_n))} {F_\la'(\varphi_\la(s^j_n))}\Big)- \re\Big(\frac{\mathfrak{D}^2_{F_\la}(\varphi_\la(\tilde f_\la(s^j_n)),\dot\Psi_\la(\tilde f_\la(s^j_n)))} {F_\la'(\varphi_\la(\tilde f_\la(s^j_n)))}\Big)\bigg|\\
   \leeq \big(K_{13}+K_{14}|\dot\Psi_\la(s^0_n)|\big)|\mc C_n(\la)|+\ve.
\end{multline}

Using Corollary \ref{col:modpsi}, Lemma \ref{lem:xi} and next Proposition \ref{prop:C/im} we obtain
\begin{equation*}
   \big(K_{13}+K_{14}|\dot\Psi_\la(s^0_n)|\big)|\mc C_n(\la)|+\ve\leeq K_{15}\frac{e^{\ve n|\delta_\la|}}{|\im z^0_n|}|\mc C_n(\la)|+\ve\leeq \ve K_{16}e^{\ve n|\delta_\la|},
\end{equation*}
where $\varphi_\la(s_n^0)=z_n^0\in\mc C_n(\la)$, $n>N$ and $0<|\delta_\la|<\eta$.
Thus, the statement follows from (\ref{eq:33}) and Lemma \ref{lem:const3}.
\end{proof}

Propositions \ref{prop:re=}, \ref{prop:beta} combined with definition (\ref{eq:D2}), Lemma  \ref{lem:xi}, and Lemmas \ref{lem:const3}, \ref{lem:pp}, lead to

\begin{prop}\label{prop:reDF}
   For every $\ve>0$ there exist $N\in\N$ and $\eta>0$ such that
 \begin{equation*}
    6\Gamma(n\delta_\la)-1-\ve e^{\ve n|\delta_\la|}\leeq \re\Big(\frac{\mathfrak{D}^2_{F_\la}(\varphi_\la(s),\dot\Psi_\la(s))} {F_\la'(\varphi_\la(s))}\Big)\leeq 6\Gamma(n\delta_\la)-1+\ve e^{\ve n|\delta_\la|},
 \end{equation*}
   where $s\in\textbf{C}_n$, $n>N$ and $0<|\delta_\la|<\eta$.
\end{prop}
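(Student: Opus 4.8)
The plan is to assemble the five results cited immediately before the statement, performing a chain of reductions until the problem is covered by Propositions~\ref{prop:re=} and~\ref{prop:beta}. Throughout I fix the target $\ve>0$ and prove the two inequalities with an auxiliary constant $\ve'\in(0,\ve)$ in place of $\ve$ everywhere except in the exponent $e^{\ve n|\delta_\la|}$, which keeps the original $\ve$; the value of $\ve'$ is pinned down only at the end, after all error terms have been collected, so that the final constant is $\le\ve$.

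First I would reduce to a single point of the central cylinder $\mc C_n$. By Lemma~\ref{lem:const3} the function $s\mapsto\re\big(\mathfrak{D}^2_{F_\la}(\varphi_\la(s),\dot\Psi_\la(s))/F_\la'(\varphi_\la(s))\big)$ varies by at most $\ve'e^{\ve' n|\delta_\la|}$ inside each $\mc C^j_n$, and by Lemma~\ref{lem:pp} it changes by at most $\ve'e^{\ve' n|\delta_\la|}$ along one step $s\mapsto\tilde f^j_{\la_\dt}(s)$ with $s\in\mc C_n$. Since $\tilde f_{\la_\dt}$ cyclically permutes the parabolic points and is a local diffeomorphism near each of them, $\tilde f^j_{\la_\dt}$ carries $\mc C_n$ onto $\mc C^j_n$; combining with a bounded number of such moves shows that the function above varies by at most $C\ve'e^{\ve n|\delta_\la|}$ over all of $\textbf{C}_n$, with $C$ depending only on the family $(F_\la)$. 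Hence it suffices to estimate it at one point $s\in\mc C_n$, and then $z:=\varphi_\la(s)\in\mc C_n(\la)\subset\mathcal{M}_0^*(\la)$, so that $\dot\psi_\la(z)$ is defined.

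Next I would replace $\dot\Psi_\la(s)$ by $\dot\psi_\la(z)$ and invoke the two key propositions. Since $\mathfrak{D}^2_{F_\la}$ is affine in its second argument (definition~(\ref{eq:D2})), $\mathfrak{D}^2_{F_\la}(z,\dot\Psi_\la(s))-\mathfrak{D}^2_{F_\la}(z,\dot\psi_\la(z))=\big(\dot\Psi_\la(s)-\dot\psi_\la(z)\big)F_\la''(z)$; as $F_\la''$ is bounded near $0$ by~(\ref{eq:F0}) and $F_\la'(z)$ is bounded away from $0$ by~(\ref{eq:reimF'}), Lemma~\ref{lem:xi} together with Corollary~\ref{col:odl} (which makes $|\im z|$ as small as we like once $n>N$ and $|\delta_\la|<\eta$) gives
$$\Big|\re\Big(\frac{\mathfrak{D}^2_{F_\la}(z,\dot\Psi_\la(s))}{F_\la'(z)}\Big)-\re\Big(\frac{\mathfrak{D}^2_{F_\la}(z,\dot\psi_\la(z))}{F_\la'(z)}\Big)\Big|\le K|\im z|\,e^{\ve' n|\delta_\la|}\le\ve'e^{\ve n|\delta_\la|}.$$
Now Proposition~\ref{prop:re=} rewrites $\re\big(\mathfrak{D}^2_{F_\la}(z,\dot\psi_\la(z))/F_\la'(z)\big)$ as $6A^2\beta_\la(z)-1+\tilde o(1)+\tilde o(\beta_\la(z))$, and Proposition~\ref{prop:beta} gives $\frac1{A^2}\Gamma(n\delta_\la)e^{-\ve' n|\delta_\la|}-\ve'\le\beta_\la(z)\le\frac1{A^2}\Gamma(n\delta_\la)e^{\ve' n|\delta_\la|}+\ve'$. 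Since $\Gamma$ is bounded by $1/2$, this forces $|\beta_\la(z)|=O(e^{\ve n|\delta_\la|})$, so the two $\tilde o$-terms together are $\le\ve'e^{\ve n|\delta_\la|}$ for $n>N$ and $|\delta_\la|<\eta$.

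The step I expect to cost the most care is the final bookkeeping: converting the multiplicative factor $e^{\pm\ve' n|\delta_\la|}$ attached to $6\Gamma(n\delta_\la)$ into the additive term $\ve e^{\ve n|\delta_\la|}$ demanded by the statement. For this I would use $0\le\Gamma\le1/2$ and the elementary inequality $x\le a^{-1}e^{ax}$ ($x,a>0$): for $0<\ve'<\ve$,
$$6\Gamma(n\delta_\la)\big(e^{\ve' n|\delta_\la|}-1\big)\le 3\ve'\,n|\delta_\la|\,e^{\ve' n|\delta_\la|}\le\frac{3\ve'}{\ve-\ve'}\,e^{\ve n|\delta_\la|},$$
and likewise $6\Gamma(n\delta_\la)\big(1-e^{-\ve' n|\delta_\la|}\big)\le 3\ve'\,n|\delta_\la|\le\frac{3\ve'}{\ve}\,e^{\ve n|\delta_\la|}$. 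Collecting the contributions of the previous paragraphs, the left-hand expression of the proposition lies within $\big(C\ve'+\frac{3\ve'}{\ve-\ve'}\big)e^{\ve n|\delta_\la|}$ of $6\Gamma(n\delta_\la)-1$, where $C$ (again depending only on the family) now absorbs the earlier constants; choosing $\ve'$ so small that $C\ve'+\frac{3\ve'}{\ve-\ve'}\le\ve$ finishes the proof. The remaining ingredients are routine — each individual estimate is already contained in Propositions~\ref{prop:re=},~\ref{prop:beta} and Lemmas~\ref{lem:xi},~\ref{lem:const3},~\ref{lem:pp} — and the only real subtlety is that, because $n|\delta_\la|$ is unbounded, all errors must be measured against $e^{\ve n|\delta_\la|}$ rather than treated as genuine $o(1)$.
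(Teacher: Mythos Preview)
Your proof is correct and follows exactly the route the paper indicates: the paper's ``proof'' is the single sentence ``Propositions~\ref{prop:re=}, \ref{prop:beta} combined with definition~(\ref{eq:D2}), Lemma~\ref{lem:xi}, and Lemmas~\ref{lem:const3}, \ref{lem:pp}, lead to\ldots'', and you have faithfully carried out that assembly --- reducing from $\textbf{C}_n$ to a single point of $\mc C_n$ via Lemmas~\ref{lem:const3} and~\ref{lem:pp}, passing from $\dot\Psi_\la$ to $\dot\psi_\la$ via Lemma~\ref{lem:xi} and the affine dependence in~(\ref{eq:D2}), and then invoking Propositions~\ref{prop:re=} and~\ref{prop:beta}. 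Your final bookkeeping (turning the multiplicative $e^{\pm\ve'n|\delta_\la|}$ on $6\Gamma(n\delta_\la)$ into the additive $\ve e^{\ve n|\delta_\la|}$ via $0\le\Gamma\le 1/2$ and $x\le a^{-1}e^{ax}$) is a detail the paper suppresses entirely but which is indeed the only point requiring care, and you handle it cleanly.
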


\section{Integral over $\textbf{B}_N$}\label{sec:bn}

The goal of this section is to prove Proposition \ref{prop:far} (cf. \cite[Proof of Proposition 4.1]{HZi} and \cite[Section 12]{J}), and next Proposition \ref{prop:limb}.


We begin with the following lemma:

\begin{lem}\label{lem:derivativefn}
   There exists $\eta>0$ and for every $N\in\N$ there is a constant $K(N)>0$, such that if $z\in \mc J_\la$ and $|\delta_\la|<\eta$, then
      $$F_\la^n(z)\in\textbf{B}_N(\la)\Rightarrow
      \frac{1}{|(F_\la^n)'(z)|}\leeq\frac{K(N)}{n^{3/2}}.$$
\end{lem}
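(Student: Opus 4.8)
The plan is to trace the orbit $z, F_\la(z), \dots, F_\la^n(z)$ through the cylinder decomposition and control the derivative by the ratio of cylinder sizes. The basic idea: write $z_0 = z$ and suppose $F_\la^n(z) \in \textbf{B}_N(\la)$. By Lemma \ref{lem:exp} the derivative grows geometrically in the number $k_n(z)$ of iterates that land in $\textbf{B}_N(\la)$; this already handles the contribution of the "outside" part of the orbit and shows $|(F_\la^n)'(z)|^{-1} \le C^{-1}\rho^{-k_n(z)}$. So the only thing that can make $|(F_\la^n)'(z)|^{-1}$ large is a long initial (or intermediate) stretch spent inside one of the cylinder towers $\mc C^j_m(\la)$, during which the derivative can be close to $1$. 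First I would reduce, via the chain rule, to the case where the orbit consists of a single descent through a tower $\mc C^j_{m+p}(\la) \to \mc C^j_{m+p-1}(\la) \to \dots \to \mc C^j_{m}(\la)$ followed by an exit into $\textbf{B}_N(\la)$: that is, it is enough to bound $|(F_\la^p)'(z)|^{-1}$ when $z \in \mc C^j_{m+p}(\la)$ and $F_\la^p(z) \in \mc C^j_{m}(\la)$ with $m$ bounded (say $m \le N+1$), because the pieces of the orbit in $\textbf{B}_N(\la)$ only help by factors $\ge \rho > 1$.

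For such a descent, Proposition \ref{prop:aad}(\ref{pit:aad2}) (together with Lemma \ref{lem:C} and Proposition \ref{prop:dyst} to pass between $\mc C^j$ and $\mc C$) gives
\begin{equation*}
   \frac{1}{|(F_\la^p)'(z)|} \le (1+\ve)\,\frac{|\mc C_{m+p}(\la)|}{|\mc C_{m}(\la)|},
\end{equation*}
so the problem is reduced to showing $|\mc C_{m+p}(\la)| \le K(N)\, (m+p)^{-3/2}$ with a constant depending only on $N$ (using $m \le N+1$ to bound $|\mc C_m(\la)|$ from below by a constant depending on $N$, via Proposition \ref{prop:mod}(\ref{pit:mod2}), and noting $n$ is comparable to $m+p$). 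But $|\mc C_{m+p}(\la)| \le K_2 (m+p)^{-3/2}$ is exactly the upper bound in Corollary \ref{col:mod}, valid for all $|\delta_\la| < \eta$ uniformly. Combining, $|(F_\la^p)'(z)|^{-1} \le K(N)\,(m+p)^{-3/2} \asymp K(N)\, n^{-3/2}$, which is the claim in this reduced case.

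Finally I would reassemble the general case: split the orbit at the last time it enters the tower around the relevant parabolic point before hitting $\textbf{B}_N(\la)$, apply the tower estimate to that last descent, absorb all earlier iterates into a factor $\le 1$ using Lemma \ref{lem:exp} (or simply $\ge C$), and conclude. The main obstacle I anticipate is the bookkeeping when the orbit visits several different towers $\mc C^{j_1}, \mc C^{j_2}, \dots$ or re-enters a tower after an excursion through $\textbf{B}_N(\la)$: one must check that each excursion contributes a multiplicative factor bounded away from zero (so it cannot conspire to spoil the $n^{-3/2}$ rate) and that the "$n$" appearing in the final bound can be taken as the total length, not merely the length of the last descent — this is where the uniformity in $\delta_\la$ of Corollary \ref{col:mod} and the geometric gain $\rho>1$ of Lemma \ref{lem:exp} must be used in tandem. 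Everything else is routine distortion estimates already packaged in Section \ref{sec:cylindry}.
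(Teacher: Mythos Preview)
Your approach is correct and matches the paper's: the paper simply refers to \cite[Lemma 12.1]{J}, noting that the pointwise bound $|F_\la'|>1$ on $\textbf{B}_N$ used there must be replaced by Lemma~\ref{lem:exp}, which is exactly the ingredient you invoke alongside the cylinder--size estimates of Proposition~\ref{prop:aad}(\ref{pit:aad2}) and Corollary~\ref{col:mod}. One small correction: your reference to Lemma~\ref{lem:C} is misplaced (that lemma compares \emph{measures}, not cylinder diameters) --- comparability of $|\mc C_n^j|$ across $j$ follows instead from bounded distortion of $\tilde f_\la^j$; and note that Lemma~\ref{lem:exp} yields $|(F_\la^n)'|>C\rho^{k_n(z)}$ with a single constant $C$ not distributing over sub-segments, so the reassembly for multiple tower visits needs the inequality $\prod_i(1+b_i/N)\greq 1+\sum_i b_i/N$ rather than a per-piece $\rho$-gain, but your dichotomy handles this.
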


This lemma can be proven in the same way as \cite[Lemma 12.1]{J}, with one change. Instead of using the fact $|F_\la'(z)|>1$ for $z\in\mc B_N$ (which is not clear in the general case), we can use Lemma \ref{lem:exp}.

For every $N_0\in\N$ we define
$$\textbf{A}^{N_0}_{\infty}:=F_{\la_\dt}^{-N_0}(\textbf{P}).$$
Note that $\textbf{A}^{N_0}_{\infty}$ is a finite set (see definition of $\textbf{P}$ in section \ref{ssc:cylinders}).

Let us fix $N\in\N$.
Then, for every $N_0\in\N$ we define a family of sets $\{\textbf{A}^{N_0}_{N,n}\}_{n\greq0}$, which form a partition of $\textbf{B}_N\sms\textbf{A}^{N_0}_{\infty}$. Let
\begin{equation*}
   \textbf{A}^{N_0}_{N,n}:=
 \left\{
  \begin{array}{ll}
    F_{\la_\dt}^{-N_0}(\textbf{C}_{n})\cap\textbf{B}_N, & \hbox{for $n\greq1$;} \\ F_{\la_\dt}^{-N_0}(\textbf{B}_0)\cap\textbf{B}_N, & \hbox{for $n=0$.}
 \end{array}
   \right.
\end{equation*}
Moreover, for $n\greq1$, and $0\leeq j\leeq k-1$ let
\begin{equation*}
   A^{N_0,j}_{N,n}:=F_{\la_\dt}^{-N_0}(\mc C^j_{n})\cap\textbf{B}_N.
\end{equation*}
Thus we have $\textbf{A}^{N_0}_{N,n}=\bigcup_{j=0}^{k-1}A^{N_0,j}_{N,n}$, and let $A^{N_0}_{N,n}:=A^{N_0,0}_{N,n}$.

Next, the sets $\textbf{A}^{N_0}_{N,n}(\la)$, $A^{N_0,j}_{N,n}(\la)$, $A^{N_0}_{N,n}(\la)$ we define as the images of $\textbf{A}^{N_0}_{N,n}$, $A^{N_0,j}_{N,n}$, $A^{N_0}_{N,n}$ under $\varphi_\la$.
Observe, that this notation is different from \cite[Proof of Proposition 4.1]{HZi} and \cite[Section 12]{J}.

Repeating the proof of \cite[Proposition 5.1]{Jii}, we obtain:
\begin{lem}\label{lem:1}
   There exist $K>0$ and $\eta>0$ such that for every $N\in\N$, $N_0\greq1$, $n\greq1$, $0\leeq j\leeq k-1$ and $|\delta_\la|<\eta$ we have
      $$\tilde{\mu_\la}(A^{N_0,j}_{N,n})\leeq K\cdot N_0\cdot\tilde{\omega_\la}(\mc C^j_{n}).$$
\end{lem}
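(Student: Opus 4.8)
The plan is to reduce the estimate on $\tilde\mu_\la(A^{N_0,j}_{N,n})$ to an estimate on the conformal measure $\tilde\omega_\la$ of the same set, and then to transport the conformal measure back to a single cylinder $\mc C^j_n$ by unwinding the $N_0$-fold preimage. First I would invoke Lemma \ref{lem:L}: since $A^{N_0,j}_{N,n}\subset\textbf{B}_N(\la)$, the Radon--Nikodym derivative $d\tilde\mu_\la/d\tilde\omega_\la$ is bounded there by the constant $K(N)$ (and below by $D^{-1}$), so it suffices to prove the inequality with $\tilde\mu_\la$ replaced by $\tilde\omega_\la$. However, one must be careful: the constant $K$ in the statement should not depend on $N$, so the bound cannot simply absorb $K(N)$. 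The resolution is that we do not compare on $\textbf{B}_N$ directly but rather use the $F_{\la_\dt}$-invariance of $\tilde\mu_\la$: write $\tilde\mu_\la(A^{N_0,j}_{N,n})$ using the transfer operator and push forward $N_0$ steps under $F_{\la_\dt}$, so that $A^{N_0,j}_{N,n}$ is mapped onto (part of) $\mc C^j_n$, where the Radon--Nikodym derivative is again controlled by Lemma \ref{lem:L} but now with a constant uniform in $N$ (since $\mc C^j_n\subset\textbf{B}_{n-1}$ is handled by the universal constant $D$, or by Lemma \ref{lem:C}).

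The key steps, in order, are: (i) express $\tilde\omega_\la(A^{N_0,j}_{N,n})$ via the conformality relation, $\tilde\omega_\la(A^{N_0,j}_{N,n}) = \int_{\mc C^j_n}|(F_{\la_\dt}^{N_0})'|^{-\mc D(\la)}\,d\tilde\omega_\la$ summed over the relevant inverse branches landing in $\textbf{B}_N$, where the preimages are the connected pieces of $F_{\la_\dt}^{-N_0}(\mc C^j_n)$ meeting $\textbf{B}_N$; (ii) bound each factor $|(F_{\la_\dt}^{N_0})'|^{-\mc D(\la)}$ using Lemma \ref{lem:derivativefn}, which gives $|(F_{\la_\dt}^{N_0})'(z)|^{-1}\leeq K(N)\,N_0^{-3/2}$ on each such piece — but again this brings an $N$-dependence one must then show is harmless, or more precisely one uses that the number of branches and the distortion along each are controlled; (iii) combine the pieces: the sum over inverse branches of $|(F_{\la_\dt}^{N_0})'|^{-\mc D(\la)}$, by bounded distortion (Proposition \ref{prop:aad}, Koebe), is comparable to $\tilde\omega_\la(F_{\la_\dt}^{-N_0}(\text{pt})\cap\textbf{B}_N)$ type quantities, and one extracts the factor $N_0$ rather than $N_0^{-3/2}$ by carefully counting how the $N_0$ preimages distribute — this is exactly where the linear-in-$N_0$ growth (as opposed to a power) comes from, since the preimages under the parabolic-like dynamics accumulate; (iv) finally re-apply Lemma \ref{lem:L} on $\mc C^j_n$ and pass from $\tilde\omega_\la$ back to $\tilde\mu_\la$ with a uniform constant, using Lemma \ref{lem:C} to replace $\mc C^j_n$ by $\mc C_n$ if needed.

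The main obstacle I anticipate is controlling the dependence on $N$ and extracting precisely the factor $N_0$ (not $N_0^{-3/2}$ or $N_0^2$): the naive bound from Lemma \ref{lem:derivativefn} on a single preimage gives a decaying factor, but $A^{N_0,j}_{N,n}$ is a union of up to $\sim N_0$ essentially disjoint pieces — one for (roughly) each $0\leeq m\leeq N_0$ recording how long the orbit stays in the tube $\textbf{M}$ before the final excursion — and summing $N_0^{-3/2}$ over these contributions, weighted by the conformal mass of the corresponding sub-cylinders, must be shown to telescope to something $\asymp N_0\cdot\tilde\omega_\la(\mc C^j_n)$. This is precisely the computation carried out in \cite[Proposition 5.1]{Jii}, and the claim is that it transfers verbatim to the present two-petal setting once Lemma \ref{lem:derivativefn} and the cylinder estimates of Section \ref{sec:cylindry} are in hand; the only genuinely new point is that here one works with the family $F_\la$ and its $k$ parabolic points, so one must check the distortion and counting arguments are uniform over $j$ and over $|\delta_\la|<\eta$, which follows from Lemma \ref{lem:C} and the fact that all constants in Section \ref{sec:cylindry} are uniform in $\la$.
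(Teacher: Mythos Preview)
Your high-level instincts are right --- invariance of $\tilde\mu_\la$, Lemma~\ref{lem:L}, and a decomposition into roughly $N_0$ pieces are indeed the ingredients --- but two of your concrete steps would not go through.

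First, your proposed fix for the $K(N)$-dependence does not work. Pushing forward $N_0$ steps lands you in $\mc C^j_n$, and there the Radon--Nikodym density $d\tilde\mu_\la/d\tilde\omega_\la$ is \emph{not} uniformly bounded: by Lemma~\ref{lem:sumofmeasures} (or Proposition~\ref{prop:meas}) one has $\tilde\mu_\la(\mc C^j_n)\asymp n\,\tilde\omega_\la(\mc C^j_n)$ in the parabolic regime, so the density is $\asymp n$. The constant $D$ in Lemma~\ref{lem:L} is a \emph{lower} bound on the density, not an upper bound, and Lemma~\ref{lem:C} only compares different values of $j$; neither gives a uniform upper bound on $\mc C^j_n$. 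Consequently the naive inequality $\tilde\mu_\la(A^{N_0,j}_{N,n})\leeq\tilde\mu_\la(\mc C^j_n)$ only yields $\lesssim n\,\tilde\omega_\la(\mc C^j_n)$, which is not the claimed $N_0\,\tilde\omega_\la(\mc C^j_n)$.

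Second, Lemma~\ref{lem:derivativefn} is a red herring here. Its hypothesis is $F_\la^m(z)\in\textbf{B}_N$, whereas for $z\in A^{N_0,j}_{N,n}$ one has $F_\la^{N_0}(z)\in\mc C^j_n\subset\textbf{B}_{n-1}$, so the lemma would only give a constant $K(n-1)$ depending on $n$; and the resulting $N_0^{-3/2}$ is in any case going in the wrong direction (you want a linear factor $N_0$, not a decaying one). The factor $N_0$ does not come from any derivative estimate.

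The decomposition that actually produces the $N_0$ (and this is what the argument in \cite[Proposition~5.1]{Jii} does) is by the \emph{last fresh entry time} into the cylinder tube: for $s\in A^{N_0,j}_{N,n}$ let $l\in\{0,\dots,N_0-1\}$ be maximal with $F_{\la_\dt}^{N_0-i}(s)\in\mc C^j_{n+i}$ for $0\leeq i\leeq l$; then $F_{\la_\dt}^{N_0-l-1}(s)$ lies in the ``entry set'' $E^j_{n+l}:=F_{\la_\dt}^{-1}(\mc C^j_{n+l})\setminus\mc C^j_{n+l+1}$. The crucial point is that every non-tube inverse branch of a cylinder lands in $\textbf{B}_0$, so $E^j_{n+l}\subset\textbf{B}_0$, where Lemma~\ref{lem:L} gives the \emph{uniform} upper bound $K(0)$. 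Invariance plus conformality then yields
\[
\tilde\mu_\la(\{s:l(s)=l\})\;\leeq\;\tilde\mu_\la(E^j_{n+l})\;\leeq\;K(0)\,\tilde\omega_\la(E^j_{n+l})\;\leeq\;C\,\tilde\omega_\la(\mc C^j_{n+l})\;\leeq\;C'\,\tilde\omega_\la(\mc C^j_{n}),
\]
and summing over the $N_0$ values of $l$ gives exactly $K\cdot N_0\cdot\tilde\omega_\la(\mc C^j_n)$ with $K$ independent of $N$. No appeal to Lemma~\ref{lem:derivativefn} is needed.
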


\begin{prop}\label{prop:far}
   There exists $\eta>0$ such that for every $N\in\N$ there is a constant $K(N)>0$ such that
      $$\int_{ \textbf{B}_N}\big|{\dot\varphi_\la}\big|d\tilde{\mu_\la}\leeq K(N),$$
   provided $0<|\delta_\la|<\eta$.
\end{prop}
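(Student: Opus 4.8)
The plan is to iterate, along the first–return map to $\textbf{B}_N(\la)$, the cohomological relation that formula (\ref{eq:sumas}) provides for $\dot\varphi_\la$, to control the arising cocycle by Lemma~\ref{lem:derivativefn}, and to estimate the contribution of a single "excursion'' near the parabolic cycle by hand; summability is then forced by Zdunik's inequality $\mc D(\la)>1$ (\cite{Zd}). Concretely, for $\tilde\mu_\la$–a.e.\ $s\in\textbf{B}_N$ Poincaré recurrence gives a first–return time $\nu(s)=\min\{j\greq1:F_{\la_\dt}^j(s)\in\textbf{B}_N\}$; put $T(s)=F_{\la_\dt}^{\nu(s)}(s)$, so $T$ preserves $\tilde\mu_\la|_{\textbf{B}_N}$. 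Since $F_{\la_\dt}$ maps $\mc C^j_{m+1}$ bijectively onto $\mc C^j_m$, either $F_{\la_\dt}(s)\in\textbf{B}_N$ and $\nu(s)=1$, or $F_{\la_\dt}(s)\in\mc C^l_{n}$ for some $n>N$ (an excursion of level $n(s):=n$), the orbit then descends the tower $\mc C^l_{n}\to\mc C^l_{n-1}\to\cdots$ and re–enters $\textbf{B}_N$ at $\mc C^l_N$, so $\nu(s)=n(s)-N+1$. Writing (\ref{eq:sumas}) with $m=\nu(s)$ gives $\dot\varphi_\la(s)=\dot\Psi'_\la(s)+\dot\varphi_\la(T(s))/(F_\la^{\nu(s)})'(\varphi_\la(s))$, where $\dot\Psi'_\la(s):=-\sum_{j=1}^{\nu(s)}(\tfrac{\partial}{\partial\la}F_\la)(F_\la^{j-1}(\varphi_\la(s)))/(F_\la^j)'(\varphi_\la(s))$. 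Iterating this $k$ times, and using that $\dot\varphi_\la$ is bounded on the compact set $\mc J_{\la_\dt}$ while $(F_\la^{\nu_k})'(\varphi_\la(s))\to\infty$ with $\nu_k(s)=\sum_{i<k}\nu(T^i s)$ (hyperbolicity of $F_{\la_\dt}$), one obtains
\begin{equation*}
   \dot\varphi_\la(s)=\sum_{k\greq0}\frac{\dot\Psi'_\la(T^k s)}{(F_\la^{\nu_k(s)})'(\varphi_\la(s))},\qquad\text{and}\qquad \frac{1}{|(F_\la^{\nu_k(s)})'(\varphi_\la(s))|}\leeq\frac{K(N)}{k^{3/2}}\ \ (k\greq1),
\end{equation*}
the last bound coming from Lemma~\ref{lem:derivativefn} applied at $F_\la^{\nu_k(s)}(\varphi_\la(s))=\varphi_\la(T^k s)\in\textbf{B}_N(\la)$, together with $\nu_k(s)\greq k$.

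It remains to bound the one–excursion coboundary $\dot\Psi'_\la$. If $\nu(s)=1$ it is $O(1)$. If there is an excursion of level $n=n(s)$, then for $2\leeq j\leeq\nu(s)$ the point $F_\la^{j-1}(\varphi_\la(s))$ lies in $\mc C^l_{n-j+2}(\la)$; writing $1/(F_\la^j)'(\varphi_\la(s))=(F_\la^{\nu(s)-j})'(F_\la^j(\varphi_\la(s)))/(F_\la^{\nu(s)})'(\varphi_\la(s))$, bounding the denominator by Lemma~\ref{lem:derivativefn} (endpoint $\varphi_\la(T s)\in\mc C^l_N\subset\textbf{B}_N(\la)$) and the cylinder–descent factor by Proposition~\ref{prop:aad}(\ref{pit:aad2}), using $(\tfrac{\partial}{\partial\la}F_\la)(w)=w+O(w^3)$ together with Lemma~\ref{lem:kat} and Corollary~\ref{col:odl} for the numerators when $l=0$, and — when $l\neq0$ — the fact that $\arg(F_\la^j)'$ alternates between values near $0$ and near $\pi$ (Lemma~\ref{lem:arg}), which produces an alternating series with decreasing terms and hence cancellation (this is exactly the mechanism of Section~\ref{sec:pp}), and summing over $j$ as in Step~2 of the proof of Proposition~\ref{prop:beta} (via Propositions~\ref{prop:rn}, \ref{prop:rnn} and Corollary~\ref{col:mod}), one gets
\begin{equation*}
   |\dot\Psi'_\la(s)|\leeq C_0+\tilde h_\la(n(s)),\qquad \tilde h_\la(n)\leeq K(N)\,n^{1/2}e^{\ve n|\delta_\la|}
\end{equation*}
for any fixed small $\ve>0$, where in the polynomial range $n|\delta_\la|\leeq1$ in fact $\tilde h_\la(n)\asymp K(N)n^{1/2}$.

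Assembling the two steps, $\int_{\textbf{B}_N}|\dot\varphi_\la|\,d\tilde\mu_\la\leeq C(N)\sum_{k\greq1}k^{-3/2}+K(N)\sum_{k\greq0}(\max(k,1))^{-3/2}\int_{\textbf{B}_N}\tilde h_\la(n(T^k s))\,d\tilde\mu_\la(s)$. As $T$ preserves $\tilde\mu_\la|_{\textbf{B}_N}$, each inner integral equals $\int_{\textbf{B}_N}\tilde h_\la(n(s))\,d\tilde\mu_\la=\sum_{n>N}\tilde h_\la(n)\,\tilde\mu_\la(\{n(\cdot)=n\})$; and since $\{n(\cdot)=n\}=F_{\la_\dt}^{-1}(\textbf{C}_n)\cap\textbf{B}_N=\textbf{A}^{1}_{N,n}$, Lemma~\ref{lem:1} with $N_0=1$ gives $\tilde\mu_\la(\{n(\cdot)=n\})\leeq K\,\tilde\omega_\la(\textbf{C}_n)$ — note the crucial point that here Lemma~\ref{lem:1} is used only with $N_0=1$, so its factor $N_0$ is harmless. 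Since $\tilde\omega_\la(\textbf{C}_n)\asymp|\mc C_n(\la)|^{\mc D(\la)}$, which is $\asymp n^{-3\mc D(\la)/2}$ in the polynomial range and, by Lemma~\ref{lem:est}, exponentially small of rate $2\mc D(\la)$ in the hyperbolic range, the series $\sum_n\tilde h_\la(n)\tilde\omega_\la(\textbf{C}_n)$ converges uniformly for $0<|\delta_\la|<\eta$: in the polynomial range $\sum_n n^{1/2}n^{-3\mc D(\la)/2}<\infty$ precisely because $\mc D(\la)>1$, and in the hyperbolic range the rate $2\mc D(\la)$ again beats, by $\mc D(\la)>1$, the rate produced by $\tilde h_\la$. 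Hence $\int_{\textbf{B}_N}\tilde h_\la(n(T^k s))\,d\tilde\mu_\la\leeq C(N)$ for every $k$, and therefore $\int_{\textbf{B}_N}|\dot\varphi_\la|\,d\tilde\mu_\la\leeq K(N)$ for $0<|\delta_\la|<\eta$.

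The step I expect to be the main obstacle is the single–excursion estimate on $\dot\Psi'_\la$ together with its uniform integration: although $\dot\varphi_\la$ itself is \emph{not} uniformly bounded on $\textbf{B}_N$ as $\la\to-1$, one must show that $\dot\Psi'_\la$ grows only like $n(s)^{1/2}$ — which for the cylinders $\mc C_n^0$ is the exact order (no further cancellation, cf.\ Proposition~\ref{prop:beta}) and for the cylinders $\mc C_n^l$, $l\neq0$, requires the alternating–sign cancellation of Section~\ref{sec:pp} to avoid an apparent order $n(s)$ — and that this growth is annihilated by the conformal mass $\tilde\omega_\la(\textbf{C}_n)$ of the excursion cylinders \emph{uniformly through the transition} $n\sim|\delta_\la|^{-1}$ between the polynomial and the hyperbolic regimes, which is where Zdunik's $\mc D(\la)>1$ enters decisively. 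The device of iterating the first–return map (so that $T$ is measure preserving and only $N_0=1$ is needed in Lemma~\ref{lem:1}) is what keeps the "time'' summations reduced to the convergent series $\sum_k k^{-3/2}$.
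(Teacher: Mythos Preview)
Your first-return-map scheme is a genuinely different route from the paper's and is essentially sound. The paper instead fixes a depth $N_0$, applies (\ref{eq:sumas}) with $m=N_0$ on $\textbf{A}^{N_0}_{N,0}$ and $m=N_0+n$ on $A^{N_0}_{N,n}$, reduces the $l\neq0$ integrals to $l=0$ via Lemma~\ref{lem:ppi} at the level of $\dot\varphi_\la$, and closes by a bootstrap: the tail contributes $\frac{K_6}{N_0^{1/2}}\int_{\textbf{B}_0}|\dot\varphi_\la|\,d\tilde\mu_\la$ on the right, and choosing $N_0$ with $K_6/N_0^{1/2}\leeq\tfrac12$ lets one absorb this into the left side. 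Your approach replaces the absorption step by iterating the return map and using $T$-invariance; this is conceptually cleaner (no bootstrap, and Lemma~\ref{lem:1} is invoked only with $N_0=1$), at the price of needing a \emph{pointwise} bound on the one-excursion cocycle $\dot\Psi'_\la$ rather than the integrated one the paper uses.

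The place where your sketch is imprecise is the $l\neq0$ excursion. The phrase ``alternating series with decreasing terms'' is not the mechanism of Section~\ref{sec:pp}: there the bound $|\dot\Psi_\la(s)|\leeq K|\dot\Psi_\la(s^0)|+K'$ for $s\in\mc C^l_n$ is obtained indirectly, by passing through $\dot\varphi_\la$ at the special $\beta$-preimages $s^j_n$ (Lemma~\ref{lem:phiPsi}), moving along the cycle via Lemma~\ref{lem:ppi}, and then extending to all of $\mc C^{l\pm}_n$ by Lemma~\ref{lem:const2}; see the displayed inequality in the proof of Lemma~\ref{lem:const3}. You may simply quote that chain, which immediately reduces your $l\neq0$ excursion bound to the $l=0$ one and yields the claimed $\tilde h_\la(n)$. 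A direct pairing argument along your lines can also be made to work, but the error from $F_\la'(w)=-1+O(w-\alpha^l)$ contributes a term of order $|\im w_j|\cdot|(F_\la^j)'(z)|^{-1}$ per pair, and summing those is precisely the computation that regenerates $\tilde h_\la(n)$; this is more delicate than the Leibniz test suggests. One further minor point: for $\delta_\la<0$ in the hyperbolic range the decay rate of $\tilde\omega_\la(\textbf{C}_n)$ is $\approx\mc D(\la)$ (not $2\mc D(\la)$), while $\tilde h_\la(n)$ then grows at rate $\approx1$; the margin is still $\mc D(\la)-1>0$, so your summability conclusion stands, but the stated rates should be adjusted.
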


\begin{proof}
Let us fix $N\in\N$. 

{\em Step 1.} Since the sets $\{\textbf{A}^{N_0}_{N,n}\}_{n\greq0}$ form the partition of $\textbf{B}_N$ (minus a finite set), we obtain
\begin{equation}\label{eq:rooz}
   \int_{\textbf{B}_N}|\dot\varphi_\la|d\tilde{\mu_\la}=
   \sum_{n=0}^\infty\int_{\textbf{A}^{N_0}_{N,n}}|\dot\varphi_\la|d\tilde{\mu_\la}\\
   =\int_{\textbf{A}^{N_0}_{N,0}}|\dot\varphi_\la|d\tilde{\mu_\la}+ \sum_{n=1}^\infty\sum_{j=0}^{k-1} \int_{A^{N_0,j}_{N,n}}|\dot\varphi_\la|d\tilde{\mu_\la}.
\end{equation}

Lemma \ref{lem:ppi} gives us
   $$|\dot\varphi_\la(s_1)|\leeq K_1|\dot\varphi_\la(s_2)|+K_2,$$
where $\tilde f_{\la_\dt}^j(s_1)=s_2$, or $\tilde f_{\la_\dt}^j(s_2)=s_1$, for $0\leeq j\leeq k-1$.
Hence, for $n\greq1$
\begin{equation}\label{eq:an}
   \sum_{j=0}^{k-1}\int_{A^{N_0,j}_{N,n}}|\dot\varphi_\la|d\tilde{\mu_\la}\leeq kK_1\int_{A^{N_0}_{N,n}}|\dot\varphi_\la|d\tilde{\mu_\la}+ K_2\tilde{\mu_\la}(\textbf{A}^{N_0}_{N,n}).
\end{equation}

Therefore, (\ref{eq:rooz}) leads to
\begin{equation}\label{eq:rooz1}
   \int_{\textbf{B}_N}|\dot\varphi_\la|d\tilde{\mu_\la} \leeq\int_{\textbf{A}^{N_0}_{N,0}}|\dot\varphi_\la|d\tilde{\mu_\la}+ kK_1\sum_{n=1}^\infty \int_{A^{N_0}_{N,n}}|\dot\varphi_\la|d\tilde{\mu_\la}+ K_2\tilde{\mu_\la}(\textbf{B}_N\sms\textbf{A}^{N_0}_{N,0}).
\end{equation}

{\em Step 2.}
We will use the formula (\ref{eq:sumas}). First, if $s\in\textbf{A}^{N_0}_{N,0}$, we get
\begin{equation}\label{eq:199}
   \dot\varphi_\la(s)= -\sum_{j=1}^{N_0}\frac{(\frac{\partial}{\partial\la}F_\la)(F_\la^{j-1}(\varphi_\la(s)))} {(F_\la^j)'(\varphi_\la(s))}+ \frac{\dot\varphi_\la(F_{\la_\dt}^{N_0}(s))}{(F_\la^{N_0})'(\varphi_\la(s))}.
\end{equation}
Next, for $s\in A^{N_0}_{N,n}$, $n\greq1$, we can split $\dot\varphi_\la(s)$ into three parts:
\begin{multline}\label{eq:200}
   \dot\varphi_\la(s)=
   -\sum_{j=1}^{N_0}
   \frac{(\frac{\partial}{\partial\la}F_\la)(F_\la^{j-1}(\varphi_\la(s)))}
   {(F_\la^j)'(\varphi_\la(s))}
   \\
   -\frac{1}{(F_\la^{N_0})'(\varphi_\la(s))}
   \sum_{j=1}^{n}\frac
   {(\frac{\partial}{\partial\la}F_\la)(F_\la^{N_0+j-1}(\varphi_\la(s)))}
   {(F_\la^j)'(F_\la^{N_0}(\varphi_\la(s)))}+
   \frac{\dot\varphi_\la(F_{\la_\dt}^{N_0+n}(s))}{(F_\la^{N_0+n})'(\varphi_\la(s))}.
\end{multline}

{\em Step 3.}
Now, according to (\ref{eq:rooz1}), we will estimate integral of the "tails" of (\ref{eq:199}), (\ref{eq:200}), i.e.
\begin{equation*}
   \int_{\textbf{A}^{N_0}_{N,0}} \Big|\frac{\dot\varphi_\la(F_{\la_\dt}^{N_0})} {(F_\la^{N_0})'(\varphi_\la)}\Big|d\tilde{\mu_\la}+ kK_1\sum_{n=1}^\infty \int_{A^{N_0}_{N,n}}\Big|\frac{\dot\varphi_\la(F_{\la_\dt}^{N_0+n})} {(F_\la^{N_0+n})'(\varphi_\la)}\Big|d\tilde{\mu_\la}.
\end{equation*}
We have $F_{\la_\dt}^{N_0}(\textbf{A}^{N_0}_{N,0})\subset \textbf{B}_0$ and $F_{\la_\dt}^{N_0+n}(A^{N_0}_{N,n})\subset \textbf{B}_0$. So, using Lemma \ref{lem:derivativefn} and the fact that measure $\tilde{\mu_\la}$ is $F_{\la_\dt}$-invariant, the above expression can be estimated by
\begin{multline}\label{eq:201}
   K_3\int_{\textbf{A}^{N_0}_{N,n}}\frac{\big|\dot\varphi_\la(F_{\la_\dt}^{N_0})\big|}
   {{N_0}^{3/2}}d\tilde{\mu_\la}+
   K_4\sum_{n=1}^\infty \int_{A^{N_0}_{N,n}}\frac{\big|\dot\varphi_\la(F_{\la_\dt}^{N_0+n})\big|}
   {(N_0+n)^{3/2}}d\tilde{\mu_\la}
   \\\leeq
   \sum_{n=0}^\infty\frac{K_5}{(N_0+n)^{3/2}}
   \int_{\textbf{B}_0}\big|\dot\varphi_\la\big|d\tilde{\mu_\la}\leeq \frac{K_6}{N_0^{1/2}}\int_{\textbf{B}_0}\big|\dot\varphi_\la\big|d\tilde{\mu_\la}.
\end{multline}

{\em Step 4.}
Now we will deal with the second expression from the right-hand side of (\ref{eq:200}).
In this proof, we need to estimate integral of this expression over the set $\bigcup_{n=1}^{\infty}A^{N_0}_{N,n}$. But, later on we will need slightly more general result, thus we will give an estimate over the set $\bigcup_{n=\tilde n}^{\infty}A^{N_0}_{N,n}$, where $\tilde n\greq1$.

First, using the fact that $\frac{\partial}{\partial\la}F_\la(z)=z+O(z^3)$, and definition (\ref{eq:psi}), we get
\begin{multline*}
   kK_1\sum_{n=\tilde n}^\infty\int_{A^{N_0}_{N,n}}\Big|\frac{1}{(F_\la^{N_0})'(\varphi_\la)} \sum_{j=1}^{n} \frac{(\frac{\partial}{\partial\la}F_\la)(F_\la^{N_0+j-1}(\varphi_\la))} {(F_\la^j)'(F_\la^{N_0}(\varphi_\la))}\Big|d\tilde{\mu_\la}\\\leeq K_7\sum_{n=\tilde n}^\infty\int_{A^{N_0}_{N,n}}\Big| \sum_{j=1}^{n} \frac{F_\la^{N_0+j-1}(\varphi_\la)} {(F_\la^j)'(F_\la^{N_0}(\varphi_\la))}\Big|d\tilde{\mu_\la}= K_7\sum_{n=\tilde n}^\infty\int_{A^{N_0}_{N,n}}|\dot\psi_\la(F_\la^{N_0}(\varphi_\la))| d\tilde{\mu_\la}
\end{multline*}
Next, Lemma \ref{lem:1} leads to
\begin{multline*}
   K_7\sum_{n=\tilde n}^\infty\int_{A^{N_0}_{N,n}}|\dot\psi_\la(F_\la^{N_0}(\varphi_\la))| d\tilde{\mu_\la}\leeq
   K_7\sum_{n=\tilde n}^\infty\sup_{z\in \mc C_{n}(\la)}\big|\dot\psi_\la(z)\big|\cdot\mu_\la(A^{N_0}_{N,n}(\la))\\\leeq K_8
   N_0\sum_{n=\tilde n}^\infty\sup_{z\in \mc C_{n}(\la)}\big|\dot\psi_\la(z)\big|\cdot\omega_\la(\mc C_{n}(\la)).
\end{multline*}


Fix $\ve>0$ (small), and let $z_{n}$ be an arbitrary point from $\mc C_{n}(\la)$. Then, Corollary \ref{col:modpsi} and next Corollary \ref{col:odl} gives us
\begin{multline*}
   K_8 N_0 \sum_{n=\tilde n}^\infty\sup_{z\in\mc C_{n}(\la)}\big|\dot\psi_\la(z)\big|\omega_\la\mc(C_{n}(\la))\\ \leeq K_{9} N_0\sum_{n=\tilde n}^\infty\sup_{z\in\mc C_{n}(\la)}\frac{\omega_\la(\mc C_{n}(\la))}{|\im z|}\,e^{\ve n|\delta_\la|}
   \leeq K_{10} N_0\sum_{n=\tilde n}^\infty\frac{\omega_\la(\mc C_{n}(\la))}{|\im z_{n}|}\,e^{\ve n|\delta_\la|}.
\end{multline*}
Next, the fact that $\omega_\la$ is a geometric measure, and Proposition \ref{prop:rn} leads to
\begin{multline*}
   K_{10} N_0\sum_{n=\tilde n}^\infty\frac{\omega_\la(\mc C_{n}(\la))}{|\im z_{n}|}\,e^{\ve n|\delta_\la|}\leeq K_{11} N_0\sum_{n=\tilde n}^\infty\frac{|\mc C_{n}(\la)|^{\mc D(\la)}}{|\im z_{n}|}\,e^{\ve n|\delta_\la|}\\
   \leeq K_{12} N_0\sum_{n=\tilde n}^\infty|\im z_{n}|^{3\mc D(\la)-1}e^{\mc D(\la)(-2n\delta+\ve n|\delta_\la|)}e^{\ve n|\delta_\la|},
\end{multline*}
where $n\greq1$ and $0<|\delta_\la|<\eta$, for suitably chosen $\eta>0$.

We  split the above sum into two parts (for $n\leeq 1/|\delta_\la|$ and $n>1/|\delta_\la|$).

If $n|\delta_\la|\leeq1$, then the exponential function is bounded. So, using Corollary \ref{col:odl} (\ref{cit:odl1}), the first part (which may by empty) can be estimated as follows:
\begin{equation*}\label{eq:300}
   K_{13} N_0\sum_{n=\tilde n}^{[1/|\delta_\la|]}|\im z_{n}|^{3\mc D(c)-1}\leeq
   K_{14} N_0\sum_{n=\tilde n}^{[1/|\delta_\la|]}n^{-\frac{3}{2}\mc D(\la)+\frac{1}{2}}\\
   \leeq K_{15} N_0 \tilde n^{-\frac{3}{2}\mc D(\la)+\frac32}.
\end{equation*}
For $n\delta_\la>1$ (so $\delta_\la>0$) Corollary \ref{col:odl} (\ref{cit:odl2}), and the fact that $\mc D(\la)>1$, gives
\begin{multline*}
   K_{12} N_0\sum_{n=[1/\delta_\la]+1}^{\infty}|\im z_{n}|^{3\mc D(c)-1}e^{\mc D(\la)(-2n\delta+\ve n\delta_\la)+\ve n\delta_\la}\\
   \leeq K_{16} N_0\sum_{n=[1/\delta_\la]+1}^{\infty}\delta_\la^{\frac{3}{2}\mc D(\la)-\frac{1}{2}}e^{-2 n\delta_\la}
   \leeq K_{17}\delta_\la^{\frac{3}{2}\mc D(\la)-\frac32}.
\end{multline*}
If $n\delta_\la<-1$ (so $\delta_\la<0$), then Corollary \ref{col:odl} (3) leads to
\begin{multline*}
   K_{12} N_0\sum_{n=[1/|\delta_\la|]+1}^{\infty}|\im z_{n}|^{3\mc D(c)-1}e^{\mc D(\la)(-2n\delta+\ve n|\delta_\la|)+\ve n|\delta_\la|}\\
   \leeq K_{18} N_0\sum_{n=[1/|\delta_\la|]+1}^{\infty}|\delta_\la|^{\frac{3}{2}\mc D(\la)-\frac{1}{2}}e^{-(\mc D(\la)-1-4\ve\mc D(\la))n|\delta_\la|}.
\end{multline*}
We can assume that $(\mc D(\la)-1-4\ve\mc D(\la))>0$, therefore the above expression (similarly as in the case $\delta_\la>0$) can be estimated by $K_{19}\delta_\la^{3\mc D(\la)/2-3/2}$.

Thus, the above estimates gives us
\begin{multline}\label{eq:202}
   kK_1\sum_{n=\tilde n}^{\infty}\int_{A^{N_0}_{N,n}}\Big|\frac{1}{(F_\la^{N_0})'(\varphi_\la)}
   \sum_{j=1}^{n} \frac{(\frac{\partial}{\partial\la}F_\la)(F_\la^{N_0+j-1}(\varphi_\la))}
   {(F_\la^j)'(F_\la^{N_0}(\varphi_\la))}\Big|d\tilde{\mu_\la}\\ \leeq K_{12} N_0\sum_{n=\tilde n}^\infty|\im z_{n}|^{3\mc D(\la)-1}e^{\mc D(\la)(-2n\delta+\ve n|\delta_\la|)}e^{\ve n|\delta_\la|}\\
   \leeq K_{20}N_0(\tilde n^{-\frac{3}{2}\mc D(\la)+\frac32}+|\delta_\la|^{\frac{3}{2}\mc D(\la)-\frac32}).
\end{multline}

{\em Step 5.}
Now we will estimate integral of first expression from the right-hand side of (\ref{eq:199}) and (\ref{eq:200}).
Since $(\frac{\partial}{\partial\la}F_\la)(F_\la^{j-1}(\varphi_\la))$ is uniformly boun\-ded, estimate (\ref{eq:rooz1}) leads to
\begin{multline}\label{eq:203}
   \int_{\textbf{A}^{N_0}_{N,0}}\Big|\sum_{j=1}^{N_0}
   \frac{(\frac{\partial}{\partial\la}F_\la)(F_\la^{j-1}(\varphi_\la))}
   {(F_\la^j)'(\varphi_\la)}\Big|d\tilde{\mu_\la}\\ +kK_1\sum_{n=1}^\infty\int_{A^{N_0}_{N,n}}\Big|\sum_{j=1}^{N_0}
   \frac{(\frac{\partial}{\partial\la}F_\la)(F_\la^{j-1}(\varphi_\la))}
   {(F_\la^j)'(\varphi_\la)}\Big|d\tilde{\mu_\la}\leeq
   N_0\cdot K_{21}\cdot\tilde{\mu_\la}(\textbf{B}_N).
\end{multline}

{\em Step 6.}
We conclude from (\ref{eq:rooz1}), and (\ref{eq:199}), (\ref{eq:200}) combined with (\ref{eq:203}), (\ref{eq:201}), (\ref{eq:202}) for $\tilde n=1$, that
\begin{multline*}
   \int_{\textbf{B}_N}\big|\dot\varphi_\la\big|d\tilde{\mu_\la}\leeq
   N_0\cdot K_{21}\cdot\tilde{\mu_\la}(\textbf{B}_N)+K_{20}N_0(1+|\delta_\la|^{\frac{3}{2}\mc D(\la)-\frac32})\\+
   \frac{K_6}{N_0^{1/2}}
   \int_{\textbf{B}_0}\big|\dot\varphi_\la\big|d\tilde{\mu_\la}+ K_2\tilde{\mu_\la}(\textbf{B}_N\sms\textbf{A}^{N_0}_{N,0}).
\end{multline*}
Chosen suitable large $N_0$, we get $K_{6}/N_0^{1/2}\leeq1/2$. Therrefore, the fact that $B_0\subset B_N$ leads to
\begin{equation*}
   \frac12\int_{\textbf{B}_N}\big|\dot\varphi_\la\big|d\tilde{\mu_\la}\leeq
   N_0\cdot K_{21}\cdot\tilde{\mu_\la}(\textbf{B}_N)\\+K_{20}N_0(1+|\delta_\la|^{\frac{3}{2}\mc D(\la)-\frac32})+K_2\tilde{\mu_\la}(\textbf{B}_N).
\end{equation*}
Since $\mc D(\la)>1$, whereas $\tilde{\mu_\la}(\textbf{B}_N)$ is bounded (cf. Lemma \ref{lem:L}), the assertion follows.
\end{proof}

Let $U_{N,\tilde n,\la}^{N_0}:\textbf{B}_N\rightarrow\C$, where
\begin{equation*}
   U_{N,\tilde n,\la}^{N_0}(s)=\left\{
  \begin{array}{lll}
    -\sum_{j=1}^{N_0+n} \frac{(\frac{\partial}{\partial\la}F_\la) (F_\la^{j-1}(\varphi_\la(s)))}{(F_\la^j)'(\varphi_\la(s))}, & \hbox{for $s\in \textbf{A}^{N_0}_{N,n}$ where $n<\tilde n$;} \\
    -\sum_{j=1}^{N_0} \frac{(\frac{\partial}{\partial\la}F_\la) (F_\la^{j-1}(\varphi_\la(s)))}{(F_\la^j)'(\varphi_\la(s))}, & \hbox{for $s\in \textbf{A}^{N_0}_{N,n}$ where $n\greq\tilde n$;}\\
    0, & \hbox{for $s\in\textbf{A}^{N_0}_{\infty}\cap\textbf{B}_N $.}
  \end{array}
\right.
\end{equation*}
Note that $\textbf{A}^{N_0}_{\infty}\cap\textbf{B}_N$ is a finite set.

\begin{col}\label{col:a-b}
For every $\ve>0$, $N\in\N$ there exist $N_0\in\N$, $\tilde n\in\N$ and $\eta>0$ such that
   $$\int_{\textbf{B}_N}\big|\dot\varphi_\la-U_{N,\tilde n,\la}^{N_0}\big|d\tilde{\mu_\la}<\ve,$$
where $0<|\delta_\la|<\eta$.
\end{col}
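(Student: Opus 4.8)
The plan is to revisit the proof of Proposition~\ref{prop:far}. By formulas (\ref{eq:sumas}), (\ref{eq:199}), (\ref{eq:200}) and the definition of $U_{N,\tilde n,\la}^{N_0}$, on each cell $\textbf{A}^{N_0}_{N,n}$ of the partition of $\textbf{B}_N$ the difference $\dot\varphi_\la-U_{N,\tilde n,\la}^{N_0}$ equals, when $n<\tilde n$, exactly the single ``tail'' term $\dot\varphi_\la(F^{N_0+n}_{\la_\dt})/(F^{N_0+n}_\la)'(\varphi_\la)$, and when $n\greq\tilde n$ the sum of this tail term with the ``middle'' term $\dot\Psi_\la(F^{N_0}_{\la_\dt})/(F^{N_0}_\la)'(\varphi_\la)$; here one uses that $F^{N_0+n}_{\la_\dt}$ maps $\textbf{A}^{N_0}_{N,n}$ into $\textbf{B}_0$, that $F^{N_0}_{\la_\dt}$ maps it into $\textbf{C}_n\subset\textbf{M}_0^*$, and that the inner sum in the middle term is precisely $-\dot\Psi_\la(F^{N_0}_{\la_\dt})$ by definition (\ref{eq:Psi}). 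So it suffices to bound the integrals over $\textbf{B}_N$ of these two families of terms (the finite set $\textbf{A}^{N_0}_\infty\cap\textbf{B}_N$, where $U_{N,\tilde n,\la}^{N_0}=0$, carries no $\tilde\mu_\la$-mass).

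For the tail terms, since $F^{N_0+n}_{\la_\dt}(\textbf{A}^{N_0}_{N,n})\subset\textbf{B}_0\subset\textbf{B}_N$, Lemma~\ref{lem:derivativefn} gives $|(F^{N_0+n}_\la)'(\varphi_\la)|^{-1}\leeq K(N)(N_0+n)^{-3/2}$ on $\textbf{A}^{N_0}_{N,n}$; combining this with the $F_{\la_\dt}$-invariance of $\tilde\mu_\la$ exactly as in Step~3 of the proof of Proposition~\ref{prop:far}, and using Proposition~\ref{prop:far} with $N=0$ to control $\int_{\textbf{B}_0}|\dot\varphi_\la|\,d\tilde\mu_\la$, one obtains
\[
\sum_{n\greq0}\int_{\textbf{A}^{N_0}_{N,n}}\bigg|\frac{\dot\varphi_\la(F^{N_0+n}_{\la_\dt})}{(F^{N_0+n}_\la)'(\varphi_\la)}\bigg|\,d\tilde\mu_\la \leeq K(N)\Big(\sum_{n\greq0}(N_0+n)^{-3/2}\Big)\int_{\textbf{B}_0}|\dot\varphi_\la|\,d\tilde\mu_\la \leeq K_1(N)\,N_0^{-1/2},
\]
a bound depending only on $N$ and $N_0$, and in particular \emph{not} on $\tilde n$.

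For the middle terms, one bounds $|(F^{N_0}_\la)'(\varphi_\la)|^{-1}$ by a constant $K(N_0)$ (the derivative of the fixed iterate $F^{N_0}_\la$ is bounded away from $0$ on $\mc J_\la$, uniformly for $\la$ near $-1$, because the critical points of $F_{-1}$ lie off $\mc J_{-1}$), then $|\dot\Psi_\la|$ on $\textbf{C}_n$ by $K e^{\ve n|\delta_\la|}/|\im z|$ (Corollary~\ref{col:modpsi} and Lemma~\ref{lem:xi} for the petal at $0$, together with Lemmas~\ref{lem:ppi} and~\ref{lem:const2} to pass to the remaining petals, exactly as in the proof of Lemma~\ref{lem:const3}), and $\tilde\mu_\la(A^{N_0,j}_{N,n})$ by $KN_0\,\tilde\omega_\la(\mc C^j_n)$ (Lemma~\ref{lem:1}, together with $\tilde\omega_\la(\mc C^j_n)\asymp\tilde\omega_\la(\mc C_n)\asymp|\mc C_n(\la)|^{\mc D(\la)}$ and Corollary~\ref{col:odl}). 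This reduces the estimate to precisely the geometric sum handled in Step~4 of the proof of Proposition~\ref{prop:far}, and the general-$\tilde n$ inequality (\ref{eq:202}) yields
\[
\sum_{n\greq\tilde n}\int_{\textbf{A}^{N_0}_{N,n}}\bigg|\frac{\dot\Psi_\la(F^{N_0}_{\la_\dt})}{(F^{N_0}_\la)'(\varphi_\la)}\bigg|\,d\tilde\mu_\la \leeq K_2(N_0)\big(\tilde n^{\frac32(1-\mc D(\la))}+|\delta_\la|^{\frac32(\mc D(\la)-1)}\big).
\]

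Since $\mc D(\la)>1$ for $\la$ close to $-1$ (Zdunik's theorem, together with $\mc D(\la)\to\mc D(-1)$), the exponents $\tfrac32(1-\mc D(\la))<0$ and $\tfrac32(\mc D(\la)-1)>0$ are bounded away from $0$ on a neighbourhood of $-1$. Hence, given $\ve>0$ and $N$, one chooses first $N_0$ so large that $K_1(N)N_0^{-1/2}<\ve/3$; then, with $N_0$ fixed, $\tilde n$ so large that $K_2(N_0)\,\tilde n^{\frac32(1-\mc D(\la))}<\ve/3$; and finally $\eta>0$ so small that $K_2(N_0)\,|\delta_\la|^{\frac32(\mc D(\la)-1)}<\ve/3$ for $0<|\delta_\la|<\eta$, and adds the three estimates. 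The only real obstacle is organisational: one must identify $\dot\varphi_\la-U_{N,\tilde n,\la}^{N_0}$ precisely with the tail plus (for deep cylinders) the middle term, and order the choice of constants correctly — the key observation being that the tail contribution is $\asymp N_0^{-1/2}$ \emph{independently of} $\tilde n$, which is what breaks the apparent circularity between the $N_0$- and $\tilde n$-dependences. All the hard quantitative work is already available in Steps~3--4 of the proof of Proposition~\ref{prop:far}, in particular the general-$\tilde n$ bound (\ref{eq:202}), so nothing new needs to be computed.
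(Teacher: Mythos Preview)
Your proof is correct and follows essentially the same strategy as the paper's own argument: decompose $\dot\varphi_\la-U_{N,\tilde n,\la}^{N_0}$ on each $\textbf{A}^{N_0}_{N,n}$ into a ``tail'' piece landing in $\textbf{B}_0$ (controlled by Lemma~\ref{lem:derivativefn}, $F_{\la_\dt}$-invariance and Proposition~\ref{prop:far}, giving the $N_0^{-1/2}$ bound independent of $\tilde n$) and, for $n\greq\tilde n$, a ``middle'' piece controlled by the general-$\tilde n$ estimate (\ref{eq:202}) from Step~4 of Proposition~\ref{prop:far}. The only organizational difference is that you package the middle piece as $\dot\Psi_\la(F^{N_0}_{\la_\dt})/(F^{N_0}_\la)'(\varphi_\la)$ and invoke the bound on $|\dot\Psi_\la|$ on all of $\textbf{C}_n$ (via Corollary~\ref{col:modpsi}, Lemma~\ref{lem:xi}, and the petal-transfer Lemmas~\ref{lem:ppi},~\ref{lem:const2}), whereas the paper first reduces from $\textbf{A}^{N_0}_{N,n}$ to $A^{N_0}_{N,n}$ using Lemma~\ref{lem:ppi} at the level of $\dot\varphi_\la(F^{N_0}_{\la_\dt})$ and then expands the explicit sum; this produces an additional harmless term $K N_0\sum_{n\greq\tilde n}\tilde\omega_\la(\textbf{C}_n)\leeq K'N_0\,\tilde n^{1-\frac32\mc D(\la)}$ which you have implicitly absorbed into the additive constant from the petal transfer.
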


\begin{proof}
Formula (\ref{eq:sumas}) (cf. (\ref{eq:199}), and (\ref{eq:200})), leads to
\begin{multline}\label{eq:250}
  \int_{\textbf{B}_N}\big|\dot\varphi_\la-U_{N,\tilde n,\la}^{N_0} \big|d\tilde{\mu_\la}\\
   =\sum_{n=0}^{\tilde n-1}\int_{\textbf{A}^{N_0}_{N,n}} \bigg|\frac{\dot\varphi_\la(F_{\la_\dt}^{N_0+n})} {(F_\la^{N_0+n})'(\varphi_\la)}\bigg|d\tilde{\mu_\la}
    +\sum_{n=\tilde n}^\infty\int_{\textbf{A}^{N_0}_{N,n}} \bigg|\frac{\dot\varphi_\la(F_{\la_\dt}^{N_0})} {(F_\la^{N_0})'(\varphi_\la)}\bigg|d\tilde{\mu_\la}.
\end{multline}

Since $\tilde f_\la^j(\varphi_\la(F_{\la_\dt}^{N_0}))=\varphi_\la(F_{\la_\dt}^{N_0}(\tilde f_{\la_\dt}^j))$ (cf. proof of Proposition \ref{prop:far},\emph{ Step 1}), we have
   $$\frac{\partial \tilde f_\la^j}{\partial \la}\big(\varphi_\la(F_{\la_\dt}^{N_0})\big) +\frac{\partial \tilde f_\la^j}{\partial z}\big(\varphi_\la(F_{\la_\dt}^{N_0})\big)\cdot \dot\varphi_\la(F_{\la_\dt}^{N_0})= \dot\varphi_\la\big(F_{\la_\dt}^{N_0}(\tilde f_{\la_\dt}^j)\big).$$
Thus, there exist $K_1$, $K_2>0$ such that
   $$\big|\dot\varphi_\la(F_{\la_\dt}^{N_0}(s_1))\big|\leeq K_1\big|\dot\varphi_\la(F_{\la_\dt}^{N_0}(s_2))\big|+K_2,$$
where $\tilde f_{\la_\dt}^j(s_1)=s_2$, or $\tilde f_{\la_\dt}^j(s_2)=s_1$, for $0\leeq j\leeq k-1$.
Hence, for $n\greq1$
\begin{multline}\label{eq:311}
  \int_{\textbf{A}^{N_0}_{N,n}} \bigg|\frac{\dot\varphi_\la(F_{\la_\dt}^{N_0})} {(F_\la^{N_0})'(\varphi_\la)}\bigg|d\tilde{\mu_\la}\\
   \leeq \sup_{s\in\textbf{A}^{N_0}_{N,n}}\frac{1}{|(F_\la^{N_0})'(\varphi_\la(s))|} \bigg(kK_1\int_{A^{N_0}_{N,n}} \big|\dot\varphi_\la(F_{\la_\dt}^{N_0})\big|d\tilde{\mu_\la}+ K_2\tilde{\mu_\la}(\textbf{A}^{N_0}_{N,n})\bigg).
\end{multline}
We conclude from (\ref{eq:sumas}) that
\begin{multline*}
  \int_{A^{N_0}_{N,n}} \big|\dot\varphi_\la(F_{\la_\dt}^{N_0})\big|d\tilde{\mu_\la} \leeq
   \int_{A^{N_0}_{N,n}} \bigg|\sum_{j=1}^{n}\frac {(\frac{\partial}{\partial\la}F_\la)(F_\la^{N_0+j-1}(\varphi_\la))} {(F_\la^j)'(F_\la^{N_0}(\varphi_\la))}\bigg|d\tilde{\mu_\la}\\+
    \int_{A^{N_0}_{N,n}}\bigg|\frac{\dot\varphi_\la(F_{\la_\dt}^{N_0+n})} {(F_\la^{n})'(F_\la^{N_0}(\varphi_\la))}\bigg|d\tilde{\mu_\la}
\end{multline*}
Thus, (\ref{eq:311}) gives us
\begin{multline*}
  \int_{\textbf{A}^{N_0}_{N,n}} \bigg|\frac{\dot\varphi_\la(F_{\la_\dt}^{N_0})} {(F_\la^{N_0})'(\varphi_\la)}\bigg|d\tilde{\mu_\la}\leeq K_3\int_{A^{N_0}_{N,n}} \bigg|\sum_{j=1}^{n}\frac {(\frac{\partial}{\partial\la}F_\la)(F_\la^{N_0+j-1}(\varphi_\la))} {(F_\la^j)'(F_\la^{N_0}(\varphi_\la))}\bigg|d\tilde{\mu_\la}\\
   +K_4\sup_{s\in\textbf{A}^{N_0}_{N,n}}\frac{1}{|(F_\la^{N_0})'(\varphi_\la(s))|} \int_{A^{N_0}_{N,n}} \bigg|\frac{\dot\varphi_\la(F_{\la_\dt}^{N_0+n})} {(F_\la^{n})'(F_\la^{N_0}(\varphi_\la))}\bigg|d\tilde{\mu_\la}+ K_5\tilde{\mu_\la}(\textbf{A}^{N_0}_{N,n}).
\end{multline*}
Next, estimate (\ref{eq:202}), Lemma \ref{lem:1}, and the fact that distortion of $F_\la^n$ on $F_\la^{N_0}(A^{N_0,j}_{N,n}(\la))\subset\mc C_n^j(\la)$ is bounded, leads to
\begin{multline*}\label{eq:245}
  \sum_{n=\tilde n}^{\infty}\int_{\textbf{A}^{N_0}_{N,n}} \bigg|\frac{\dot\varphi_\la(F_{\la_\dt}^{N_0})} {(F_\la^{N_0})'(\varphi_\la)}\bigg|d\tilde{\mu_\la}\leeq
   K_6N_0(\tilde n^{-\frac{3}{2}\mc D(\la)+\frac32}+|\delta_\la|^{\frac{3}{2}\mc D(\la)-\frac32})\\
    +K_4\sum_{n=\tilde n}^{\infty} \sup_{s\in\textbf{A}^{N_0}_{N,n}}\frac{1}{|(F_\la^{N_0+n})'(\varphi_\la(s))|} \int_{A^{N_0}_{N,n}} \big|\dot\varphi_\la(F_{\la_\dt}^{N_0+n})\big|d\tilde{\mu_\la}\\ +K_7 N_0\sum_{n=\tilde n}^\infty\sum_{j=0}^{k-1}\tilde{\omega_\la}(\mc C^{j}_{n}).
\end{multline*}
Note that $F_{\la_\dt}^{N_0+n}(A^{N_0}_{N,n})\subset \textbf{B}_0$, and the measure $\tilde{\mu_\la}$ is $F_{\la_\dt}$-invariant. So, the above estimate combined with (\ref{eq:250}), and Lemma \ref{lem:derivativefn}, gives
\begin{multline*}
  \int_{\textbf{B}_N}\big|\dot\varphi_\la-U_{N,\tilde n,\la}^{N_0} \big|d\tilde{\mu_\la}
   \leeq K_6N_0\big(\tilde n^{-\frac{3}{2}\mc D(\la)+\frac32}+|\delta_\la|^{\frac{3}{2}\mc D(\la)-\frac32}\big)\\
    +K_8\sum_{n=0}^{\infty}\frac{1}{(N_0+n)^{3/2}} \int_{\textbf{B}_0} \big|\dot\varphi_\la\big|d\tilde{\mu_\la}+ K_7 N_0\sum_{n=\tilde n}^\infty\sum_{j=0}^{k-1}\tilde{\omega_\la}(\mc C^{j}_{n}).
\end{multline*}
Finally, using Corollary \ref{col:mod}, we obtain
   $$\sum_{n=\tilde n}^\infty\sum_{j=0}^{k-1}\tilde{\omega_\la}(\mc C^{j}_{n})\leeq K_9k\sum_{n=\tilde n}^\infty|\mc C_n|^{\mc D(\la)}\leeq K_{10}\sum_{n=\tilde n}^\infty n^{-\frac32\mc D(\la)}\leeq K_{11}{\tilde n}^{-\frac32\mc D(\la)+1}$$
Since $\int_{\textbf{B}_0} |\dot\varphi_\la|d\tilde{\mu_\la}$ is bounded (see Proposition \ref{prop:far}), the assertion follows.
\end{proof}

\begin{prop}\label{prop:limb}
   For every $N\in\N$ the limit
      $$\lim_{\la\rightarrow-1} \int_{\textbf{B}_N} \re\Big(\frac{\frac{\partial}{\partial\la}F'_\la(\varphi_\la)} {F_\la'(\varphi_\la)}\Big)d\tilde{\mu_\la}$$
   exists.
\end{prop}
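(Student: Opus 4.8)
The plan is to express the integrand in terms of the functions already controlled in the preceding sections and then pass to the limit using the dominated-convergence-type estimates supplied by Corollary \ref{col:a-b}. Recall from \eqref{eq:fff} that
\[
   \frac{\partial}{\partial\la}F'_\la(\varphi_\la)= \Big(\frac{\partial}{\partial\la}F_\la'\Big)(\varphi_\la)+\dot\varphi_\la\cdot F_\la''(\varphi_\la),
\]
so that on $\textbf{B}_N$ the integrand is a continuous function of $(\varphi_\la,\dot\varphi_\la)$, with $F_\la'(\varphi_\la)$ bounded away from $0$ (the trajectory being separated from the critical points on $\textbf{B}_N$), and with $\frac{\partial}{\partial\la}F_\la'$, $F_\la''$ converging uniformly as $\la\to-1$. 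The only term that is not manifestly convergent is the one containing $\dot\varphi_\la$, which need not converge pointwise a priori; the role of Corollary \ref{col:a-b} is precisely to replace $\dot\varphi_\la$, up to an $L^1(\tilde\mu_\la)$-error that is uniformly small, by the finite partial sum $U^{N_0}_{N,\tilde n,\la}$, whose every term is a ratio of compositions of $F_\la$ and its $\la$-derivative evaluated along a bounded piece of orbit inside a fixed compact set.

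First I would fix $\ve>0$ and, using Corollary \ref{col:a-b}, choose $N_0$, $\tilde n$ and $\eta>0$ so that $\int_{\textbf{B}_N}|\dot\varphi_\la-U^{N_0}_{N,\tilde n,\la}|\,d\tilde\mu_\la<\ve$ for $0<|\delta_\la|<\eta$. Since $F_\la''(\varphi_\la)$ is uniformly bounded on $\textbf{B}_N$ and $F_\la'(\varphi_\la)$ is uniformly bounded away from $0$, this gives
\[
   \bigg|\int_{\textbf{B}_N}\re\Big(\tfrac{\frac{\partial}{\partial\la}F'_\la(\varphi_\la)}{F_\la'(\varphi_\la)}\Big)d\tilde\mu_\la
   -\int_{\textbf{B}_N}\re\Big(\tfrac{(\frac{\partial}{\partial\la}F_\la')(\varphi_\la)+U^{N_0}_{N,\tilde n,\la}F_\la''(\varphi_\la)}{F_\la'(\varphi_\la)}\Big)d\tilde\mu_\la\bigg|\leeq K\ve,
\]
with $K$ independent of $\la$ and of $\ve$. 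Next I would show that the second integral converges as $\la\to-1$. Its integrand is built from: the uniformly convergent families $\frac{\partial}{\partial\la}F_\la'$, $F_\la''$, $F_\la'$; the uniformly convergent holomorphic motion $\varphi_\la$ (Section \ref{ssc:hm}); and the finitely many compositions $F_\la^{j}$ and $\frac{\partial}{\partial\la}F_\la$ appearing in $U^{N_0}_{N,\tilde n,\la}$, all evaluated on orbit segments that, by construction of the partition $\{\textbf{A}^{N_0}_{N,n}\}$, stay in a fixed compact subset of $\hat\C$ away from the critical points, so that the relevant derivatives $(F_\la^j)'$ are bounded away from $0$. Hence the integrand $\re\big(\cdots\big)$ converges uniformly on $\textbf{B}_N$, away from the finite set $\textbf{A}^{N_0}_\infty\cap\textbf{B}_N$, to the corresponding expression for $\la=-1$; combined with the weak convergence $\tilde\mu_\la\to\tilde\mu_{-1}$ on functions supported away from the parabolic cycle (Corollary \ref{col:lim}, via Lemma \ref{lem:L}), this yields $\lim_{\la\to-1}\int_{\textbf{B}_N}\re(\cdots)\,d\tilde\mu_\la$ for the truncated integrand.

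Finally, to conclude that the original integral has a limit, I would argue by Cauchy: for $\la,\la'$ both close to $-1$, the truncated integrals differ by less than, say, $\ve$ (by the convergence just established), and each differs from the true integral by at most $K\ve$ (by the Corollary \ref{col:a-b} estimate), so $|\int_{\textbf{B}_N}\re(\cdots)\,d\tilde\mu_\la-\int_{\textbf{B}_N}\re(\cdots)\,d\tilde\mu_{\la'}|\leeq (2K+1)\ve$; letting $\ve\to0$ shows the net is Cauchy and hence convergent. The main obstacle is the middle step: one must verify carefully that every piece of $U^{N_0}_{N,\tilde n,\la}$, after composition with $\varphi_\la$, really is evaluated along orbit segments confined to a compact region where $(F_\la^j)'$ stays bounded away from $0$ uniformly in $\la$ — this is what makes the truncated integrand a genuinely uniformly convergent family, and it relies on the fact that $\textbf{B}_N$ and its finitely many forward images under $F_{\la_\dt}$ (up to time $N_0+\tilde n$) are separated from the postcritical set, together with the uniform convergence of $\varphi_\la$.
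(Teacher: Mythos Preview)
Your proposal is correct and follows essentially the same approach as the paper's proof: approximate $\dot\varphi_\la$ by the finite sum $U^{N_0}_{N,\tilde n,\la}$ via Corollary~\ref{col:a-b}, use the uniform convergence of this truncated family together with Corollary~\ref{col:lim} to obtain convergence of the truncated integral, and conclude by a triangle/Cauchy argument using the boundedness of $F_\la''(\varphi_\la)/F_\la'(\varphi_\la)$ and of $\tilde\mu_\la(\textbf{B}_N)$. The paper phrases the middle step a bit more tersely (simply asserting that $U^{N_0}_{N,\tilde n,\la}\to U^{N_0}_{N,\tilde n,-1}$ uniformly, which is exactly the content of the ``obstacle'' you flag), but the logic is identical.
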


\begin{proof}
The functions $\varphi_\la$ converge uniformly to $\varphi_{-1}$, when $\la\rightarrow-1$.
Moreover, for every $N$, $N_0$, $\tilde n\in\N$, the functions $U_{N,\tilde n,\la}^{N_0}$ converge uniformly to $U_{N,\tilde n,-1}^{N_0}$. So, Corollary \ref{col:lim} leads to
\begin{equation}\label{eq:c1}
   \lim_{\la\rightarrow-1}\int_{\textbf{B}_N} \re\bigg(\frac{\mathfrak{D}^2_{F_\la}(\varphi_\la,U_{N,\tilde n,\la}^{N_0})} {F_\la'(\varphi_\la)}\bigg)d\tilde{\mu_\la}=
   \int_{\textbf{B}_N} \re\bigg(\frac{\mathfrak{D}^2_{F_{-1}}(\varphi_{-1},U_{N,\tilde n,-1}^{N_0})} {F_{-1}'(\varphi_{-1})}\bigg)d\tilde{\mu}_{-1}.
\end{equation}

We have (cf. definition (\ref{eq:D2}) and (\ref{eq:D2cor}))
   $$\frac{\partial}{\partial\la}F'_\la(\varphi_\la)- \mathfrak{D}^2_{F_\la}(\varphi_\la,U_{N,\tilde n,\la}^{N_0})= \big(\dot\varphi_\la-U_{N,\tilde n,\la}^{N_0}\big)F_\la''(\varphi_\la).$$
Thus, using Corollary \ref{col:a-b}, we see that there exists a constant $K$ such that for every $\ve>0$ we can get
\begin{equation*}
   \bigg|\int_{\textbf{B}_N} \re\bigg(\frac{\frac{\partial}{\partial\la}F'_\la(\varphi_\la)} {F_\la'(\varphi_\la)}\bigg)d\tilde{\mu_\la}-
   \int_{\textbf{B}_N} \re\bigg(\frac{\mathfrak{D}^2_{F_\la}(\varphi_\la,U_{N,\tilde n,\la}^{N_0})} {F_\la'(\varphi_\la)}\bigg)d\tilde{\mu_\la}\bigg|
   \leeq K\ve\tilde\mu_\la(\textbf{B}_N),
\end{equation*}
where $\tilde n$, $N_0$ are large enough, and $\la$ is close to $-1$. Since $\tilde\mu_\la(\textbf{B}_N)$ is bounded (cf. Lemma \ref{lem:L}) we can assume that $K\ve\tilde\mu_\la(\textbf{B}_N)$ is small, so the statement follows from (\ref{eq:c1}).
\end{proof}

\section{Integral over $\textbf{M}_N$}\label{sec:mn}

First, note that formula (\ref{eq:wzor}) gives us
\begin{equation}\label{eq:formula1}
   \mc D'(\la)=-\mc D(\la)\frac{\int_{\textbf{M}_N}\re\Big(\frac{\frac{\partial}{\partial \la} (F_\la'(\varphi_\la))}{F_\la'(\varphi_\la)}\Big)d\tilde{\mu_\la}+\int_{\textbf{B}_N}\re\Big(\frac{\frac{\partial}{\partial \la} (F_\la'(\varphi_\la))}{F_\la'(\varphi_\la)}\Big)d\tilde{\mu_\la}}{\int_{\mc J_{\la_\dt}}\log|F'_\la(\varphi_\la)|d\tilde{\mu_\la}}.
\end{equation}
Thus, we have to estimate integral over $\textbf{M}_N$ (cf. Propositions \ref{prop:den}, \ref{prop:limb}). Of course $\mc D(\la)\rightarrow\mc D(-1)$ when $\la\rightarrow-1$.

We begin with the following lemma:
\begin{lem}\label{lem:f-P}
   There exists $\eta>0$ and $K>0$ such that for every $N\in\N$ we have
 \begin{equation*}
    \int_{\textbf{M}_N}\bigg|\re\Big(\frac{\frac{\partial}{\partial \la} (F_\la'(\varphi_\la))}{F_\la'(\varphi_\la)}\Big)-\re\Big(\frac{\mathfrak{D}^2_{F_\la}(\varphi_\la,\dot\Psi_\la)} {F_\la'(\varphi_\la)}\Big)\bigg|d\tilde{\mu_\la}
    \leeq \frac{K}{N^{1/2}},
 \end{equation*}
   where $0<|\delta_\la|<\eta$.
\end{lem}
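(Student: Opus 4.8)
The plan is to rewrite the integrand in terms of $\dot\varphi_\la-\dot\Psi_\la$, to recognise this difference on each cylinder $\textbf{C}_n$ as the ``tail'' of the series (\ref{eq:sumas}), to bound its denominator by Lemma \ref{lem:derivativefn}, and finally to sum, using the $F_{\la_\dt}$-invariance of $\tilde\mu_\la$ together with Proposition \ref{prop:far}. No self-improvement scheme of the kind used in Proposition \ref{prop:far} should be needed here.

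\emph{Step 1 (reduction to $\dot\varphi_\la-\dot\Psi_\la$).} By (\ref{eq:D2cor}) we have $\frac{\partial}{\partial\la}\big(F'_\la(\varphi_\la)\big)=\mathfrak D^2_{F_\la}(\varphi_\la,\dot\varphi_\la)$, while (\ref{eq:D2}) gives $\mathfrak D^2_{F_\la}(\varphi_\la,\dot\varphi_\la)-\mathfrak D^2_{F_\la}(\varphi_\la,\dot\Psi_\la)=(\dot\varphi_\la-\dot\Psi_\la)\,F''_\la(\varphi_\la)$; hence the integrand on the left-hand side equals $\big|\re\big((\dot\varphi_\la-\dot\Psi_\la)F''_\la(\varphi_\la)/F'_\la(\varphi_\la)\big)\big|$, which is at most $\frac{|F''_\la(\varphi_\la)|}{|F'_\la(\varphi_\la)|}\,|\dot\varphi_\la-\dot\Psi_\la|$. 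For $\la$ close to $-1$, $\textbf{M}_N(\la)\subseteq\textbf{M}_0(\la)$ lies in a fixed compact set disjoint from the critical points of $F_\la$, so $F'_\la(\varphi_\la)$ is bounded away from $0$ and $F''_\la(\varphi_\la)$ is bounded there (cf.\ (\ref{eq:F0})); thus there is a constant $K_1$, \emph{independent of $N$}, with $\int_{\textbf{M}_N}(\text{integrand})\,d\tilde\mu_\la\leeq K_1\int_{\textbf{M}_N}|\dot\varphi_\la-\dot\Psi_\la|\,d\tilde\mu_\la$.

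\emph{Step 2 (the tail on a cylinder).} For $s\in\textbf{C}_n$ with $n>N$, taking $m=n$ in (\ref{eq:sumas}) and subtracting (\ref{eq:Psi}) gives
\[
  \dot\varphi_\la(s)-\dot\Psi_\la(s)=\frac{\dot\varphi_\la(F_{\la_\dt}^n(s))}{(F_\la^n)'(\varphi_\la(s))}.
\]
Here $F_{\la_\dt}^n$ maps $\textbf{C}_n$ onto the fixed set $\textbf{C}_0:=F_{\la_\dt}(\textbf{C}_1)$, and from the external-ray construction of Section \ref{sec:cylindry} one checks $\textbf{C}_0\subseteq\textbf{B}_0$; in particular $F_\la^n(\varphi_\la(s))\in\textbf{B}_0(\la)$, so Lemma \ref{lem:derivativefn} (with $N=0$) yields $|(F_\la^n)'(\varphi_\la(s))|^{-1}\leeq K_2\,n^{-3/2}$. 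Consequently
\[
  \int_{\textbf{M}_N}|\dot\varphi_\la-\dot\Psi_\la|\,d\tilde\mu_\la=\sum_{n>N}\int_{\textbf{C}_n}\frac{|\dot\varphi_\la(F_{\la_\dt}^n)|}{|(F_\la^n)'(\varphi_\la)|}\,d\tilde\mu_\la\leeq K_2\sum_{n>N}\frac{1}{n^{3/2}}\int_{\textbf{C}_n}|\dot\varphi_\la(F_{\la_\dt}^n)|\,d\tilde\mu_\la .
\]

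\emph{Step 3 (summation).} Since $\textbf{C}_n\subseteq(F_{\la_\dt}^n)^{-1}(\textbf{C}_0)$ and $\tilde\mu_\la$ is $F_{\la_\dt}$-invariant,
\[
  \int_{\textbf{C}_n}|\dot\varphi_\la(F_{\la_\dt}^n)|\,d\tilde\mu_\la\leeq\int_{(F_{\la_\dt}^n)^{-1}(\textbf{C}_0)}|\dot\varphi_\la(F_{\la_\dt}^n)|\,d\tilde\mu_\la=\int_{\textbf{C}_0}|\dot\varphi_\la|\,d\tilde\mu_\la\leeq\int_{\textbf{B}_0}|\dot\varphi_\la|\,d\tilde\mu_\la\leeq K(0),
\]
the last inequality being Proposition \ref{prop:far} with $N=0$, valid uniformly for $0<|\delta_\la|<\eta$. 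Since $\sum_{n>N}n^{-3/2}\leeq 2N^{-1/2}$, combining this with Steps 1 and 2 gives the claim, with $K$ depending only on $K_1$, $K_2$, $K(0)$, hence not on $N$. The two small points to watch are the inclusion $F_{\la_\dt}(\textbf{C}_1)\subseteq\textbf{B}_0$ (only used so that Lemma \ref{lem:derivativefn} applies with a fixed index — any fixed index would serve) and the uniform-in-$N$ control of $F''_\la/F'_\la$ on $\textbf{M}_0(\la)$; the main conceptual point is simply that on $\textbf{C}_n$ the error $\dot\varphi_\la-\dot\Psi_\la$ is exactly the truncation remainder of (\ref{eq:sumas}), which decays like $n^{-3/2}$ and whose $\tilde\mu_\la$-integral is controlled, via one use of invariance, by the finite quantity $\int_{\textbf{B}_0}|\dot\varphi_\la|\,d\tilde\mu_\la$.
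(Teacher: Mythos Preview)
Your proof is correct and follows essentially the same approach as the paper's: reduce to $\int_{\textbf{M}_N}|\dot\varphi_\la-\dot\Psi_\la|\,d\tilde\mu_\la$ via the identity $\mathfrak D^2_{F_\la}(\varphi_\la,\dot\varphi_\la)-\mathfrak D^2_{F_\la}(\varphi_\la,\dot\Psi_\la)=(\dot\varphi_\la-\dot\Psi_\la)F''_\la(\varphi_\la)$, recognise the difference on $\textbf{C}_n$ as the tail $\dot\varphi_\la(F_{\la_\dt}^n)/(F_\la^n)'(\varphi_\la)$ from (\ref{eq:sumas}), apply Lemma~\ref{lem:derivativefn}, and sum using invariance of $\tilde\mu_\la$ together with Proposition~\ref{prop:far}. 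The paper packages your Step~1 as a reference to inequality~(\ref{eq:20}), but the content is identical.
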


\begin{proof}
We see from inequality (\ref{eq:20}) (cf. (\ref{eq:D2}) and (\ref{eq:D2cor})) that it is enough to estimate integral of $|\dot\varphi_\la-\dot\Psi_\la|$.
Formula (\ref{eq:sumas}) and definition (\ref{eq:Psi}) gives us
   \begin{equation*}
      \int_{ \textbf{M}_N}\big|\dot\varphi_\la-\dot\Psi_\la\big|d\tilde{\mu_\la}= \sum_{n>N}\int_{ \textbf{C}_n} \Big|\frac{\dot\varphi_\la(F^n_{\la_\dt})}{(F^n_\la)'(\varphi_\la)}\Big| d\tilde{\mu_\la}.
   \end{equation*}
If $\varphi_\la(s)\in\textbf{C}_n(\la)$, then $F^n_\la(\varphi_\la(s))\in\textbf{B}_0(\la)$, therefore Lemma \ref{lem:derivativefn} leads to
   \begin{equation*}
      \int_{ \textbf{M}_N}\big|\dot\varphi_\la-\dot\Psi_\la\big|d\tilde{\mu_\la}\leeq \sum_{n>N}\frac{K(0)}{n^{3/2}}\int_{ \textbf{C}_n} \big|\dot\varphi_\la(F^n_{\la_\dt})\big| d\tilde{\mu_\la}.
   \end{equation*}
Since $F_{\la_\dt}^{n}(\textbf{C}_n)\subset \textbf{B}_0$, and the measure $\tilde{\mu_\la}$ is $F_{\la_\dt}$-invariant, we conclude that
   \begin{equation*}
      \int_{ \textbf{M}_N}\big|\dot\varphi_\la-\dot\Psi_\la\big|d\tilde{\mu_\la}\leeq \frac{K_1}{N^{1/2}}\int_{ \textbf{B}_0} \big|\dot\varphi_\la\big| d\tilde{\mu_\la}.
   \end{equation*}
Thus, the statement follows from Proposition \ref{prop:far}.
\end{proof}

\subsection{The case $\mc D(-1)<4/3$}

Let us write
   $$G^h_+(s):=Mk\int^\infty_{s}\Lambda_{0}^{h}(u) du, \;\;\; G^h_-(s):=Mk\int_{-\infty}^{s}\Lambda_{0}^{h}(u) du,$$
where $M$ is the constant from Lemma \ref{lem:est}.

For $h\in[1,4/3)$ we define
   $$\Upsilon_+(h):=\int_0^\infty(6\Gamma(s)-1)G_+^h(s)ds, \;\;\; \Upsilon_-(h):=\int_{-\infty}^0(6\Gamma(s)-1)G_-^h(s)ds.$$

\begin{prop}\label{prop:mn}
   For every $\varepsilon>0$ there exist $\eta>0$, $N\in\N$ such that if $0<|\delta_\la|<\eta$ and $\mc D(-1)<4/3$ then
 \begin{multline*}
    \Upsilon_\pm(\mc D(-1))-\varepsilon\leeq |\delta_\la|^{-\frac{3}{2}\mc D(\la)+2}\int_{\textbf{M}_N}\re\Big(\frac{\frac{\partial}{\partial \la} (F_\la'(\varphi_\la))}{F_\la'(\varphi_\la)}\Big)d\tilde{\mu_\la}\\ \leeq \Upsilon_\pm(\mc D(-1))+\varepsilon.
 \end{multline*}
   We take $\Upsilon_+$ for $\delta_\la>0$, and $\Upsilon_-$ for $\delta_\la<0$.
\end{prop}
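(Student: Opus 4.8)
The plan is to decompose the integral over $\textbf{M}_N$ according to the cylinders $\textbf{C}_n$ and replace $\re\big(\frac{\partial}{\partial\la}F_\la'(\varphi_\la)/F_\la'(\varphi_\la)\big)$ on each cylinder by the quantity controlled in Proposition \ref{prop:reDF}. First I would use Lemma \ref{lem:f-P} to reduce, up to an error of order $N^{-1/2}$ (which after multiplication by $|\delta_\la|^{-\frac32\mc D(\la)+2}$ is still small provided $N$ is large — note $-\frac32\mc D(\la)+2>0$ in this case — wait, one must be careful: $|\delta_\la|^{-\frac32\mc D(\la)+2}\to\infty$, so the $N^{-1/2}$ must be absorbed before letting $\delta_\la\to0$; this is handled by fixing $N$ large first, then $\eta$ small), to the integral of $\re\big(\mathfrak D^2_{F_\la}(\varphi_\la,\dot\Psi_\la)/F_\la'(\varphi_\la)\big)$. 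Then I would write
$$\int_{\textbf{M}_N}\re\Big(\frac{\mathfrak D^2_{F_\la}(\varphi_\la,\dot\Psi_\la)}{F_\la'(\varphi_\la)}\Big)d\tilde\mu_\la=\sum_{n>N}\int_{\textbf{C}_n}\re\Big(\frac{\mathfrak D^2_{F_\la}(\varphi_\la,\dot\Psi_\la)}{F_\la'(\varphi_\la)}\Big)d\tilde\mu_\la,$$
and on each $\textbf{C}_n$ apply Proposition \ref{prop:reDF} to bound the integrand between $6\Gamma(n\delta_\la)-1\mp\ve e^{\ve n|\delta_\la|}$, so that the sum is squeezed between $\sum_{n>N}\big(6\Gamma(n\delta_\la)-1\big)\tilde\mu_\la(\textbf{C}_n)$ plus an error $\pm\ve\sum_{n>N}e^{\ve n|\delta_\la|}\tilde\mu_\la(\textbf{C}_n)$, the latter being $O(\ve\,\tilde\mu_\la(\textbf{M}_N))=O(\ve|\delta_\la|^{\frac32\mc D(\la)-2})$ by Lemma \ref{lem:ee}(\ref{lit:ee2}) (or Lemma \ref{lem:mj}(\ref{lit:mj1})). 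This reduces the claim to showing
$$|\delta_\la|^{-\frac32\mc D(\la)+2}\sum_{n>N}\big(6\Gamma(n\delta_\la)-1\big)\tilde\mu_\la(\textbf{C}_n)\longrightarrow\Upsilon_\pm(\mc D(-1)).$$

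For this limit I would use Lemma \ref{lem:mm}, which says that $\tilde\mu_\la(\mc C_n)$ is, up to total $\ell^1$-error $\ve|\delta_\la|^{\frac32\mc D(\la)-2}$, equal to $M\delta_\la^{\frac32\mc D(\la)-1}\int_{n\delta_\la}^{\pm\infty}\Lambda_0^{\mc D(\la)}(u)du$; combined with Lemma \ref{lem:C}, which gives $\tilde\mu_\la(\textbf{C}_n)=\sum_{j}\tilde\mu_\la(\mc C_n^j)\approx k\,\tilde\mu_\la(\mc C_n)$ up to a factor $(1\pm\ve)$, one gets $\tilde\mu_\la(\textbf{C}_n)\approx G^{\mc D(\la)}_\pm(n\delta_\la)\,\delta_\la^{\frac32\mc D(\la)-1}$ with summable error. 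Since $6\Gamma-1$ is bounded (values in $(-1,2)$), replacing $\tilde\mu_\la(\textbf{C}_n)$ by this expression costs only $O(\ve|\delta_\la|^{\frac32\mc D(\la)-2})$. Then
$$|\delta_\la|^{-\frac32\mc D(\la)+2}\,\delta_\la^{\frac32\mc D(\la)-1}\sum_{n>N}\big(6\Gamma(n\delta_\la)-1\big)G^{\mc D(\la)}_\pm(n\delta_\la)=|\delta_\la|\sum_{n>N}\big(6\Gamma(n\delta_\la)-1\big)G^{\mc D(\la)}_\pm(n\delta_\la)$$
is a Riemann sum for $\int_0^\infty\big(6\Gamma(s)-1\big)G^{\mc D(\la)}_\pm(s)\,ds$ (resp. $\int_{-\infty}^0$ for $\delta_\la<0$) with mesh $|\delta_\la|$; the contribution of $n\le N$ is $O(N|\delta_\la|)\to0$, and the tail beyond any large cutoff is controlled by the exponential decay of $G^h_\pm$ (indeed $\int_s^\infty\Lambda_0^h\asymp s^{-3h/2+1}$, integrable against the bounded $6\Gamma-1$ since $h>1$ near the origin and the integrand decays). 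Finally, continuity of $h\mapsto\Upsilon_\pm(h)$ and $\mc D(\la)\to\mc D(-1)$ let me replace $\mc D(\la)$ by $\mc D(-1)$ in the limit, giving $\Upsilon_\pm(\mc D(-1))$.

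The main obstacle is the bookkeeping of error terms across scales: the three error sources — the $\tilde o(\beta_\la)e^{\ve n|\delta_\la|}$-type errors from Proposition \ref{prop:reDF}, the $\ell^1$-error in the measure estimate from Lemma \ref{lem:mm}, and the Riemann-sum discretization error — must each be shown to be $o(|\delta_\la|^{\frac32\mc D(\la)-2})$ uniformly, which requires choosing the constants in the right order ($N$ large depending on $\ve$, then $\eta$ small depending on $N$ and $\ve$) and repeatedly invoking Lemma \ref{lem:ee} to sum the $e^{\ve n|\delta_\la|}$ weights against $\tilde\mu_\la(\textbf{C}_n)$. A secondary subtlety is justifying the passage from the discrete sum $|\delta_\la|\sum_n(6\Gamma(n\delta_\la)-1)G^h_\pm(n\delta_\la)$ to the integral $\Upsilon_\pm(h)$ uniformly in $h$ near $\mc D(-1)$; this follows because $G^h_\pm$ and its first derivative are bounded on compact $s$-intervals and decay exponentially, uniformly for $h$ in a neighborhood of $\mc D(-1)$, so standard Riemann-sum estimates apply with error $O(|\delta_\la|)$ on bounded intervals plus a uniformly small tail.
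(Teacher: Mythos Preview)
Your plan matches the paper's proof essentially step for step: Proposition \ref{prop:reDF} with Lemma \ref{lem:ee}(\ref{lit:ee2}) to reduce the $\mathfrak D^2$-integral to $\sum_{n>N}(6\Gamma(n\delta_\la)-1)\tilde\mu_\la(\textbf C_n)$, then Lemma \ref{lem:mm} together with Lemma \ref{lem:C} to replace the measures by $|\delta_\la|^{\frac32\mc D(\la)-1}G^{\mc D(\la)}_\pm(n\delta_\la)$, then a Riemann-sum passage to $\Upsilon_\pm$ and continuity in $h$ (the paper offloads this step to \cite[Proposition 11.1]{Ji} rather than writing it out), and finally Lemma \ref{lem:f-P} to switch between $\mathfrak D^2_{F_\la}$ and $\partial_\la F_\la'$ (you invoke it first, the paper last). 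The scaling concern you flag about the $O(N^{-1/2})$ error from Lemma \ref{lem:f-P} blowing up under $|\delta_\la|^{-\frac32\mc D(\la)+2}$ is not addressed in the paper's proof either; for the application to Theorem \ref{thm:twopetals} it is harmless, since in (\ref{eq:formula1}) this $O(1)$ contribution is absorbed into the bounded $\textbf B_N$-integral from Proposition \ref{prop:limb}.
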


\begin{proof}
Fix $\ve>0$. Proposition \ref{prop:reDF} combined with Lemma \ref{lem:ee} (\ref{lit:ee2}) (note that the function $\Gamma$ is bounded) leads to
 \begin{multline*}
    \bigg|\int_{\textbf{M}_N} \re\Big(\frac{\mathfrak{D}^2_{F_\la}(\varphi_\la,\dot\Psi_\la)} {F_\la'(\varphi_\la)}\Big)d\tilde{\mu_\la}- \sum_{n>N}(6\Gamma(n\delta_\la)-1)\tilde{\mu_\la}(\textbf{C}_n)\bigg|\\ \leeq\ve K_1\tilde{\mu_\la}(\textbf{M}_N),
 \end{multline*}
where $N\in\N$ is large enough $\delta_\la$ is close to $0$, and $K_1$ does not depend on $\ve$.

Note that Lemma \ref{lem:mj} (\ref{lit:mj1}) gives us an estimate of $\tilde{\mu_\la}(\textbf{M}_N)$. Thus, we conclude from Lemma \ref{lem:mm}, Lemma \ref{lem:C} and definition of $G_\pm^h$, that
 \begin{multline*}
    \bigg|\int_{\textbf{M}_N}\re\Big(\frac{\mathfrak{D}^2_{F_\la} (\varphi_\la,
    \dot\Psi_\la)} {F_\la'(\varphi_\la)}\Big)d\tilde{\mu_\la}\\- |\delta_\la|^{\frac{3}{2}\mc D(\la)-1} \sum_{n>N}(6\Gamma(n\delta_\la)-1)G_\pm^{\mc D(\la)}(n\delta_\la)\bigg| \leeq\ve K_2|\delta_\la|^{\frac32\mc D(\la)-2}.
 \end{multline*}
Next, as in \cite[Proposition 11.1]{Ji}, we can obtain the following formula (cf. definition of $\Upsilon_\pm$),
 \begin{multline*}
    \Upsilon_\pm(\mc D(-1))-\ve K_3\leeq |\delta_\la|^{-\frac{3}{2}\mc D(\la)+2}\int_{\textbf{M}_N}\re\Big(\frac{\mathfrak{D}^2_{F_\la} (\varphi_\la,
   \dot\Psi_\la)} {F_\la'(\varphi_\la)}\Big)d\tilde{\mu_\la}\\ \leeq \Upsilon_\pm(\mc D(-1))+\ve K_3.
 \end{multline*}
where $K_3$ does not depend on $\ve$. Thus, the assertion follows from Lemma \ref{lem:f-P}.
\end{proof}

\begin{lem}\label{lem:>0}
   If $h\in(1,4/3)$ then $\Upsilon_+(h)>\Upsilon_-(h)>0$, whereas $\Upsilon_-(1)=0$.
\end{lem}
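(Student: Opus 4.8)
The plan is to reduce the three assertions to explicit one–variable computations with $\Gamma$, after replacing the iterated integrals defining $\Upsilon_{\pm}$ by single integrals of a primitive of $6\Gamma-1$. First I would record elementary facts about $\Gamma$. From $\Gamma(x)=\tfrac14\bigl(1+\tfrac{\sinh 2x-2x}{\cosh 2x-1}\bigr)$ one reads off $\Gamma(x)+\Gamma(-x)=\tfrac12$, hence $\gamma(x):=6\Gamma(x)-1$ satisfies $\gamma(x)+\gamma(-x)=1$; using $\tfrac{\sinh 2x}{\cosh 2x-1}=\coth x$ and $\tfrac{2x}{\cosh 2x-1}=\tfrac{x}{\sinh^{2}x}$ one checks by differentiation that
$$\Psi(x):=\frac{x}{2}\bigl(1+3\coth x\bigr)-\frac32$$
is the primitive of $\gamma$ with $\Psi(0)=0$, and that $\Psi(x)=\tfrac{x}{2}+R(x)$ with $R(x):=\tfrac32(x\coth x-1)$ even and $R(x)>0$ for $x\neq0$. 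Since $\gamma$ is increasing with $\gamma(0)=\tfrac12$, one gets $\Psi>0$ on $(0,\infty)$; on $(-\infty,0)$ one has $\Psi\to+\infty$ at $-\infty$, $\Psi(0)=0$, and $\Psi$ is first decreasing then increasing, so $\Psi$ changes sign exactly once there, at some $s^{*}<0$, with $\Psi>0$ on $(-\infty,s^{*})$ and $\Psi<0$ on $(s^{*},0)$. I would also use $\Lambda_{0}^{h}(u)=e^{hu}/|e^{2u}-1|^{3h/2}$, which gives $\Lambda_{0}^{h}(-u)=e^{hu}\Lambda_{0}^{h}(u)$, together with $\Psi(u)-\Psi(-u)=\int_{-u}^{u}\gamma=u$ ($\gamma-\tfrac12$ being odd).

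By Fubini, and because the tails converge precisely when $h<4/3$,
$$\Upsilon_{+}(h)=Mk\!\int_{0}^{\infty}\!\Psi(u)\Lambda_{0}^{h}(u)\,du,\qquad \Upsilon_{-}(h)=-Mk\!\int_{-\infty}^{0}\!\Psi(u)\Lambda_{0}^{h}(u)\,du,$$
and the symmetry relations recast the second as $\Upsilon_{-}(h)=Mk\int_{0}^{\infty}(u-\Psi(u))e^{hu}\Lambda_{0}^{h}(u)\,du$; in particular $\Upsilon_{+}(h)>0$. For $\Upsilon_{-}(1)=0$ I would compute directly: for $s<0$, $\Lambda_{0}^{1}(s)=e^{s}(1-e^{2s})^{-3/2}$ gives $G_{-}^{1}(s)=Mk\,e^{s}(1-e^{2s})^{-1/2}$, and with $6\Gamma(s)-1=\tfrac{2e^{4s}-(1+6s)e^{2s}-1}{(e^{2s}-1)^{2}}$ one verifies that $(6\Gamma(s)-1)G_{-}^{1}(s)=-Mk\,\tfrac{d}{ds}\bigl[(e^{-2s}+2s-1)(e^{-2s}-1)^{-3/2}\bigr]$; both boundary values of this primitive vanish (the numerator is $\sim 2s^{2}$ as $s\to0^{-}$, the quotient $\sim e^{s}$ as $s\to-\infty$), so $\Upsilon_{-}(1)=0$, equivalently $\int_{-\infty}^{0}\Psi\,\Lambda_{0}^{1}=0$. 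Then for $h\in(1,4/3)$ write $\Lambda_{0}^{h}=\Lambda_{0}^{1}\cdot w$ with $w=(\Lambda_{0}^{1})^{h-1}$; since $\tfrac{d}{du}\log\Lambda_{0}^{1}(u)=\tfrac{1+2e^{2u}}{1-e^{2u}}>0$ on $(-\infty,0)$, the weight $w$ is strictly increasing there. As $\Psi\geq0$ on $(-\infty,s^{*}]$ (where $w\leq w(s^{*})$) and $\Psi\leq0$ on $[s^{*},0)$ (where $w\geq w(s^{*})$), in both cases $w\Psi\Lambda_{0}^{1}\leq w(s^{*})\Psi\Lambda_{0}^{1}$, and integrating — using the vanishing just proved — gives $\int_{-\infty}^{0}\Psi\Lambda_{0}^{h}=\int_{-\infty}^{0}w\Psi\Lambda_{0}^{1}<w(s^{*})\int_{-\infty}^{0}\Psi\Lambda_{0}^{1}=0$, strictly since $w$ is non-constant; hence $\Upsilon_{-}(h)>0$.

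For the comparison, the integral formulas above give
$$\Upsilon_{+}(h)-\Upsilon_{-}(h)=Mk\!\int_{0}^{\infty}\!\bigl(\Psi(u)(1+e^{hu})-ue^{hu}\bigr)\Lambda_{0}^{h}(u)\,du,$$
so it suffices to show the bracket is positive for $u>0$, i.e.\ $\Psi(u)(1+e^{-hu})>u$, which with $\Psi=\tfrac{u}{2}+R$ becomes $R(u)>\tfrac{u}{2}\tanh\tfrac{hu}{2}$. Since $h<2$ it is enough to prove $3(u\coth u-1)>u\tanh u$ for $u>0$, and as $3\coth u-\tanh u=\tfrac{2(2+\cosh 2u)}{\sinh 2u}$ this reduces to $g(t):=2t+t\cosh t-3\sinh t>0$ for $t=2u>0$; the latter holds because $g(0)=g'(0)=0$ and $g''(t)=\cosh t\,(t-\tanh t)>0$ for $t>0$. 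Combining this with the previous paragraph yields $\Upsilon_{+}(h)>\Upsilon_{-}(h)>0$.

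The delicate points are finding the two explicit primitives — of $\Gamma$, and of $(6\Gamma-1)G_{-}^{1}$ at $h=1$ — and the monotone–rearrangement (Chebyshev-type) step that upgrades $\Upsilon_{-}(1)=0$ to $\Upsilon_{-}(h)>0$ for $h>1$; the remaining work is sign-chasing together with the verification that the tail integrals converge, which is exactly where the hypothesis $h<4/3$ enters.
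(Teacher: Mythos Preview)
Your proof is correct and follows essentially the same route as the paper's: both reduce $\Upsilon_\pm$ via Fubini/integration by parts to single integrals of the primitive $\Psi=v$ of $6\Gamma-1$ against $\Lambda_0^h$, and both prove $\Upsilon_-(h)>0$ by a Chebyshev-type comparison exploiting $\Upsilon_-(1)=0$ together with monotonicity of an appropriate weight. Two small differences are worth noting: you supply an explicit antiderivative computation for $\Upsilon_-(1)=0$, whereas the paper cites \cite{Ji}; and for $\Upsilon_+(h)>\Upsilon_-(h)$ you prove pointwise positivity of the full integrand $\Psi(u)(1+e^{hu})-ue^{hu}$ on $(0,\infty)$, while the paper first discards the tail $(x_0,\infty)$ (where the integrand is manifestly positive) before reducing to the same elementary inequality $2t+t\cosh t-3\sinh t>0$. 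Your version is marginally cleaner, but the two arguments are otherwise identical in substance.
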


\begin{proof}
The fact that  $\Upsilon_-(1)=0$ is proven in (\cite{Ji}, Lemma 11.3) by an exact computation.\\
 Next we first notice that
 $$\Lambda_0^h(-u)=e^{hu}\Lambda_0^h(u).$$
 We also compute
 $$v(x)=\int_0^x(6\Gamma(s)-1)ds=\frac{3xe^{2x}}{e^{2x}-1}-x-\frac 32.$$
 Integrating by part, we then obtain
 \begin{align*}
 \Upsilon_+(h)&=\int_0^{+\infty}v(x)\Lambda_0^h(x)dx,\\
\Upsilon_-(h)&=-\int_{-\infty}^0v(x)\Lambda_0^h(x)dx\\
&=-\int_0^{+\infty}e^{hx}v(-x)\Lambda_0^h(x)dx,
\end{align*}
Now we prove that $\Upsilon_-(h)>0$ for $1<h<4/3$.
Note that the function $6\Gamma(x)-1$ is monotone increasing, whereas $6\Gamma(0)-1=1/2$ and $\lim_{x\rightarrow-\infty}6\Gamma(x)-1=-1$. So, we conclude that there exist unique point $x_0>0$, such that $-v(-x_0)=0$. Moreover, the function $-v(-x)$ is positive on the interval $(0,x_0)$, and negative on $(x_0,\infty)$.
Write $e^{hx}\Lambda_0^h(x_0):=c_h$. Because the function $e^{x}\Lambda_0^1(x)$ is positive and decreasing, we see that the function $e^{(h-1)x}\Lambda_0^{h-1}(x)$ is also positive decreasing. We have
$$e^{(h-1)x}\Lambda_0^{h-1}(x)=\frac{e^{hx}\Lambda_0^{h}(x)}{e^{x}\Lambda_0^{1}(x)},$$
therefore if $x<x_0$ then
$$\frac{e^{hx}\Lambda_0^{h}(x)}{e^{x}\Lambda_0^{1}(x)}>\frac{e^{hx_0}\Lambda_0^{h}(x_0)}{e^{x_0}\Lambda_0^{1}(x_0)},\textrm{ so } 
\frac{e^{hx}\Lambda_0^{h}(x)}{c_h}>\frac{e^{x}\Lambda_0^{1}(x)}{c_1}.$$
For $x>x_0$ we obtain the opposite inequality, thus the function 
$$\frac{e^{hx}\Lambda_0^{h}(x)}{c_h}-\frac{e^{x}\Lambda_0^{1}(x)}{c_1}$$
is positive on $(0,x_0)$, and negative on $(x_0,\infty)$.
Therefore
\begin{multline*}
\int_0^{+\infty}(-v(-x))\frac{e^{hx}\Lambda_0^h(x)}{c_h}dx-
\int_0^{+\infty}(-v(-x))\frac{e^{x}\Lambda_0^1(x)}{c_1}dx\\
=\int_0^{+\infty}(-v(-x))\Big(\frac{e^{hx}\Lambda_0^h(x)}{c_h}-\frac{e^{x}\Lambda_0^1(x)}{c_1}\Big)dx>0,
\end{multline*}
so
$$\Upsilon_-(h)=\int_0^{+\infty}(-v(-x))e^{hx}\Lambda_0^h(x)dx>0.$$
We now prove that 
$$\Upsilon_+(h)>\Upsilon_(h)$$
on the interval $(1,4/3)$.
\begin{align*}
 \Upsilon_+(h)-\Upsilon_-(h)&=\int_0^{+\infty}(v(x)+e^{hx}v(-x))\Lambda_0^h(x)dx\\
 &\geq\int_0^{x_0}(v(x)+e^{hx}v(-x))\Lambda_0^h(x)dx\\
 &\geq \int_0^{x_0}(v(x)+e^{2x}v(-x))\Lambda_0^h(x)dx.
 \end{align*}
 and we conclude by noticing that, for $x\geq 0$, $$v(x)+e^{2x}v(-x)=\frac{4x-3\sinh(2x)+2x\cosh(2x)}{1-e^{-2x}}\geq 0,$$ as can be easily seen.

\end{proof}

\begin{proof}[Proof of Theorem \ref{thm:twopetals} in the case $\mc D(\la)<4/3$]
Since $|\delta_\la|^{\frac{3}{2}\mc D(\la)-2}\rightarrow\infty$ when
$\delta_\la\rightarrow0$, Proposition \ref{prop:mn} and Lemma \ref{lem:>0} combined with Propositions \ref{prop:den}, \ref{prop:limb} and formula (\ref{eq:formula1}) leads to
 \begin{equation*}
    -(1+\ve)K_\pm|\delta_\la|^{\frac{3}{2}\mc D(\la)-2}\leeq\mc D'(\la)\leeq-(1-\ve)K_\pm|\delta_\la|^{\frac{3}{2}\mc D(\la)-2}
 \end{equation*}
for suitable constants $K_\pm$. Next $\mc D(\la)$ can be replaced by $\mc D(-1)$ as in \cite[proof of Theorem 1.1]{Ji}.
\end{proof}

\subsection{The case $\mc D(-1)=4/3$}

\begin{lem}\label{lem:4/3m}
   If $\mc D(-1)=4/3$, then for every $\ve>0$, $\alpha>0$ and $N\in\N$ there exist $\eta>0$ such that
 \begin{multline*}
      \Big(\frac12-\ve\Big)\tilde{\mu_\la}(\textbf{M}_N\sms\textbf{M}_{[\alpha/|\delta_\la|]})\leeq \int_{\textbf{M}_N}\re\Big(\frac{\frac{\partial}{\partial \la} (F_\la'(\varphi_\la))}{F_\la'(\varphi_\la)}\Big)d\tilde{\mu_\la}\\\leeq \Big(\frac12+\ve\Big)\tilde{\mu_\la}(\textbf{M}_N\sms\textbf{M}_{[\alpha/|\delta_\la|]}),
 \end{multline*}
   where $0<|\delta_\la|<\eta$. In particular, the above integral tends to infinity when $\la\rightarrow-1$.
\end{lem}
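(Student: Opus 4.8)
The plan is to turn the integral into a weighted sum over the cylinders $\textbf{C}_n$ by means of Proposition \ref{prop:reDF} and Lemma \ref{lem:f-P}, and then to exploit that, in the critical case $\mc D(-1)=4/3$, the mass $\tilde{\mu_\la}(\textbf{M}_N\sms\textbf{M}_{[\alpha/|\delta_\la|]})$ grows (like $-\log|\delta_\la|$, cf.\ Lemma \ref{lem:a/d}); consequently every fixed error term and every contribution of a bounded range of cylinders becomes negligible against it. This divergence is exactly what is special to the value $4/3$.

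First I would apply Lemma \ref{lem:f-P} to replace $\re\big(\tfrac{\partial}{\partial\la}F'_\la(\varphi_\la)/F'_\la(\varphi_\la)\big)$ by $\re\big(\mathfrak{D}^2_{F_\la}(\varphi_\la,\dot\Psi_\la)/F'_\la(\varphi_\la)\big)$ in the integral over $\textbf{M}_N$, at the cost of a fixed additive error $K/N^{1/2}$. Next, fixing for the moment a small $\ve$ and writing $\textbf{M}_N=\bigcup_{n>N}\textbf{C}_n$ up to a finite set, I would use Proposition \ref{prop:reDF} on each $\textbf{C}_n$; that proposition forces $n$ to exceed some $N_1=N_1(\ve)$, but the finitely many cylinders $\textbf{C}_{N+1},\dots,\textbf{C}_{N_1}$ carry a bounded integrand (cf.\ Corollary \ref{col:modpsi} and Lemma \ref{lem:xi}) and total mass bounded in terms of $N_1$ only, so they add an error independent of $\delta_\la$. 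This gives
\[
  \Big|\int_{\textbf{M}_N}\re\Big(\tfrac{\mathfrak{D}^2_{F_\la}(\varphi_\la,\dot\Psi_\la)}{F'_\la(\varphi_\la)}\Big)d\tilde{\mu_\la}-\sum_{n>N}(6\Gamma(n\delta_\la)-1)\tilde{\mu_\la}(\textbf{C}_n)\Big|\leeq \ve\!\sum_{n>N}e^{\ve n|\delta_\la|}\tilde{\mu_\la}(\textbf{C}_n)+C(N_1),
\]
and Lemma \ref{lem:ee}(\ref{lit:ee2}) bounds the first term on the right by $\ve K\tilde{\mu_\la}(\textbf{M}_N)$, which by Lemma \ref{lem:a/d} is $\leeq \ve K(1+\ve)\tilde{\mu_\la}(\textbf{M}_N\sms\textbf{M}_{[\alpha/|\delta_\la|]})$ once $|\delta_\la|$ is small (so that $|\mc D(\la)-4/3|$ is small).

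The heart of the argument is the estimate of $\sum_{n>N}(6\Gamma(n\delta_\la)-1)\tilde{\mu_\la}(\textbf{C}_n)$. I would pick a small $\alpha_0\in(0,\alpha)$, depending only on $\ve$ and $\alpha$, such that $|6\Gamma(x)-1-\tfrac12|<\ve$ whenever $|x|\leeq\alpha_0$ (possible since $\Gamma$ is continuous and $6\Gamma(0)-1=\tfrac12$), and split the sum according to $N<n\leeq[\alpha_0/|\delta_\la|]$, $[\alpha_0/|\delta_\la|]<n\leeq[\alpha/|\delta_\la|]$, and $n>[\alpha/|\delta_\la|]$. On the first range $|n\delta_\la|\leeq\alpha_0$, so that piece equals $\tfrac12\tilde{\mu_\la}(\textbf{M}_N\sms\textbf{M}_{[\alpha_0/|\delta_\la|]})$ up to $\ve\,\tilde{\mu_\la}(\textbf{M}_N\sms\textbf{M}_{[\alpha/|\delta_\la|]})$; on the other two ranges I use $|6\Gamma-1|\leeq2$ together with $\tilde{\mu_\la}(\textbf{M}_{[\alpha_0/|\delta_\la|]}\sms\textbf{M}_{[\alpha/|\delta_\la|]})\leeq\ve\,\tilde{\mu_\la}(\textbf{M}_N\sms\textbf{M}_{[\alpha/|\delta_\la|]})$ and $\tilde{\mu_\la}(\textbf{M}_{[\alpha/|\delta_\la|]})\leeq\ve\,\tilde{\mu_\la}(\textbf{M}_N\sms\textbf{M}_{[\alpha/|\delta_\la|]})$, both obtained from Lemma \ref{lem:a/d} applied with $\alpha_0$, resp.\ $\alpha$. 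Absorbing once more the difference between $\tfrac12\tilde{\mu_\la}(\textbf{M}_N\sms\textbf{M}_{[\alpha_0/|\delta_\la|]})$ and $\tfrac12\tilde{\mu_\la}(\textbf{M}_N\sms\textbf{M}_{[\alpha/|\delta_\la|]})$, I arrive at
\[
  \Big|\int_{\textbf{M}_N}\re\Big(\tfrac{\frac{\partial}{\partial\la}F'_\la(\varphi_\la)}{F'_\la(\varphi_\la)}\Big)d\tilde{\mu_\la}-\tfrac12\,\tilde{\mu_\la}(\textbf{M}_N\sms\textbf{M}_{[\alpha/|\delta_\la|]})\Big|\leeq C\ve\,\tilde{\mu_\la}(\textbf{M}_N\sms\textbf{M}_{[\alpha/|\delta_\la|]})+C(N_1)+\tfrac{K}{N^{1/2}}.
\]
Since by Lemma \ref{lem:a/d} one has $\tilde{\mu_\la}(\textbf{M}_N\sms\textbf{M}_{[\alpha/|\delta_\la|]})\to\infty$ as $\la\to-1$ (using $\mc D(\la)\to\mc D(-1)=4/3$), the fixed errors $C(N_1)+K/N^{1/2}$ are $\leeq\ve\,\tilde{\mu_\la}(\textbf{M}_N\sms\textbf{M}_{[\alpha/|\delta_\la|]})$ once $|\delta_\la|$ is small enough; renaming $\ve$ then yields the two-sided bound with $\tfrac12\pm\ve$. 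The final assertion is immediate, since the lower bound $(\tfrac12-\ve)\tilde{\mu_\la}(\textbf{M}_N\sms\textbf{M}_{[\alpha/|\delta_\la|]})$ tends to $+\infty$.

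The main obstacle I anticipate is purely the bookkeeping of constants and the order of quantifiers: one must verify that \emph{all} error terms — the $K/N^{1/2}$ of Lemma \ref{lem:f-P}, the finite-index contribution $C(N_1)$ with $N_1=N_1(\ve)$, and the $\ve K\tilde{\mu_\la}(\textbf{M}_N)$ coming from Lemma \ref{lem:ee} — are dominated by $\tilde{\mu_\la}(\textbf{M}_N\sms\textbf{M}_{[\alpha/|\delta_\la|]})$, which holds precisely because that mass grows logarithmically in the critical case $\mc D(-1)=4/3$; and that $\alpha_0$ is chosen depending only on $\ve$ and $\alpha$ (via continuity of $\Gamma$ at $0$), while $\eta$ is chosen last, depending on $\ve,\alpha,\alpha_0,N,N_1$.
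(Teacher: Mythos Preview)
Your proposal is correct and follows essentially the same route as the paper: replace $\dot\varphi_\la$ by $\dot\Psi_\la$ via Lemma~\ref{lem:f-P}, reduce to the weighted sum $\sum_{n>N}(6\Gamma(n\delta_\la)-1)\tilde\mu_\la(\textbf{C}_n)$ via Proposition~\ref{prop:reDF} and Lemma~\ref{lem:ee}(\ref{lit:ee2}), use $\Gamma(0)=1/4$ on the ``parabolic'' range of cylinders, and absorb all fixed errors into the diverging mass via Lemma~\ref{lem:a/d}. The only organisational difference is that the paper first proves the inequality for sufficiently small $\alpha$ and large $N$ and then extends to arbitrary $\alpha,N$ at the end (using that $\tilde\mu_\la(\textbf{M}_0\sms\textbf{M}_N)$ is bounded while $\tilde\mu_\la(\textbf{M}_N\sms\textbf{M}_{[\alpha/|\delta_\la|]})\to\infty$), whereas you handle general $\alpha,N$ directly by introducing the auxiliary $\alpha_0<\alpha$ and $N_1>N$; the content is the same.
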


\begin{proof}
Fix $\alpha, \ve>0$ (small). We can also assume that $\alpha<\ve<\min(\ve_0,K^{-1})$, where $\ve_0$ and $K$ are constants from Lemma \ref{lem:ee}.
Since the function $|\Gamma|$ is bounded by $1/2$, we conclude from Proposition \ref{prop:reDF}, that there exists $\eta$ such that
 \begin{equation*}
    \int_{\textbf{M}_{[\alpha/|\delta_\la|]}}\bigg|\re\Big(\frac{\mathfrak{D}^2_{F_\la} (\varphi_\la,
    \dot\Psi_\la)} {F_\la'(\varphi_\la)}\Big)\bigg|d\tilde{\mu_\la}\leeq \sum_{n>[\alpha/|\delta_\la|]}(2+\ve e^{\ve n|\delta_\la|})\tilde{\mu_\la}(\mc C_n).
 \end{equation*}
So, Lemma \ref{lem:ee} (\ref{lit:ee2}), assumption that $\mc D(-1)=4/3$, and Lemma \ref{lem:a/d} gives us
 \begin{multline}\label{eq:ml}
    \int_{\textbf{M}_{[\alpha/|\delta_\la|]}}\bigg|\re\Big(\frac{\mathfrak{D}^2_{F_\la} (\varphi_\la,
    \dot\Psi_\la)} {F_\la'(\varphi_\la)}\Big)\bigg|d\tilde{\mu_\la}\leeq (2+\ve K)\tilde{\mu_\la}(\textbf{M}_{[\alpha/|\delta_\la|]})\\ \leeq3\tilde{\mu_\la}(\textbf{M}_{[\alpha/|\delta_\la|]})\leeq 3\ve\tilde{\mu_\la}(\textbf{M}_N\sms\textbf{M}_{[\alpha/|\delta_\la|]}),
 \end{multline}
where  $0<|\delta_\la|<\eta$, for suitably chosen $\eta>0$.

Now we will estimate integral over the set $\textbf{M}_N\sms\textbf{M}_{[\alpha/|\delta_\la|]}$. So, we have $N<n\leeq[\alpha/|\delta_\la|]$, in particular $-\alpha\leeq n\delta_\la\leeq\alpha$.
Recall that $\Gamma(0)=1/4$, and $\Gamma'(0)=1/6$. Thus, if $\alpha$ is small enough (possibly changing $\eta$), we get
 \begin{equation*}
    \frac12-2\alpha<6\Gamma(n\delta_\la)-1<\frac12+2\alpha.
 \end{equation*}
We can assume that $e^{\ve n |\delta_\la|}<2$, so Proposition \ref{prop:reDF} leads to
 \begin{multline*}
    \Big(\frac12-2\alpha-2\ve\Big) \tilde{\mu_\la}(\textbf{M}_N\sms\textbf{M}_{[\alpha/|\delta_\la|]})\leeq \int_{\textbf{M}_N\sms\textbf{M}_{[\alpha/|\delta_\la|]}}\re\Big(\frac{\mathfrak{D}^2_{F_\la} (\varphi_\la,
    \dot\Psi_\la)} {F_\la'(\varphi_\la)}\Big)d\tilde{\mu_\la}\\\leeq \Big(\frac12+2\alpha
    +2\ve\Big) \tilde{\mu_\la}(\textbf{M}_N\sms\textbf{M}_{[\alpha/|\delta_\la|]})
 \end{multline*}
Thus, for $\alpha>0$ small enough and $N\in\N$ large enough, statement follows from (\ref{eq:ml}) and Lemma \ref{lem:f-P}

But, we know that $\tilde{\mu_\la}(\textbf{M}_0\sms\textbf{M}_{N})$ is bounded, whereas $\tilde{\mu_\la}(\textbf{M}_N\sms\textbf{M}_{[\alpha/|\delta_\la|]})\rightarrow\infty$. Moreover $\tilde{\mu_\la}(\textbf{M}_N\sms\textbf{M}_{[\alpha/|\delta_\la|]})$ is close to $\tilde{\mu_\la}(\textbf{M}_N)$ (see Lemma \ref{lem:a/d}). Therefore statement holds for every $\alpha>0$ and $N\in\N$.
\end{proof}

\begin{col}\label{col:d4}
   If $\mc D(-1)=4/3$, then there exist $K>0$ and for every $\ve>0$ there exist $\eta>0$ such that
 \begin{equation*}
    (1-\ve)K\frac{|\delta_\la|^{\frac32\mc D(\la)-2}-1}{\frac32\mc D(\la)-2}\leeq \mc D'(\la) \leeq(1+\ve)K\frac{|\delta_\la|^{\frac32\mc D(\la)-2}-1}{\frac32\mc D(\la)-2}
 \end{equation*}
   where $0<|\delta_\la|<\eta$. Moreover $\mc D'(\la)\rightarrow-\infty$, when $\delta_\la\rightarrow0$.
\end{col}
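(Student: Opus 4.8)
The plan is to substitute the results of this section into formula (\ref{eq:formula1}) and follow each of its pieces as $\la\to-1$, bearing in mind that $\mc D(\la)\to\mc D(-1)=4/3$, so $\tfrac32\mc D(\la)-2\to0$ and the right-hand side of the Corollary is the natural interpolation between a power and a logarithm of $|\delta_\la|$. First I would dispose of the two bounded ingredients. By Proposition \ref{prop:den} the denominator $\int_{\mc J_{\la_\dt}}\log|F'_\la(\varphi_\la)|\,d\tilde{\mu_\la}\to\chi>0$, so for small $|\delta_\la|$ it equals $\chi$ up to a factor as close to $1$ as we wish; and, for a fixed $N$, Proposition \ref{prop:limb} says that $\int_{\textbf{B}_N}\re\big(\tfrac{\frac{\partial}{\partial\la}F'_\la(\varphi_\la)}{F'_\la(\varphi_\la)}\big)\,d\tilde{\mu_\la}$ has a finite limit, hence stays bounded by some $K(N)$ near $-1$. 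Thus (\ref{eq:formula1}) reduces to studying $-\mc D(\la)\chi^{-1}$ times the integral over $\textbf{M}_N$, plus a term that is $O_N(1)$; it will suffice to show that this $\textbf{M}_N$-integral equals a fixed positive constant times $-(|\delta_\la|^{\frac32\mc D(\la)-2}-1)/(\tfrac32\mc D(\la)-2)$ up to a factor tending to $1$, and that this quantity diverges to $+\infty$ (so the $O_N(1)$ is negligible).

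To evaluate the $\textbf{M}_N$-integral I would invoke, in order: Lemma \ref{lem:4/3m} (which uses $\mc D(-1)=4/3$) to write it as $\tilde{\mu_\la}(\textbf{M}_N\sms\textbf{M}_{[\alpha/|\delta_\la|]})$ times a factor as close to $1/2$ as desired (for $\alpha$ small, $N$ large, $|\delta_\la|$ small); Lemma \ref{lem:a/d} to see that $\tilde{\mu_\la}(\textbf{M}_{[\alpha/|\delta_\la|]})$ is negligible against $\tilde{\mu_\la}(\textbf{M}_N\sms\textbf{M}_{[\alpha/|\delta_\la|]})$, so that the latter agrees with $\tilde{\mu_\la}(\textbf{M}_N)$ up to a factor tending to $1$ and both diverge; Lemma \ref{lem:C} to pass from $\tilde{\mu_\la}(\textbf{C}_n)$ to $k\,\tilde{\mu_\la}(\mc C_n)$ for $n>N$ up to a small relative error; and Proposition \ref{prop:Csmall} to replace $\tilde{\mu_\la}(\mc C_n)$ by $K_1 n^{1-\frac32\mc D(\la)}$, with $K_1$ a constant of the family, valid precisely for $N<n\leeq[\alpha/|\delta_\la|]$. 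Then $\tilde{\mu_\la}(\textbf{M}_N\sms\textbf{M}_{[\alpha/|\delta_\la|]})$ is $kK_1\sum_{N<n\leeq[\alpha/|\delta_\la|]}n^{1-\frac32\mc D(\la)}$ up to a factor tending to $1$, and I would compare this sum with
\[
  \int_N^{\alpha/|\delta_\la|}x^{1-\frac32\mc D(\la)}\,dx=\frac{\alpha^{2-\frac32\mc D(\la)}\,|\delta_\la|^{\frac32\mc D(\la)-2}-N^{2-\frac32\mc D(\la)}}{2-\frac32\mc D(\la)},
\]
the Euler--Maclaurin discrepancy being $O(1)$, hence swamped by the divergent left side. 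Since $\alpha^{2-\frac32\mc D(\la)}\to1$ and $N^{2-\frac32\mc D(\la)}\to1$ as $\mc D(\la)\to4/3$, this closed form equals $-(|\delta_\la|^{\frac32\mc D(\la)-2}-1)/(\tfrac32\mc D(\la)-2)$ up to a factor tending to $1$, a positive and divergent quantity.

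Assembling the three ingredients in (\ref{eq:formula1}) then yields $\mc D'(\la)=\tfrac23 kK_1\chi^{-1}\cdot(|\delta_\la|^{\frac32\mc D(\la)-2}-1)/(\tfrac32\mc D(\la)-2)$ up to a factor tending to $1$, which is the claimed two-sided estimate with $K=\tfrac23 kK_1\chi^{-1}>0$, and forces $\mc D'(\la)\to-\infty$. The main obstacle is this last comparison step: making the passage from the cylinder sum to the closed expression in $\tfrac32\mc D(\la)-2$ uniform as $\mc D(\la)\to4/3$, so that the dependence on $N$, on $\alpha$, and on the cut-off at $[\alpha/|\delta_\la|]$ all dissolve into the factor tending to $1$. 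This is exactly why one routes the argument through Lemma \ref{lem:a/d} and Proposition \ref{prop:Csmall} — Lemma \ref{lem:mm} is not available here, since $\mc D(\la)$ need not be $<4/3$ for $\la$ near $-1$ — and where one must verify carefully that the additive $O(1)$ errors coming from the sum-to-integral comparison and from the $\textbf{B}_N$-integral are genuinely absorbed by the divergence of $\tilde{\mu_\la}(\textbf{M}_N)$.
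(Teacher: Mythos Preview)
Your proposal is correct and follows essentially the same route as the paper: reduce (\ref{eq:formula1}) via Propositions \ref{prop:den} and \ref{prop:limb} to the $\textbf{M}_N$-integral, apply Lemma \ref{lem:4/3m} to convert that integral into $\tfrac12\,\tilde\mu_\la(\textbf{M}_N\sms\textbf{M}_{[\alpha/|\delta_\la|]})$, and then evaluate this truncated mass via Proposition \ref{prop:Csmall} and the explicit antiderivative of $x^{1-\frac32\mc D(\la)}$, letting $\alpha^{2-\frac32\mc D(\la)}$ and $N^{2-\frac32\mc D(\la)}$ tend to $1$. The paper adds one small remark you leave implicit: having already established $\mc D'(\la)\to-\infty$, one may assume $\mc D(\la)\neq4/3$ near $-1$, so the closed-form antiderivative (with $\tfrac32\mc D(\la)-2$ in the denominator) is legitimate at every $\la$ under consideration.
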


\begin{proof}
The fact that $\mc D'(\la)\rightarrow-\infty$ follows from Lemma \ref{lem:4/3m} combined with Propositions \ref{prop:den}, \ref{prop:limb}, and formula (\ref{eq:formula1}). Moreover, we see that it is enough to estimate $\tilde{\mu_\la}(\textbf{M}_N\sms\textbf{M}_{[\alpha/|\delta_\la|]})$.

Fix $\ve>0$. We can find $\alpha>0$ and $\N\in\N$ such that
 \begin{equation*}
    (1-\ve)K\sum_{n=N+1}^{[\alpha/|\delta_\la|]} n^{-\frac32\mc D(\la)+1}\leeq\sum_{n=N+1}^{[\alpha/|\delta_\la|]}\tilde{\mu_\la}(\textbf{C}_n)\leeq(1+\ve)K\sum_{n=N+1}^{[\alpha/|\delta_\la|]} n^{-\frac32\mc D(\la)+1}
 \end{equation*}
(see Proposition \ref{prop:Csmall}).
Since $\mc D'(\la)\rightarrow-\infty$, we can assume that $\mc D(\la)\neq4/3$. Thus
 \begin{equation*}
    \int_{N}^{\alpha/|\delta_\la|}x^{-\frac32\mc D(\la)+1} dx= \alpha^{-\frac32\mc D(\la)+2} \frac{|\delta_\la|^{\frac32\mc D(\la)-2}-(\frac{\alpha}{N})^{\frac32\mc D(\la)-2}}{-\frac32\mc D(\la)+2}.
 \end{equation*}
Next, one can get
 \begin{equation*}
    \frac{|\delta_\la|^{\frac32\mc D(\la)-2}-(\frac{\alpha}{N})^{\frac32\mc D(\la)-2}}{|\delta_\la|^{\frac32\mc D(\la)-2}-1}= 1-\frac{(\frac{\alpha}{N})^{\frac32\mc D(\la)-2}-1}{|\delta_\la|^{\frac32\mc D(\la)-2}-1}\rightarrow1.
 \end{equation*}
Therefore, using the fact that $\alpha^{-\frac32\mc D(\la)+2}\rightarrow1$, we obtain
 \begin{equation*}
    (1-2\ve)K\frac{|\delta_\la|^{\frac32\mc D(\la)-2}-1}{-\frac32\mc D(\la)+2}\leeq \tilde{\mu_\la}(\textbf{M}_N\sms\textbf{M}_{[\alpha/|\delta_\la|]}) \leeq(1+2\ve)K\frac{|\delta_\la|^{\frac32\mc D(\la)-2}-1}{-\frac32\mc D(\la)+2},
 \end{equation*}
and the statement follows.
\end{proof}

\begin{lem}
   If $\mc D(\la)\rightarrow4/3$, then
 $$\lim_{\delta_\la\rightarrow0}|\delta_\la|^{\frac32\mc D(\la)-2}=1.$$
\end{lem}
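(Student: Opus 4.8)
The plan is to reduce the statement to the assertion that $\big(\tfrac32\mc D(\la)-2\big)\log|\delta_\la|\to0$ as $\delta_\la\to0$, because then
\[
   |\delta_\la|^{\frac32\mc D(\la)-2}=\exp\!\Big(\big(\tfrac32\mc D(\la)-2\big)\log|\delta_\la|\Big)\longrightarrow e^{0}=1 .
\]
Write $t:=|\delta_\la|$ and $u:=\big|\tfrac32\mc D(\la)-2\big|$; by the hypothesis $\mc D(\la)\to4/3$ we have $u\to0$ as $t\to0$, so it is equivalent to prove $u\,|\log t|\to0$. First I would record a structural remark: since $\mc D'(\la)\to-\infty$, the map $\la\mapsto\mc D(\la)$ is strictly decreasing on each of the one-sided intervals $(-1-\ve_1,-1)$ and $(-1,-1+\ve_1)$ for some $\ve_1>0$; as its one-sided limits at $-1$ equal $4/3$, we get $\tfrac32\mc D(\la)-2\neq0$ for $0<|\delta_\la|<\ve_1$ (so Corollary \ref{col:d4} is meaningful there), and on each side $t\mapsto u$ is real-analytic on $(0,\ve_1)$, continuous up to $0$, with $u(0^{+})=0$.

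The main step uses Corollary \ref{col:d4} with its parameter $\ve$ set to $1$: there are $K>0$ and $\eta>0$ such that, writing $x:=\tfrac32\mc D(\la)-2$,
\[
   |\mc D'(\la)|\ \leq\ 2K\,\Big|\frac{t^{x}-1}{x}\Big|\qquad(0<t<\eta).
\]
For $t\in(0,1)$, if $x>0$ then $\big|\tfrac{t^{x}-1}{x}\big|=\tfrac{1-e^{-u|\log t|}}{u}\leq|\log t|$ (using $1-e^{-s}\leq s$), while if $x<0$ then, using that $v\mapsto(e^{v|\log t|}-1)/v$ is increasing for $v>0$ and that $u\leq\tfrac12$ once $t$ is small (because $u\to0$), one gets $\big|\tfrac{t^{x}-1}{x}\big|=\tfrac{e^{u|\log t|}-1}{u}\leq 2\big(t^{-1/2}-1\big)$. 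In both cases $|\mc D'(\la)|\leq C\,t^{-1/2}$ for all sufficiently small $t$ and a fixed constant $C$. Since $\big|\tfrac{d}{dt}u\big|=\tfrac32|\mc D'(\la)|$ and $u$ is real-analytic on $(0,\ve_1)$ with $u(0^{+})=0$, integrating from $0$ gives $u(t)\leq C'\,t^{1/2}$ for small $t$; hence $u\,|\log t|\leq C'\,t^{1/2}|\log t|\to0$, which is what we wanted.

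I expect no serious obstacle here: the content is entirely carried by Corollary \ref{col:d4} together with the elementary observation $\mc D'\to-\infty$ (which guarantees $\tfrac32\mc D-2$ is nonzero and of a single sign on each side of $-1$), after which the conclusion follows by one integration. The only point requiring a little care is the two-case estimate of $\big|\tfrac{t^{x}-1}{x}\big|$ according to the sign of $\tfrac32\mc D(\la)-2$, and the elementary monotonicity of $v\mapsto(e^{va}-1)/v$.
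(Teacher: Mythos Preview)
Your argument is correct and is considerably more direct than the paper's. Both approaches rest on Corollary \ref{col:d4}, but you exploit it differently: you first extract from it the crude bound $|\mc D'(\la)|\leq C\,|\delta_\la|^{-1/2}$ (via the elementary inequalities $\frac{1-e^{-s}}{s}\cdot s\leq s$ and the monotonicity of $v\mapsto(e^{va}-1)/v$), and then integrate once to obtain $\big|\tfrac32\mc D(\la)-2\big|\leq C'\,|\delta_\la|^{1/2}$, which immediately forces $\big(\tfrac32\mc D(\la)-2\big)\log|\delta_\la|\to0$. The paper instead argues by contradiction: assuming a subsequence along which $|\delta_\la|^{\frac32\mc D(\la)-2}$ stays bounded away from $1$, it builds an explicit comparison barrier $g_{\tilde a,\pm\ve}(\delta_\la)=\frac{\log\tilde a}{\log|\delta_\la|}\pm\ve$ for $h(\delta_\la)=\tfrac32\mc D(\la)-2$, and uses the differential inequality from Corollary \ref{col:d4} to show that $h$ cannot cross $g$, forcing $h$ to stay bounded away from $0$ and contradicting $\mc D(\la)\to4/3$. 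Your route is shorter and avoids the somewhat delicate barrier construction; the paper's approach, on the other hand, never needs to integrate and works purely at the level of differential inequalities.

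One small remark: your two-case analysis according to the sign of $x=\tfrac32\mc D(\la)-2$ is not actually a case split in practice, because (as you observe) $\mc D'\to-\infty$ forces $x$ to have a fixed sign on each one-sided neighbourhood of $\la=-1$; still, covering both signs as you do makes the argument self-contained.
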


\begin{proof}
Fix $0<\tilde\ve<1/4$. Let $K$ and $\eta$ be constants from Corollary \ref{col:d4}, such that the estimates of $\mc D'(\la)$ holds for $\tilde\ve$.

Note that if $\delta_\la^g=\alpha$, then $g=\log a/\log \delta_\la$. So, let us define
$$g_{\alpha,\ve}(\delta_\la):=\frac{\log \alpha}{\log \delta_\la}+\ve.$$
Moreover, write
$$h(\delta_\la):=\frac32\mc D(\la)-2.$$

\emph{Step 1. The case $\delta_\la>0$.}
Since we can assume that $\mc D'(\la)<0$ we conclude that $h(\delta_\la)<0$, therefore $\delta_\la^{h(\delta_\la)}>1$.

Suppose, towards a contradiction, that there exists a sequence $\delta_{\la_n}\rightarrow0$ and $a>1$ such that
   $$\lim_{n\rightarrow\infty}\delta_{\la_n}^{h(\delta_{\la_n})}\greq a.$$
i.e. $h(\delta_{\la_n})\leeq \log a/\log \delta_{\la_n}$. There exist an element $\delta_{\la_m}<\eta$ such that
\begin{equation*}
   2K \delta_\la^{1/2}a |\log^3 \delta_\la|\leeq\log^2 a,
\end{equation*}
for $\delta_\la\in(0,\delta_{\la_m})$. Moreover, we can find $1<\tilde a<a$ and $\ve>0$ such that
\begin{equation*}
   2K \delta_\la^{1-\ve}\tilde a |\log^3 \delta_\la|\leeq\log^2 \tilde a,
\end{equation*}
where
$$\frac{\log a}{\log \delta_{\la_m}}=\frac{\log\tilde a}{\log \delta_{\la_m}-\ve}=g_{\tilde a,-\ve}(\delta_{\la_m}),$$
and $\delta_\la\in(0,\delta_{\la_m})$.
Since $h(\delta_{\la_n})\leeq \log a/\log \delta_{\la_n}$, we get $h(\delta_{\la_m})\leeq g_{\tilde a,-\ve}(\delta_{\la_m})$.
Next we have
\begin{equation*}
   2K\delta_\la^{1-\ve}(\tilde a- \delta_\la^{\ve})|\log^3 \delta_\la|\leeq\log \tilde a(\log \tilde a-\ve\log \delta_\la),
\end{equation*}
and then
\begin{equation*}
   2K \frac{\delta_\la^{\frac{\log\tilde a}{\log\delta_\la}-\ve}-1}{\frac{\log\tilde a}{\log\delta_\la}-\ve}= 2K \frac{\tilde a\delta_\la^{\ve}-1}{\log \tilde a-\ve\log \delta_\la}\log \delta_\la\greq
   - \frac{\log\tilde a}{\delta_\la\log^2 \delta_\la}=g_{\tilde a,-\ve}'(\delta_\la).
\end{equation*}
So, if $h(\delta_\la)=g_{\tilde a,-\ve}(\delta_\la)$, then Corollary \ref{col:d4} gives us
\begin{equation*}
    h'(\delta_\la)=\frac32\mc D'(\la)\greq g_{\tilde a,-\ve}'(\delta_\la),
\end{equation*}
where $\delta_\la\in(0,\delta_{\la_m})$
Thus, the assumption $h(\delta_{\la_m})\leeq g_{\tilde a,-\ve}(\delta_{\la_m})$ leads to
\begin{equation*}
    \lim_{\delta_\la\rightarrow0}h(\delta_\la)\leeq g_{\tilde a,-\ve}(0)=-\ve<0.
\end{equation*}
This is contradiction to the fact that $\lim_{\delta_\la\rightarrow0}\mc D(\la)=4/3$.

\emph{Step 2. The case $\delta_\la<0$.}
Since $\mc D'(\la)<0$ we conclude that $h(\delta_\la)>0$ and $|\delta_\la|^{h(\delta_\la)}<1$

As before suppose that there exist a sequence $\delta_{\la_n}\rightarrow0$ and $b<1$ such that
   $$\lim_{n\rightarrow\infty}|\delta_{\la_n}|^{h(\delta_{\la_n})}\leeq  b.$$
i.e. $h(\delta_{\la_n})\greq \log b/\log |\delta_{\la_n}|$. There exist an element $|\delta_{\la_m}|<\eta$ such that
\begin{equation*}
   2K \delta_\la\log^3 |\delta_\la|\leeq\log^2 b,
\end{equation*}
for $\delta_\la\in(\delta_{\la_m},0)$. We can find $b<\tilde b<1$ and $\ve>0$ such that
\begin{equation*}
   2K (1-b|\delta_\la|^\ve)\delta_\la\log^3 |\delta_\la|\leeq\log \tilde b(log \tilde b+\ve\log |\delta_\la|),
\end{equation*}
where
$$\frac{\log b}{\log |\delta_{\la_m}|}=\frac{\log\tilde b}{\log |\delta_{\la_m}|+\ve}=g_{\tilde b,\ve}(\delta_{\la_m})$$
and $\delta_\la\in(0,\delta_{\la_m})$.
Thus $h(\delta_{\la_m})\greq g_{\tilde b,\ve}(\delta_{\la_m})$. Next
\begin{equation*}
   2K \frac{|\delta_\la|^{\frac{\log\tilde b}{\log|\delta_\la|}+\ve}-1}{\frac{\log\tilde b}{\log|\delta_\la|}+\ve}=
   2K \frac{\tilde b|\delta_\la|^{\ve}-1}{\log \tilde b+\ve\log |\delta_\la|}\log |\delta_\la|
   \greq -\frac{\log\tilde b}{\delta_\la\log^2 |\delta_\la|}=g_{\tilde b,\ve}'(\delta_\la).
\end{equation*}
So, we see that if $h(\delta_\la)=g_{\tilde b,\ve}(\delta_\la)$ then Corollary \ref{col:d4} gives us
\begin{equation*}
    h'(\delta_\la)=\frac32\mc D'(\la)\greq g_{\tilde b,\ve}'(\delta_\la),
\end{equation*}
where $\delta_\la\in(\delta_{\la_m},0)$
Thus, the assumption $h(\delta_{\la_m})\greq g_{\tilde b,\ve}(\delta_{\la_m})$ leads to
\begin{equation*}
    \lim_{\delta_\la\rightarrow0}h(\delta_\la)\greq g_{\tilde b,\ve}(0)=\ve>0.
\end{equation*}
This is contradiction to the fact that $\lim_{\delta_\la\rightarrow0}\mc D(\la)=4/3$.
\end{proof}

\begin{proof}[Proof of Theorem \ref{thm:twopetals} in the case $\mc D(\la)=4/3$]  Fix $\ve>0$.
Since
$$e^{(\frac32\mc D(\la)-2)\log|\delta_\la|}=\delta_\la^{\frac32\mc D(\la)-2}\rightarrow1,$$
we get $(\frac32\mc D(\la)-2)\log|\delta_\la|\rightarrow0$, hence
 \begin{equation*}
   |\delta_\la|^{\frac32\mc D(\la)-2}-1=\Big(\frac32\mc D(\la)-2\Big)\log|\delta_\la|+ o\Big(
   \Big(\frac32\mc D(\la)-2\Big)\log|\delta_\la|\Big).
 \end{equation*}
Thus, we obtain
 \begin{equation*}
    (1-\ve)\log|\delta_\la|\leeq\frac{|\delta_\la|^{\frac32\mc D(\la)-2}-1}{\frac32\mc D(\la)-2}\leeq(1+\ve)\log|\delta_\la|,
 \end{equation*}
where $0<|\delta_\la|<\eta$ for sufficiently chosen $\eta>0$. So, statement follows from Corollary \ref{col:d4}.
\end{proof}

\subsection{The case $\mc D(-1)>4/3$}

\begin{prop}\label{prop:mnn}
   For every $\varepsilon>0$ there exist $\eta>0$ and $N\in\N$ such that if $0<|\delta_\la|<\eta$ then
 \begin{equation*}
    \int_{\textbf{M}_N}\bigg|\re\Big(\frac{\frac{\partial}{\partial \la} (F_\la'(\varphi_\la))}{F_\la'(\varphi_\la)}\Big)\bigg|d\tilde{\mu_\la}\leeq \ve.
 \end{equation*}
\end{prop}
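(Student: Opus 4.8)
The plan is to replace the integrand by the more explicit expression $\re\big(\mathfrak{D}^2_{F_\la}(\varphi_\la,\dot\Psi_\la)/F_\la'(\varphi_\la)\big)$, to bound the $\textbf{M}_N$-integral of the latter by a fixed constant times $\tilde\mu_\la(\textbf{M}_N)$, and finally to use that $\mc D(-1)>4/3$ forces $\tilde\mu_\la(\textbf{M}_N)$ to be uniformly small for $N$ large and $\la$ close to $-1$. Concretely, fix $\ve>0$. By Lemma~\ref{lem:f-P} there are $\eta_1>0$ and $K_1>0$, with $K_1$ independent of $N$, such that
\[
\int_{\textbf{M}_N}\Big|\re\Big(\frac{\frac{\partial}{\partial\la}(F_\la'(\varphi_\la))}{F_\la'(\varphi_\la)}\Big)-\re\Big(\frac{\mathfrak{D}^2_{F_\la}(\varphi_\la,\dot\Psi_\la)}{F_\la'(\varphi_\la)}\Big)\Big|\,d\tilde\mu_\la\leeq\frac{K_1}{N^{1/2}},
\]
for all $N\in\N$ and $0<|\delta_\la|<\eta_1$; hence it is enough to choose $N$ with $K_1/N^{1/2}<\ve/2$ and to bound $\int_{\textbf{M}_N}\big|\re(\mathfrak{D}^2_{F_\la}(\varphi_\la,\dot\Psi_\la)/F_\la'(\varphi_\la))\big|\,d\tilde\mu_\la$ by $\ve/2$.

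\emph{Bound on cylinders.} Let $\ve_0$ be the constant from Lemma~\ref{lem:ee}, and fix once and for all some $\ve'\in(0,\ve_0)$. Since $\Gamma$ takes values in $(0,1/2)$ we have $|6\Gamma(x)-1|<2$ for every $x$, so Proposition~\ref{prop:reDF}, applied with the fixed parameter $\ve'$, produces $N_2\in\N$ and $\eta_2>0$ such that $\big|\re(\mathfrak{D}^2_{F_\la}(\varphi_\la(s),\dot\Psi_\la(s))/F_\la'(\varphi_\la(s)))\big|\leeq 2+\ve' e^{\ve' n|\delta_\la|}$ whenever $s\in\textbf{C}_n$, $n>N_2$ and $0<|\delta_\la|<\eta_2$. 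Integrating over each cylinder and summing over $n>N$ for $N\greq N_2$, then invoking Lemma~\ref{lem:ee}\,(\ref{lit:ee2}), gives
\[
\int_{\textbf{M}_N}\big|\re(\mathfrak{D}^2_{F_\la}(\varphi_\la,\dot\Psi_\la)/F_\la'(\varphi_\la))\big|\,d\tilde\mu_\la\leeq 2\tilde\mu_\la(\textbf{M}_N)+\ve'\sum_{n>N}e^{\ve' n|\delta_\la|}\tilde\mu_\la(\textbf{C}_n)\leeq K_2\,\tilde\mu_\la(\textbf{M}_N),
\]
with $K_2:=2+\ve_0 K$, where $K$ is the constant of Lemma~\ref{lem:ee}, valid once $|\delta_\la|$ is small enough.

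\emph{Smallness of the measure and conclusion.} Since $\mc D(\la)\to\mc D(-1)>4/3$ as $\la\to-1$, we may assume $\mc D(\la)>4/3$ for $|\delta_\la|$ small; the final clause of Lemma~\ref{lem:mj}\,(\ref{lit:mj3}) then gives $N_3\in\N$ and $\eta_3>0$ with $\tilde\mu_\la(\textbf{M}_{N_3})<\ve/(2K_2)$ for $0<|\delta_\la|<\eta_3$, and since $\textbf{M}_N\subset\textbf{M}_{N_3}$ for $N\greq N_3$ the same estimate holds for $\tilde\mu_\la(\textbf{M}_N)$. Choosing $N\greq\max(N_2,N_3)$ large enough that also $K_1/N^{1/2}<\ve/2$, and $\eta$ the minimum of $\eta_1,\eta_2,\eta_3$ and the threshold appearing in Lemma~\ref{lem:ee}, the triangle inequality yields $\int_{\textbf{M}_N}\big|\re(\frac{\partial}{\partial\la}(F_\la'(\varphi_\la))/F_\la'(\varphi_\la))\big|\,d\tilde\mu_\la<\ve/2+\ve/2=\ve$, as required.

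\emph{Main obstacle.} This is in fact the benign case among the three treated in this section: the hypothesis $\mc D(-1)>4/3$ puts us in the regime where $\tilde\mu_\la$ assigns uniformly small mass to a neighborhood of the parabolic cycle, so, unlike the cases $\mc D(-1)\leeq4/3$, no cancellation in the oscillatory factor $6\Gamma(n\delta_\la)-1$ needs to be exploited. The only point that requires care is the order of quantifiers: the auxiliary parameter $\ve'$ feeding Proposition~\ref{prop:reDF} and Lemma~\ref{lem:ee} must be fixed before $N$, so that $K_2$ remains a genuine constant, and $\eta$ must be contracted to the minimum of all the thresholds only at the very end.
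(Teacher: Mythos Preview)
Your proof is correct and follows essentially the same approach as the paper's own argument: split off the error via Lemma~\ref{lem:f-P}, bound the $\mathfrak{D}^2$-integrand pointwise using Proposition~\ref{prop:reDF} and $|6\Gamma-1|<2$, control the sum with Lemma~\ref{lem:ee}\,(\ref{lit:ee2}), and finish with the smallness of $\tilde\mu_\la(\textbf{M}_N)$ from Lemma~\ref{lem:mj}\,(\ref{lit:mj3}). Your handling of the quantifier order (fixing $\ve'$ before $N$, then contracting $\eta$) is exactly what the paper does implicitly.
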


\begin{proof}
Fix $\ve>0$. Let $K_1$ be a constant from Lemma \ref{lem:f-P} and let $N_0$ be an integer such that
 \begin{equation}\label{eq:n0}
    \frac{K_1}{N_0^{1/2}}\leeq\frac{\ve}{2}.
 \end{equation}

Fix $\tilde\ve>0$ less than $\ve_0$ from Lemma \ref{lem:ee}.
Since the function $|\Gamma|$ is bounded by $1/2$, we conclude from Proposition \ref{prop:reDF}, that there exist $N_1$ and $\eta$ such that
 \begin{equation*}
    \int_{\textbf{M}_N}\bigg|\re\Big(\frac{\mathfrak{D}^2_{F_\la} (\varphi_\la,
    \dot\Psi_\la)} {F_\la'(\varphi_\la)}\Big)\bigg|d\tilde{\mu_\la}\leeq \sum_{n>N}(2+\tilde\ve e^{\tilde\ve n|\delta_\la|})\tilde{\mu_\la}(\mc C_n),
 \end{equation*}
where $N\greq N_1$ and $|\delta_\la|<\eta$. Next, Lemma \ref{lem:ee} (\ref{lit:ee2}) and \ref{lem:mj} (\ref{lit:mj3}) leads to
 \begin{multline}\label{eq:n1}
    \int_{\textbf{M}_N}\bigg|\re\Big(\frac{\mathfrak{D}^2_{F_\la} (\varphi_\la,
    \dot\Psi_\la)} {F_\la'(\varphi_\la)}\Big)\bigg|d\tilde{\mu_\la}\leeq 2\tilde{\mu_\la}(\textbf{M}_N)+\tilde\ve K_2\tilde{\mu_\la}(\textbf{M}_N)\\\leeq (2+\tilde\ve K_2)\frac{K_3}{\frac32\mc D(\la)-2}N^{-\frac32\mc D(\la)+2}.
 \end{multline}
Since we can find an integer $N>\max(N_0,N_1)$ and $\eta>0$ such that
   $$(2+\tilde\ve K_2)\frac{K_3}{\frac32\mc D(\la)-2}N^{-\frac32\mc D(\la)+2}\leeq\frac{\ve}{2},$$
where $|\delta_\la|<\eta$, the assertion follows from (\ref{eq:n0}) and (\ref{eq:n1}).
\end{proof}

\begin{proof}[Proof of Theorem \ref{thm:twopetals} in the case $\mc D(\la)>4/3$]
The statement follows from formula (\ref{eq:formula1})
combined with Propositions \ref{prop:den}, \ref{prop:limb} and Proposition \ref{prop:mnn}.
\end{proof}

\end{document}